\newif\ifdraft
\newcommand{\tensor}{\otimes}
\newcommand{\isom}{\simeq}
\newcommand{\C}{\mathbb{C}}
\newcommand{\Q}{\mathbb{Q}}
\newcommand{\Z}{\mathbb{Z}}
\newcommand{\A}{\mathbb{A}}
\renewcommand{\P}{\mathbb{P}}
\newcommand{\DD}{\mathbb{D}}
\newcommand{\M}{\mathcal{M}}
\newcommand{\N}{\mathcal{N}}
\newcommand{\D}{\mathcal{D}}
\newcommand{\G}{\mathcal{G}}
\newcommand{\HH}{\mathbb{H}}
\newcommand{\I}{\mathcal{I}}
\renewcommand{\O}{\mathcal{O}}
\newcommand{\DR}{\mathrm{DR}}
\renewcommand{\L}{\mathcal{L}}
\newcommand{\codim}{\mathrm{codim}}
\newcommand{\w}{\omega}
\newcommand{\K}{\mathcal{K}}
\newcommand{\gr}{\mathrm{Gr}}
\newcommand{\Hom}{\mathcal{H}om}
\newcommand{\h}{\underline h}
\renewcommand{\sp}{\mathrm{sp}}
\newcommand{\DB}{\underline{\Omega}} %for du Bois complex
\newcommand{\Sing}{\mathrm{Sing}}
\newcommand{\dual}{\mathbf{D}}
\newcommand{\Supp}{\mathrm{Supp}}
\newcommand{\sm}{\smallsetminus}
\newcommand{\IC}{\mathrm{IC}}
\newcommand{\wt}{\widetilde}
\newcommand{\lct}{\mathrm{lct}}
\newcommand{\Core}{\mathrm{Core}}
\newcommand{\IH}{\mathit{IH}}
\newcommand{\X}{\mathcal{X}}
\newtheorem{thm}[equation]{Theorem}
\newtheorem{cor}[equation]{Corollary}
\newtheorem{lem}[equation]{Lemma}
\newtheorem{prop}[equation]{Proposition}
\theoremstyle{definition}
\newtheorem{defn}[equation]{Definition}
\newtheorem{question}[equation]{Question}
\theoremstyle{remark}
\newtheorem{rmk}[equation]{Remark}
\newtheorem{ex}[equation]{Example}
\theoremstyle{plain}
\newcommand{\theoremref}[1]{\hyperref[#1]{Theorem~\ref*{#1}}}
\newcommand{\lemmaref}[1]{\hyperref[#1]{Lemma~\ref*{#1}}}
\newcommand{\definitionref}[1]{\hyperref[#1]{Definition~\ref*{#1}}}
\newcommand{\propositionref}[1]{\hyperref[#1]{Proposition~\ref*{#1}}}
\newcommand{\conjectureref}[1]{\hyperref[#1]{Conjecture~\ref*{#1}}}
\newcommand{\corollaryref}[1]{\hyperref[#1]{Corollary~\ref*{#1}}}
\newcommand{\exampleref}[1]{\hyperref[#1]{Example~\ref*{#1}}}
\let\old@caption\caption
\renewcommand*{\caption}[1]{%
	\setcounter{figure}{\value{equation}}%
	\stepcounter{equation}%
	\old@caption{#1}\relax%
}
\newcounter{intro}
\newtheorem{intro-conjecture}[intro]{Conjecture}
\newtheorem{intro-corollary}[intro]{Corollary}
\newtheorem{intro-theorem}[intro]{Theorem}
\def\Ddots{\mathinner{\mkern1mu\raise\p@
\vbox{\kern7\p@\hbox{.}}\mkern2mu
\raise4\p@\hbox{.}\mkern2mu\raise7\p@\hbox{.}\mkern1mu}}
\begin{document}

\title[GIT stability and Hodge structures of hypersurfaces]{The GIT stability and Hodge structures of hypersurfaces via minimal exponent}

\author{Sung Gi Park}
\address{Department of Mathematics, Princeton University, and School of Mathematics, Institute for Advanced Study, Princeton, NJ, USA}
\email{sp6631@princeton.edu \,\,\,\,\,\,sgpark@ias.edu}

%%\address{Department of Mathematics, Princeton University, Fine Hall, Washington Road, Princeton, NJ 08544, USA}

%%\address{Institute for Advanced Study, 1 Einstein Drive, Princeton, NJ 08540, USA}

\thanks{}

\subjclass[2020]{14B05, 14D07, 14F10, 14J10, 14L24}

\date{\today}

\begin{abstract}
Let $X\subset \mathbb P^n$ be a degree $d$ hypersurface. We prove that $X$ is GIT stable if the minimal exponent $\widetilde \alpha(X)>\frac{n+1}{d}$ and GIT semistable if $\widetilde \alpha(X)=\frac{n+1}{d}$, resolving a question of Laza. Conversely, for GIT semistable cubic hypersurfaces, we prove a uniform lower bound for the minimal exponent, which implies that every such cubic has canonical singularities (and is terminal for $n\ge 6$), answering a question of Spotti-Sun. In the classical cases $(n,d)=(2,4),(2,6),(3,3),(4,3),(5,3)$, the period map from the GIT moduli is an open embedding over the stable locus with $\widetilde \alpha(X)>\frac{n+1}{d}$ and extends regularly to the Baily-Borel compactification precisely along the boundary where $\widetilde \alpha(X)=\frac{n+1}{d}$.

To generalize this period map behavior in the Calabi-Yau type case $\frac{n+1}{d}=m+1\in \mathbb Z$, we introduce $m$-liminal sources and $m$-liminal centers, refining the theory of sources and log canonical centers. For an $m$-Du Bois hypersurface, we prove that the core of the limit mixed Hodge structure of any one-parameter smoothing is completely determined by the $m$-liminal source. In particular, maximal unipotent degeneration is detected by the local singularity type of the special fiber.

\end{abstract}

\maketitle

\makeatletter
\newcommand\@dotsep{4.5}
\def\@tocline#1#2#3#4#5#6#7{\relax
  \ifnum #1>\c@tocdepth % then omit
  \else
    \par \addpenalty\@secpenalty\addvspace{#2}%
    \begingroup \hyphenpenalty\@M
    \@ifempty{#4}{%
      \@tempdima\csname r@tocindent\number#1\endcsname\relax
    }{%
      \@tempdima#4\relax
    }%
    \parindent\z@ \leftskip#3\relax
    \advance\leftskip\@tempdima\relax
    \rightskip\@pnumwidth plus1em \parfillskip-\@pnumwidth
    #5\leavevmode\hskip-\@tempdima #6\relax
    \leaders\hbox{$\m@th
      \mkern \@dotsep mu\hbox{.}\mkern \@dotsep mu$}\hfill
    \hbox to\@pnumwidth{\@tocpagenum{#7}}\par
    \nobreak
    \endgroup
  \fi}
\def\l@section{\@tocline{1}{0pt}{1pc}{}{\bfseries}}
\def\l@subsection{\@tocline{2}{0pt}{25pt}{5pc}{}}
\makeatother

%========================================================

\tableofcontents

\section{Introduction}\label{scn:intro}

This paper studies GIT stability and Hodge theory of degenerations of hypersurfaces using the minimal exponent and recently developed notions of higher singularities. In the literature, the GIT moduli spaces and their period maps have been studied, mainly for low-dimensional and low-degree hypersurfaces \cite{Shah80, Shah81, ACT02, LS07, Artebani09, Laza09, Looijenga09, Laza10, ACT11, LOG18}, based on the explicit classification of GIT stable (and semistable) hypersurfaces and the study of limit mixed Hodge structures of one-parameter degenerations.

Beyond low-dimensional and low-degree hypersurfaces, the explicit analysis of the GIT moduli space for hypersurfaces is hardly known. One of the major difficulties lies in determining which hypersurfaces are GIT (semi)stable, and the other lies in analyzing the Hodge structures when hypersurfaces degenerate. We aim to overcome these difficulties and establish new results.

Throughout the text, a variety is a reduced separated scheme of finite type over $\C$. All pure Hodge structures are assumed polarizable, and all mixed Hodge structures are graded polarizable. We use a decreasing filtration $F^\bullet$ and an increasing filtration $F_\bullet$, related by $F^p=F_{-p}$; we freely pass between these conventions.

\smallskip
\noindent
{\bf The GIT stability and extension of period map via minimal exponent.} The GIT stability of a hypersurface $X\subset \P^n$ of degree $d$ has been studied for decades from the perspective of singularities of $X$. While providing a complete classification of GIT (semi)stable hypersurfaces is essentially impossible aside from special cases with small $n$ and $d$, many results were established on sufficient conditions for (semi)stability. Here is a list of results from the literature:

\begin{itemize}
%%($d=2$) A quadric hypersurface is not stable, and semistable if and only if smooth (\cite{Mumford94}).
\item $d\ge 3$: $X$ is stable if $X$ is smooth (\cite{Mumford94}).
\item $d\ge n+1$: $X$ is stable (resp. semistable) if $\lct(\P^n,X)>\frac{n+1}{d}$ (resp. $\ge$) (\cite{Hacking04, KL04} for plane curves and \cite{Lee08} for hypersurfaces).
\item $d= n+1$: $X$ is stable if $X$ has canonical singularities (\cite{Tian94}).
\end{itemize}

For Fano hypersurfaces ($d\le n$), not much was known about the explicit stability of singular hypersurfaces, even with mild singularities. Recently, sufficient conditions in terms of the multiplicity and the dimension of the singular locus were established in \cite{Mordant24, He25}.

We prove a new criterion for GIT (semi)stability that recovers and generalizes the above classical results in terms of a singularity invariant, namely the \emph{minimal exponent} $\wt\alpha(X)$. This invariant is the negative of the greatest root of the reduced Bernstein-Sato polynomial of hypersurface singularities, defined by Saito \cite{Saito93}.

\begin{intro-theorem}\label{thm:GIT stability via minimal exponent}
A hypersurface $X\subset \P^n$ of degree $d\ge 3$ is GIT stable (resp. semistable) if $\wt\alpha(X)>\frac{n+1}{d}$ (resp. $\ge$).
\end{intro-theorem}

For example, every nodal hypersurface of degree $\ge3$ is GIT stable. Note that the minimal exponent $\wt\alpha(X)$ refines the log canonical threshold:
$$
\lct(\P^n,X)=\min\left\{\wt\alpha(X),1\right\}.
$$
Moreover, $X$ has canonical singularities if and only if $\wt\alpha(X)>1$ \cite{Saito93}. With the convention $\wt\alpha(X)=\infty$ for smooth $X$, Theorem \ref{thm:GIT stability via minimal exponent} immediately recovers the results mentioned above.

Conversely, for GIT semistable cubic hypersurfaces, we prove a uniform lower bound on the minimal exponent. In particular, every such $X$ has canonical singularities, thereby answering Question 5.8 of Spotti-Sun \cite{SS17} in the affirmative. Previously, this was known for cubic surfaces, threefolds, and fourfolds ($n\le 5$), via explicit GIT analyses \cite{Allcock03, Yokoyama02,Yokoyama08, Laza09, Laza10} or through the equivalence between GIT stability and K-stability \cite{OSS16,LX19,Liu22}. By contrast, no analogous bound exists for degree $d\ge 4$: products of GIT semistable quadrics and cubics remain GIT semistable, so reducible GIT semistable hypersurfaces occur.

\begin{intro-theorem}\label{thm:cubic converse via minimal exponent}
For $n\ge 3$, every GIT semistable cubic hypersurface $X\subset \P^n$ satisfies
$$
\wt\alpha(X)\ge \max\left\{\frac{4}{3},\frac{n+1}{9}\right\}.
$$
Furthermore, if $n\ge 6$, then $\wt\alpha(X)\ge\frac{5}{3}$ and $X$ has terminal singularities.
\end{intro-theorem}

For $n\le 5$, there exist GIT semistable cubics with canonical but non-terminal singularities; the bound $n\ge 6$ is sharp for terminality. For explicit examples and for sharp bounds on the minimal exponent when $n\le 5$, see Remark \ref{rmk:boundary cubics}. Notably, the minimal log discrepancy of a hypersurface singularity is greater than $k$ if its minimal exponent is greater than $1+\frac{k}{2}$; see Proposition \ref{prop:minimal exponent vs mld}.

For small $n$ and $d$, explicit GIT classifications are available and key to understanding both the birational geometry of the GIT moduli space and the relevant period map; depending on $(n,d)$, the latter is induced by the Hodge structure of a suitable Calabi-Yau type cyclic cover.

Although the detailed study of period maps is highly case-dependent, the minimal exponent provides a uniform description of the indeterminacy locus. For each classical pair $(n,d)\in\left\{(2,4),(2,6),(3,3),(4,3),(5,3)\right\}$, let
$$
\Phi:\P^{\binom{n+d}{d}-1} \dashrightarrow (\Gamma\backslash D)^*
$$
be the period map from the projective parameter space of degree $d$ hypersurfaces in $\P^n$ to the Baily-Borel compactification of the respective period domain $\Gamma\backslash D$. This period map descends to the GIT moduli space
$$
\mathcal P:\overline\M^{\rm GIT} \dashrightarrow (\Gamma\backslash D)^*.
$$
We summarize the results from the literature with an input from Theorem \ref{thm:GIT stability via minimal exponent}.

\begin{intro-corollary}[{\cite{Shah80, Kondo00, ACT02, LS07, Artebani09, Looijenga09, Laza10, ACT11}}]
\label{cor:classical extension results}
Fix $(n,d)$ in the above list, and denote by the open sets
$$
U :=\left\{[X]\in \P^{\binom{n+d}{d}-1} \mid \wt\alpha(X)\ge \frac{n+1}{d} \right\},\quad V:= \left\{[X]\in \P^{\binom{n+d}{d}-1} \mid \wt\alpha(X)> \frac{n+1}{d} \right\}.
$$

\noindent 
(1) The period map $\Phi$ extends to a regular morphism on $U$.

\noindent
(2) The inverse image $\Phi^{-1}(\Gamma\backslash D)$ in $U$ is $V$.

\noindent
(3) Let $\pi:\left(\P^{\binom{n+d}{d}-1}\right)^{ss}\to\overline\M^{\rm GIT}$ be the GIT quotient. Then $\mathcal P|_{\pi(V)}:\pi(V)\to \Gamma\backslash D$ is an open embedding and the indeterminacy locus of $\mathcal P$ is the closed set $Z$ of GIT polystable hypersurfaces with $\wt\alpha(X)<\frac{n+1}{d}$:
$$
Z:=\pi\left(\left(\P^{\binom{n+d}{d}-1}\right)^{ss}\sm U\right)\subset \overline\M^{\rm GIT}.
$$
\end{intro-corollary}

Using the description of limit mixed Hodge structures provided in Theorem \ref{thm:core of liminal sources}, we give direct proofs of (1) and (2) (except for cubic surfaces). Statement (3), on the other hand, follows from the cited literature, where it relies on a subtle and technical analysis of the lattice theory and the period map. Aside from the computation of minimal exponents, the only genuinely new result is for cubic fourfolds: answering a question posed by Laza at the 2024 AIM workshop ``Higher Du Bois and Higher Rational Singularities," we show that $1$-Du Bois cubic fourfolds are GIT semistable, and that the period map extends over their parameter space $U$ as stated.

Beyond the classical cases, Bakker-Filipazzi-Mauri-Tsimerman \cite{BFMT25} have recently announced a construction of Baily-Borel compactifications. In particular, for the moduli stack $\mathcal Y$ of polarized klt log Calabi-Yau pairs with coarse space $Y$, they obtain a unique normal compactification $Y^{\rm BBH}$ such that the Hodge bundle extends amply and satisfies the natural extension property along normal crossing boundaries. Along the same line, we conjecture an analogous Baily-Borel compactification for Calabi-Yau type hypersurfaces, accompanied by an extension statement parallel to Corollary \ref{cor:classical extension results}. For any pair $(n,d)$, we expect a reduction to this conjecture after an appropriate cyclic cover, as in the classical cases; see Question \ref{question:BB compactification and period map}.

\begin{intro-conjecture}
\label{conj:BB compactification for Calabi-Yau type}
Let $(n,d)$ be a pair with $\frac{n+1}{d}=m+1\in \Z$. Then the GIT moduli space $M$ of degree $d$ hypersurfaces in $\P^n$ with $m$-rational singularities admits the Hodge-theoretic compactification $M^{\rm BBH}$. Moreover, the rational map
$$
\Phi:\P^{\binom{n+d}{d}-1}\dashrightarrow M^{\rm BBH}
$$
is a regular morphism on the locus parameterizing $m$-Du Bois hypersurfaces.
\end{intro-conjecture}

The boundary behavior of the Hodge structures at the threshold $\frac{n+1}{d}$ in Theorem \ref{thm:GIT stability via minimal exponent} and Corollary \ref{cor:classical extension results} is captured by the Hodge-theoretic objects, namely \emph{liminal sources}. Building on recent advances in higher Du Bois and higher rational singularities, we describe limit mixed Hodge structures via liminal sources, in parallel with the classical description via sources of log canonical centers. This provides a first step toward Conjecture \ref{conj:BB compactification for Calabi-Yau type} and toward understanding when (and how) the period map extends to the conjectural Baily-Borel compactification. We focus on the integral case $\frac{n+1}{d}\in \Z$, since other cases are expected to reduce to this one.

\smallskip
\noindent
{\bf Liminal sources of Calabi-Yau type hypersurfaces and Hodge structures.} A degree $d$ hypersurface $X\subset \P^n$ is called \emph{Calabi-Yau type} if $\frac{n+1}{d}=m+1\in \Z$. This term stems from the fact that the middle cohomology $H^{n-1}(X,\Q)$ resembles that of Calabi-Yau varieties of dimension $n-2m-1$: when $X$ is smooth, the Hodge numbers are
$$
h^{m,n-1-m}(X)=h^{n-1-m,m}(X)=1\quad\textrm{and}\quad h^{i,n-1-i}(X)=h^{n-1-i,i}(X)=0\quad \forall\; i<m.
$$
In terms of higher singularities, Theorem \ref{thm:GIT stability via minimal exponent} states that $X$ is GIT stable if $X$ is $m$-rational and $X$ is GIT semistable if $X$ is $m$-liminal -- that is, $m$-Du Bois but not $m$-rational (see \cite{FL24l} for definitions).

The difference between $m$-liminal singularities and $m$-rational singularities is encoded in certain simple perverse subquotients -- called $m$-liminal sources -- of the constant perverse sheaf $\Q_X[\dim X]$. By Saito's theory of mixed Hodge modules \cite{Saito88, Saito90}, every simple subquotient of $\Q_X[\dim X]$ is the minimal extension of a polarizable variation of $\Q$-Hodge structure. More precisely, if a simple subquotient has support $Z\subset X$, then its underlying pure Hodge module is $\IC^H_Z(\mathbb V)$, where $(\mathbb V, F^\bullet)$ is an irreducible polarizable variation of $\Q$-Hodge structure defined on a smooth Zariski dense open subset of $Z$.

\smallskip
\noindent
{\bf Definition.} Let $X$ be a variety with $m$-Du Bois hypersurface singularities. A simple pure Hodge module $\IC^H_Z(\mathbb V)$ with strict support $Z\subset X$ is an \emph{$m$-liminal source} of $X$ if it is a simple subquotient of the mixed Hodge module $\Q^H_X[\dim X]$ and the underlying polarizable variation of $\Q$-Hodge structure $(\mathbb V, F^\bullet)$ satisfies $F^{m+1}\mathbb V_\C\neq\mathbb V_\C$. An \emph{$m$-liminal center} of $X$ is any $Z$ that appears as the support of an $m$-liminal source.

\smallskip

The notions of $m$-liminal sources and $m$-liminal centers play a crucial role in the Hodge theory of Calabi-Yau type hypersurfaces. Notably, they exhibit close analogies with sources and log canonical centers. We begin with the properties of $m$-liminal centers.

\begin{intro-theorem}\label{thm:liminal centers}
Let $X$ be a variety with $m$-Du Bois hypersurface singularities. Then:

\noindent
(1) An intersection of two $m$-liminal centers is a union of $m$-liminal centers.

\noindent
(2) There is a unique $m$-liminal source for each $m$-liminal center.

\noindent 
(3) Any union of $m$-liminal centers has Du Bois singularities, and every minimal (with respect to inclusion) $m$-liminal center has rational singularities.

In particular, the $m$-liminal locus (i.e. the complement of the locus of $m$-rational singularities) has Du Bois singularities.
\end{intro-theorem}

When $X$ has log canonical singularities, the analogous statements are proven for sources and log canonical centers: $(1)$ by Ambro \cite{Ambro03,Ambro11}, $(2)$ by Kollár \cite{Kollar16} up to a crepant birational equivalence, and $(3)$ by Kollár-Kovacs \cite{KK10} and Kawamata \cite{Kawamata98}. While the source of a log canonical center is generically a klt (log) Calabi-Yau fibration over the center, unique up to a crepant birational equivalence, the $m$-liminal source is generically a variation of Calabi-Yau type Hodge structure over the $m$-liminal center; see Section \ref{sec:liminal centers and liminal sources}. At this moment, we lack a birational geometric description of the $m$-liminal source for $m\ge 1$.

In fact, the minimal $m$-liminal center is ``the center of minimal exponent" defined in Schnell-Yang \cite{SY23}. Hence, the second part of $(3)$ follows from \textit{loc. cit.}

For Calabi-Yau type hypersurfaces, $m$-liminal sources exhibit $\Q$-Hodge structural uniqueness, analogous to the crepant birational uniqueness of sources of a projective log canonical Calabi-Yau variety. More precisely, the \emph{cores}, defined below by Laza, of the middle cohomologies of liminal sources are unique.

\smallskip
\noindent
{\bf Definition (Laza).} Let $H=(V_\Q,F^\bullet, W_\bullet)$ be a mixed $\Q$-Hodge structure with Hodge filtration $F^\bullet$ on $V_\C:=V_\Q\tensor_\Q\C$ and weight filtration $W_\bullet$ on $V_\Q$. We say that $H$ is of \emph{Calabi-Yau type} if $F^mV_\C=V_\C$ and $\dim_\C\gr_F^{m}V_\C=1$ for some $m$. The \emph{core} of $H$, denoted $\Core(H)$, is the simple subquotient $H'=(V'_\Q,F^\bullet)$ of $H$ such that $\dim_\C\gr_F^{m}V'_\C=1$.

\smallskip

Informally, the core of a Calabi–Yau type mixed Hodge structure is the simple subquotient that contains the outermost ``1," hence pure of Calabi-Yau type. For a Calabi-Yau type hypersurface $X$ with $m$-Du Bois singularities, the cores of (i) the middle cohomology for $X$, (ii) the limit mixed Hodge structures for one-parameter smoothings, and (iii) the middle cohomologies of minimal $m$-liminal sources, all coincide.

\begin{intro-theorem}\label{thm:core of liminal sources}
Let $X\subset \P^n$ be an $m$-Du Bois hypersurface of degree $d$, with $\frac{n+1}{d}=m+1\in \Z$. For any $m$-liminal source $\IC^H_Z(\mathbb V)$ supported on a minimal $m$-liminal center $Z\subset X$ (minimal by inclusion), and for any one-parameter smoothing $f:\X\to \Delta$ of $X$, the mixed Hodge structures $\HH^0(Z,\IC^H_Z(\mathbb V)), H^{n-1}(X,\Q), H^{n-1}(\X_\infty,\Q)$ are of Calabi-Yau type and their cores are isomorphic:
$$
\Core\left(\HH^0(Z,\IC^H_Z(\mathbb V))\right)\isom\Core\left(H^{n-1}(X,\Q)\right)\isom\Core\left(H^{n-1}(\X_\infty,\Q)\right).
$$
\end{intro-theorem}

Here, $H^{n-1}(\X_\infty,\Q)$ denotes the limit mixed Hodge structure of the degeneration $f:\X\to \Delta$. The fact that $H^{n-1}(X,\Q)$ is of Calabi-Yau type follows from Friedman-Laza's constancy of the Hodge-Du Bois numbers $\h^{p,q}$ for $0\le p\le m$ in families with $m$-Du Bois singularities \cite{FL24}. 

The middle cohomologies of fibers of a one-parameter smoothing of Calabi-Yau type hypersurfaces induce a variation of Calabi-Yau type Hodge structure over the punctured disk. The associated nilpotent operator $N$ of the limit mixed Hodge structure satisfies $N^{n-2m}=0$. We say that the degeneration is \emph{maximal} in the sense of Kontsevich-Soibelman \cite{KS01}, if $N^{n-2m-1}\neq 0$. Equivalently, the monodromy operator $T$ is \emph{maximally unipotent}, meaning that for sufficiently divisible $s$, we have $(T^s-1)^{n-2m}=0$ but $(T^s-1)^{n-2m-1}\neq 0$.

Using Theorem \ref{thm:core of liminal sources}, we obtain that any one-parameter smoothing of a Calabi-Yau type hypersurface with $m$-Du Bois singularities is maximally degenerate if and only if there exists a point on the special fiber with a specific type of singularities.

\begin{intro-corollary}\label{cor:maximal degeneration}
Let $X\subset \P^n$ be an $m$-Du Bois hypersurface of degree $d$, with $\frac{n+1}{d}=m+1\in \Z$. Then the following are equivalent:

\noindent
(1) $\Core\left(H^{n-1}(X,\Q)\right)=\Q^H(-m)$.

\noindent
(2) $\Q_{\left\{x\right\}}^H(-m)$ is an $m$-liminal source of $X$ for some $x\in X$.

\noindent 
(3) Any one-parameter smoothing of $X$ is a maximal degeneration.
\end{intro-corollary}

Here, $\Q^H$ denotes the trivial $\Q$-Hodge structure, and $\Q^H(-m)$ is its Tate twist. By Davis-L\H{o}rincz-Yang \cite{DLY24} with some additional input, one can prove that condition (2) is equivalent to the statement that the multiplicity of the root $-m-1$ of the reduced local Bernstein-Sato polynomial at the point $x\in X$ is $n-2m-1$.

Theorem \ref{thm:core of liminal sources} has another consequence regarding Hodge-Du Bois numbers 
$$
\h^{p,q}(X):=\dim_\C\gr_F^pH^{p+q}(X,\C).
$$
If $X$ has $m$-rational singularities, then the $m$-liminal locus $S$ of $X$ is empty and $\IC_X^H$ is the only $m$-liminal source of $X$. Hence, the core of $H^{n-1}(X,\Q)$ has weight $n-1$, or equivalently $\h^{n-1-m,m}(X)=1$. If $X$ is $m$-liminal, then $S$ is nonempty. Hence, the core has weight $<n-1$ and $\h^{n-1-m,m}(X)=0$. In fact, the numbers $\h^{n-1-m,\bullet}(X)$ are determined by $\h^{0,\bullet}(S)$:

\begin{intro-theorem}\label{thm:Hodge numbers}
Let $X\subset \P^n$ be an $m$-liminal hypersurface of degree $d$, with $\frac{n+1}{d}=m+1\in \Z$. Denote by $S$, the $m$-liminal locus of $X$  (i.e. union of every $m$-liminal center $\subsetneq X$). Then
$$
\h^{n-1-m,i}(X)-\h^{0,i-1-m}(S)=\begin{cases}
-1 & \text{if }\; i=m+1\\
\phantom{-}1 & \text{if }\; i=n-1-m\\
\phantom{-}0 & \text{otherwise.}
\end{cases}
$$
Furthermore, if the core of $H^{n-1}(X,\Q)$ has weight $\le n-3$, then $S$ is connected.
\end{intro-theorem}

Note that $S$ has Du Bois singularities by Theorem \ref{thm:liminal centers}, hence $\h^{0,i}(S)=h^i(S,\O_S)$.

In particular, if $X$ is a semi-log canonical Calabi-Yau hypersurface $(m=0)$, then $S$ is the non-klt locus and the Hodge-Du Bois numbers $\h^{0,i}(S)$ are birational invariants of $X$. Moreover, for a minimal $m$-liminal source $\IC^H_Z(\mathbb V)$, the quantity $n-1-\mathrm{weight}(\IC^H_Z(\mathbb V))$ is equal to the dimension of the dual complex of a minimal dlt model of any one-parameter smoothing; this follows directly from Theorem \ref{thm:core of liminal sources} and Nicaise-Xu \cite{NX16}. If this quantity is at least $2$, then $S$ is connected. See Example \ref{ex:K3 surface} for K3 surfaces.

In the follow-up work, we provide a detailed analysis of $1$-liminal sources and centers for GIT polystable cubic fourfolds and compute the full Hodge diamond using the techniques developed in this paper.

\smallskip
\noindent 
{\bf Example: Degenerations of cubic sevenfolds.} A cubic sevenfold $X\subset \P^8$ is a Calabi-Yau type hypersurface. Theorem \ref{thm:GIT stability via minimal exponent} says $X$ is GIT stable if it has $2$-rational singularities, and GIT semistable if it is $2$-liminal. Suppose $X$ has $2$-Du Bois singularities. Then, $X$ has $2$-rational (resp. $2$-liminal) singularities if and only if $\Core\left(H^{7}(X,\Q)\right)$ has weight $7$ (resp. $<7$). Moreover, the core of the limit Hodge structure of any one-parameter smoothing of $X$ is independent of the choice of smoothing. More explicitly, we analyze the following four cases.

\textit{Case 1.} $X=\left\{x_0^3+x_1^3+\cdots+x_8^3=0\right\}\subset \P^8$. Then $X$ is smooth, GIT stable, and the nilpotent operator $N$ of the limit mixed Hodge structure of any one-parameter smoothing satisfies $N=0$. Note that the Hodge numbers of $H^7(X,\Q)$ are:
$$
0\quad0\quad1\quad84\quad84\quad1\quad0\quad0,
$$
with $h^{7,0}(X)$ on the left and $h^{0,7}(X)$ on the right.

\textit{Case 2.} $X=\left\{x_0^3+x_1^3+\cdots+x_5^3+x_6x_7x_8=0\right\}\subset \P^8$. Then $X$ is $2$-liminal, GIT semistable, and the nilpotent operator $N$ satisfies $N\neq 0$, $N^2=0$. Additionally,
$$
\Core\left(H^7(X,\Q)\right)=\Core\left(H^4(Y,\Q)\right)(-1)
$$
where $Y=\left\{x_0^3+x_1^3+\cdots+x_5^3=0\right\}\subset \P^5$ is the Fermat cubic fourfold.

\textit{Case 3.} $X=\left\{x_0^3+x_1^3+x_2^3+x_3x_4x_5+x_6x_7x_8=0\right\}\subset \P^8$. Then $X$ is $2$-liminal, GIT semistable, and the nilpotent operator $N$ satisfies $N^2\neq0$, $N^3=0$. Additionally,
$$
\Core\left(H^7(X,\Q)\right)=\Core\left(H^1(C,\Q)\right)(-2)
$$
where $C=\left\{x_0^3+x_1^3+x_2^3=0\right\}\subset \P^2$ is the Fermat cubic curve.

\textit{Case 4.} $X=\left\{x_0x_1x_2+x_3x_4x_5+x_6x_7x_8=0\right\}\subset \P^8$. Then $X$ is $2$-liminal, GIT semistable, and the nilpotent operator $N$ satisfies $N^3\neq0$, $N^4=0$. We have
$$
\Core\left(H^7(X,\Q)\right)=\Q^H(-2),
$$
and every one-parameter smoothing of $X$ is a maximal degeneration.

\smallskip
\noindent
{\bf Acknowledgements.} I am grateful to Radu Laza and Brad Dirks for organizing the AIM workshop on higher Du Bois and higher rational singularities, and for asking questions that motivated this paper. I thank Kenny Ascher, Robert Friedman, Matt Kerr, Hyunsuk Kim, János Kollár, Yongnam Lee, Jennifer Li, Yuchen Liu, Lisa Marquand, Lauren\c tiu Maxim, Mircea Musta\c t\u a, Mihnea Popa, Sasha Viktorova, Chenyang Xu, and Ruijie Yang for valuable discussions. Part of this work was completed during my visit to KIAS as a June E Huh Visiting Fellow. This research was supported by the Oswald Veblen Fund at IAS.

\section{Preliminaries}
\label{sec:preliminaries}

\subsection{Du Bois complexes, intersection Du Bois complexes, and condition $D_m$}\label{sec:DB}
A complex variety $X$ has the associated \emph{filtered de Rham complex} $(\DB_X^\bullet, F^\bullet)$ in the bounded derived category of filtered differential complexes on $X$. This object was initially defined and studied by Du Bois \cite{DB81} and Deligne \cite{Deligne74} as a generalization of the de Rham complex for smooth varieties. The complex $\DB_X^\bullet$ is quasi-isomorphic to the constant sheaf $\C_X$, and its filtration $F^\bullet$ induces the Hodge filtration of the mixed Hodge structure on the singular cohomology $H^\bullet(X,\Q)$ of $X$ upon taking the hypercohomologies when $X$ is proper. The \emph{$p$-th Du Bois complex} $\DB^p_X$ of $X$ is the shifted graded piece
$$
\DB^p_X : = \gr^p_F\, \DB^\bullet_X[p],
$$
which is an object in $D^b_{\rm coh}(X,\O_X)$. For a detailed treatment of Du Bois complexes, see \cite[Chapter V]{GNPP} or \cite[Chapter 7.3]{PS08}.

The Du Bois complex admits an interpretation via Saito's theory of mixed Hodge modules. Locally (after a closed embedding $X\hookrightarrow Y$ with $Y$ smooth), an object of ${\rm MHM}(X)$ is given by a quadruple
$$
\M:=(M,F_\bullet, K;W_\bullet),
$$
where $M$ is a regular holonomic (right) $\D_Y$-module with a good increasing Hodge filtration $F_\bullet M$, $K$ is a $\Q$-perverse sheaf on $X$ equipped with a comparison isomorphism $\DR(M)\isom K\tensor_\Q\C$, and $W_\bullet$ is the weight filtration; these data satisfy Saito's axioms \cite{Saito88, Saito90}. The category ${\rm MHM}(X)$ is abelian, and the derived category $D^b{\rm MHM}(X)$ carries the full six-functor formalism; see details in \emph{loc. cit.}

In \cite{Saito00}, Saito proved that Du Bois complexes are naturally isomorphic to the graded de Rham complexes of the trivial object $\Q_X^H[\dim X]\in D^b{\rm MHM}(X)$, up to a shift:
$$
\DB^p_X\isom\gr^F_{-p}\DR(\Q^H_X[\dim X])[p-\dim X].
$$
When $X$ is an equidimensional variety, we replace the trivial object $\Q_X^H[\dim X]$ with the pure Hodge module $\IC^H_X$ of weight $\dim X$ associated to the intersection complex. The \emph{$p$-th intersection Du Bois complex} $I\DB^p_X$ of $X$ is the shifted graded piece
$$
I\DB^p_X := \gr^F_{-p}\DR(\IC_X^H)[p-\dim X],
$$
which is an object in $D^b_{\rm coh}(X,\O_X)$. Applying the graded de Rham functor $\gr^F_{-p} \DR(\,\cdot\,)$ to the natural morphism
$$
\gamma_X: \Q^H_X[\dim X]\to \IC^H_X
$$
in the derived category $D^b{\rm MHM}(X)$ of mixed Hodge modules on $X$, we obtain a natural morphism from the Du Bois complex to the intersection Du Bois complex:
$$
\gamma_p: \DB^p_X \to I\DB^p_X.
$$
In \cite{PP25a}, Popa and the author extensively studied this morphism and introduced a notation $D_m$ when $\gamma_p$ are isomorphic for all $p\le m$:

\begin{defn}
Condition $D_m$ is said to hold for an equidimensional variety $X$ if the morphisms $\gamma_p: \DB^p_X \to I\DB^p_X$ are isomorphisms for all $0 \le p \le m$. For brevity, we will sometimes write $\DB^p_X = I\DB^p_X$ to indicate this isomorphism. 
\end{defn}

See \cite{PP25a} for further discussions on Hodge-Du Bois numbers and intersection Hodge numbers,
$$
\h^{p,q}(X):=\dim_\C\gr_F^pH^{p+q}(X,\C),\quad I\h^{p,q}(X):=\dim_\C\gr_F^p\mathit{IH}^{p+q}(X,\C),
$$
when $X$ is projective and satisfies condition $D_m$. We note that condition $D_m$ is also studied in \cite{DOR25}, under different notation, where it is referred to as $X$ being a \emph{rational homology manifold up to Hodge degree $m$}.

We record a duality formula for the graded de Rham functor of mixed Hodge modules, which induces a duality on intersection Du Bois complexes (see e.g. \cite[Lemma 3.2]{Park23}).

\begin{prop}
\label{prop:duality}
Let $X$ be a quasi-projective variety and $\M^\bullet\in D^b{\rm MHM}(X)$. Then, for every integer $p$, we have an isomorphism
$$
R\Hom_{\O_X}\left(\gr^F_p\DR(\M^\bullet),\omega_X^\bullet\right)\isom\gr^F_{-p}\DR(\mathbb \dual(\M^\bullet))
$$
in $D^b_{\rm coh}(X,\O_X)$, where $\omega^\bullet_X$ is the dualizing complex of $X$.
\end{prop}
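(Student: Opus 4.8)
The plan is to reduce to the case of a smooth ambient variety and then to deduce the graded identity from the self-duality of the Koszul--Spencer de Rham complex together with Saito's duality for filtered $\D$-modules; the sign flip $p\mapsto -p$ is precisely the shift produced by this self-duality.

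First I would reduce to the smooth case. Both sides are local on $X$, so I may fix a closed embedding $i:X\hookrightarrow Y$ into a smooth variety $Y$ of dimension $n$ and regard $\M^\bullet$ as $i_*\M^\bullet\in D^b{\rm MHM}(Y)$ supported on $X$. Three compatibilities then transport the problem to $Y$: duality commutes with the proper pushforward $i_*$, so $i_*\dual_X=\dual_Y i_*$; the graded de Rham functor commutes with $i_*$, so $\gr^F_p\DR_Y\circ i_*\isom i_*\circ\gr^F_p\DR_X$, filtration included; and Grothendieck duality along the finite morphism $i$ gives $R\Hom_{\O_Y}(i_*G,\omega_Y^\bullet)\isom i_*R\Hom_{\O_X}(G,\omega_X^\bullet)$, using $\omega_X^\bullet=i^!\omega_Y^\bullet$. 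Since $i_*$ is fully faithful on coherent sheaves supported on $X$, an isomorphism on $Y$ descends to the desired isomorphism on $X$. Thus it suffices to prove the statement on the smooth variety $Y$.

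On $Y$ smooth I would work with the underlying filtered right $\D_Y$-module $(M,F_\bullet)$ of $\M^\bullet$, whose filtered de Rham complex is the Koszul--Spencer complex, with $p$-th graded piece the $\O_Y$-linear complex
$$
\gr^F_p\DR_Y(M)=\left[\,\gr^F_{p}M\to\gr^F_{p+1}M\otimes\Omega^1_Y\to\cdots\to\gr^F_{p+n}M\otimes\Omega^n_Y\,\right].
$$
The engine is the self-duality of this complex: passing to associated gradeds converts the filtered $\D_Y$-module duality into $\O$-linear duality, and the complex above is the Koszul complex built from the action of $T_Y$ on $\gr^F M$. Applying $R\Hom_{\O_Y}(-,\omega_Y^\bullet)$ with $\omega_Y^\bullet=\omega_Y[n]$ reverses the Koszul complex, twists by $\wedge^n\Omega^1_Y=\omega_Y$, and reindexes the Hodge degree by $p\mapsto -p$; by Saito's compatibility of his duality functor with the filtered de Rham complex, this reversed complex is exactly $\gr^F_{-p}\DR_Y(\dual_Y M)$, which is the claim on $Y$.

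The main obstacle will be the precise bookkeeping of the Hodge filtration through $\dual_Y$: I must check that the filtration Saito places on the dual is normalized so that Koszul self-duality lands on $\gr^F_{-p}$ on the nose, rather than on a neighboring degree. This is exactly where I would invoke that $\dual_Y$ is the duality of the theory of mixed Hodge modules -- so that the underlying filtered $\D$-module of $\dual_Y M$ carries its prescribed filtration shift -- together with the strictness of the Hodge filtration guaranteed by Saito's axioms, which ensures that forming graded pieces commutes with the derived duality functor and that no degeneration issues arise. Once this normalization is confirmed, the remaining identifications are formal consequences of Koszul duality and Grothendieck duality along $i$.
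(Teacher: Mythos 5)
Your proposal is correct and coincides with the argument behind the paper's treatment: the paper records this proposition without proof, citing \cite[Lemma 3.2]{Park23}, which is established precisely by your route --- a global (by quasi-projectivity) closed embedding into a smooth $Y$, the compatibilities of $\dual$ and $\gr^F\DR$ with $i_*$ together with Grothendieck duality for the finite morphism $i$, and Saito's filtered duality for the de Rham (Koszul--Spencer) complex from \cite[\S 2.4]{Saito88}. The one point worth making explicit is that the commutation of $\gr^F$ with the derived dual rests on strictness of the filtered dual, i.e.\ on the Cohen--Macaulayness of $\gr^F M$ over $\mathrm{Sym}\,T_Y$ proved by Saito, which is exactly what your appeal to ``Saito's axioms'' encodes.
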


Here, $\dual:{\rm MHM}(X)\to {\rm MHM}(X)$ is the Saito-Verdier dualizing functor for mixed Hodge modules. When $X$ is equidimensional, the polarization $\IC_X^H(\dim X)\isom \dual \IC_X^H$ of the intersection complex yields the duality
$$
I\DB_X^p\isom R\Hom_{\O_X}\left(I\DB_X^{\dim X-p}, \omega_X^\bullet[-\dim X]\right).
$$
For any integer $k$ and $\M=(M,F_\bullet, K;W_\bullet)\in {\rm MHM}(X)$, the Tate twist is
$$
\M(k):=(M,F_{\bullet-k},K(k); W_{\bullet+2k}),
$$
where $K(k)=K\tensor_\Q\Q(k)$ and $\Q(k)=(2\pi i)^k\Q\subset \C$.

\subsection{Higher singularities and minimal exponents}\label{sec:higher singularities}

The Du Bois complexes have been used to define \emph{higher Du Bois} and \emph{higher rational} singularities, which refine the classical notions of Du Bois and rational singularities. These refinements have been developed for varieties with local complete intersection (lci) singularities; see, for example, \cite{JKSY, MOPW, MP22, FL24}.

\begin{defn}\label{defn:higher singularities lci}
Let $m\ge 0$ be an integer and let $X$ be a variety with lci singularities.
\begin{enumerate}
\item $X$ has \emph{$m$-Du Bois singularities} if the natural morphisms $\Omega^p_X \to \DB^p_X$ are isomorphisms for all $0\le p\le m$.
\item $X$ has \emph{$m$-rational singularities} if the morphisms $\Omega^p_X \to \DD_X(\DB^{n-p}_X)$  are isomorphisms for all $0\le p\le m$, where $\DD_X (\,\cdot\,) : = R\Hom (\, \cdot \, , \omega_X)$ denotes the (shifted) Grothendieck dual.
\item We say $X$ is \emph{$m$-liminal} if $X$ is $m$-Du Bois but not $m$-rational.
\end{enumerate}
\end{defn}

When $X$ has hypersurface singularities, one can define another numerical invariant -- the \emph{minimal exponent} -- for $X$. Introduced by \cite{Saito93} through the Bernstein-Sato polynomial of a local defining equation $f$, this invariant provides a precise criterion for higher Du Bois and higher rational singularities. We briefly explain this.

For a non-invertible regular function $f$ on a germ of a smooth complex variety $Y$ at $y\in Y$, there exists a nonzero polynomial $b(s)\in \C[s]$ and a differential operator $P(s)\in \D_Y[s]$ such that
$$
P(s)f^{s+1}=b(s)f^s
$$
formally in $\O_Y[\frac{1}{f},s]\cdot f^s$, where $\D_Y$ is the ring of differential operators on $Y$. The set of all polynomials $b(s)$ satisfying this equation is an ideal of the polynomial ring $\C[s]$; its monic generator is the \emph{Bernstein-Sato polynomial} of $f$, denoted $b_f(s)$. It is easy to see from the construction that $b_f(-1)=0$. The \emph{minimal exponent} $\wt \alpha_{y}(f)$ is defined to be the negative of the greatest root of the reduced Bernstein-Sato polynomial $\wt b_f(s):=b_f(s)/(s+1)$. Note that $\left\{f=0\right\}$ is smooth if and only if $\wt b_f(s)=1$, in which case $\wt \alpha_y(f)=+\infty$ by convention. By Kashiwara's rationality theorem \cite{Kashiwara76}, $\wt \alpha_{y}(f)$ is a positive rational number.

For a variety $X$ with hypersurface singularities, the local minimal exponent at $x\in X$ is defined by
$$
\wt\alpha_x(X):=\wt\alpha_x(f_x),
$$
where $f_x$ is a local defining equation for $X$ in a smooth ambient variety near the point $x$. This invariant is a well-defined positive rational number: $\wt\alpha_{x}(f_x)$ is a positive rational number, independent of the choice of the embedding (see e.g. \cite[Proposition 4.14]{CDMO24}).

The global minimal exponent of $X$ is defined by
$$
\wt \alpha(X):=\min_{x\in X} \wt\alpha_x(X),
$$
which is again a positive rational number; it is well known that the minimum is attained at some point $x\in X$. When $X$ is a divisor in a smooth variety $Y$, the log canonical threshold satisfies
$$
\lct(Y,X)=\min\left\{\wt\alpha(X),1\right\}.
$$
This implies that $X$ has Du Bois singularities if and only if $\wt \alpha(X)\ge 1$. Furthermore, Saito \cite{Saito93} proved that $X$ has rational singularities if and only if $\wt \alpha(X)> 1$. These criteria were generalized for higher Du Bois and higher rational singularities:

\begin{thm}
\label{thm:higher singularities vs minimal exponent}
Let $X$ be a variety with hypersurface singularities. Then

\noindent 
(1) $X$ has $m$-Du Bois singularities if and only if $\wt \alpha(X)\ge m+1$.

\noindent 
(2) $X$ has $m$-rational singularities if and only if $\wt \alpha(X)> m+1$.
\end{thm}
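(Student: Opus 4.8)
The plan is to reduce both statements to a computation of the Hodge filtration on the localization mixed Hodge module and then to invoke Saito's control of that filtration by the minimal exponent. Since ``$m$-Du Bois'' and ``$m$-rational'' are local conditions and $\wt\alpha(X)$ is a minimum of local invariants independent of the chosen embedding, it suffices to work near a point and assume $X=\{f=0\}$ is a reduced hypersurface in a smooth $Y$ with $\dim Y=\dim X+1=:n+1$, the minimal exponent being computed from $f$. Writing $j\colon U=Y\sm X\hookrightarrow Y$, I would focus on the localization $\O_Y(*X)=j_*\O_U$, which underlies the mixed Hodge module $j_*\Q^H_U[n+1]$ with Hodge filtration $F_\bullet\O_Y(*X)$. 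The pole-order filtration $F^{\mathrm{ord}}_k\O_Y(*X)=\O_Y((k+1)X)$ always contains $F_k$, and the discrepancy is recorded by the Hodge ideals $I_k(X)$ via $F_k\O_Y(*X)=I_k(X)\tensor\O_Y((k+1)X)$.

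First I would translate the defining comparison morphisms into $\D$-module language. Using Saito's identification $\DB^p_X\isom\gr^F_{-p}\DR(\Q^H_X[\dim X])[p-\dim X]$ together with the localization triangle in $D^b{\rm MHM}(Y)$ relating $\Q^H_Y[n+1]$, $j_*\Q^H_U[n+1]$, and the constant Hodge module supported on $X$, I would apply $\gr^F_\bullet\DR$ to obtain a triangle whose terms are the de Rham complex of $Y$, the Hodge-graded pieces of $\O_Y(*X)$, and the Du Bois complexes $\DB^\bullet_X$. Tracking the associated graded shows that the natural maps $\Omega^p_X\to\DB^p_X$ are isomorphisms for all $p\le m$ precisely when the Hodge ideals $I_k(X)$ are trivial for all $k\le m$, i.e. when $F_k\O_Y(*X)=F^{\mathrm{ord}}_k\O_Y(*X)$ in that range. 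At this point I would invoke Saito's minimal-exponent criterion (as refined by Mustață--Popa): the Hodge filtration agrees with the pole-order filtration up to level $m$ if and only if $\wt\alpha(X)\ge m+1$. Chaining these equivalences yields part (1).

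For part (2), the plan is to dualize. By \propositionref{prop:duality} and the polarization of the intersection complex, the Grothendieck duals $\DD_X(\DB^{n-p}_X)$ appearing in the definition of $m$-rational are the Hodge-graded de Rham pieces of $\dual\big(\Q^H_X[\dim X]\big)$; thus the $m$-rational condition measures exactly whether $\Q^H_X[\dim X]$ agrees with its own dual up to Hodge level $m$, which again is governed by the Hodge filtration on the (dual) localization. The essential difference lies at the critical level: at $\wt\alpha(X)=m+1$ the top graded piece $\gr^F_{m}$ still carries a nonzero residue class — this is precisely the obstruction recording $\DB^m_X\neq I\DB^m_X$, the $m$-liminal phenomenon — so the comparison at level $m$ is an isomorphism only when the inequality is strict, $\wt\alpha(X)>m+1$. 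Equivalently, one can deduce (2) from (1) through the duality symmetry exchanging the Du Bois and rational conditions with a unit shift of the threshold.

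The main obstacle I expect is the boundary case $\wt\alpha(X)=m+1$, where the distinction between the closed condition in (1) and the open condition in (2) is decided. Controlling it requires Saito's precise description of the jump of the Hodge filtration at an integral value of $\alpha$, and, crucially, verifying that the relevant morphisms are quasi-isomorphisms on every cohomology sheaf simultaneously rather than merely generically. The nonvanishing of the single critical graded piece $\gr^F_m$ at the threshold is what separates $m$-liminal from $m$-rational, and pinning it down is the delicate heart of the argument.
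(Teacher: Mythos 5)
First, a point of comparison: the paper does not prove this theorem at all. It is recorded as background, with (1) attributed to \cite{JKSY} and \cite{MOPW} and (2) to the appendix of \cite{FL24} and to \cite{MP25} (with lci generalizations in \cite{CDM24, CDMO24}). So there is no in-paper argument to match your proposal against; your sketch has to stand on its own as a proof of a known, substantial theorem. For part (1), your chain --- $m$-Du Bois $\Leftrightarrow$ $I_k(X)=\O_Y$ for $k\le m$ $\Leftrightarrow$ $\wt\alpha(X)\ge m+1$ --- is exactly the route of \cite{MOPW}, and it is correct in substance; but note that both links are themselves the cited theorems. The step you describe as ``tracking the associated graded'' of the localization triangle is the nontrivial bookkeeping that constitutes the proof in \cite{MOPW} (one needs strictness and the precise comparison between $\gr^F_\bullet\DR$ of $j_*\Q^H_U[n+1]$ and the complexes $\DB^p_X$, not just the existence of the triangle), and the equivalence of $F_k=F^{\mathrm{ord}}_k$ up to level $m$ with $\wt\alpha(X)\ge m+1$ is the Musta\c t\u a--Popa theorem, not a formal consequence of Saito's framework. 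As an assembly of known results, (1) is fine; as an independent proof, it is circular at its key step.

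Part (2) contains the genuine gap. There is no ``duality symmetry exchanging the Du Bois and rational conditions with a unit shift of the threshold'': what duality (via \propositionref{prop:duality} and the polarization of $\IC^H_X$) gives is the identification of $\DD_X(\DB^{n-p}_X)$ with graded de Rham pieces of $\dual(\Q^H_X[\dim X])$, and from $m$-Du Bois one gets only $(m-1)$-rational by formal means. The entire content of (2) is the boundary case $\wt\alpha(X)=m+1$, where one must prove the \emph{nonvanishing} of the obstruction $\gr^F_{-m}\DR(\K^\bullet_X)$ (equivalently $\DB^m_X\neq I\DB^m_X$): your proposal asserts that ``the top graded piece $\gr^F_m$ still carries a nonzero residue class,'' but this assertion \emph{is} the theorem --- the $m$-liminal phenomenon --- and no argument for it is supplied. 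In the literature this is exactly where the new input enters: either the $V$-filtration/vanishing-cycle analysis of \cite{MP25}, or the characterization of \cite{CDM24} that $m$-rational is $m$-Du Bois plus condition $D_m$, combined with a computation showing $D_m$ fails precisely at the integral threshold. You correctly identify this as ``the delicate heart of the argument,'' but you leave it unproven, so the proposal does not establish part (2).
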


Note that (1) was proved in \cite[Theorem 1]{JKSY} and \cite[Theorem 1.1]{MOPW} and $(2)$ was proved in \cite[Appendix]{FL24} and \cite[Theorem E]{MP25} (see \cite{CDM24, CDMO24} for the generalization of this result to lci singularities). For our later use, we record here the Thom-Sebastiani theorem for minimal exponents:

\begin{thm}[{\cite[Theorem 0.8]{Saito94}}]
\label{thm:Thom-Sebastiani for minimal exponents}
Let $Y_1$ and $Y_2$ be smooth varieties and $f_1\in \O_{Y_1}(Y_1)$ and $f_2\in \O_{Y_2}(Y_2)$ be nonzero regular functions. For points $y_1\in Y_1$ and $y_2\in Y_2$, assume $f_1(y_1)=0$ and $f_2(y_2)=0$. Then
$$
\wt\alpha_{(y_1,y_2)}(f_1\oplus f_2)=\wt\alpha_{y_1}(f_1)+\wt\alpha_{y_2}(f_2)
$$
where $f_1\oplus f_2\in \O_{Y_1\times Y_2}(Y_1\times Y_2)$ and $(y_1,y_2)\in Y_1\times Y_2$.
\end{thm}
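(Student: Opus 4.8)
The plan is to prove the additivity by passing from the Bernstein–Sato polynomial to the Malgrange–Kashiwara $V$-filtration, where the Thom–Sebastiani operation becomes transparent. One cannot argue directly with the functional equations $P_i(s)f_i^{s+1}=b_{f_i}(s)f_i^s$, since the join $f_1\oplus f_2$ does not factor through $f_1$ and $f_2$ — it genuinely mixes the two sets of variables — so a microlocal/convolution argument is needed instead.

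First I would set up the graph-embedding $\D$-modules. For each $i$, let $\iota_i\colon Y_i\hookrightarrow Y_i\times\C_{t}$ be the graph of $f_i$, let $B_{f_i}:=(\iota_i)_+\O_{Y_i}$, and let $\delta_i$ be its canonical generator; equip $B_{f_i}$ with its $V$-filtration along $t=0$, on which $s=-\partial_t t$ acts with $(s+\gamma)$ nilpotent on $\gr_V^\gamma$, together with the microlocal refinement $\wt V^\bullet$ obtained from $B_{f_i}[\partial_t^{-1}]$. Recall that $\wt\alpha_{y_i}(f_i)$ is the largest $\gamma$ with $\delta_i\in \wt V^\gamma$, the microlocalization being exactly what removes the unipotent factor $(s+1)$ and passes from $b_{f_i}$ to the reduced $\wt b_{f_i}$. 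I would then record the Thom–Sebastiani compatibility at the level of these modules: the graph module $B_{f_1\oplus f_2}$ on $Y_1\times Y_2\times\C_t$ is obtained from $B_{f_1}\boxtimes B_{f_2}$ by pushing forward along the addition map $\C_{t_1}\times\C_{t_2}\to\C_t$, and this operation carries the two microlocal $V$-filtrations to their additive convolution,
\[
\wt V^\gamma\bigl(B_{f_1\oplus f_2}\bigr)=\sum_{\gamma_1+\gamma_2=\gamma}\wt V^{\gamma_1}B_{f_1}\boxtimes \wt V^{\gamma_2}B_{f_2}.
\]
On the topological side this is the classical Thom–Sebastiani isomorphism $\psi_{f_1\oplus f_2}\isom\psi_{f_1}\boxtimes\psi_{f_2}$, compatible with monodromy, refined to keep track of the $V$-grading.

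Granting this compatibility, the conclusion is formal. The largest $\gamma$ with $\delta_1\boxtimes\delta_2\in \wt V^\gamma\bigl(B_{f_1\oplus f_2}\bigr)$ equals $\gamma_1+\gamma_2$, where $\gamma_i$ is the corresponding largest index for $B_{f_i}$, because under a convolution the minimum of the combined support is the sum of the two minima. Translating back through the characterization of the previous paragraph yields $\wt\alpha_{(y_1,y_2)}(f_1\oplus f_2)=\wt\alpha_{y_1}(f_1)+\wt\alpha_{y_2}(f_2)$. The one point demanding care is the passage between $b_f$ and its reduced quotient $\wt b_f$: the root $-1$ must be excised consistently on all three sides, which is precisely what the microlocalization $B_f[\partial_t^{-1}]$ accomplishes, so that the minimal exponent is genuinely additive even though the full lists of roots of $b_{f_1}$, $b_{f_2}$, $b_{f_1\oplus f_2}$ are not related by a simple sum.

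I expect the main obstacle to be establishing the filtered Thom–Sebastiani compatibility precisely — that the addition-map pushforward of $B_{f_1}\boxtimes B_{f_2}$ sends the product microlocal $V$-filtrations to their convolution, with the minimum attained and the reduced normalization preserved — in the general, possibly non-isolated, singularity setting. For isolated singularities this is the classical additivity of the singularity spectrum (Varchenko, Scherk–Steenbrink, Saito), and $\wt\alpha_f$ is then simply the smallest spectral number; the content of the general statement is to run the same convolution argument with the microlocal $V$-filtration in place of the spectrum, which is exactly where Saito's microlocal $b$-function machinery enters.
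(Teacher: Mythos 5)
The paper offers no proof of this statement---it is imported wholesale from \cite[Theorem 0.8]{Saito94}---and your proposal is essentially a faithful reconstruction of Saito's argument there: characterize $\wt\alpha_f$ via the microlocal $V$-filtration on $B_f[\partial_t^{-1}]$, identify $\wt B_{f_1\oplus f_2}$ with the external product of $\wt B_{f_1}$ and $\wt B_{f_2}$ pushed forward along the addition map, and read off additivity from the convolution of the filtrations. So your route is correct and coincides with the one the paper relies on.
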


See also \cite[Example 6.7]{MP20} for an alternative explanation.

More recently, the notions of higher Du Bois and higher rational singularities were generalized for arbitrary varieties, not necessarily with local complete intersection singularities. We record the following definition from \cite[Definitions 1.2 and 1.3]{SVV23}. When $X$ has lci singularities, these notions agree with Definition \ref{defn:higher singularities lci}; see \cite[Propositions 5.5 and 5.6]{SVV23} for more details.

\begin{defn}
\label{defn:higher singularities general}
Let $X$ be a variety. We say that $X$ has \textit{$m$-Du Bois singularities} if it is seminormal, and 
\begin{enumerate}
	\item $\codim_X\; \Sing(X) \ge 2m+1$;
	\item $\mathcal H^{>0}(\DB_X^p)=0$ for all $0\le p \le m$;
	\item $\mathcal H^0(\DB_X^p)$ is reflexive, for all $0\le p \le m$.
\end{enumerate}
We say that $X$ has \textit{$m$-rational singularities} if it is normal, and
\begin{enumerate}
	\item $\codim_X\;\Sing(X) > 2m+1$;
	\item $\mathcal H^{>0}(\DD_X(\DB_X^{n-p}))=0$ for all $0\le p \le m$.
\end{enumerate}
\end{defn}

Condition (2) of $m$-Du Bois (resp. $m$-rational) singularities is referred to as pre-$m$-Du Bois (resp. pre-$m$-rational) singularities. For a normal variety, pre-$m$-rational singularities are equivalent to pre-$m$-Du Bois singularities with $D_m$, which follows from \cite[Theorem B]{SVV23}, \cite[Proposition 9.4]{PSV25}, and \cite[Remark 5.2]{DOR25}:

\begin{prop}
\label{prop:pre-m-rational is pre-m-Du Bois with D_m}
Let $X$ be a normal variety. Then the following are equivalent:

\noindent
(1) $\mathcal H^{>0}(\DD_X(\DB_X^{n-p}))=0$ for all $0\le p \le m$.

\noindent
(2) $\mathcal H^{>0}(\DB_X^p)=0$ for all $0\le p \le m$, and $X$ satisfies condition $D_m$.
\end{prop}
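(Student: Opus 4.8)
The plan is to realize all three complexes in the statement as graded de Rham complexes of mixed Hodge modules, so that the equivalence can be read off from the duality of Proposition~\ref{prop:duality}. We may assume $X$ irreducible of dimension $n$. Recall that $\DB_X^p=\gr^F_{-p}\DR(\Q_X^H[n])[p-n]$ and $I\DB_X^p=\gr^F_{-p}\DR(\IC_X^H)[p-n]$, and that $\gamma_p$ is the image of $\gamma_X\colon\Q_X^H[n]\to\IC_X^H$ under $\gr^F_{-p}\DR(-)[p-n]$. Using the polarization $\dual\IC_X^H\isom\IC_X^H(n)$, the Saito--Verdier dual of $\gamma_X$ produces a canonical factorization
$$
\Q_X^H[n]\xrightarrow{\ \gamma_X\ }\IC_X^H\xrightarrow{\ \gamma_X^\vee\ }\dual(\Q_X^H[n])(-n),\qquad \gamma_X^\vee=\dual(\gamma_X)(-n),
$$
whose composite is the canonical self-duality morphism of $\Q_X^H[n]$. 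Applying Proposition~\ref{prop:duality} to $\M=\Q_X^H[n]$ identifies $\gr^F_{-p}\DR\big(\dual(\Q_X^H[n])(-n)\big)[p-n]$ with $\DD_X(\DB_X^{n-p})$ and, via the self-duality $I\DB_X^p\isom\DD_X(I\DB_X^{n-p})$ recorded after Proposition~\ref{prop:duality}, identifies the image $\gamma_p'$ of $\gamma_X^\vee$ with $\DD_X(\gamma_{n-p})$. Thus $\gr^F_{-p}\DR(-)[p-n]$ turns the factorization into
$$
\DB_X^p\xrightarrow{\ \gamma_p\ }I\DB_X^p\xrightarrow{\ \gamma_p'\ }\DD_X(\DB_X^{n-p}),\qquad \mathrm{cone}(\gamma_p')\isom\DD_X\!\big(\mathrm{cone}(\gamma_{n-p})\big)[1].
$$

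The technical engine is the structure of the intersection Du Bois complexes, which I would extract from \cite[Theorem B]{SVV23} and \cite[Proposition 9.4]{PSV25}: namely $\mathcal H^{>0}(I\DB_X^p)=0$ for every $p$. Combined with the self-duality $I\DB_X^p\isom\DD_X(I\DB_X^{n-p})$ this forces each $I\DB_X^p$ to be a single coherent sheaf placed in degree $0$, and for normal $X$ one has the base-case facts that $I\DB_X^0\isom\O_X$ and that $\gamma_0$ is an isomorphism precisely when $X$ is Du Bois. Feeding the dual of the triangle $\DB_X^{n-p}\to I\DB_X^{n-p}\to\mathrm{cone}(\gamma_{n-p})$ into the display above yields the working triangle
$$
\DD_X\!\big(\mathrm{cone}(\gamma_{n-p})\big)\longrightarrow I\DB_X^p\longrightarrow\DD_X(\DB_X^{n-p})\xrightarrow{\ +1\ },
$$
whose middle term has no positive cohomology. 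This triangle is the bridge between the two conditions.

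For $(2)\Rightarrow(1)$ I would induct on $0\le p\le m$: condition $D_m$, which by \cite[Remark 5.2]{DOR25} is the statement that $\gamma_q$ is an isomorphism for $q\le m$, gives $\DB_X^p\isom I\DB_X^p$ and hence $\mathcal H^{>0}(\DB_X^p)=0$ in range, while the long exact sequence of the working triangle shows that the obstruction to $\mathcal H^{>0}\DD_X(\DB_X^{n-p})=0$ is exactly $\mathcal H^{>0}\DD_X(\mathrm{cone}(\gamma_{n-p}))$, which the cited duality controls. For $(1)\Rightarrow(2)$ I would run the triangle in reverse: $\mathcal H^{>0}\DD_X(\DB_X^{n-p})=0$ together with $\mathcal H^{>0}(I\DB_X^p)=0$ forces $\mathrm{cone}(\gamma_p')$ to vanish in positive degrees, and biduality applied to $\gamma_p'=\DD_X(\gamma_{n-p})$ propagates this to the vanishing of $\mathrm{cone}(\gamma_p)$ and of $\mathcal H^{>0}(\DB_X^p)$, anchored at the normal base case $\gamma_0$.

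The main obstacle is the bookkeeping mismatch: the data controlled by $D_m$ and pre-$m$-Du Bois live at the bottom indices $p\le m$, whereas $\DD_X(\DB_X^{n-p})$ is built from the a priori top-range cones $\mathrm{cone}(\gamma_{n-p})$. Reconciling the two ends is exactly where the self-duality $I\DB_X^p\isom\DD_X(I\DB_X^{n-p})$ and the concentration $\mathcal H^{>0}(I\DB_X^p)=0$ are indispensable, and where \cite[Theorem B]{SVV23} and \cite[Proposition 9.4]{PSV25} do the essential work; normality of $X$ is what makes the induction start, since it guarantees $I\DB_X^0\isom\O_X$ and pins down $\gamma_0$.
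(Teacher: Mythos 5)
Your duality scaffolding in the first paragraph is sound and is the right frame: the factorization $\Q^H_X[n]\to\IC^H_X\to\dual(\Q^H_X[n])(-n)$, the identification of $\DD_X(\DB^{n-p}_X)$ with the graded de Rham complex of the twisted dual via Proposition \ref{prop:duality}, and $\gamma_p'=\DD_X(\gamma_{n-p})$ are all correct. But the proof collapses at its declared technical engine: the unconditional vanishing $\mathcal H^{>0}(I\DB^p_X)=0$ for \emph{every} $p$ on every normal variety is false, and so is the consequent claim that each $I\DB^p_X$ is a sheaf in degree $0$. Take $X$ the affine cone over an elliptic curve $E\subset\P^2$ and $\mu:\wt X\to X$ the blow-up of the vertex. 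Since $\mu$ is semismall, $\mu_*\Q^H_{\wt X}[2]\isom\IC^H_X\oplus\Q^H_{\{0\}}(-1)$, and $\gr^F_0\DR\left(\Q^H_{\{0\}}(-1)\right)=0$, whence $I\DB^0_X\isom R\mu_*\O_{\wt X}$ and $\mathcal H^1(I\DB^0_X)\isom H^1(E,\O_E)\neq 0$, a skyscraper at the vertex. The same example kills both of your ``base-case facts'': for normal $X$ only $\mathcal H^0(I\DB^0_X)\isom\O_X$ holds, not $I\DB^0_X\isom\O_X$; and this $X$ is Du Bois (it is log canonical) while $\gamma_0$ is \emph{not} an isomorphism, so ``$\gamma_0$ iso $\Leftrightarrow$ Du Bois'' is wrong --- condition $D_0$ is strictly stronger than Du Bois, which is precisely the distinction the proposition encodes. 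Since the engine appears as the middle term of your working triangle and is invoked at every step of both inductions, neither direction of your argument survives as written.

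What is true, and what the cited references actually supply, is range-restricted: under either hypothesis one gets $\mathcal H^{>0}(I\DB^p_X)=0$ only for $0\le p\le m$ (under (2) this is immediate from $D_m$ together with pre-$m$-Du Bois; under (1) it is part of the content of \cite[Theorem B]{SVV23} and \cite[Proposition 9.4]{PSV25}). The genuine difficulty --- which your final paragraph correctly names as the bookkeeping mismatch, but then resolves with the false engine --- is the third term of your working triangle: $\DD_X\bigl(\mathrm{cone}(\gamma_{n-p})\bigr)$ is, up to shift, $\gr^F_{n-p}\DR(\dual\K^\bullet_X)$, which lives at the top filtration indices $n-p\ge n-m$, where neither $D_m$ nor pre-$m$-Du Bois says anything formally (they control indices $\le m$); moreover, for a general normal variety $\K^\bullet_X$ is a genuine complex of mixed Hodge modules, so no degree bound makes these terms vanish for free. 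Bridging the bottom range to the top range is exactly the Hodge-theoretic content of the cited results, and indeed the paper does not reprove the proposition in-house: it derives it directly from \cite[Theorem B]{SVV23}, \cite[Proposition 9.4]{PSV25}, and \cite[Remark 5.2]{DOR25}. To repair your write-up you would need to replace the unconditional concentration claim by the precise, hypothesis- and range-restricted statements of those references, and then run your (otherwise correct) duality formalism only within that range.
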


\subsection{The RHM defect objects and liminal sources}\label{sec:RHM defect}
\label{subsec:RHM defect objects and liminal sources}

For a variety $X$ with hypersurface singularities, the difference between $m$-Du Bois and $m$-rational singularities is encoded in the RHM-defect object, defined and studied in \cite{PP25a}. This object is important both for understanding the Hodge structure of $m$-liminal varieties and for analyzing limit mixed Hodge structures of one-parameter degenerations.

\begin{defn}[{\cite[Definition 6.1]{PP25a}}]
\label{defn:RHM defect object}
The \emph{RHM-defect object} of an equidimensional variety $X$ of dimension $n$ is the object $\K_X^\bullet\in D^b{\rm MHM}(X)$ sitting in the distinguished triangle:
\begin{equation}
\label{eqn:Q to IC triangle}
\K_X^\bullet \longrightarrow \Q_X^H[\dim X]\xrightarrow{\gamma_X}\IC_X^H\xrightarrow{+1}.    
\end{equation}
\end{defn}

By definition, condition $D_m$ is equivalent to the vanishing $\gr^F_{-p}\DR(\K^\bullet_X)=0$ for $0\le p\le m$.

When $X$ has hypersurface singularities, the sheaf $\Q_X[\dim X]$ is perverse and the RHM-defect object $\K^\bullet_X$ is a single mixed Hodge module. In this situation, it is proven in \cite[Theorem 3.1]{CDM24} that $X$ is $m$-rational if and only if it is $m$-Du Bois and satisfies condition $D_m$. Additionally, $m$-Du Bois implies $(m-1)$-rational by Theorem \ref{thm:higher singularities vs minimal exponent}, hence condition $D_{m-1}$. In summary,
\begin{center}
$X$ is $m$-rational $\Longleftrightarrow$ $X$ is $m$-Du Bois and $\gr^F_{-m}\DR(\K^\bullet_X)= 0$.
\end{center}
We use this characterization to present an equivalent definition of $m$-liminal source (equivalent to the one in the introduction).

\begin{defn}
\label{defn:liminal sources and centers}
Let $X$ be a variety with $m$-Du Bois hypersurface singularities. A pure Hodge module $\M$ is an \emph{$m$-liminal source} of $X$ if either $\M=\IC_X^H$ or $\M$ is a simple subquotient of $\K_X^\bullet$ such that
\begin{equation}
\label{eqn:nonvanishing grDR-m}
\gr^F_{-m}\DR(\M)\neq 0.    
\end{equation}
An \emph{$m$-liminal center} is the strict support $\Supp(\M)\subset X$ of an $m$-liminal source $\M$.
\end{defn}

Note that when $X$ has $m$-rational singularities, we have $\gr^F_{-m}\DR(\K^\bullet_X)=0$ and $\IC^H_X$ is the only $m$-liminal source, with $X$ as the only $m$-liminal center. Since $\K_X^\bullet$ is of weight $\le \dim X-1$ by \cite[Proposition 6.4]{PP25a}, an $m$-liminal source $\M$ is of weight $\le \dim X-1$ if $\M\neq \IC^H_X$.

By the structure theorem \cite[Theorem 3.21]{Saito90} of pure Hodge modules, a simple Hodge module $\M$ is the minimal extension of an irreducible polarizable variation of Hodge structure $(\mathbb V, F^\bullet)$ on a smooth open subvariety of an irreducible subvariety $Z\subset X$. Following the convention for the minimal extension, we denote by
$$
\M=\IC^H_Z(\mathbb V).
$$
If $\M$ is an $m$-liminal source of an $m$-Du Bois variety $X$ not $\IC_X^H$, then \eqref{eqn:nonvanishing grDR-m} is equivalent to $F^m\mathbb V_\C\neq F^{m+1}\mathbb V_\C$ (equivalently, $\gr_F^m\mathbb V_\C\neq 0$). Therefore, this definition of $m$-liminal source agrees with the one in the introduction; details follow in the next paragraph.

Indeed, over the locus where $(\mathbb V,F^\bullet)$ is a variation of Hodge structure, from the definition of the graded de Rham functor, we have
$$
\gr^F_{-m}\DR(\M)\isom\gr_F^{m}\mathcal V[\dim Z]
$$
where $\mathcal V$ is a vector bundle with flat connection associated to $\mathbb V$. Hence,
\begin{equation}
\label{eqn:Hodge piece nonvanishing for VHS}
F^m\mathbb V_\C\neq F^{m+1}\mathbb V_\C\Longleftrightarrow \gr^F_{-m}\DR(\M)\neq 0     
\end{equation}
on an open set of $Z$. By dualizing, \eqref{eqn:nonvanishing grDR-m} is equivalent to 
$$
F_m\dual\M:=\gr^F_{m}\DR(\dual\M)\neq 0,
$$
that is, the index of the first nonzero Hodge filtration of $\dual \M$ is $m$ (as a right D-module). The first nonzero Hodge filtration is a torsion-free $\O_Z$-module from Saito's theory, so \eqref{eqn:nonvanishing grDR-m} on the open set implies the same everywhere.

\smallskip
\noindent
\textbf{Log rational pairs.} As names suggest, $m$-liminal sources and $m$-liminal centers satisfy analogous properties of sources and log canonical centers. For instance, any union of $m$-liminal centers has Du Bois singularities. We recall a key notion used in \cite{Park23} to give an alternative proof of the theorem of Kollár and Kovács \cite{KK10}, that a union of log canonical centers is Du Bois.

\begin{defn}
\label{defn:log rational pair}
Let $X$ be a variety and $Z\subset X$ a reduced closed subscheme. We call $(X,Z)$ a \emph{log rational pair} if
\begin{enumerate}
\item the natural morphism $\I_{X,Z}\to \DB^0_{X,Z}$ is a quasi-isomorphism, where $\I_{X,Z}$ is the ideal sheaf of $Z$ in $X$; and
\item the open complement $X\sm Z$ has rational singularities.
\end{enumerate}
\end{defn}

Note that the Du Bois complex $\DB^0_{X,Z}$ of a pair $(X,Z)$ is an object in $ {\rm coh}(X, \O_X)$, sitting in a distinguished triangle:
$$
\DB^0_{X,Z}\rightarrow\DB^0_X\xrightarrow{\rho}\DB^0_Z\xrightarrow{+1},
$$
hence admits a natural morphism $\I_{X,Z}\to \DB^0_{X,Z}$. If condition (2) is omitted, $(X,Z)$ is a \emph{Du Bois pair} in the sense of \cite[Definition 3.13]{Kovacs11}.

Combining Kovács' criteria \cite[Theorem 1]{Kovacs00} for rational singularities and \cite[Theorem 5.4]{Kovacs11} for Du Bois pairs, we obtain a criterion for a log rational pair (see \cite[Corollary 1.10]{Park23}): 

\begin{prop}
\label{prop:criterion for log rational pair}
Let $(X',Z')$ be a log rational pair and $\mu:X'\to X$ a proper morphism with $\mu(Z')\subset Z$. Then, $(X,Z)$ is a log rational pair if there exists a left quasi-inverse of the natural morphism $\I_{X,Z}\to R\mu_*\I_{X',Z'}$.
\end{prop}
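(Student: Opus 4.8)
The plan is to verify the two defining conditions of \definitionref{defn:log rational pair} for $(X,Z)$ separately, using the given splitting as the input to two criteria of Kovács. Write $\varphi:\I_{X,Z}\to R\mu_*\I_{X',Z'}$ for the natural morphism and $\psi:R\mu_*\I_{X',Z'}\to \I_{X,Z}$ for the postulated left quasi-inverse, so that $\psi\circ\varphi=\mathrm{id}_{\I_{X,Z}}$ in $D^b_{\rm coh}(X,\O_X)$.

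\emph{Condition (1).} First I would show that $(X,Z)$ is a Du Bois pair, i.e. that $\I_{X,Z}\to\DB^0_{X,Z}$ is a quasi-isomorphism. Since $(X',Z')$ is a log rational pair it is in particular a Du Bois pair, so $\I_{X',Z'}\isom\DB^0_{X',Z'}$. Kovács' splitting criterion for Du Bois pairs \cite[Theorem 5.4]{Kovacs11} asserts precisely that if $(X',Z')$ is a Du Bois pair, $\mu$ is proper with $\mu(Z')\subset Z$, and $\varphi$ admits a left quasi-inverse, then $(X,Z)$ is a Du Bois pair. Applying it with the given $\psi$ yields condition (1) directly.

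\emph{Condition (2).} Next I would prove that $U:=X\sm Z$ has rational singularities by restricting everything to this open locus and invoking \cite[Theorem 1]{Kovacs00}. Let $j:U\hookrightarrow X$ be the open immersion, set $U':=\mu^{-1}(U)$ with open immersion $j':U'\hookrightarrow X'$, and let $\mu':=\mu|_{U'}:U'\to U$. Because $\mu(Z')\subset Z$ forces $Z'\cap U'=\varnothing$, we have $\I_{X',Z'}|_{U'}=\O_{U'}$, and likewise $\I_{X,Z}|_U=\O_U$. Flat base change along the Cartesian square formed by $j,\mu,j',\mu'$ gives $j^*R\mu_*\I_{X',Z'}\isom R\mu'_*\O_{U'}$; under this identification $j^*\varphi$ becomes the natural morphism $\O_U\to R\mu'_*\O_{U'}$ and $j^*\psi$ becomes a left quasi-inverse of it. Since $U'$ is an open subset of $X'\sm Z'$, which has rational singularities, $U'$ has rational singularities, and $\mu'$ is proper; hence \cite[Theorem 1]{Kovacs00} forces $U=X\sm Z$ to have rational singularities, which is condition (2).

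The main obstacle—indeed the only nonformal point—is checking that the left quasi-inverse survives restriction to $U$, i.e. that $j^*\psi$ genuinely splits $\O_U\to R\mu'_*\O_{U'}$. This reduces to the open-immersion base-change isomorphism $j^*R\mu_*\isom R\mu'_*(j')^*$ together with the identifications $\I_{X,Z}|_U=\O_U$ and $\I_{X',Z'}|_{U'}=\O_{U'}$, both of which follow from $Z'\subset\mu^{-1}(Z)$. Once these are in place, no further normality or Cohen–Macaulay hypotheses need to be verified by hand, as they are subsumed into the two cited criteria; the two conditions of \definitionref{defn:log rational pair} then hold and $(X,Z)$ is a log rational pair.
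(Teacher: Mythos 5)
Your proof is correct and is essentially the paper's own argument: the proposition is stated there as a direct combination of Kovács' splitting criterion for Du Bois pairs \cite[Theorem 5.4]{Kovacs11} (your condition (1)) and Kovács' criterion for rational singularities \cite[Theorem 1]{Kovacs00} applied after restricting to $X\sm Z$ via open base change (your condition (2)), exactly as in \cite[Corollary 1.10]{Park23}. The only cosmetic point is that the hypothesis gives $\psi\circ\varphi$ equal to a quasi-isomorphism rather than the identity, but replacing $\psi$ by $(\psi\circ\varphi)^{-1}\circ\psi$ reduces to your situation, so nothing is lost.
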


Here, a left quasi-inverse refers to a morphism $R\mu_*\I_{X',Z'}\to \I_{X,Z}$ such that the composition is a quasi-isomorphism of $\I_{X,Z}$ to itself.

\subsection{The Hilbert-Mumford criterion and minimal exponent of affine cone}

The Hilbert-Mumford criterion provides a standard method to check the GIT (semi)stability of a hypersurface $X\subset \P^n$. We briefly review this criterion in a form that is compatible with a particular bound for the minimal exponent.

\begin{defn}
Let $w=(w_0,...,w_n)\in \Q^{n+1}$ be the rational weight system. For a nonzero polynomial $f\in \C[x_0,\dots,x_n]$, the weight $\mathrm{wt}_w(f)$ of a polynomial $f$ is the minimum of $\sum_{i=0}^nw_ie_i$ for all monomials $x_0^{e_0}\cdots x_n^{e_n}$ appearing in $f$ with nonzero coefficients.
\end{defn}

By the diagonalizability of a one-parameter subgroup of the special linear group $SL(n+1)$, the Hilbert-Mumford criterion for the GIT stability of hypersurfaces can be stated as follows:

\begin{prop}[Hilbert-Mumford numerical criterion \cite{Mumford94}]
\label{prop:Hilbert-Mumford criterion}
Let $X\subset \P^n$ be a hypersurface defined by a degree $d$ homogeneous polynomial $f(x_0,\dots,x_n)=0$. Then $X$ is GIT stable (resp. semistable) if and only if for every nontrivial (rational) weight system $w=(w_0,\dots,w_n)$ and $g\in SL(n+1)$, we have $\mathrm{wt}_w(f\circ g)< \frac{d}{n+1}\sum_{i=0}^n w_i$ (resp. $\mathrm{wt}_w(f\circ g)\le \frac{d}{n+1}\sum_{i=0}^n w_i$).
\end{prop}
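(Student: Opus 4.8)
The plan is to specialize Mumford's general numerical criterion for GIT (semi)stability to the $SL(n+1)$-action on the space $V=H^0(\P^n,\O(d))$ of degree $d$ forms, and then to translate the abstract Hilbert-Mumford weight into the combinatorial function $\mathrm{wt}_w$. The hypersurface $X$ corresponds to a point $[f]\in\P(V)$ with its natural linearization, and Mumford's fundamental theorem asserts that $[f]$ is semistable (resp. stable) if and only if $\mu([f],\lambda)\ge 0$ (resp. $>0$) for every one-parameter subgroup (resp. every nontrivial one) $\lambda\colon\mathbb{G}_m\to SL(n+1)$. Here, for a diagonal $\lambda(t)=\mathrm{diag}(t^{a_0},\dots,t^{a_n})$ acting on the homogeneous coordinates by $x_i\mapsto t^{a_i}x_i$, a monomial $x^e=x_0^{e_0}\cdots x_n^{e_n}$ is an eigenvector of weight $\sum_i a_ie_i$; writing $f=\sum_e c_ex^e$ one has $\mu([f],\lambda)=-\min\{\sum_i a_ie_i : c_e\ne 0\}$, since whether $0$ lies in the orbit closure of a lift of $[f]$ is governed precisely by the sign of this minimum. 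I would invoke this theorem as a black box, as it is the only deep input.

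Second, I would reduce to diagonal one-parameter subgroups. Every $\lambda$ is conjugate into the diagonal maximal torus, and the weight transforms by $\mu([f],g\lambda_0 g^{-1})=\mu(g^{-1}\cdot[f],\lambda_0)=\mu([f\circ g],\lambda_0)$, where $g\in SL(n+1)$ acts on forms by precomposition. Thus it suffices to test all diagonal $\lambda_0$ against all translates $f\circ g$, which is exactly the quantification over $g\in SL(n+1)$ appearing in the statement.

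Third, I would carry out the combinatorial translation, in which the trace-zero constraint $\sum_i a_i=0$ of $SL$ becomes the subtraction of the average. Given an arbitrary rational weight $w=(w_0,\dots,w_n)$, set $a_i=w_i-\bar w$ with $\bar w=\frac{1}{n+1}\sum_i w_i$, so that $\sum_i a_i=0$ and conversely every trace-zero diagonal weight arises this way. Since $\sum_i e_i=d$ for every degree $d$ monomial, for each such $e$ one has $\sum_i a_ie_i=\sum_i w_ie_i-\bar w\,d$, and taking the minimum over the monomials of $f\circ g$ gives
\[
\min_e\textstyle\sum_i a_ie_i=\mathrm{wt}_w(f\circ g)-\frac{d}{n+1}\sum_i w_i.
\]
Therefore $\mu([f\circ g],\lambda_0)=-\bigl(\mathrm{wt}_w(f\circ g)-\frac{d}{n+1}\sum_i w_i\bigr)$, and the conditions $\mu\ge 0$ (resp. $>0$) are literally $\mathrm{wt}_w(f\circ g)\le\frac{d}{n+1}\sum_i w_i$ (resp. $<$), matching the statement; the trivial one-parameter subgroups correspond to the constant weights $w\propto(1,\dots,1)$, which are excluded by the word ``nontrivial.''

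Finally, I would reconcile integral and rational weights. Genuine one-parameter subgroups require $a_i\in\Z$, whereas the statement ranges over rational $w$; but $\mu(\cdot,\lambda^N)=N\mu(\cdot,\lambda)$ and both $\mathrm{wt}_w$ and $\frac{d}{n+1}\sum_i w_i$ are positively homogeneous of degree one in $w$, so clearing denominators (and rescaling the $a_i$ to integers summing to zero) shows the rational inequalities are equivalent to the integral ones defining actual one-parameter subgroups. The only genuinely delicate points are keeping a single sign convention consistent throughout — the direction of the final inequality pins down the convention $\mu=-\min$ together with semistability $\Leftrightarrow\mu\ge 0$ — and correctly identifying the average-subtraction $\frac{d}{n+1}\sum_i w_i$ as the incarnation of the $SL$ trace-zero condition via $\sum_i e_i=d$. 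Everything else is formal bookkeeping once Mumford's theorem is granted.
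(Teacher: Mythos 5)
Your proposal is correct, and it follows exactly the route the paper intends: the paper states this proposition as a direct consequence of Mumford's numerical criterion \cite{Mumford94}, remarking only that one reduces to diagonal one-parameter subgroups by diagonalizability, which is precisely your conjugation step, followed by the same average-subtraction translation of the trace-zero condition (using $\sum_i e_i = d$) and the homogeneity argument for rational versus integral weights. Your sign bookkeeping and the identification of trivial one-parameter subgroups with constant weight systems both match the paper's conventions, so there is nothing to correct.
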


Here, a nontrivial weight system refers to $w=(w_0,\dots,w_n)$ such that $w_i$ are not all equal. Note that for a homogeneous polynomial $f$ and a weight system $w'=w+\alpha:=(w_0+\alpha,\dots,w_n+\alpha)$, we have
$$
\mathrm{wt}_{w'}(f)=\mathrm{wt}_w(f)+d\alpha\quad\mathrm{and}\quad \frac{d}{n+1}\sum_{i=0}^n (w_i+\alpha)=\frac{d}{n+1}\sum_{i=0}^n w_i+d\alpha.
$$
This implies that the Hilbert-Mumford numerical criterion is sufficient to check for only nonnegative weight systems.

On the other hand, a weight system gives an upper bound for the minimal exponent, which refines \cite[Proposition 8.13]{Kollar97} for log canonical thresholds.

\begin{prop}[{\cite[Proposition 2.1]{CDM25}}]
\label{prop:weight-minimal exponent inequality}
Let $f\in \C[x_0,\dots,x_n]$ be a nonzero polynomial with $f(0)=0$, and let $w=(w_0,\dots,w_n)$ be a nonnegative nonzero weight system. If the hypersurface $\{f=0\}\subset \C^{n+1}$ is singular at $0$, then the minimal exponent of $f$ at $0$ satisfies
$$
\wt \alpha_0(f)\le\frac{w_0+\cdots+w_n}{\mathrm{wt}_{w}(f)}.
$$
\end{prop}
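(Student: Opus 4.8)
The plan is to reduce to a single toric valuation and to bound the minimal exponent through Saito's microlocal $V$-filtration. First I would dispose of the normalizations. Both sides of the desired inequality are homogeneous of degree $0$ under rescaling $w\mapsto\lambda w$ with $\lambda\in\Q_{>0}$, and $\wt\alpha_0(f)$ is independent of $w$; moreover if $\mathrm{wt}_w(f)=0$ the right-hand side is $+\infty$ and there is nothing to prove. When $\mathrm{wt}_w(f)>0$ but some $w_i=0$, I would perturb $w$ to $w+\varepsilon\mathbf 1$ with $\varepsilon\in\Q_{>0}$, note that $\sum_i w_i$ and $\mathrm{wt}_w(f)=\min_{\alpha\in\mathrm{supp}(f)}\langle w,\alpha\rangle$ are continuous in $w$ (a minimum of finitely many linear forms), and let $\varepsilon\to 0^+$. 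After clearing denominators this reduces everything to the case $w\in\Z_{>0}^{n+1}$, and I set $e:=\mathrm{wt}_w(f)>0$.

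Next I would introduce the geometric model. Let $\pi\colon\widetilde Y\to Y=\C^{n+1}$ be the weighted blowup of the origin with weights $w$, a toric morphism with a single irreducible exceptional divisor $E$. The two invariants of $E$ are computed monomially: since $\mathrm{ord}_E(\pi^*x^\alpha)=\langle w,\alpha\rangle$, one has $\mathrm{ord}_E(\pi^*f)=\min_{\alpha\in\mathrm{supp}(f)}\langle w,\alpha\rangle=e$, while $K_{\widetilde Y/Y}=(\sum_i w_i-1)E$ gives log discrepancy $A_E=\sum_i w_i$. The proposition is then the assertion that this particular toric valuation yields an upper bound for the minimal exponent, $\wt\alpha_0(f)\le A_E/\mathrm{ord}_E(f)=\sum_i w_i/e$. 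This refines Kollár's inequality $\lct(Y,\{f=0\})\le A_E/\mathrm{ord}_E(f)$: indeed, when $\wt\alpha_0(f)\le 1$ we have $\wt\alpha_0(f)=\lct(Y,\{f=0\})$ and the bound is already Kollár's, so the content lies in the range $\wt\alpha_0(f)>1$, where $f$ has rational singularities and the minimal exponent genuinely refines the log canonical threshold.

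For the core estimate I would pass to the graph embedding $i\colon Y\hookrightarrow Y\times\C_t$, $x\mapsto(x,f(x))$, and to Saito's microlocal $V$-filtration $\widetilde V^\bullet$ on $\widetilde{\mathcal B}_f=i_+\O_Y[\partial_t^{-1}]$, for which $\wt\alpha_0(f)=\max\{\gamma:\delta_f\in\widetilde V^\gamma\}$ with $\delta_f$ the canonical generator. The task becomes: $\delta_f\notin\widetilde V^\gamma$ for every $\gamma>\sum_i w_i/e$. I would make this computation in the toric chart of $\pi$, where $\pi^*f$ vanishes to order $e$ along $E$; using the weighted Euler field $\theta=\sum_i w_i x_i\partial_{x_i}$, which acts on the weighted-degree-$j$ part of $\O_Y$ by $j$ and satisfies $\theta f=e\,\mathrm{in}_w(f)+(\text{terms of weight}>e)$, the weighted-homogeneous leading term $\mathrm{in}_w(f)$ governs the leading behavior of the $V$-filtration. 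The sharp model is Brieskorn-Pham: for $f=\sum_i x_i^{a_i}$ with $w_i a_i=e$, Thom-Sebastiani (\theoremref{thm:Thom-Sebastiani for minimal exponents}) gives $\wt\alpha_0(f)=\sum_i 1/a_i=\sum_i w_i/e$, so the bound is attained.

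The main obstacle is precisely this last step, and it is subtle for a reason worth isolating. The naive move — degenerating $f$ to its weighted initial form $\mathrm{in}_w(f)$ via the $\C^*$-action — only produces, by lower semicontinuity of the minimal exponent, the inequality $\wt\alpha_0(f)\ge\wt\alpha_0(\mathrm{in}_w(f))$, which is the \emph{wrong} direction; and $\wt\alpha_0(\mathrm{in}_w(f))$ can be strictly smaller than $\sum_i w_i/e$ when the initial form is non-isolated, so one cannot even recover the bound from the model in this way. Thus the upper bound is genuinely not a degeneration statement: it must come from controlling, inside $\widetilde V^\bullet$, the contribution of the higher-weight terms $f-\mathrm{in}_w(f)$ and showing they cannot raise the $V$-order of $\delta_f$ past $\sum_i w_i/e$. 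That such control holds is special to monomial (toric) valuations — for a general divisorial valuation the analogous inequality would force $\wt\alpha_0(f)\le\lct(Y,\{f=0\})$, which is false — and making it rigorous, via the weighted grading on $\widetilde{\mathcal B}_f$ adapted to $E$, is the technical heart of the argument.
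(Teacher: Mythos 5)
Your reductions (homogeneity in $w$, perturbing zero weights, clearing denominators) and the reformulation via Saito's microlocal $V$-filtration are sound, and your diagnosis is exactly right that the $\mathbb G_m$-degeneration of $f$ to its initial form only yields $\wt\alpha_0(f)\ge \wt\alpha_0(\mathrm{in}_w(f))$ by lower semicontinuity, which is the wrong direction. But the proposal stops precisely where the proof has to begin: no argument is given that $\delta_f\notin\widetilde V^{\gamma}$ for $\gamma>(w_0+\cdots+w_n)/e$. The ``weighted grading on $\widetilde{\mathcal B}_f$ adapted to $E$'' is invoked but never constructed, and no mechanism is offered for controlling the higher-weight terms $f-\mathrm{in}_w(f)$ --- which, as your own non-isolated example shows, genuinely move the $V$-order of $\delta_f$, since $\wt\alpha_0(f)$ can exceed $\wt\alpha_0(\mathrm{in}_w(f))$. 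The Brieskorn--Pham computation only shows the bound is sharp; sharpness of a model is not an upper bound for arbitrary $f$. By your own admission this step is ``the technical heart,'' so the estimate that constitutes the proposition is asserted rather than proved: what you have is a correct framing plus a correct explanation of why one natural approach fails, which is a genuine gap, not a proof.

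A telling symptom is that the hypothesis that $\{f=0\}$ is singular at $0$ never enters your computation, although it is essential: for $f=x_0$ and $w=(1,\dots,1)$ the right-hand side is $n+1$ while $\wt\alpha_0(f)=+\infty$, and the paper stresses (after the proposition) that this hypothesis is exactly what separates the minimal-exponent bound from Koll\'ar's unconditional bound for the log canonical threshold. Any complete argument must use the singularity of $\{f=0\}$ at $0$ somewhere in the $V$-filtration estimate, and your sketch gives no indication of where. Note also that the paper itself does not prove this statement --- it imports it wholesale from \cite[Proposition 2.1]{CDM25} --- so the benchmark is that proof, and your outline does not reconstruct its key estimate. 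Finally, the closing aside that such control is ``special to monomial (toric) valuations'' misidentifies the dichotomy: the natural constraint making the divisorial bound $\wt\alpha_0(f)\le A_E/\mathrm{ord}_E(f)$ plausible is that the center of $E$ lie in the singular locus of $\{f=0\}$ (which is how the hypothesis enters here), not toric-ness per se; your reductio via the lct only rules out the bound for unconstrained divisors, since the infimum computing $\lct$ ranges over divisors whose centers need not lie in the singular locus.
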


Unlike minimal exponents, the analogous bound for the log canonical threshold does not require the hypersurface to be singular at the origin. This is essentially why the methods of \cite{Hacking04,KL04,Lee08}, which prove GIT stability of hypersurfaces with bounds on log canonical thresholds, cannot be directly adapted to the case of minimal exponents.

Note that the minimal exponent of a homogeneous degree $d\ge 2$ polynomial $f$ at the origin satisfies
\begin{equation}
\label{eqn:minimal exponent multiplicity bound}
\wt \alpha_0(f)\le \frac{n+1}{\mathrm{mult}_0(f)}=\frac{n+1}{d}.
\end{equation} 
This inequality follows either from \cite[Theorem E(3)]{MP20} or from the above weighted bound applied with $w=(1,\dots,1)$. By combining Propositions \ref{prop:Hilbert-Mumford criterion} and \ref{prop:weight-minimal exponent inequality}, we conclude that a hypersurface $X\subset \P^n$ is GIT semistable when its defining equation $f=0$ satisfies
$$
\wt \alpha_0(f)= \frac{n+1}{d}.
$$
Recall that $\wt \alpha_0(f)=\wt \alpha_0(f\circ g)$ for every $g\in SL(n+1)$, since the minimal exponent is an invariant of the hypersurface singularity. Therefore, the semistability part of Theorem \ref{thm:GIT stability via minimal exponent} is settled, if the following implication is true: $\wt\alpha(X)\ge \frac{n+1}{d}$ $\Rightarrow$ $\wt \alpha_0(f)= \frac{n+1}{d}$. This is verified in Theorem \ref{thm:minimal exponent of cone}.

\section{The GIT stability via minimal exponent}

\subsection{Higher singularities of affine cones}

Let $X$ be a projective scheme with an ample line bundle $\L$. Following \cite[Section 3.8]{Kollar13}, the \emph{affine cone} over $X$ with conormal bundle $\L$ is
$$
C(X,\L):=\text{Spec}\bigoplus_{k\ge 0}H^0(X,\L^k).
$$
In particular, when $X\subset \P^n$ is a hypersurface defined by a homogeneous polynomial $f=0$, the affine cone over $X$ with conormal bundle $\O_X(1)$ is the classical affine cone $\mathrm{Cone}(X)\subset \A^{n+1}$ over $X$:
$$
C(X,\O_X(1))\isom\mathrm{Cone}(X):=\{f=0\}\subset \A^{n+1}.
$$

From the viewpoint of singularities of the minimal model program, it is known that semi-log canonical or klt singularities are preserved under taking an affine cone in certain settings, such as cones over Calabi-Yau or Fano varieties. We prove a refinement of this fact for hypersurfaces: if the degree and the dimension satisfy a specific numerical inequality, then the affine cone with any conormal bundle $\O_X(r)$ (for all $r\ge 1$) preserves higher Du Bois or higher rational singularities.

\begin{thm}
\label{thm:higher singularities of cone}
Let $X\subset \P^n$ be a hypersurface of degree $d\ge 2$, and let $r$ be any positive integer. For an integer $m\ge 0$,

\noindent
(1) if $\frac{n+1}{d}\ge m+1$, then $X$ has $m$-Du Bois singularities if and only if the affine cone $C(X,\O_X(r))$ has $m$-Du Bois singularities.

\noindent
(2) if $\frac{n+1}{d}> m+1$, then $X$ has $m$-rational singularities if and only if the affine cone $C(X,\O_X(r))$ has $m$-rational singularities.
\end{thm}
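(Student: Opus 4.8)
The plan is to analyze $C:=C(X,\O_X(r))$ separately along its smooth part and at the vertex $v$, reading off both from $X$. Set $U:=C\smallsetminus\{v\}$. The blow-up of $C$ at $v$ is the total space of the line bundle $\O_X(-r)$, so $U$ is the complement of its zero section, i.e.\ a $\mathbb{G}_m$-bundle $\pi\colon U\to X$. Since $\pi$ is smooth of relative dimension $1$, the relative cotangent sequence yields a triangle $\pi^*\DB^p_X\to\DB^p_U\to\pi^*\DB^{p-1}_X\otimes\Omega^1_{U/X}\xrightarrow{+1}$ with $\Omega^1_{U/X}$ invertible; as $\pi$ is faithfully flat, $\mathcal H^{>0}(\DB^p_U)=0$ for $p\le m$ if and only if $X$ is pre-$m$-Du Bois, and the analogous statement for the dual complexes governing $m$-rationality holds as well. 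Thus away from $v$ the singularity type of $C$ coincides with that of $X$, and for $p\le m$ the sheaf $\mathcal H^{>0}(\DB^p_C)$ is supported at $v$ once $X$ is assumed $m$-Du Bois. The codimension requirement of \definitionref{defn:higher singularities general} at the vertex reads $\dim C=n\ge 2m+1$ (resp.\ $>$), and this is exactly where the numerical hypothesis enters: from $d\ge2$ we get $n+1\ge d(m+1)\ge 2(m+1)$ (resp.\ strict), hence $n\ge 2m+1$ (resp.\ $>$).

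It remains to control the vertex, and first I would make the reduction to cohomology on $X$ explicit. Since $C$ is affine and, for $p\le m$, $\mathcal H^{>0}(\DB^p_C)$ is supported at $v$, one has $\mathcal H^j(\DB^p_C)_v=\HH^j(C,\DB^p_C)$ for $j>0$. Using the contracting $\C^*$-action -- equivalently formal functions along the exceptional $X\subset\mathrm{Tot}(\O_X(-r))$, whose conormal bundle is $\O_X(r)$ -- together with the triangle above, these groups decompose into $\C^*$-weights, and the positive weights contribute
$$
\bigoplus_{k\ge1}\Big(\HH^j\big(X,\DB^p_X\otimes\O_X(rk)\big)\oplus\HH^j\big(X,\DB^{p-1}_X\otimes\O_X(rk)\big)\Big),
$$
the weight-zero piece cancelling against the contribution of the exceptional divisor. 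Consequently, the vanishing $\mathcal H^{>0}(\DB^p_C)_v=0$ for all $p\le m$ is equivalent to the twisted vanishing $\HH^{>0}\big(X,\DB^{p'}_X\otimes\O_X(rk)\big)=0$ for every $k\ge1$ and every $0\le p'\le m$.

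The twisted vanishing is the technical heart, and I expect it to be the main obstacle. Saito's vanishing theorem (cf.\ \cite{Saito90}) applied to $\Q^H_X[\dim X]$ gives $\HH^q(X,\DB^{p'}_X\otimes\O_X(rk))=0$ only in the Nakano range $q>\dim X-p'$; the remaining range $1\le q\le \dim X-p'$ is precisely where the degree and dimension must be used. For a hypersurface I would run the two short exact sequences relating $\DB^{p'}_X$ to $\Omega^{p'}_{\P^n}|_X$ and to $\DB^{p'-1}_X\otimes\O_X(-d)$ (in the smooth case these are the conormal and restriction sequences for $\Omega^{p'}$), reducing the claim to Bott vanishing on $\P^n$. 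The binding term is the top cohomology $\HH^{\dim X}(X,\DB^{p'}_X\otimes\O_X(rk))$, which by the adjunction $\omega_X=\O_X(d-n-1)$ is governed by the positivity of $\O_X\big(rk-(d-n-1)\big)$; the hypothesis $\tfrac{n+1}{d}\ge m+1$ is exactly what makes this term, and all intermediate ones, vanish for $k\ge1$ and $p'\le m$. Carrying out this Bott-type bookkeeping for the Du Bois complexes of a possibly singular hypersurface, rather than for $\Omega^{p'}$ of a smooth variety, is the delicate point.

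For the $m$-rational statement I would use \propositionref{prop:pre-m-rational is pre-m-Du Bois with D_m}: for the normal variety $C$, pre-$m$-rational is equivalent to pre-$m$-Du Bois together with condition $D_m$, and both transfer between $C$ and $X$ away from $v$ as above. At the vertex one dualizes the previous computation via \propositionref{prop:duality}, replacing the twisted vanishing by its Serre dual; the strict inequality $\tfrac{n+1}{d}>m+1$ then furnishes both the strict codimension $n>2m+1$ and the additional unit of positivity needed for the dual vanishing. Seminormality (resp.\ normality) of $C$ and reflexivity of $\mathcal H^0(\DB^p_C)$ follow from the corresponding properties of $X$ and the $S_2$/codimension conclusions above. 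Assembling the two regions gives the equivalences in (1) and (2); the case $r=1$, where $C$ is the affine hypersurface $\{f=0\}\subset\A^{n+1}$, is the geometric input behind the minimal-exponent computation of $\wt\alpha_0(f)$ in \theoremref{thm:minimal exponent of cone}.
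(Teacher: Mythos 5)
Your two-region strategy (punctured cone as a $\mathbb G_m$-torsor over $X$; vertex handled by a twisted-vanishing reduction) is the same skeleton as the paper's proof, and your reduction of the vertex to the vanishing of $H^{>0}(X,\cdot\otimes\O_X(rk))$ for $k\ge 1$ agrees with what the paper imports from \cite[Corollary 7.1]{PS25} in the form $H^i(X,\Omega^p_X(kr))=0$ for all $i,k\ge1$, $0\le p\le m$. But the step you yourself flag as ``the delicate point'' is a genuine gap, in two ways. First, the conormal-type short exact sequences you propose for $\DB^{p'}_X$ do not exist on a singular hypersurface; the paper's fix is to use the $m$-Du Bois hypothesis to replace $\DB^p_X$ by $\Omega^p_X$ for $p\le m$ and then resolve $\Omega^p_X(k)$ by the Koszul complex $\O_{\P^n}(-pd+k)|_X\to\cdots\to\Omega^p_{\P^n}(k)|_X$, which is a resolution precisely because $\codim_X\Sing(X)\ge 2m+1\ge p$ (depth sensitivity, \lemmaref{lem:Koszul resolution of differentials}). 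Second, your assertion that the hypothesis $\frac{n+1}{d}\ge m+1$ ``makes this term, and all intermediate ones, vanish'' is false: Bott vanishing on $\P^n$ leaves surviving $E_1$-terms $H^j(\P^n,\Omega^j_{\P^n})$ and $H^{j+1}(\P^n,\Omega^{j+1}_{\P^n})$ in the row with twist $k=(p-j)d$, $j<p$, and the proof needs the additional identification (\lemmaref{lem:atiyah class}, via the Atiyah class) of the $d_1$-differential between them with cup product by $c_1(\O_{\P^n}(d))$, which is an isomorphism; the vanishing only holds at the $E_2$-page. Your local-cohomology/$\C^*$-weight derivation of the reduction is plausible but also not carried out, whereas the reflexivity of $\mathcal H^0(\DB^p_{C})$ is not formal: the paper first secures rational singularities of the cone (the $m=0$ case) and then quotes \cite[Corollary 1.11]{KS21} and \cite[Theorem 7.12]{HJ14}.

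For part (2), ``dualize the previous computation and use one extra unit of positivity'' is not an argument that can be completed as stated. Pre-$m$-rationality of the cone concerns $\DD_{C}(\DB^{n-p}_{C})$, i.e.\ Du Bois complexes in degrees near $\dim C$, far outside the range where $\DB=\Omega$, and Serre duality on $X$ gives no handle on these objects. The paper instead reduces, via \propositionref{prop:pre-m-rational is pre-m-Du Bois with D_m}, to condition $D_m$ at the vertex, i.e.\ $\gr^F_{-p}\DR(\iota^*\IC^H_{C(X,r)})=0$ for $1\le p\le m$, and computes $\iota^*\IC^H_{C(X,r)}$ through the blow-up (an $\A^1$-bundle over $X$), Saito's decomposition theorem, proper base change, and hard Lefschetz, arriving at
$$
\gr^F_{-p}H^{-i}(\iota^*\IC^H_{C(X,r)})\oplus \gr^F_{-p+1}\IH^{n-i-2}(X,\C)\isom \gr^F_{-p}\IH^{n-i}(X,\C).
$$
The required vanishing then rests on genuinely global inputs absent from your sketch: $\h^{p,q}(X)=I\h^{p,q}(X)=h^{p,q}(X')$ for $p\le m$ with $X'$ a smooth hypersurface (\cite[Theorem 7.1]{PP25a}, \cite[Corollary 1.4]{FL24}), together with $h^{p,q}(X')=0$ for $q\neq p$, $p\le m$, under $\frac{n+1}{d}>m+1$ (weak Lefschetz and Griffiths). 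The strict inequality is used there, not as ``extra positivity'' in a dual twisted vanishing; a Serre-dual bookkeeping on $X$ alone cannot see the intersection-cohomology contribution concentrated at the vertex.
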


When $m=0$, this statement follows from the classical result about semi-log canonical and klt singularities of affine cones (see, for example, \cite[Lemma 3.1]{Kollar13}).

Before proving Theorem \ref{thm:higher singularities of cone}, we provide two basic lemmas which are well known to experts, and include short proofs for completeness. The first one provides a natural resolution of the sheaf of Kähler differentials, which is used repetitively throughout the text:

\begin{lem}
\label{lem:Koszul resolution of differentials}
Let $X\subset Y$ be a Cartier divisor in a smooth variety $Y$, and
$$
\O_Y(-X)|_X\xrightarrow{\phi} \Omega^1_Y|_X\to \Omega_X\to 0.
$$
be the associated conormal exact sequence. For any integer $p\ge 1$, if $\codim_X\Sing(X)\ge p$, then the Koszul complex
$$
K_p^\bullet(\phi):\O_Y(-pX)|_X\to\Omega_Y(-(p-1)X)|_X \to\cdots\to\Omega_Y^p|_X,
$$
associated to the morphism $\phi:\O_Y(-X)|_X\to \Omega^1_Y|_X$ is naturally quasi-isomorphic to $\Omega_X^p$.
\end{lem}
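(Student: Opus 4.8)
The plan is to exhibit $K_p^\bullet(\phi)$ as a resolution of $\Omega^p_X$, with $\Omega^p_X$ sitting in the top degree, by separating the computation of its top cohomology (which requires no hypothesis) from the vanishing of the lower cohomology (where the assumption $\codim_X\Sing(X)\ge p$ is used). Write $L:=\O_Y(-X)|_X$ and $E:=\Omega^1_Y|_X$, so that the $j$-th term of $K_p^\bullet(\phi)$ is $\wedge^jE\otimes L^{\otimes(p-j)}$ and the differential is wedging with $\phi$. Placing $\wedge^pE=\Omega^p_Y|_X$ at the right-hand (top) end, the natural surjection $\Omega^p_Y|_X\to\Omega^p_X$ will provide the asserted morphism $K_p^\bullet(\phi)\to\Omega^p_X$, and the content is that it is a quasi-isomorphism.

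For the top cohomology I would use only right-exactness of exterior powers, with no assumption on singularities. The conormal sequence exhibits $\Omega^1_X$ as $\mathrm{coker}(\phi)$, and for any right-exact sequence $L\xrightarrow{\phi}E\to\Omega^1_X\to 0$ with $L$ a line bundle there is an induced right-exact sequence $\wedge^{p-1}E\otimes L\xrightarrow{\phi\wedge}\wedge^pE\to\Omega^p_X\to 0$ (the exterior power of a quotient). Hence the cokernel of the last differential of $K_p^\bullet(\phi)$ is exactly $\Omega^p_X$, compatibly with the natural map $\Omega^p_Y|_X\to\Omega^p_X$.

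The heart of the argument, and the step where the codimension hypothesis enters, is the vanishing of the cohomology of $K_p^\bullet(\phi)$ in all degrees below the top. After twisting by the line bundle $L^{\otimes(-p)}$, the $j$-th term becomes $\wedge^j(E\otimes L^{-1})=\wedge^jF$ with $F:=E\otimes L^{-1}=\Omega^1_Y(X)|_X$, so $K_p^\bullet(\phi)$ is identified, up to this twist, with the brutal truncation in wedge-degrees $\le p$ of the complex $(\wedge^\bullet F,\sigma\wedge(-))$, where $\sigma\in\Gamma(X,F)$ is the section induced by $\phi$. Working locally on $Y=\mathrm{Spec}\,R$ with $X=\{f=0\}$, the section $\sigma$ is given by the partials of $f$, so its zero scheme is cut out by the Jacobian ideal $I_\sigma$ and $Z(\sigma)=\Sing(X)$; consequently the cohomology sheaves of $(\wedge^\bullet F,\sigma\wedge(-))$ are supported on $\Sing(X)$, and the truncation does not affect them in the relevant range. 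Identifying $\wedge^jF\isom\wedge^{N-j}F^\vee\otimes\det F$ with $N=\dim Y=\mathrm{rk}\,F$ turns this into the classical Koszul complex $(\wedge^\bullet F^\vee,\iota_\sigma)$, whose homology in degree $i$ vanishes for $i>N-\mathrm{grade}(I_\sigma,\O_X)$. The main obstacle is thus packaged into a single standard input, the depth-sensitivity of the Koszul complex; the geometric hypothesis feeds in through the equality $\mathrm{grade}(I_\sigma,\O_X)=\mathrm{ht}(I_\sigma)=\codim_X\Sing(X)\ge p$, which holds because $X$, as a Cartier divisor in a smooth variety, is Cohen–Macaulay. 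Translating these vanishings back through the truncation and twist, and combining with the top-cohomology computation, yields the natural quasi-isomorphism $K_p^\bullet(\phi)\isom\Omega^p_X$.
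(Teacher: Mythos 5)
Your proposal is correct and follows essentially the same route as the paper: both identify $\phi$ locally with the row of partials of a defining equation $f$, note that these generate an ideal cutting out $\Sing(X)$, and conclude via the depth-sensitivity of the Koszul complex (the paper cites \cite[Theorem 16.8]{Matsumura89}) that $\mathcal H^{<0}(K_p^\bullet(\phi))=0$ once $\codim_X\Sing(X)\ge p$, with the top cohomology being $\Omega^p_X$ by right-exactness of exterior powers. The only difference is that you make explicit two points the paper leaves implicit --- the self-duality twist identifying the wedge-with-$\sigma$ complex with the contraction Koszul complex, and the use of Cohen--Macaulayness of the hypersurface $X$ to convert the codimension bound into a grade bound --- which is sound bookkeeping, not a different argument.
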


We consider $K_p^\bullet(\phi)$ as a complex supported on degrees $[-p,0]$, so that there exists a natural map
\begin{equation}
\label{eqn:resolution of differentials}
K_p^\bullet(\phi)\to \Omega_X^p[0].    
\end{equation}

\begin{proof}
It suffices to show that \eqref{eqn:resolution of differentials} is locally a quasi-isomorphism. Let $f=0$ be the local defining equation of $X$ in $Y$, and $y_0,\dots,y_n$ be the system of local coordinates of $Y$. Upon trivialization, $\phi:\O_Y(-X)|_X\to \Omega^1_Y|_X$ is locally represented by the matrix
$$
\left[\frac{\partial f}{\partial y_0},\dots, \frac{\partial f}{\partial y_n}\right].
$$
Note that the ideal generated by these partial derivatives defines $\Sing(X)$. Thus, the depth of this ideal is $\codim_X\Sing(X)$. The depth sensitivity of the Koszul complex (see \cite[Theorem 16.8]{Matsumura89}) implies the cohomology vanishing $\mathcal H^{<0}(K_p^\bullet(\phi))=0$, or equivalently
$$
K_p^\bullet(\phi)\isom \mathcal H^0(K_p^\bullet(\phi))[0]=\Omega^p_X[0],
$$
for $p\le \codim_X\Sing(X)$.
\end{proof}

Note that the section $\phi(X):\O_Y|_X\to \Omega^1_Y(X)|_X$ is the logarithmic differential $d\log f$ restricted to $X$, where $f:\O_Y\to \O_Y(X)$ is the natural section. Each differential in the Koszul complex $K_p^\bullet(\phi)$ is the wedge product map $\cdot \wedge d\log f$, restricted to $X$. Now, consider the natural composition of maps
\begin{equation}
\label{eqn:p-atiyah class}
\Omega_Y^p\to \Omega_Y^p|_X\xrightarrow{\wedge\; d \log f}\Omega_Y^{p+1}(X)|_X\to\Omega_Y^{p+1}[1],
\end{equation}
where the first map is the restriction map and the last map is the connecting morphism arising from the short exact sequence 
$$
0\to\Omega_Y^{p+1}\to\Omega_Y^{p+1}(X)\to\Omega^{p+1}_Y(X)|_X\to0.
$$
This composition has the following cohomological interpretation:

\begin{lem}
\label{lem:atiyah class}
Let $Y$ be a smooth projective variety, and $X\subset Y$ be a Cartier divisor with the associated line bundle $\L=\O_Y(X)$. For every $p\ge 0$, the cup product map with the first Chern class
$$
\cdot\cup c_1(\L): H^q(Y,\Omega_Y^p)\to H^{q+1}(Y,\Omega_Y^{p+1})
$$
coincides with $\frac{1}{2\pi i}$ times the map induced on cohomology by the composition \eqref{eqn:p-atiyah class}.
\end{lem}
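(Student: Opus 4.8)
The plan is to compute both maps through a common \v{C}ech representative on a cover trivializing $\L = \O_Y(X)$ and match them termwise. First I would fix an open cover $\{U_i\}$ of $Y$ on which $\L$ is trivial, choose local equations $f_i \in \O_Y(U_i)$ for $X$, and write $f_i = g_{ij} f_j$ on overlaps, where the $g_{ij} \in \O_Y^\times(U_i \cap U_j)$ are the transition functions of $\L$. The crucial identity is that the local logarithmic differentials $d\log f_i = df_i/f_i$, which are sections of $\Omega^1_Y(X)$ with a first-order pole along $X$, satisfy $d\log f_i - d\log f_j = d\log g_{ij}$, and the right-hand side is a holomorphic $1$-form because $g_{ij}$ is invertible. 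This is precisely the cancellation of poles that lets the connecting morphism land in the subsheaf $\Omega^{p+1}_Y$.

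Next I would unwind the composition \eqref{eqn:p-atiyah class} at the level of \v{C}ech cochains. Given a class in $H^q(Y, \Omega^p_Y)$ represented by a cocycle $\{\omega_{i_0\cdots i_q}\}$, wedging the restriction with $d\log f$ produces, on each $U_i$, the natural lift $\omega_{i_0\cdots i_q} \wedge d\log f_i$ to $\Omega^{p+1}_Y(X)$. Applying the \v{C}ech differential and using the identity above, the poles cancel and the result is the holomorphic cocycle $\{\omega_{i_0\cdots i_q} \wedge d\log g_{i_q i_{q+1}}\}$, which represents the image under the connecting morphism $\delta$ in $H^{q+1}(Y, \Omega^{p+1}_Y)$. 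On the other hand, $c_1(\L)$ is represented in $H^1(Y, \Omega^1_Y)$ by the cocycle $\frac{1}{2\pi i}\{d\log g_{ij}\}$, so the cup product $\,\cdot \cup c_1(\L)$ is represented by $\frac{1}{2\pi i}\{\omega_{i_0\cdots i_q} \wedge d\log g_{i_q i_{q+1}}\}$. Comparing the two cocycles yields exactly the factor $\frac{1}{2\pi i}$ asserted in the statement.

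Conceptually it is cleanest to first treat the case $p = 0$, where \eqref{eqn:p-atiyah class} is a morphism $\O_Y \to \Omega^1_Y[1]$ whose class in $H^1(Y, \Omega^1_Y)$ is the Atiyah class of $\L$, equal to $2\pi i\, c_1(\L)$. The general $p$ then follows because both \eqref{eqn:p-atiyah class} and cup product are obtained from the $p=0$ map by wedging with $\Omega^p_Y$: the relevant short exact sequence is the twist of the $p=0$ sequence by the locally free sheaf $\Omega^p_Y$, so the connecting morphisms are compatible with the wedge map $\Omega^p_Y \otimes \Omega^1_Y \to \Omega^{p+1}_Y$.

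The main obstacle is bookkeeping rather than conceptual: one must track the signs introduced by the \v{C}ech differential and the precise normalization relating the $d\log g_{ij}$ representative to the integral first Chern class under the Hodge decomposition, since it is exactly this normalization that produces the factor $\frac{1}{2\pi i}$. A secondary point to verify is that the naive local lift $\omega \wedge d\log f_i$ genuinely computes the connecting morphism; this is the standard fact that $\delta$ is obtained by lifting through the surjection $\Omega^{p+1}_Y(X) \to \Omega^{p+1}_Y(X)|_X$ and applying the \v{C}ech differential, which here is transparent because the lift has only a simple pole.
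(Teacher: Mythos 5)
Your proof is correct and follows essentially the same route as the paper's: a \v{C}ech computation on a trivializing cover identifying the $p=0$ composition with the Atiyah class via the cocycle $\{d\log g_{ij}\}$ and the normalization $a(\L)=2\pi i\,c_1(\L)$, with the general $p$ case obtained by wedging with $\Omega_Y^p$. Your explicit cochain-level verification of the connecting morphism for general $q$ is a bit more detailed than the paper's two-line reduction, but it is the same argument.
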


\begin{proof}
For $p=0$, the composition
\begin{equation}
\label{eqn:atiyah class}
\O_Y\to \O_Y|_X\xrightarrow{d \log f}\Omega_Y^1(X)|_X\to\Omega_Y^1[1]  
\end{equation}
is the Atiyah class $a(\L)\in H^1(Y,\Omega_Y)$. Indeed, for a trivializing chart $\{U_i\}$ of $\O_Y(X)$ with $f_i=f|_{U_i}$, the \v Cech cocycle $\{d \log f_i-d \log f_j\}=\{d\log g_{ij}\}$ represents the extension class associated to the composition \eqref{eqn:atiyah class}, where $g_{ij}=\frac{f_i}{f_j}$ is the transition function of the line bundle $\L=\O_Y(X)$. This \v Cech cocyle represents the Atiyah class $a(\L)$, which is equal to $2\pi i c_1(\L)$ by \cite[Proposition 12]{Atiyah57}.

The composition map \eqref{eqn:p-atiyah class} for general $p\ge 0$ is the wedge product of $\Omega_Y^p$ with \eqref{eqn:atiyah class}. Hence, the cohomology map of this composition is the cup product map with the Atiyah class $a(\L)$.
\end{proof}

We now prove Theorem \ref{thm:higher singularities of cone}. For notational convenience, set $C(X,r):=C(X,\O_X(r))$, and denote its cone point by $0\in C(X,r)$.

\begin{proof}[Proof of Theorem \ref{thm:higher singularities of cone}]
Since $C(X,r)\sm\{0\}$ is a $\mathbb G_m$-torsor over $X$, $C(X,r)\sm\{0\}$ has hypersurface singularities. Then by Theorem \ref{thm:higher singularities vs minimal exponent}, if $C(X,r)\sm\{0\}$ has $m$-Du Bois (resp. $m$-rational) singularities, then $X$ has $m$-Du Bois (resp. $m$-rational) singularities. Hence, it suffices to prove the forward implications of this theorem when $X$ has $m$-Du Bois (resp. $m$-rational) singularities. The case $m=0$ follows from classical results on semi-log canonical and klt singularities (see \cite[Lemma 3.1]{Kollar13}); note that semi-log canonical implies Du Bois \cite{KK10} and klt implies rational \cite{Elkik81}. From now on, we assume $m\ge 1$, and thus $C(X,r)$ has rational singularities.

\textbf{Proof of (1).}
Suppose $X$ has $m$-Du Bois singularities and $\frac{n+1}{d}\ge m+1$. We check the conditions in Definition \ref{defn:higher singularities general}. Since $\codim\;\Sing(X)\ge 2m+1$, we have
$$
\codim\;\Sing(C(X,r))\ge 2m+1.
$$
Additionally since $C(X,r)$ has rational singularities, \cite[Corollary 1.11]{KS21} and \cite[Theorem 7.12]{HJ14} implies that $\mathcal H^0(\DB_{C(X,r)}^p)$ is reflexive for all $p$.

Hence, $C(X,r)$ has $m$-Du Bois singularities if and only if the higher cohomologies of Du Bois complexes vanish,
$$
\mathcal H^{>0}(\DB_{C(X,r)}^p)=0 \quad\text{for all}\quad 0\le p\le m.
$$
In other words, $C(X,r)$ has pre-$m$-Du Bois singularities as defined in \cite{SVV23}. By \cite[Corollary 7.1]{PS25}, this condition is equivalent to
$$
H^i(X,\Omega_X^p(kr))=0 \quad \text{for all}\quad i,k\ge 1,\; 0\le p\le m.
$$
It suffices to prove for $r=1$. By Lemma \ref{lem:Koszul resolution of differentials}, the $p$-th sheaf of differentials twisted by $\O_{\P^n}(k)$, $\Omega_X^p(k)$, is quasi-isomorphic to
$$
K_p^\bullet(\phi)(k):\O_{\P^n}(-pd+k)|_X\to\Omega_{\P^n}(-(p-1)d+k)|_X \to\cdots\to\Omega_{\P^n}^p(k)|_X.
$$
Consequently, we have the spectral sequence induced by the stupid filtration
$$
E_1^{i,j}\Longrightarrow H^{i+j}(X,\Omega_X^p(k)),
$$
where $E_1^{i,j}=H^j(X,\Omega_{\P^n}^{p+i}(id+k)|_X)$ for $i\le 0$ and $E_1^{i,j}=0$ otherwise. It suffices to prove $E_2^{i,j}=0$ for all $j>0$.

With a fixed $j>0$, we have the following complex $E_1^{\bullet,j}$:
\begin{equation}
\label{eqn:E_1 row}
H^j(X,\O_{\P^n}(-pd+k)|_X)\xrightarrow{d_1} H^j(X,\Omega_{\P^n}(-(p-1)d+k)|_X)\xrightarrow{d_1} \cdots\xrightarrow{d_1} H^j(X,\Omega_{\P^n}^p(k)|_X).
\end{equation}
For $-p\le i\le 0$, consider the short exact sequence
$$
0\to\Omega_{\P^n}^{p+i}((i-1)d+k)\to\Omega_{\P^n}^{p+i}(id+k)\to\Omega_{\P^n}^{p+i}(id+k)|_X\to0.
$$
From the Bott vanishing theorem for projective spaces (see e.g. \cite[Theorem 7.2.3]{CMSP17}), the cohomology $H^j(\P^n,\Omega^{e}_{\P^n}(l))$ vanishes for all $j>0$, except in the two cases:
$$
(i)\;j=e \text{ and } l=0,\quad (ii)\;j=n \text{ and } l<-n+e.
$$
Applying this with $e=p+i$ and $l=(i-1)d+k$, we have
$$
(i-1)d+k\ge -n+m+i\ge -n+p+i,
$$
for $k\ge 1$ under the hypothesis $\frac{n+1}{d}\ge m+1$; the case $(ii)$ does not occur. From the long exact sequence of cohomology, this implies that
$$
H^j(X,\Omega_{\P^n}^{p+i}(id+k)|_X)= \left\{ \begin{array}{lcl}
H^j(\P^n,\Omega_{\P^n}^j) & \text{for } i=j-p \text{ and } k=(p-j)d,\\
H^{j+1}(\P^n,\Omega_{\P^n}^{j+1}) & \text{for } i=j+1-p \text{ and } k=(p-j)d,\\
0 &  \text{otherwise}.
\end{array}\right.
$$
Therefore, if \eqref{eqn:E_1 row} is not zero, then $k=(p-j)d$, in which case \eqref{eqn:E_1 row} is
$$
0\to \cdots\to 0\to H^j(X,\Omega_{\P^n}^{j}|_X)\xrightarrow{d_1} H^j(X,\Omega_{\P^n}^{j+1}(d)|_X)\to0\to\cdots\to 0.
$$
The map $d_1:H^j(X,\Omega_{\P^n}^{j}|_X)\to H^j(X,\Omega_{\P^n}^{j+1}(d)|_X)$ is naturally isomorphic to $2\pi i$ times the cup product map
$$
\cdot\cup c_1(\O_{\P^n}(X)): H^j(\P^n,\Omega_{\P^n}^j)\to H^{j+1}(\P^n,\Omega_{\P^n}^{j+1})
$$
by Lemma \ref{lem:atiyah class}, and this map is an isomorphism. Hence, $E_2^{i,j}=0$ for all $j>0$ as desired.

\textbf{Proof of (2).} Suppose $X$ has $m$-rational singularities and $\frac{n+1}{d}> m+1$. By (1), we already know that $C(X,r)$ has $m$-Du Bois singularities, and
$$
\codim\;\Sing(C(X,r))> 2m+1.
$$
Therefore by Proposition \ref{prop:pre-m-rational is pre-m-Du Bois with D_m}, it suffices to prove condition $D_m$ for $C(X,r)$, that is
$$
\DB_{C(X,r)}^p\isom I\DB_{C(X,r)}^p\quad \text{for all}\quad 0\le p\le m.
$$
In terms of the RHM defect object $\K_{C(X,r)}^\bullet$, this condition is equivalent to
$$
\gr^F_{-p}\DR(\K_{C(X,r)}^\bullet)=0 \quad \text{for all}\quad p\le m.
$$
Since $C(X,r)\sm\{0\}$ is a $\mathbb G_m$-torsor over $X$, we know that $C(X,r)\sm\{0\}$ has $m$-rational singularities. Hence, the above vanishing holds over $C(X,r)\sm\{0\}$.

Denote by $j:C(X,r)\sm\{0\}\hookrightarrow C(X,r)$ the open embedding and $\iota:\{0\}\hookrightarrow C(X,r)$ the closed embedding. From the standard fact about the graded de Rham functor under pushforward (see e.g. \cite[Lemma 3.4]{Park23} and its dual statement), we have
$$
\gr^F_{-p}\DR(j_!\K_{C(X,r)\sm\{0\}}^\bullet)=0 \quad \text{for all}\quad p\le m.
$$
From the distinguished triangle
$$
j_!\K_{C(X,r)\sm\{0\}}^\bullet\to \K_{C(X,r)}^\bullet\to \iota_*\iota^*\K_{C(X,r)}^\bullet\xrightarrow{+1},
$$
it suffices to prove that
$$
\gr^F_{-p}\DR(\iota^*\K_{C(X,r)}^\bullet)=0 \quad \text{for all}\quad p\le m.
$$
Applying the pullback $\iota^*$ to the distinguished triangle \eqref{eqn:Q to IC triangle} for $C(X,r)$, we have
$$
\iota^*\K_{C(X,r)}^\bullet\to \Q_{\{0\}}^H[n]\to \iota^*\IC_{C(X,r)}^H\xrightarrow{+1}.
$$
Recall that the case $m=0$ is proven in \cite[Lemma 3.1]{Kollar13}. It remains to prove
$$
\gr^F_{-p}\DR(\iota^*\IC_{C(X,r)}^H)=0 \quad \text{for all}\quad 1\le p\le m.
$$

Consider the blow up $\mu:\wt C\to C(X,r)$ of $C(X,r)$ at the cone point, and the associated Cartesian diagram:
\begin{equation*}
\xymatrix{
{X}\ar[r]^-{\iota}\ar[d]_{\mu}& {\wt C}\ar[d]^{\mu}\\
{\{0\}}\ar[r]^-{\iota}&{C(X,r)}
}
\end{equation*}
Note that $\wt C$ is an $\A^1$-bundle over $X$, and the exceptional divisor of $\mu$ is its zero section (see \cite[Section 3.8]{Kollar13} for a general discussion of affine cones). In particular, we have 
$$
\iota^*\IC^H_{\wt C}\isom\IC^H_X[1].
$$
By Saito's Decomposition Theorem \cite[Théorème 5.3.1]{Saito88}, we have
$$
\mu_*\IC^H_{\wt C}\isom \IC^H_{C(X,r)}\oplus\M^\bullet.
$$
Since $\mu$ is an isomorphism away from the cone point $\{0\}$, $\M^\bullet$ is supported on $\{0\}$ and satisfies the hard Lefschetz property. By the proper base change theorem \cite[(4.4.3)]{Saito90}, we have
$$
\mu_*\IC_X^H[1]\isom\iota^*\mu_*\IC^H_{\wt C}\isom \iota^*\IC^H_{C(X,r)}\oplus\iota^*\M^\bullet.
$$
Since the constructible cohomologies of the intersection complex are supported in degrees $<0$, we have $H^{\ge0}(\iota^*\IC^H_{C(X,r)})=0$, which induces an isomorphism
$$
H^i(\iota^*\M^\bullet)\isom\IH^{n+i}(X,\Q),\quad H^{-i}(\iota^*\IC^H_{C(X,r)})\oplus H^{-i}(\iota^*\M^\bullet)\isom \IH^{n-i}(X,\Q)
$$
for all $i\ge 0$. By the hard Lefschetz on $\M^\bullet$ and $\IH^\bullet(X,\Q)$, we have $H^i(\M^\bullet)\isom H^{-i}(\M^\bullet)(-i)$ and $\IH^{n+i}(X,\Q)\isom \IH^{n-i-2}(X,\Q)(-i-1)$. Therefore,
$$
H^{-i}(\iota^*\IC^H_{C(X,r)})\oplus \IH^{n-i-2}(X,\Q)(-1)\isom \IH^{n-i}(X,\Q).
$$
for $i\ge 0$. For the graded pieces of Hodge filtration, we have
\begin{equation}
\label{eqn:pullback IC of cone}
\gr^F_{-p}H^{-i}(\iota^*\IC^H_{C(X,r)})\oplus \gr^F_{-p+1}\IH^{n-i-2}(X,\C)\isom \gr^F_{-p}\IH^{n-i}(X,\C)
\end{equation}
(we implicitly treat $H^{-i}(\iota^*\IC^H_{C(X,r)})$ as the associated complex Hodge structure).
Since $X$ has $m$-rational singularities, we have the following equality of Hodge-Du Bois numbers and intersection Hodge numbers (see \cite[Section 4]{PP25a} for definitions)
$$
\h^{p,q}(X)=I\h^{p,q}(X)=h^{p,q}(X')
$$
for all $0\le p\le m$, where $X'$ is a smooth hypersurface of degree $d$; the first equality is \cite[Theorem 7.1]{PP25a} and the second equality is \cite[Corollary 1.4]{FL24}. Under the hypothesis $\frac{n+1}{d}>m+1$, we have the vanishing of Hodge numbers
$$
h^{p,q}(X')=0 \quad\text{for all}\quad q\neq p,\; 0\le p\le m
$$
and $H^{p,p}(X')=1$ for all $0\le p\le m$, which easily follows from the weak Lefschetz theorem and Griffiths' description of the middle primitive cohomology (see, for example, \cite[Corollary 6.12]{Voisin03}). In particular,
$$
I\h^{p-1,q-1}(X)=I\h^{p,q}(X) \quad\text{for all}\quad 1\le p\le m
$$
and applying this to \eqref{eqn:pullback IC of cone}, we deduce $\gr^F_{-p}H^{-i}(\iota^*\IC^H_{C(X,r)})=0$ for all $i$ and $1\le p\le m$, as desired.
\end{proof}

\subsection{From minimal exponent to GIT stability}

The global minimal exponent of a hypersurface determines the local minimal exponent of its classical affine cone at the cone point in a precise formula below. This is anticipated by Theorem \ref{thm:higher singularities of cone} in the case $r=1$, which is a key ingredient of the proof. The formula provides a direct bridge between the global minimal exponent of a hypersurface and the Hilbert-Mumford numerical criterion.

\begin{thm}
\label{thm:minimal exponent of cone}
For a hypersurface $X\subset \P^n$ of degree $d\ge 2$, defined by a homogeneous polynomial $f=0$, we have
$$
\wt\alpha_0(f)=\min \left\{\wt\alpha(X),\frac{n+1}{d}\right\}.
$$
\end{thm}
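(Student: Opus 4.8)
The plan is to prove the formula $\wt\alpha_0(f)=\min\{\wt\alpha(X),\frac{n+1}{d}\}$ by establishing the inequality in each direction separately. The direction $\wt\alpha_0(f)\le\min\{\wt\alpha(X),\frac{n+1}{d}\}$ should be the routine one. The bound $\wt\alpha_0(f)\le\frac{n+1}{d}$ is exactly inequality \eqref{eqn:minimal exponent multiplicity bound}, which holds for any homogeneous polynomial of degree $d\ge 2$ singular at the origin. For the bound $\wt\alpha_0(f)\le\wt\alpha(X)$, I would use the fact that the punctured cone $C(X,\O_X(1))\sm\{0\}$ is a $\mathbb G_m$-torsor over $X$, so for every point $x\in X$ there is a point in the cone lying over $x$ whose local minimal exponent equals $\wt\alpha_x(X)$ (the minimal exponent is a local analytic invariant, and the cone is locally a product $X\times \mathbb G_m$ up to smooth factors near such a point). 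Since $\wt\alpha_0(f)=\wt\alpha(\mathrm{Cone}(X))=\min_{y\in\mathrm{Cone}(X)}\wt\alpha_y(f)$ and the cone point $0$ achieves the minimum over the cone, we get $\wt\alpha_0(f)\le \wt\alpha_x(X)$ for all $x$, hence $\wt\alpha_0(f)\le \wt\alpha(X)$.

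The substantive direction is $\wt\alpha_0(f)\ge\min\{\wt\alpha(X),\frac{n+1}{d}\}$, and here is where I would invoke Theorem \ref{thm:higher singularities of cone}. The strategy is to set $\beta:=\min\{\wt\alpha(X),\frac{n+1}{d}\}$ and show that whenever $\beta\ge m+1$ (resp. $\beta>m+1$) for an integer $m$, the local minimal exponent $\wt\alpha_0(f)$ is also $\ge m+1$ (resp. $>m+1$), which by the density/supremum characterization forces $\wt\alpha_0(f)\ge\beta$. Concretely, suppose $\wt\alpha(X)\ge m+1$ and $\frac{n+1}{d}\ge m+1$. By Theorem \ref{thm:higher singularities vs minimal exponent}(1), the hypothesis $\wt\alpha(X)\ge m+1$ means $X$ has $m$-Du Bois singularities. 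Then Theorem \ref{thm:higher singularities of cone}(1), whose numerical hypothesis $\frac{n+1}{d}\ge m+1$ is satisfied, implies that the affine cone $C(X,\O_X(1))=\mathrm{Cone}(X)$ has $m$-Du Bois singularities. Applying Theorem \ref{thm:higher singularities vs minimal exponent}(1) in reverse to the cone gives $\wt\alpha(\mathrm{Cone}(X))\ge m+1$, and since the global minimal exponent of the cone is the minimum of local minimal exponents (attained at the cone point), we obtain $\wt\alpha_0(f)\ge m+1$.

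The analogous argument in the $m$-rational case uses Theorem \ref{thm:higher singularities vs minimal exponent}(2) and Theorem \ref{thm:higher singularities of cone}(2): if $\wt\alpha(X)>m+1$ (so $X$ is $m$-rational) and $\frac{n+1}{d}>m+1$, then $\mathrm{Cone}(X)$ is $m$-rational, hence $\wt\alpha_0(f)>m+1$. The remaining bookkeeping is to convert these integer-threshold statements into the sharp inequality $\wt\alpha_0(f)\ge\beta$. I would argue as follows: let $\beta=\min\{\wt\alpha(X),\frac{n+1}{d}\}$ and let $m=\lceil\beta\rceil-1$ be the largest integer with $m+1\le\beta$; wait, more carefully, I take $m$ to be the integer part so that $m< \beta\le m+1$ or $\beta=m+1$. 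If $\beta=m+1$ exactly, the $m$-Du Bois argument gives $\wt\alpha_0(f)\ge m+1=\beta$, combined with the reverse inequality this yields equality. If $m<\beta<m+1$, then $X$ is $m$-Du Bois but the threshold arguments only directly give $\wt\alpha_0(f)\ge m+1>\beta$, which would contradict the easy direction unless $\beta=m+1$; this signals that when $\beta$ is not an integer the controlling bound must come from the $\frac{n+1}{d}$ term or from a finer comparison, and reconciling these cases is the main obstacle. The cleanest resolution is to treat the two quantities in the minimum separately: the easy direction already gives $\wt\alpha_0(f)\le\frac{n+1}{d}$ and $\wt\alpha_0(f)\le\wt\alpha(X)$, so it suffices to prove $\wt\alpha_0(f)\ge\wt\alpha(X)$ whenever $\wt\alpha(X)\le\frac{n+1}{d}$ (the regime where the minimum is $\wt\alpha(X)$), and $\wt\alpha_0(f)=\frac{n+1}{d}$ whenever $\frac{n+1}{d}\le\wt\alpha(X)$. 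The first is handled by a local product structure / Thom-Sebastiani-type comparison (Theorem \ref{thm:Thom-Sebastiani for minimal exponents}) between the singularity of $f$ at $0$ and the worst singularity of $X$, and the second follows from the integrality analysis above together with the multiplicity bound \eqref{eqn:minimal exponent multiplicity bound}; carefully matching the non-integer thresholds across these two regimes is the delicate point I expect to require the most care.
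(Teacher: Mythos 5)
Your easy direction is fine and matches the paper: the upper bound $\wt\alpha_0(f)\le\frac{n+1}{d}$ is \eqref{eqn:minimal exponent multiplicity bound}, and $\wt\alpha_0(f)\le\wt\alpha(X)$ follows from the $\mathbb G_m$-torsor structure of the punctured cone together with lower semicontinuity of the minimal exponent. Your hard direction, however, has a genuine gap, and you have correctly located it yourself: the threshold results you invoke (Theorem \ref{thm:higher singularities vs minimal exponent} combined with Theorem \ref{thm:higher singularities of cone}) only see the integer values $m+1$, so when $\beta:=\min\{\wt\alpha(X),\frac{n+1}{d}\}$ is not an integer they yield only $\wt\alpha_0(f)\ge\lfloor\beta\rfloor$, which is strictly weaker than the claim. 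Your proposed fix --- a ``local product structure / Thom--Sebastiani-type comparison between the singularity of $f$ at $0$ and the worst singularity of $X$'' --- does not work as described: the cone point is a genuinely new, more degenerate singularity, not locally a Thom--Sebastiani sum involving the worst singularity of $X$, so Theorem \ref{thm:Thom-Sebastiani for minimal exponents} cannot be applied in that way, and the inequality $\wt\alpha_0(f)\ge\wt\alpha(X)$ in the regime $\wt\alpha(X)\le\frac{n+1}{d}$ (the reverse of semicontinuity) is precisely the nontrivial content that remains unproved. Note also that even your second regime $\frac{n+1}{d}\le\wt\alpha(X)$ is not closed by your integrality analysis unless $\frac{n+1}{d}\in\Z$.

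The missing idea in the paper is an amplification trick that reduces the general rational case to the integer case, but applied to an auxiliary hypersurface rather than to $X$ itself. Assuming $\wt\alpha_0(f)<\frac{n+1}{d}$, choose $N$ with $N\wt\alpha_0(f)\in\Z$ and form $F=f_1\oplus\cdots\oplus f_N$, the sum of $N$ copies of $f$ in disjoint variables; this defines a hypersurface $X_N\subset\P^{N(n+1)-1}$ which still has degree $d$. Working in an affine chart and applying Theorem \ref{thm:Thom-Sebastiani for minimal exponents} pointwise, one computes $\wt\alpha(X_N)=\wt\alpha(X)+(N-1)\wt\alpha_0(f)$, while at the origin $\wt\alpha_0(F)=N\wt\alpha_0(f)<\frac{N(n+1)}{d}$. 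Setting $m:=N\wt\alpha_0(f)-1\in\Z$, Theorem \ref{thm:higher singularities vs minimal exponent} says $\mathrm{Cone}(X_N)$ is $m$-liminal, and then Theorem \ref{thm:higher singularities of cone} is applied in the direction you did not use --- from the cone back to the projective hypersurface (it is an equivalence) --- to conclude that $X_N$ is $m$-liminal, i.e.\ $\wt\alpha(X_N)=m+1=N\wt\alpha_0(f)$ exactly. Comparing the two expressions for $\wt\alpha(X_N)$ gives $\wt\alpha_0(f)=\wt\alpha(X)$. So your skeleton (cone theorem plus exponent characterization) is the right machinery, but without the $N$-fold Thom--Sebastiani amplification to clear the denominator of $\wt\alpha_0(f)$, and without exploiting the full liminal (Du Bois and not rational) characterization to pin down an exact equality, the argument cannot reach the sharp rational value.
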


In particular, if $\wt\alpha(X)\ge\frac{n+1}{d}$, then $\wt\alpha_0(f)=\frac{n+1}{d}$.

\begin{proof}[Proof of Theorem \ref{thm:minimal exponent of cone}]
Recall that the affine cone $C(X,\O_X(1))\isom\mathrm{Cone}(X)$ is the hypersurface $\{f=0\}\subset\C^{n+1}$, and we have the equality
\begin{equation}
\label{eqn:minimal exponent of cone-origin}
\wt\alpha(\mathrm{Cone}(X)\sm\{0\})=\wt\alpha(X)
\end{equation}
since $\mathrm{Cone}(X)\sm\{0\}$ is a $\mathbb G_m$-torsor over $X$. From the lower semicontinuity of the minimal exponent (see \cite[Theorem E(2)]{MP20}), we have $\wt\alpha_0(f)\le\wt\alpha(X)$. Combined with \eqref{eqn:minimal exponent multiplicity bound}, we have
$$
\wt\alpha_0(f)\le\min \left\{\wt\alpha(X),\frac{n+1}{d}\right\}.
$$

When $\wt\alpha_0(f)=\frac{n+1}{d}$, the equality holds. Hence, it suffices to prove $\wt\alpha_0(f)=\wt\alpha(X)$ when $\wt\alpha_0(f)<\frac{n+1}{d}$. Recall that $\wt\alpha_0(f)\in \Q$ by Kashiwara's rationality theorem \cite{Kashiwara76}, so there exists a sufficiently divisible positive integer $N$ such that $N\wt\alpha_0(f)\in \Z$.

Consider homogeneous polynomials $f_1,\dots,f_N$ defined by
$$
f_i(x_{i,0},\dots,x_{i,n})=f(x_{i,0},\dots,x_{i,n}).
$$
Denote by
$$
F=f_1\oplus \cdots\oplus f_N\in \C[x_{i,j}]_{1\le i\le N,\; 0\le j\le n},
$$
and $X_N\subset \P^{N(n+1)-1}$ be the degree $d$ hypersurface, defined by $F=0$. Consider a singular point $p_N=[p_{i,j}]\in X_N$. Without loss of generality, assume that $p_{1,0}\neq 0$. In the affine chart $\{x_{1,0}\neq 0\}\subset \P^{N(n+1)-1}$ which contains $p_N$, the equation of $X_N$ is given by
$$
F(1,x_{i,j})_{(i,j)\neq(1,0)}=f_1(1,x_{1,1},\dots,x_{1,n})\oplus f_2\oplus\cdots\oplus f_N=0.
$$
Hence, by Thom-Sebastiani Theorem \ref{thm:Thom-Sebastiani for minimal exponents} and \eqref{eqn:minimal exponent of cone-origin}, the local minimal exponent of $X_N$ at $p_N$ is at least $\wt\alpha(X)+(N-1)\wt\alpha_0(f)$. Furthermore, this lower bound is attained at the point
$$
[p_{1,0}:\dots:p_{1,n}:0:\dots:0]\in \P^{N(n+1)-1}
$$
when $p:=[p_{1,0}:\dots:p_{1,n}]\in X$ is the point satisfying $\wt\alpha_p(X)=\wt\alpha(X)$. Therefore,
$$
\wt\alpha(X_N)=\wt\alpha(X)+(N-1)\wt\alpha_0(f).
$$

On the other hand, by Thom-Sebastiani Theorem \ref{thm:Thom-Sebastiani for minimal exponents}, we have
$$
\wt\alpha_0(F)=N\wt\alpha_0(f)<\frac{N(n+1)}{d}.
$$
Denote by $m:=\wt\alpha_0(F)-1\in \Z$. Then, Theorem \ref{thm:higher singularities vs minimal exponent} implies that $\mathrm{Cone}(X_N)$ has $m$-Du Bois singularities, but not $m$-rational singularities (in other words, has $m$-liminal singularities).
Hence, Theorem \ref{thm:higher singularities of cone} implies that $X_N$ has $m$-liminal singularities, or equivalently,
$$
\wt\alpha(X_N)=m+1=N\wt\alpha_0(f).
$$
Therefore, $\wt\alpha_0(f)=\wt\alpha(X)$.
\end{proof}

Next, we prove Theorem \ref{thm:GIT stability via minimal exponent}. Earlier works \cite{Hacking04,KL04,Lee08} prove GIT semistability (resp. stability) of non-Fano hypersurfaces with log canonical threshold $\ge \frac{n+1}{d}$ (resp. $>$), by recasting the Hilbert-Mumford criterion as pointwise inequalities for local log canonical thresholds on $X$.

A naïve substitution of the minimal exponent for the log canonical threshold fails: at a smooth point $x\in X$, Proposition \ref{prop:weight-minimal exponent inequality} cannot be applied, so one cannot verify the Hilbert–Mumford criterion from the local data on $X$ alone. We overcome this by considering both the cone point of the affine cone and the points of $X$.

\begin{proof}[Proof of Theorem \ref{thm:GIT stability via minimal exponent}]
Denote by $f=0$ the defining degree $d$ homogeneous polynomial of $X$. By Theorem \ref{thm:minimal exponent of cone}, if $\wt\alpha(X)\ge \frac{n+1}{d}$, then $\wt\alpha_0(f)=\frac{n+1}{d}$. Applying Propositions \ref{prop:Hilbert-Mumford criterion} and \ref{prop:weight-minimal exponent inequality}, this implies that $X$ is GIT semistable.

Now, assume $\wt\alpha(X)>\frac{n+1}{d}$. It remains to prove that $X$ is GIT stable. We argue by contradiction: suppose that $X$ is not GIT stable. Then, by the Hilbert-Mumford numerical criterion, there exist a nontrivial rational weight system $w=(w_0,...,w_n)$ and $g\in SL(n+1)$ such that
$$
\mathrm{wt}_w(f\circ g) \ge \frac{d}{n+1}\sum_{i=0}^n w_i.  
$$
Without loss of generality, we replace $f\circ g$ with $f$ and assume that $w_0\le w_1\le\cdots\le w_n$, not all equal.

Let $k\ge0$ be an integer such that $w_0=\dots=w_k<w_{k+1}$. Then, for any linear change of coordinates $h\in SL(k+1)\subset SL(n+1)$ within $x_0,\dots,x_k$, we have
\begin{equation}
\label{eqn:Hilbert-Mumford 0 to k}
\mathrm{wt}_w(f\circ h)=\mathrm{wt}_w(f) \ge \frac{d}{n+1}\sum_{i=0}^n w_i.  
\end{equation}

\smallskip
\noindent
\textit{Claim 1.}  $\P^{k}=\{x_{k+1}=\dots=x_n=0\}\subset X\subset \P^n$ and $X$ is smooth along $\P^k$.

For any point $x\in \P^k$, there exists $h\in SL(k+1)$ satisfying $h(x)=[1:0:\dots:0]$. We replace $f\circ h$ with $f$, and $x$ with $[1:0:\dots:0]$. By \eqref{eqn:Hilbert-Mumford 0 to k}, $x_0^d$ does not appear in $f$ with nonzero coefficients, and thus, $[1:0:\dots:0]\in X$.

We argue by contradiction: suppose $[1:0:\dots:0]\in X$ is a singular point of $X$. Equivalently, the hypersurface $\{f(1,x_1,\dots,x_n)=0\}\subset\C^n$ is singular at $(x_1,\dots,x_n)=0$. For a weight system
$$
\wt w=(\wt w_1,\dots, \wt w_n):=(w_1-w_0,\dots,w_n-w_0),
$$
we have
$$
\mathrm{wt}_{\wt w}(f(1,x_1,\dots,x_n))=\mathrm{wt}_{w}(f)-dw_0\ge \frac{d}{n+1}\sum_{i=1}^{n} \wt w_i.
$$
This implies
$$
\wt\alpha(X)\le \wt\alpha_0(f(1,x_1,\dots,x_n))\le \frac{n+1}{d}
$$
by Proposition \ref{prop:weight-minimal exponent inequality}, which is a contradiction.

\smallskip
\noindent
\textit{Claim 2.} Let $e$ be a positive integer. Denote by
$$
f_{n-e}(x_0,\dots,x_{n-e}):=f(x_0,\dots,x_{n-e},0,\dots,0)\in \C[x_0,\dots,x_{n-e}].
$$
If $(e-1)(d-1)-1\le k$, then $f_{n-e}\neq 0$, $\wt\alpha_0(f_{n-e})=\frac{n+1}{d}-e$, and $w_n=\dots=w_{n-e+1}$.

We proceed by induction on $e$. Suppose $e=1$. By Claim 1,  $X$ is smooth at $[1:0:\dots:0]$, so $x_0^{d-1}x_l$ appears in $f$ with nonzero coefficients for some $l\neq0$. Therefore,
\begin{equation}
\label{eqn:monomial d-1,1}
(d-1)w_0+w_n\ge(d-1)w_0+w_l\ge \frac{d}{n+1}\sum_{i=0}^n w_i.
\end{equation}
In particular, $d\le n+1$. Write
$$
f(x_0,\dots,x_n)=x_nF_n(x_0,\dots,x_n)+f_{n-1}(x_0,\dots,x_{n-1}).
$$
If $f_{n-1}=0$, then $f=x_nF_n$ which implies $\frac{n+1}{d}<\wt\alpha(X)\le 1$; this cannot happen, so $f_{n-1}\neq0$.

Consider a weight system
$$
w'=(w_0',\dots, w_{n-1}'):=(0,w_1-w_0,\dots,w_{n-1}-w_0).
$$
If this weight system is zero, then $w_0=\cdots=w_{n-1}$. By \eqref{eqn:Hilbert-Mumford 0 to k}, this implies that $f_{n-1}=0$. Hence, $w'$ is a nonnegative nontrivial weight system, and \eqref{eqn:monomial d-1,1} implies $d<n+1$.

Denote by $w_n':=w_n-w_0$. From \eqref{eqn:Hilbert-Mumford 0 to k} and \eqref{eqn:monomial d-1,1}, we have
$$
\mathrm{wt}_{w'}(f_{n-1})\ge\frac{d}{n+1}\sum_{i=0}^{n} w_i' \quad \text{and} \quad w_n'\ge \frac{d}{n+1}\sum_{i=0}^{n} w_i',
$$
from which we deduce
$$
\mathrm{wt}_{w'}(f_{n-1})\ge \frac{d}{n+1-d}\sum_{i=0}^{n-1} w_i'.
$$
By Proposition \ref{prop:weight-minimal exponent inequality}, this implies that $\wt \alpha_0(f_{n-1})\le\frac{n+1}{d}-1$.

On the other hand, we have
$$
\frac{n+1}{d}=\wt\alpha_0(f)\le\wt\alpha_0(x_nF_n)+\wt\alpha_0(f_{n-1})\le 1+\wt\alpha_0(f_{n-1})
$$
where the first inequality follows from \cite[Proposition 6.6]{MP20}. Therefore, $\wt \alpha_0(f_{n-1})=\frac{n+1}{d}-1$. This completes the case $e=1$.

Suppose the claim is true for $e\ge 1$. We prove for $e+1$: if $e(d-1)-1\le k$, then $f_{n-e-1}\neq 0$, $\wt\alpha_0(f_{n-e-1})=\frac{n+1}{d}-e-1$, and $w_n=\dots=w_{n-e}$. Write
$$
f=x_nF_n+\dots+x_{n-e+1}F_{n-e+1}+f_{n-e}, \quad f_{n-e}=x_{n-e}F_{n-e}+f_{n-e-1}.
$$
for some $F_{n-i}\in \C[x_0,\dots,x_{n-i}]$ for all $0\le i\le e$.

Note that $k\le n-e$, since by the induction hypothesis, we have $w_n=\dots=w_{n-e+1}$ and $w_0=\dots=w_k$ while $w$ is a nontrivial weight system. If the hypersurface $\{f_{n-e}=0\}\subset\P^{n-e}$ is singular along every point of
$$
\P^k=\{x_{k+1}=\dots=x_{n-e}=0\}\subset\P^{n-e},
$$
then $X$ is singular along
$$
\P^k\cap\{F_n=\dots=F_{n-e+1}=0\}\subset X,
$$
which is nonempty because $k\ge e$. Hence, the hypersurface $\{f_{n-e}=0\}\subset\P^{n-e}$ contains a smooth point in $\P^k$. By an appropriate linear change of coordinates within $x_0,\dots,x_k$, the smooth point becomes $[1:0:\dots:0]$. Note that this does not affect the statement of Claim 2.

As a consequence, $x_0^{d-1}x_l$ appears in $f_{n-e}$ with nonzero coefficients for some $0<l\le n-e$. Therefore, we have
$$
(d-1)w_0+w_{n-e}\ge(d-1)w_0+w_l\ge \frac{d}{n+1}\sum_{i=0}^n w_i,
$$
which implies
\begin{equation}
\label{eqn:monomial d-1,1 (2)}
w_{n-e}'\ge \frac{d}{n+1}\sum_{i=0}^n w_i',
\end{equation}
using the notation $w_i'=w_i-w_0$, as before. In particular, this implies $w_{n-e}'$ is positive and $d(e+1)\le n+1$. If $f_{n-e-1}=0$, then
$$
f=x_nF_n+\dots+x_{n-e+1}F_{n-e+1}+x_{n-e}F_{n-e}.
$$
For a singular point $p\in \{x_{n-e}=\dots=x_{n}=F_{n-e}=\dots=F_n=0\}\subset X$ of $X$, we have
$$
\frac{n+1}{d}<\wt\alpha(X)\le\wt\alpha_p(X)\le e+1.
$$
The last inequality is due to \cite[Proposition 6.6]{MP20}; at $p$, the local minimal exponent for each term $x_{n-i}F_{n-i}$ is at most $1$, for all $0\le i\le e$. This cannot happen, since $d(e+1)\le n+1$. Thus, $f_{n-e+1}\neq 0$.

Consider a weight system
$$
w^{(e+1)}=(w_0',\dots, w_{n-e-1}')=(0,w_1-w_0,\dots,w_{n-e-1}-w_0).
$$
This is a nontrivial weight system; if not, \eqref{eqn:Hilbert-Mumford 0 to k} implies that $f_{n-e-1}=0$. From this nontriviality and \eqref{eqn:monomial d-1,1 (2)}, we have $d(e+1)<n+1$.

From \eqref{eqn:Hilbert-Mumford 0 to k}, we have
$$
\mathrm{wt}_{w^{(e+1)}}(f_{n-e-1})\ge\frac{d}{n+1}\sum_{i=0}^{n} w_i'.
$$
Then, \eqref{eqn:monomial d-1,1 (2)} induces
$$
\mathrm{wt}_{w^{(e+1)}}(f_{n-e-1})\ge \frac{d}{n+1-(e+1)d}\sum_{i=0}^{n-e-1} w_i',
$$
and the equality holds only if $w_n=\dots=w_{n-e}$. By Proposition \ref{prop:weight-minimal exponent inequality}, this implies that $\wt\alpha_0(f_{n-e-1})\le\frac{n+1}{d}-e-1$.

On the other hand, we have
$$
\frac{n+1}{d}=\wt\alpha_0(f)\le\wt\alpha_0(x_nF_n)+\dots+\wt\alpha_0(x_{n-e}F_{n-e})+\wt\alpha_0(f_{n-e-1})\le e+1+\wt\alpha_0(f_{n-e-1})
$$
as in the case $e=1$. Therefore, $\wt \alpha_0(f_{n-e-1})=\frac{n+1}{d}-e-1$ and all the above inequalities are equalities. Hence, $w_n=\dots=w_{n-e}$.

\smallskip
\noindent
\textit{Claim 3.} If $(e-1)(d-1)-1\le k < e(d-1)-1$, then $\wt\alpha_0(f_{n-e})<\frac{n+1}{d}-e$.

Let $\epsilon\ll 1$ be a small positive rational number, and consider a nonnegative weight system
$$
w_\epsilon''=(w_0'',\dots, w_{n-e}''):=(0,\dots,0,w_{k+1}'-\epsilon,\dots, w_{n-e}'-\epsilon).
$$
This is nontrivial from the proof of Claim 2, and we have
$$
\mathrm{wt}_{w_\epsilon''}(f_{n-e})\ge \mathrm{wt}_{w^{(e)}}(f_{n-e})-d\epsilon\ge\frac{d}{n+1-ed}\sum_{i=0}^{n-e} w_i'-d\epsilon>\frac{d}{n+1-ed}\sum_{i=0}^{n-e} w_i''
$$
where the last inequality follows from the assumption $k< e(d-1)-1$. By Proposition \ref{prop:weight-minimal exponent inequality}, we obtain $\wt\alpha_0(f_{n-e})<\frac{n+1}{d}-e$.

Take $e=\left[\frac{k+1}{d-1}\right]+1$. Then Claim 3 contradicts Claim 2.
\end{proof}

As a byproduct, we prove a precise inversion of adjunction formula for homogeneous polynomials, which may be of independent interest.

\begin{prop}
\label{prop:adjunction minimal exponent general hyperplane}
Let $f\in \C[x_0,\dots,x_n]$ be a nonzero homogeneous polynomial of degree $d\ge 2$. For a general hyperplane $H\subset \C^{n+1}=\mathrm{Spec}\; \C[x_0,\dots,x_n]$ through the origin, we have
$$
\wt\alpha_0(f|_H)=\min\left\{\wt\alpha_0(f),\frac{n}{d}\right\}.
$$
\end{prop}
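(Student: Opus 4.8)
The plan is to deduce the formula from Theorem~\ref{thm:minimal exponent of cone} applied twice, reducing everything to a statement about general hyperplane sections of the projective hypersurface $X=\{f=0\}\subset\P^n$. Fix a general hyperplane $H=\{L=0\}$ through the origin, so $L$ is a general homogeneous linear form and $\bar H\subset\P^n$ is a general hyperplane. Then $f|_H$ is again homogeneous of degree $d\ge 2$ in the $n$ coordinates on $H\cong\C^n$, and $\{f|_H=0\}$ is the affine cone over the general hyperplane section $X':=X\cap\bar H\subset\bar H\cong\P^{n-1}$. Applying Theorem~\ref{thm:minimal exponent of cone} to $f$ (in $n+1$ variables) and to $f|_H$ (in $n$ variables) yields
$$
\wt\alpha_0(f)=\min\left\{\wt\alpha(X),\tfrac{n+1}{d}\right\},\qquad \wt\alpha_0(f|_H)=\min\left\{\wt\alpha(X'),\tfrac{n}{d}\right\}.
$$
Since $\tfrac{n}{d}<\tfrac{n+1}{d}$, the target identity $\wt\alpha_0(f|_H)=\min\{\wt\alpha_0(f),\tfrac{n}{d}\}$ becomes the purely projective assertion that $\min\{\wt\alpha(X'),\tfrac{n}{d}\}=\min\{\wt\alpha(X),\tfrac{n}{d}\}$, which I will call $(\ast)$.

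The inequality $\le$ in $(\ast)$ I would obtain directly on the cone: the minimal exponent can only decrease under restriction to a smooth hypersurface through the point, so $\wt\alpha_0(f|_H)\le\wt\alpha_0(f)$, while the multiplicity bound \eqref{eqn:minimal exponent multiplicity bound}, applied in $n$ variables to the homogeneous $f|_H$ (singular at $0$ as $d\ge 2$), gives $\wt\alpha_0(f|_H)\le\tfrac{n}{d}$; together these give $\wt\alpha_0(f|_H)\le\min\{\wt\alpha_0(f),\tfrac{n}{d}\}$. The substance is the reverse inequality $\wt\alpha(X')\ge\min\{\wt\alpha(X),\tfrac{n}{d}\}$. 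Here I would invoke Bertini to identify $\Sing(X')=\Sing(X)\cap\bar H$ and note that a general $\bar H$ meets each component of $\Sing(X)$ transversally at general points, thereby avoiding the lower-dimensional strata of higher multiplicity and worse singularity type (in particular the isolated points realizing the global minimum of $\wt\alpha$, unless $\Sing(X)$ is positive-dimensional there). At a general point $q\in\Sing(X)\cap\bar H$ one then controls $\wt\alpha_q(X')$ from below in terms of $\wt\alpha_q(X)\ge\wt\alpha(X)$ and the local multiplicity $\le d$, and takes the minimum over $q$.

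The main obstacle is exactly this sharp lower bound: the minimal exponent genuinely drops under a general hyperplane restriction, and one must prevent it from falling below the threshold $\tfrac{n}{d}$. This is where homogeneity is indispensable, and it is \emph{not} a formal consequence of a pointwise generic-restriction formula: for a non-homogeneous germ such as $z^2+x^2y$ the restriction to a general plane through the origin drops the minimal exponent from $1$ to $\tfrac{5}{6}$, strictly below the naive multiplicity cap, so no uniform local formula of the form $\min\{\wt\alpha_q,\tfrac{\dim}{\mathrm{mult}}\}$ can hold. The point I would have to establish is that homogeneity of $f$ forces the \emph{generic} transverse type of $X$ along each component of $\Sing(X)$ to be tame enough that a general $\bar H$ only ever encounters such controlled types; I expect the cleanest route is to model that generic transverse type as a cone and feed it back through Theorem~\ref{thm:minimal exponent of cone}, turning the argument into an induction on $n$. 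Verifying that this induction closes — i.e.\ that the transverse slices remain amenable to the cone formula — is the delicate step.
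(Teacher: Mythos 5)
Your reduction is exactly the paper's: apply Theorem~\ref{thm:minimal exponent of cone} in $n+1$ and in $n$ variables to convert the statement into the projective assertion $(\ast)$, and obtain the inequality $\le$ from the restriction theorem \cite[Theorem E(1)]{MP20} together with the multiplicity bound \eqref{eqn:minimal exponent multiplicity bound}. That part is complete and correct. The genuine gap is the lower bound, which you leave as an unverified plan (Bertini plus an induction on generic transverse slice types, which you yourself flag as the delicate unfinished step). The paper closes this step with a single citation: by \cite[Lemma 7.5]{MP20a}, the minimal exponent does not decrease under restriction to a general member of a basepoint-free linear system, so $\wt\alpha(X\cap \P H)\ge \wt\alpha(X)$ holds outright for a general hyperplane $\P H\subset \P^n$ — in fact with no loss and no need for the cap $\frac{n}{d}$ on the projective side. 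This generic restriction theorem is the missing ingredient in your argument; without it (or an equivalent substitute), your sketch does not constitute a proof, and the inductive scheme you propose would essentially amount to reproving it.

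Relatedly, your diagnosis of the ``main obstacle'' conflates two different restriction problems. Your example $z^2+x^2y$ shows that the minimal exponent can drop below $\wt\alpha_0$ when one restricts to a general hyperplane \emph{through a fixed singular point}; that phenomenon is real, but it is irrelevant to $(\ast)$, because the hyperplane $\P H\subset\P^n$ arising from a general $H\subset\C^{n+1}$ through the origin is a completely unconstrained general hyperplane of $\P^n$ — it is not required to pass through any point of $\Sing(X)$. The only forced incidence is at the cone point $0\in\C^{n+1}$, and the drop there is precisely what the cap $\frac{n}{d}$ in Theorem~\ref{thm:minimal exponent of cone} accounts for. Once this is separated out, no analysis of transverse singularity types along $\Sing(X)$, and no homogeneity input beyond the cone theorem itself, is needed: the generic restriction theorem of Musta\c t\u a--Popa does all the remaining work.
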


\begin{proof}
Let $X\subset \P^n$ be a hypersurface defined by $f=0$. Denote by $\P H\subset \P^n$ a projective hyperplane associated to a general hyperplane $H\subset \C^{n+1}$ through the origin. Then by \cite[Lemma 7.5]{MP20a}, we have
$$
\wt\alpha(X\cap\P H)\ge\wt\alpha(X).
$$
Applying Theorem \ref{thm:minimal exponent of cone}, we obtain
$$
\wt\alpha_0(f|_H)=\min\left\{\wt\alpha(X\cap\P H),\frac{n}{d}\right\}\ge \min\left\{\wt\alpha(X),\frac{n}{d}\right\}=\min\left\{\wt\alpha_0(f),\frac{n}{d}\right\}.
$$
On the other hand, $\wt\alpha_0(f|_H)\le\wt\alpha_0(f)$ by \cite[Theorem E(1)]{MP20} and $\wt\alpha_0(f|_H)\le\frac{n}{d}$ by \eqref{eqn:minimal exponent multiplicity bound}, which complete the proof.
\end{proof}

\subsection{From GIT stability of cubic hypersurfaces to minimal exponent}

Motivated by the conjectural equivalence between GIT stability and K-stability for cubic hypersurfaces, one expects that every GIT semistable cubic has canonical singularities -- formalized as \cite[Question 5.8]{SS17}. This is known in dimension $\le 4$, where explicit classifications of GIT polystable cubics are available. For dimension $\ge 5$, however, the complexity of the GIT problem has made it very difficult to classify GIT (semi)stable cubics or to understand their singularities. In this section, we resolve \cite[Question 5.8]{SS17} by establishing a lower bound for the minimal exponent of GIT semistable cubic hypersurfaces, which in particular implies that they have canonical singularities.

We begin by relating the minimal exponent of a hypersurface to that of the complement of a subvariety. As an application, we obtain a lower bound for the global minimal exponent of a hypersurface in terms of the dimension of its singular locus.

\begin{thm}
\label{thm:minimal exponent of complement}
Let $X\subset \P^n$ be a hypersurface of degree $d\ge 2$. For any closed subset $Z\subset X$, we have
$$
\wt\alpha(X)\ge \min\left\{\wt\alpha(X\sm Z),\frac{n-\dim Z}{d}\right\}.
$$
In particular,
$$
\wt\alpha(X)\ge\frac{n-\dim\Sing(X)}{d}.
$$
\end{thm}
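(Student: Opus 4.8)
The plan is to deduce the bound from the two results already established in this subsection---the cone formula \theoremref{thm:minimal exponent of cone} and the precise inversion of adjunction \propositionref{prop:adjunction minimal exponent general hyperplane}---by slicing $X$ with a general linear subspace of codimension exactly $\dim Z+1$, chosen so that the slice misses $Z$ altogether. First I would record the numerical identity obtained by iterating the adjunction formula. Fix $f$ homogeneous of degree $d\ge 2$ defining $X$, and for a sufficiently general linear subspace $L\subset\P^n$ of codimension $j$ with $1\le j\le n-1$ set $X_j:=X\cap L$, a degree-$d$ hypersurface in $L\isom\P^{n-j}$. The restriction $f^{(j)}:=f|_{H^{(j)}}$ of $f$ to the corresponding general codimension-$j$ linear subspace of the affine cone remains nonzero and homogeneous of degree $d\ge 2$, so \propositionref{prop:adjunction minimal exponent general hyperplane} applies at each step; iterating it $j$ times gives $\wt\alpha_0(f^{(j)})=\min\{\wt\alpha_0(f),\tfrac{n+1-j}{d}\}$. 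Applying \theoremref{thm:minimal exponent of cone} to $X$ and to $X_j$ then turns this into
\[
\min\left\{\wt\alpha(X_j),\tfrac{n+1-j}{d}\right\}=\min\left\{\wt\alpha(X),\tfrac{n+1-j}{d}\right\}.
\]

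Next I would specialize to $j=\dim Z+1$, writing $k:=\dim Z$ and assuming $k\le n-2$ (which covers the case $Z=\Sing(X)$, since a reduced hypersurface is generically smooth; the degenerate case $k=n-1$ is disposed of separately by the multiplicity bound $\wt\alpha(X)\ge\tfrac1d$). Because $\dim Z+\dim L<n$, a general $L$ of codimension $k+1$ satisfies $L\cap Z=\emptyset$, so $(X\sm Z)\cap L=X\cap L=X_{k+1}$ as sets. The crucial comparison is
\[
\wt\alpha(X_{k+1})\ge\wt\alpha(X\sm Z),
\]
which I would obtain by applying the generic hyperplane statement \cite[Lemma 7.5]{MP20a}---that a general hyperplane section does not decrease the global minimal exponent---iterated $k+1$ times to the hypersurface $X\sm Z$. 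Feeding this into the displayed identity, where $\tfrac{n+1-j}{d}=\tfrac{n-k}{d}$, yields $\min\{\wt\alpha(X),\tfrac{n-k}{d}\}\ge\min\{\wt\alpha(X\sm Z),\tfrac{n-k}{d}\}$, and a one-line case analysis according to whether $\wt\alpha(X)$ is at least $\tfrac{n-k}{d}$ upgrades this to $\wt\alpha(X)\ge\min\{\wt\alpha(X\sm Z),\tfrac{n-k}{d}\}$, as desired. The ``in particular'' statement is then immediate on taking $Z=\Sing(X)$: here $X\sm Z$ is smooth, so $\wt\alpha(X\sm Z)=+\infty$ and the bound collapses to $\wt\alpha(X)\ge\tfrac{n-\dim\Sing(X)}{d}$.

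The hard part will be justifying the comparison $\wt\alpha(X_{k+1})\ge\wt\alpha(X\sm Z)$. It runs opposite to the naive pointwise behaviour of restriction---restricting through a fixed point can only lower the minimal exponent---and instead reflects the global genericity phenomenon that a general section avoids the worst part of the singular locus; this is precisely why an iterate of the generic-section inequality, rather than a local restriction estimate, is needed. The remaining subtlety is that \cite[Lemma 7.5]{MP20a} is naturally phrased for a projective hypersurface, whereas here it must be applied to the quasi-projective $X\sm Z$; since the minimal exponent is a local analytic invariant and the lemma is a statement about generic hyperplane restriction, it applies verbatim, but this is the point I would state with the most care.
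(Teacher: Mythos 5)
Your proof is correct and essentially the same as the paper's: both arguments slice by a general linear subspace of codimension $\dim Z+1$ chosen to miss $Z$, apply \cite[Lemma 7.5]{MP20a} to get $\wt\alpha((X\sm Z)\cap \P L)\ge \wt\alpha(X\sm Z)$, and convert between the global minimal exponent and the minimal exponent of the restricted cone at the origin via \theoremref{thm:minimal exponent of cone}. The only cosmetic difference is that you route the bookkeeping through the iterated equality of \propositionref{prop:adjunction minimal exponent general hyperplane}, whereas the paper uses the one-sided restriction inequality \cite[Theorem E(1)]{MP20} directly; since that proposition precedes this theorem and is proved from the same two inputs, there is no circularity and the two arguments coincide in substance.
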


Here, $\Sing(X)$ denotes the singular locus of $X$. By Theorem \ref{thm:minimal exponent of cone} and semicontinuity of the minimal exponent, the last inequality is equivalent to the lower bound in \cite[Theorem E(3)]{MP20}. Moreover, combined with Theorem \ref{thm:minimal exponent of cone}, the first inequality recovers the bound on log canonical thresholds in \cite[Theorem 0.2]{dFEM03} for homogeneous hypersurfaces in $\A^{n+1}$.

\begin{proof}
Denote by $s=\dim Z$. Consider the affine cone $\{f=0\}\subset\C^{n+1}$, and let $L\subset \C^{n+1}$ be the intersection of $(s+1)$-general hyperplanes through the origin. Then $(X\sm Z)\cap \P L\subset \P L$ is a projective hypersurface whose affine cone is $\{f|_L=0\}\subset L$. Additionally,
$$
\wt\alpha((X\sm Z)\cap \P L)\ge \wt\alpha(X\sm Z)
$$
by \cite[Lemma 7.5]{MP20a}. Hence,
$$
\wt\alpha_0(f|_L)\ge \min\left\{\wt\alpha(X\sm Z),\frac{n-s}{d}\right\}
$$
by Theorem \ref{thm:minimal exponent of cone}. Moreover, we have $\wt\alpha_0(f)\ge\wt\alpha_0(f|_L)$ by \cite[Theorem E(1)]{MP20}, which implies that
$$
\wt\alpha(X)\ge\wt\alpha_0(f)\ge\min\left\{\wt\alpha(X\sm Z),\frac{n-s}{d}\right\}.
$$
This completes the proof.
\end{proof}

By Theorem \ref{thm:minimal exponent of complement}, the upper bound on the dimension of the singular locus yields the lower bound on the minimal exponent. In the case of a cubic hypersurface, the secant variety of its singular locus is contained in the hypersurface. We utilize this fact to obtain a bound.

\begin{lem}
\label{lem:dim sing bound for cubic}
Let $X\subset \P^n$ be a GIT semistable cubic hypersurface. Then
$$
\dim \Sing(X)\le\frac{2n-1}{3}.
$$
\end{lem}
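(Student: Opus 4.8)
\emph{Strategy.} I would argue the contrapositive through the Hilbert--Mumford criterion (Proposition~\ref{prop:Hilbert-Mumford criterion}), combining two inputs: the containment of the secant variety of $\Sing(X)$ in $X$, and a multiplicity bound along linear subspaces forced by semistability. Throughout let $f$ denote the defining cubic, and normalize weight systems so that $\sum_i w_i=0$, so that the numerical threshold becomes $\mathrm{wt}_w(f\circ g)\le 0$. For the first input, if $p,q\in\Sing(X)$ are distinct then the line $\overline{pq}$ meets $X$ with multiplicity $\ge 2$ at each of $p,q$; as $2+2>3=\deg X$, the restriction of $f$ to $\overline{pq}$ vanishes, so $\overline{pq}\subset X$. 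Varying $p,q$ gives $\mathrm{Sec}(\Sing(X))\subset X$, a proper closed subset of $\P^n$.

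For the second input, let $\Pi=\{x_{a+1}=\dots=x_n=0\}\cong\P^a$ and let $\mu=\mathrm{mult}_\Pi(X)$ be the order of vanishing of $f$ along $\Pi$. Weighting $x_0,\dots,x_a$ by $w<0$ and $x_{a+1},\dots,x_n$ by $v>0$ with $(a+1)w+(n-a)v=0$, each monomial of $f$, having $z$-degree at least $\mu$ (where $z=(x_{a+1},\dots,x_n)$), has weight $\ge\mu v+(3-\mu)w$; hence semistability forces $\mu v+(3-\mu)w\le 0$, that is
$$
\mathrm{mult}_\Pi(X)\le\frac{3(n-a)}{n+1}\qquad\text{for every linear }\Pi\cong\P^a.
$$
Taking $\mu\ge 1$ shows a GIT semistable cubic contains no $\P^a$ with $a>\frac{2n-1}{3}$; taking $\mu\ge 2$ shows it is singular along no $\P^a$ with $a>\frac{n-2}{3}$.

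It now suffices to produce a linear subspace of $X$ of dimension $\ge\dim\Sing(X)=s$, since the displayed bound then yields $s\le\frac{2n-1}{3}$. Set $\Lambda=\langle\Sing(X)\rangle$, of dimension $\ell\ge s$. If $\mathrm{Sec}(\Sing(X))=\Lambda$, then $\Lambda\subset X$ and we are done. The hard case is that $\Sing(X)$ is nondegenerate but secant-defective in $\Lambda$. Here I would use that the Hessian of a cubic is linear, that $\Sing(X)=\{x:\mathrm{Hess}_x f\cdot x=0\}$, and that for general $x\in\Sing(X)$ one has $T_x\Sing(X)=\P(\ker\mathrm{Hess}_x f)$ with $\dim\ker\mathrm{Hess}_x f\ge s+1$. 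Writing $B$ for the symmetric trilinear form with $f(u)=B(u,u,u)$, one checks $f(x+ty)=t^3 f(y)$ and $B(x,x',y)=B(x,y,y')=0$ whenever $y,y'\in\ker\mathrm{Hess}_x f$. Thus for two general points $x,x'$ and $K=\ker\mathrm{Hess}_x f\cap\ker\mathrm{Hess}_{x'}f$, of dimension $\ge 2(s+1)-(n+1)$, all polarizations with a slot in $\langle x,x'\rangle$ vanish, so $f$ restricted to $\langle x,x'\rangle\oplus K$ equals $f|_K$; equivalently $X$ meets this linear space in a cone with vertex the secant line $\overline{xx'}$ over the cubic $\{f|_K=0\}\subset\P(K)$.

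\emph{Main obstacle.} The remaining and hardest step is to convert this cone (Thom--Sebastiani) structure into either a linear subspace of $X$ of dimension $>\frac{2n-1}{3}$ or an explicit destabilizing one-parameter subgroup, by weighting the vertex directions positively against the complementary ones while ruling out low-weight monomials in the complementary variables. Equivalently, one must show that the singular locus of a semistable cubic cannot remain secant-defective; this is exactly where the bound $a\le\frac{n-2}{3}$ on linear subspaces \emph{inside} $\Sing(X)$ is fed back in, matching the arithmetic $\frac{2n-1}{3}=2\cdot\frac{n-2}{3}+1$ that quantifies the maximal admissible secant defect. I expect this balancing of weights against the secant containment to be the technical heart of the proof.
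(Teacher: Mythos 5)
Your second input is correct and is exactly the paper's destabilizing computation: your bound $\mathrm{mult}_\Pi(X)\le\frac{3(n-a)}{n+1}$ for a linear $\Pi\cong\P^a$, specialized to $\mu=1$, is the paper's weight system $w_0=\dots=w_s=-1$, $w_{s+1}=\dots=w_n=2+\epsilon$ with $\sum_i w_i=0$, which forces $s+1\le 2(n-s)$, i.e.\ $s\le\frac{2n-1}{3}$, once one knows $f\in(x_{s+1},\dots,x_n)$. The genuine gap is in your first half: to feed the weight bound you need an $s$-dimensional \emph{linear} subspace of $X$ with $s=\dim\Sing(X)$, and you try to extract one from the span $\Lambda=\langle\Sing(X)\rangle$ via the secant variety. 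When $\mathrm{Sec}(\Sing(X))\subsetneq\Lambda$ you launch into the Hessian/trilinear-form analysis and, by your own admission, leave the ``main obstacle'' unresolved: the cone structure over $f|_K$ yields neither a linear subspace of dimension $>\frac{2n-1}{3}$ nor an explicit destabilizing one-parameter subgroup as written, and the proposed feedback of the bound $a\le\frac{n-2}{3}$ is never carried out. So the proof does not close.

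The hard case is in fact unnecessary, and avoiding it is the point of the paper's proof. Instead of the span, take a smooth point $x$ of a top-dimensional component of $\Sing(X)$ and its embedded tangent plane $T\cong\P^s$. Every line in $T$ through $x$ is a limit of secant lines $\overline{xp}$ with $p\in\Sing(X)$, $p\to x$; since all secant lines of $\Sing(X)$ lie in $X$ (your own first input: intersection multiplicity $2+2>3=\deg X$) and $X$ is Zariski closed, the entire plane $T$ lies in $X$. This produces a linear $\P^s\subset X$ unconditionally --- no secant-defectivity dichotomy, no Hessian analysis --- after which your $\mu=1$, $a=s$ computation finishes the lemma. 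This is precisely what the paper does: it chooses coordinates so that the tangent plane to $\Sing(X)$ at $x$ is $\{x_{s+1}=\dots=x_n=0\}$, concludes $f\in(x_{s+1},\dots,x_n)$ from the secant containment, and destabilizes with the weights above.
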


\begin{proof}
Note that a cubic hypersurface contains the secant variety of its singular locus. Let $s:=\dim \Sing(X)$, and pick a smooth point $x\in \Sing(X)$ with the reduced structure. After a linear change of coordinates, we may assume that the tangent plane to $\Sing(X)$ at $x$ is 
$$
\P^s=\{x_{s+1}=\dots=x_n=0\}\subset \P^n.
$$
Since $\P^s\subset X$, the defining cubic polynomial $f$ of $X$ is contained in the ideal $(x_{s+1},\dots,x_n)$.

Consider the weight system $w=(w_0,\dots,w_n)$ defined by
$$
w_0=\dots=w_s=-1,\quad w_{s+1}=\dots=w_n=2+\epsilon,
$$
for some $\epsilon \in \Q$ such that $\sum_iw_i=0$. Equivalently, this condition is $(n-s)(2+\epsilon)=s+1$. If $\epsilon>0$, then $\mathrm{wt}_w(f)>0$, which contradicts GIT semistability of $X$ by the Hilbert-Mumford numerical criterion. Hence, we must have
$$
(n-s)\epsilon=s+1-2(n-s)\le0,
$$
which completes the proof.
\end{proof}

Next, we prove geometric obstructions for the GIT semistability of cubics, crucial for the proof of Theorem \ref{thm:cubic converse via minimal exponent}.

\begin{lem}
\label{lem:geometric obstruction to cubic GIT}
Let $n\ge 6$, and let $X\subset \P^n$ be a cubic hypersurface. If either of the following holds:
\begin{enumerate}
    \item $X$ contains an $(n-2)$-plane, i.e. $\P^{n-2} \subset X$; or
    \item $\Sing(X)$ contains a hypersurface $Z\subset \P^{n-3}\subset \P^n$ of an $(n-3)$-plane,
\end{enumerate}
then $X$ is not GIT semistable.
\end{lem}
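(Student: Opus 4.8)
The plan is to verify non-semistability through the Hilbert--Mumford numerical criterion (Proposition \ref{prop:Hilbert-Mumford criterion}): since $f$ is homogeneous and the criterion reduces to nonnegative systems, it suffices to exhibit a nontrivial weight system $w=(w_0,\dots,w_n)$ with $\sum_i w_i=0$ for which $\mathrm{wt}_w(f)>0$. The guiding principle is that both hypotheses force the defining cubic $f$ to lie in a high power of a coordinate ideal; splitting the variables into a ``light'' group of weight $-1$ and a ``heavy'' group of positive weight (normalized so $\sum_i w_i=0$) then yields a strictly positive weight exactly once $n\ge 6$, which is where the numerical threshold enters.

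\textbf{Case (1).} After an element of $SL(n+1)$ I may assume $\P^{n-2}=\{x_{n-1}=x_n=0\}\subset X$, so $f\in(x_{n-1},x_n)$ and every monomial of $f$ contains at least one of $x_{n-1},x_n$. Assigning weight $-1$ to $x_0,\dots,x_{n-2}$ and $\tfrac{n-1}{2}$ to $x_{n-1},x_n$ gives $\sum_i w_i=0$ and $\mathrm{wt}_w(f)\ge 2(-1)+\tfrac{n-1}{2}=\tfrac{n-5}{2}>0$ for $n\ge 6$, contradicting semistability.

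\textbf{Case (2).} I choose coordinates so that the $(n-3)$-plane is $P=\{x_{n-2}=x_{n-1}=x_n=0\}$, with $Z=\{g=0\}\subset P$ for a reduced form $g\in\C[x_0,\dots,x_{n-3}]$ of degree $e\ge 1$. Writing $f=\sum_{k=0}^{3}f^{(k)}$ by the degree $k$ in the transverse variables $(x_{n-2},x_{n-1},x_n)$, the condition that $X$ be singular along $Z$ translates into the structural constraints that $f^{(0)}=A(y)$ vanishes to order $\ge 2$ along $Z$ and that each coefficient $B_i$ of $f^{(1)}=\sum_i B_i(y)\,x_{n-2+i}$ lies in $(g)$. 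Since a cubic singular along a degree-$e$ hypersurface vanishes along its secant variety, and $\mathrm{Sec}(Z)=P$ once $e\ge 2$, I obtain $A=0$ for $e\ge 2$; comparing degrees, $B_i\in(g)$ forces $B_i=0$ when $e\ge 3$ and $B_i=c_i g$ when $e=2$. This exhausts Case (2) in three regimes. When $e=1$, $Z$ is a linear $\P^{n-4}$ and $X$ is singular along it, so $f\in I_{\P^{n-4}}^2=(x_{n-3},\dots,x_n)^2$; weights $-1$ on the remaining $n-3$ variables and $\tfrac{n-3}{4}$ on $x_{n-3},\dots,x_n$ give $\mathrm{wt}_w(f)\ge -1+2\cdot\tfrac{n-3}{4}=\tfrac{n-5}{2}>0$. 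When $e\ge 3$, one has $f^{(0)}=f^{(1)}=0$, so $X$ is singular along all of $P$ and $f\in(x_{n-2},x_{n-1},x_n)^2$; weights $-1$ on $x_0,\dots,x_{n-3}$ and $\tfrac{n-2}{3}$ on the three transverse variables give $\mathrm{wt}_w(f)\ge -1+2\cdot\tfrac{n-2}{3}=\tfrac{2n-7}{3}>0$.

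The remaining regime $e=2$ is the main obstacle: here $f^{(1)}=g(y)\,\ell(z)$ with $\ell\ne 0$, so $X$ is singular only along the quadric $Z$ and along no coordinate linear space of the expected dimension, and a symmetric two-group weight fails (the term $g\,\ell$ has weight only $\tfrac{n-8}{3}<0$ at $n=6$). My plan is to break the symmetry: after a coordinate change in the transverse variables I may take $\ell=x_{n-2}$, and I assign weight $-1$ to $x_0,\dots,x_{n-3}$, a large weight $p$ to $x_{n-2}$, and a moderate weight $q$ to $x_{n-1},x_n$, normalized by $p=(n-2)-2q$. The three types of monomials $g\,x_{n-2}$, $f^{(2)}$, and $f^{(3)}$ then carry weights at least $-2+p$, $-1+2q$, and $3q$ respectively, and choosing any $q$ with $\tfrac12<q<\min\!\big(\tfrac{n-4}{2},\tfrac{n-2}{3}\big)$ (for instance $q=\tfrac34$, $p=\tfrac52$ when $n=6$) makes all three positive. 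I expect the only delicate points to be confirming that these inequalities admit a common solution for every $n\ge 6$ and that the structural reduction (chiefly $A=0$ and $B_i\in(g)$, hence the exhaustive trichotomy in $e$) is airtight; the weight bookkeeping itself is routine.
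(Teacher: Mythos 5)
Your proposal is correct and follows essentially the same route as the paper: the Hilbert--Mumford criterion with explicit destabilizing weight systems, the secant-variety trick to force $f^{(0)}=0$ for $e\ge 2$ and $f\in(x_{n-3},\dots,x_n)^2$ for $e=1$, and in the delicate $e=2$ regime your asymmetric weights $(-1,\dots,-1,p,q,q)$ with $p=(n-2)-2q$ contain the paper's choice exactly (the paper uses $w=(-1,\dots,-1,\tfrac12+\epsilon,\tfrac12+\epsilon,2+\epsilon)$ with $\epsilon=\tfrac{n-5}{3}$, which is your system at $q=\tfrac{2n-7}{6}$, a value lying in your admissible window $\tfrac12<q<\min\{\tfrac{n-4}{2},\tfrac{n-2}{3}\}$ for all $n\ge 6$). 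The only loose end is your unjustified assertion that $\ell\neq 0$ when $e=2$: the $c_i$ may all vanish, but then $f\in(x_{n-2},x_{n-1},x_n)^2$ and your $e\ge 3$ computation applies verbatim, so the trichotomy closes with one added sentence (the paper sidesteps this by writing the normal form $f=cx_ng+h$ with $c$ possibly zero and destabilizing both cases with the single weight system above).
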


\begin{proof} If $X$ contains an $(n-2)$-plane, then after a linear change of coordinates, we may assume that the defining cubic $f$ of $X$ is contained in the ideal $(x_{n-1},x_n)$. Then the weight system $w$ with $s=n-2$ given in the proof of Lemma \ref{lem:dim sing bound for cubic} shows that $X$ is not GIT semistable.

Assume condition (2). After a linear change of coordinates, the $(n-3)$-plane becomes
$$
\{x_{n-2}=x_{n-1}=x_n=0\}\subset \P^n.
$$
Let $g(x_0,\dots,x_{n-3})$ be the defining homogeneous polynomial of $Z$ in $\P^{n-3}$.

If $Z$ is a hyperplane, then after a linear change of coordinates, we may assume $g=x_{n-3}$. Hence, $X$ is singular along
$$
\{x_{n-3}=x_{n-2}=x_{n-1}=x_n=0\}\subset \P^n
$$
which implies that $f\in (x_{n-3},x_{n-2},x_{n-1},x_n)^2$. Consider the weight system $w=(w_0,\dots,w_n)$ defined by
$$
w_0=\dots=w_{n-4}=-2,\quad w_{n-3}=\dots=w_n=1+\epsilon,
$$
for some $\epsilon \in \Q$ such that $\sum_iw_i=0$. Since $n\ge 6$, we have $\epsilon>0$ and $\mathrm{wt}_w(f)>0$. Therefore, $X$ is not GIT semistable.

If $Z$ is not a hyperplane, then the secant variety of $Z$ is $\P^{n-3}$. This implies that $\P^{n-3}\subset X$, or equivalently
$$
f\in (x_{n-2},x_{n-1},x_n).
$$
Then,
$$
f=x_{n-2}g_{n-2}+x_{n-1}g_{n-1}+x_ng_n+h
$$
for some polynomials $g_{n-2},g_{n-1},g_n\in \C[x_0,\dots,x_{n-3}]$ and $h\in (x_{n-2},x_{n-1},x_n)^2$. Since $Z\subset \Sing(X)$, we have
$$
\frac{\partial f}{\partial x_{n-2}}=\frac{\partial f}{\partial x_{n-1}}=\frac{\partial f}{\partial x_{n}}=0\quad \mathrm{on}\quad Z.
$$
This implies that $g_{n-2}=g_{n-1}=g_{n}=0$ on $Z\subset \P^{n-3}$. Note that $g$ is not linear and $g_{n-2}$, $g_{n-1}$, and $g_{n}$ are quadratic. Hence, $g_{n-2}$, $g_{n-1}$, and $g_{n}$ are constant multiples of $g$. After a linear coordinate change within $x_{n-2},x_{n-1},x_n$, we have
$$
f=cx_ng+h,\quad h\in (x_{n-2},x_{n-1},x_n)^2
$$
for some $c\in \C$. Consider the weight system $w=(w_0,\dots,w_n)$ defined by
$$
w_0=\dots=w_{n-3}=-1,\quad w_{n-2}=w_{n-1}=\frac{1}{2}+\epsilon,\quad w_n=2+\epsilon,
$$
for some $\epsilon \in \Q$ such that $\sum_iw_i=0$. Since $n\ge 6$, we have $\epsilon>0$ and $\mathrm{wt}_w(f)>0$. Therefore, $X$ is not GIT semistable.
\end{proof}

Recall that the rank of the Hessian matrix -- equivalently, the rank of the quadratic part of the defining equation -- of a hypersurface singularity is independent of the chosen local equation. If this rank is $r$, then the singularity (of dimension $n-1$) is, up to analytic change of coordinates, locally equivalent to the hypersurface
$$
\{x_1^2+\dots+x_r^2+g(x_{r+1},\dots,x_n)=0\}\subset \C^{n}
$$
near the origin, where $g$ has no quadratic terms. By Thom-Sebastiani Theorem \ref{thm:Thom-Sebastiani for minimal exponents}, the minimal exponent is at least $\frac{r}{2}$. In particular, if the minimal exponent of the singularity is $<\frac{7}{4}$, then $r\le 3$. We further prove that, under the assumption of GIT semistability for cubic fivefolds and higher, this rank is in fact at most $2$.

\begin{lem}
\label{lem:cubic Hessian rank}
Let $n\ge 6$ and let $X\subset\P^n$ be a GIT semistable cubic hypersurface. For a singular point $x\in X$ such that 
$$
\wt\alpha_x(X)<\frac{7}{4},
$$
the rank of the Hessian matrix of the singularity at $x\in X$ is at most $2$.
\end{lem}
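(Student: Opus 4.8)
The discussion preceding the lemma already shows $r\le 3$, so the plan is to rule out $r=3$. Assume for contradiction that $X$ is GIT semistable with Hessian rank $r=3$ at $x$. Since $r>0$ the point $x$ is a double point of the cubic $X$, so after a projective linear change of coordinates I would place $x$ at $[1:0:\cdots:0]$ and diagonalize the rank-$3$ tangent quadric, bringing $f$ to the normal form
$$
f = x_0\,(x_1^2+x_2^2+x_3^2) + C(x_1,\dots,x_n),
$$
with $C$ homogeneous of degree $3$ and free of $x_0$; in the chart $x_0=1$ the local equation of $X$ at $x$ is $x_1^2+x_2^2+x_3^2+C$. Write $\bar C(x_4,\dots,x_n):=C(0,0,0,x_4,\dots,x_n)$ for the residual cubic. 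The whole argument hinges on forcing $\bar C=0$.

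First I would show that the hypothesis $\wt\alpha_x(X)<\tfrac74$ forces $\bar C=0$. I would degenerate the local equation by the one-parameter subgroup giving $x_1,x_2,x_3$ weight $3$ and $x_4,\dots,x_n$ weight $2$: the weighted-homogeneous limit of $x_1^2+x_2^2+x_3^2+C$ is exactly $x_1^2+x_2^2+x_3^2+\bar C$, since every remaining monomial of $C$ acquires strictly larger weighted degree. By lower semicontinuity of the minimal exponent (\cite[Theorem E(2)]{MP20}) one gets $\wt\alpha_0(x_1^2+x_2^2+x_3^2+\bar C)\le \wt\alpha_x(X)$, and by Thom--Sebastiani (Theorem~\ref{thm:Thom-Sebastiani for minimal exponents}), the two summands being in disjoint variables,
$$
\wt\alpha_0\!\left(x_1^2+x_2^2+x_3^2+\bar C\right)=\tfrac32+\wt\alpha_0(\bar C).
$$
If $\bar C\ne 0$, then $\bar C$ is a nonzero cubic in the $n-3\ge 3$ variables $x_4,\dots,x_n$, and combining Theorems~\ref{thm:minimal exponent of cone} and \ref{thm:minimal exponent of complement} yields $\wt\alpha_0(\bar C)\ge\tfrac13$ (the singular locus of a cubic in $\P^{n-4}$ has dimension at most $n-5$, and $\tfrac{n-3}{3}\ge\tfrac13$). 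This gives $\wt\alpha_x(X)\ge\tfrac32+\tfrac13=\tfrac{11}{6}>\tfrac74$, a contradiction. Hence $\bar C=0$, i.e. $C\in(x_1,x_2,x_3)$, so every monomial of $f$ contains a factor from $\{x_1,x_2,x_3\}$.

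Then I would convert this into GIT instability; this is where $n\ge 6$ is used. Consider the weight system $w$ with $w_1=w_2=w_3=\beta>0$, $w_4=\cdots=w_n=\gamma$, and $w_0=-3\beta-(n-3)\gamma$ so that $\sum_i w_i=0$. Every monomial of $f$ is either $x_0x_i^2$ ($i\in\{1,2,3\}$), of weight $w_0+2\beta=-\beta-(n-3)\gamma$, or a cubic monomial of $C$ having $j\in\{1,2,3\}$ factors among $x_1,x_2,x_3$, of weight $j\beta+(3-j)\gamma$. All of these are strictly positive as soon as
$$
-\tfrac{\beta}{2}<\gamma<-\tfrac{\beta}{\,n-3\,},
$$
and this interval is nonempty precisely when $n-3>2$, i.e. $n\ge 6$. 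Choosing such a $\gamma$ gives $\mathrm{wt}_w(f)>0=\tfrac{d}{n+1}\sum_i w_i$, so by the Hilbert--Mumford criterion (Proposition~\ref{prop:Hilbert-Mumford criterion}) $X$ is not GIT semistable, contradicting the assumption. Therefore $r\le 2$.

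The hard part is the first step: one must isolate the exact contribution $\tfrac32$ from the rank-$3$ quadratic part and the universal lower bound $\tfrac13$ from the residual cubic, and observe that their sum already exceeds the threshold $\tfrac74$. The clean device for separating these contributions is the weighted-homogeneous degeneration together with Thom--Sebastiani; the numerical fact $\tfrac32<\tfrac74<\tfrac32+\tfrac13$ is exactly what collapses the rank-$3$ case while leaving rank $\le 2$ permissible.
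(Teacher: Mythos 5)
Your proof is correct, and its second half takes a genuinely different, shorter route than the paper's. Both arguments begin identically: the normal form $f=x_0(x_1^2+x_2^2+x_3^2)+C$ with $C$ free of $x_0$, then a weighted $\mathbb G_m$-degeneration of the local equation to $x_1^2+x_2^2+x_3^2+\bar C$ and the Thom--Sebastiani count $\tfrac32+\tfrac13=\tfrac{11}{6}>\tfrac74$ forcing $\bar C=0$. (The paper first completes squares $y_j=x_j+q_j/2$, but as your weight bookkeeping shows this is unnecessary for this step, since the cross terms linear in the rank variables already have weighted degree $7>6$; also, the paper gets $\wt\alpha_0(\bar C)\ge\tfrac13$ more directly from $\wt\alpha\ge\lct\ge 1/\mathrm{mult}_0$, whereas your detour through Theorems \ref{thm:minimal exponent of cone} and \ref{thm:minimal exponent of complement} works but is heavier.) After this point the paper does substantially more: a second, square-completed degeneration forces $q_0^2+q_1^2+q_2^2=0$ for the quadrics $q_j$ multiplying the rank variables --- this is where the threshold $\tfrac32+\tfrac14=\tfrac74$ enters --- followed by a case analysis in which proportional $q_j$'s are excluded by Lemma \ref{lem:geometric obstruction to cubic GIT}(2) and otherwise $q_0$ has rank $2$, the $q_j$ lie in two variables, and a bespoke weight system destabilizes. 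You bypass all of this: the membership $C\in(x_1,x_2,x_3)$ alone destabilizes via $w=\bigl(-3\beta-(n-3)\gamma,\beta,\beta,\beta,\gamma,\dots,\gamma\bigr)$ with $\gamma\in\bigl(-\tfrac{\beta}{2},-\tfrac{\beta}{n-3}\bigr)$, an interval that is nonempty exactly when $n\ge 6$, and your monomial check is complete because the normal form guarantees the only monomials of $f$ are $x_0x_i^2$ (weight $-\beta-(n-3)\gamma>0$) and monomials of $C$ with $j\ge 1$ factors among $x_1,x_2,x_3$ (weight $j\beta+(3-j)\gamma\ge\beta+2\gamma>0$). Two consequences of your route are worth noting: since the hypothesis $\wt\alpha_x(X)<\tfrac74$ is used only in the first degeneration, your argument actually proves the lemma under the weaker assumption $\wt\alpha_x(X)<\tfrac{11}{6}$; and it avoids Lemma \ref{lem:geometric obstruction to cubic GIT} entirely, at the cost of not extracting the finer structural identity $\sum_j q_j^2=0$ (which the paper does not use elsewhere anyway).
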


\begin{proof}
We argue by contradiction: suppose the rank is $3$. Without loss of generality, we assume $x=[0:\dots:0:1]$, and $f(x_0,\dots,x_{n-1},1)$ has quadratic part of rank $3$, where $f$ is the defining homogeneous cubic polynomial of $X$. After a linear change of coordinates, we have the following local defining polynomial $P$ at $x$:
$$
P:=f(x_0,\dots,x_{n-1},1)=x_{0}^2+x_{1}^2+x_{2}^2+f(x_0,\dots,x_{n-1},0).
$$
Then there exist homogeneous quadratics $q_0,q_{1},q_{2}$ and a homogeneous cubic $g$ in $\C[x_3,\dots,x_{n-1}]$ such that
$$
F(x_0,\dots,x_{n-1}):=f(x_0\dots,x_{n-1},0)-x_{0}q_0-x_{1}q_1-x_{2}q_2-g\in (x_{0},x_{1},x_{2})^2.
$$
Denote by $y_j:=x_{j}+\frac{q_j}{2}$ for $0\le j\le 2$. With this change of coordinates, we have
$$
P=y_0^2+y_1^2+y_2^2-\frac{q_0^2}{4}-\frac{q_1^2}{4}-\frac{q_2^2}{4}+g+F(y_0-\frac{q_0}{2},y_1-\frac{q_1}{2},y_2-\frac{q_2}{2},x_3,\dots,x_{n-1}).
$$

Define a $\mathbb G_m$-action on the coordinates as
$$
t\cdot x_i=t^2x_i\;\;(3\le i\le n-1),\quad t\cdot y_j=t^3y_j\;\;(0\le j\le 2).
$$
Taking the limit $t\to 0$, we obtain a $\mathbb G_m$-equivariant degeneration of $P$ to
$$
y_0^2+y_1^2+y_2^2+g.
$$
If $g\neq 0$, then by Thom-Sebastiani Theorem \ref{thm:Thom-Sebastiani for minimal exponents} and the lower semicontinuity of the minimal exponent \cite[Theorem E(2)]{MP20}, we have 
$$
\wt\alpha_0(P)\ge\wt\alpha_0(y_0^2+y_1^2+y_2^2+g)=\frac{3}{2}+\wt\alpha_0(g)\ge\frac{3}{2}+\frac{1}{3}.
$$
The last inequality follows from \cite[Proposition 9.5.13]{LazarsfeldII}, that the log canonical threshold of the divisor $\{g=0\}$ at the origin is at least $\frac{1}{\mathrm{mult}_0(g)}$. This contradicts $\wt\alpha_0(P)=\wt\alpha_x(X)<\frac{7}{4}$, and thus, $g=0$.

Next, define a $\mathbb G_m$-action on the coordinates as
$$
t\cdot x_i=tx_i\;\;(3\le i\le n-1),\quad t\cdot y_j=t^2y_j\;\;(0\le j\le 2).
$$
Taking the limit $t\to 0$, we obtain a $\mathbb G_m$-equivariant degeneration of $P$ to
$$
y_0^2+y_1^2+y_2^2-\frac{q_0^2}{4}-\frac{q_1^2}{4}-\frac{q_2^2}{4}.
$$
As above, if $q_0^2+q_1^2+q_2^2\neq0$, then we have
$$
\wt\alpha_0(P)\ge \frac{3}{2}+\wt\alpha_0(q_0^2+q_1^2+q_2^2) \ge \frac{3}{2}+\frac{1}{4}.
$$
Thus, $q_0^2+q_1^2+q_2^2=0$.

In summary, we have
$$
f=x_{0}q_0+x_{1}q_1+x_{2}q_2+h, \quad h\in (x_{0},x_{1},x_{2})^2
$$
such that $q_0^2+q_1^2+q_2^2=0$.

If $q_0,q_1,q_2$ are proportional, that is, constant multiples of a quadratic $q\in \C[x_3,\dots,x_{n-1}]$, then $\Sing(X)$ contains
$$
\{x_0=x_1=x_2=q=0\}\subset \P^n.
$$
This is condition (2) in Lemma \ref{lem:geometric obstruction to cubic GIT}, hence a contradiction.

From $(q_1+\sqrt{-1}q_2)(q_1-\sqrt{-1}q_2)=-q_0^2$, the rank of the quadratic polynomial $q_0$ is $2$. Indeed, if $q_0=0$, then $q_1$ is a constant multiple of $q_2$. If $q_0$ has rank greater than $2$, then $q_0$ is an irreducible polynomial. This implies that $q_1$ and $q_2$ are constant multiples of $q_0$. If $q_0$ has rank $1$, then by a linear change of coordinates, we may assume $q_0=x_3^2$, which implies that $q_1$ and $q_2$ are constant multiples of $x_3^2$.

After a linear change of coordinates within $x_3,\dots,x_{n-1}$, we may assume $q_0=x_3x_4$ and
$$
(q_1+\sqrt{-1}q_2)(q_1-\sqrt{-1}q_2)=-x_3^2x_4^2.
$$
This implies that $q_1,q_2\in \C[x_3,x_4]$, and thus, $q_0,q_1,q_2$ are quadratic polynomials in variables $x_3$ and $x_4$. Consider the weight system $w=(w_0,\dots,w_n)$ defined by
$$
w_0=w_1=w_2=1+\epsilon,\quad w_3=w_4=-\frac{1}{2},\quad w_5=\dots=w_n=-2,
$$
for some $\epsilon \in \Q$ such that $\sum_iw_i=0$. Since $n\ge 6$, we have $\epsilon>0$ and $\mathrm{wt}_w(f)>0$. Therefore, $X$ is not GIT semistable, which is a contradiction.
\end{proof}

Lastly, before proving Theorem \ref{thm:cubic converse via minimal exponent}, we relate the minimal exponent with the minimal log discrepancy. Following \cite[Section 2.1]{Kollar13}, the minimal log discrepancy of a pair $(X,\Delta)$ over an irreducible subvariety $W\subset X$ is defined as
$$
\mathrm{mld}(W;X,\Delta):= \min_E\{1+a(E;X,\Delta) : \mathrm{center}_X(E)= W\}
$$
where the minimum is taken every irreducible divisor $E$ of $Y$ for every birational morphism $\mu:Y\to X$ with $\mu(E)=W$; here, $a(E;X,\Delta)$ is the discrepancy of $E$ with respect to the pair $(X,\Delta)$. If $\Delta$ is empty, we omit this.

\begin{prop}
\label{prop:minimal exponent vs mld}
Let $X$ be a variety with hypersurface singularities and $W\subset \Sing(X)$ be an irreducible subvariety in the singular locus. If $\wt\alpha(X)> 1+\frac{k}{2}$ for a nonnegative integer $k$, then the minimal log discrepancy of $X$ over $W$ satisfies
$$
\mathrm{mld}(W;X)\ge k+1.
$$
In particular, if $\wt\alpha(X)>\frac{3}{2}$, then $X$ has terminal singularities.
\end{prop}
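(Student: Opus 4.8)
The plan is to pass from the intrinsic log discrepancies of $X$ to the discrepancies of a pair on a smooth ambient space, where the minimal exponent can be controlled valuatively. Working locally, write $X=\{f=0\}\subset Y$ as a reduced hypersurface in a smooth variety $Y$. Since $\wt\alpha(X)>1+\tfrac k2\ge 1$, Theorem \ref{thm:higher singularities vs minimal exponent} shows that $X$ has rational — equivalently, for a Gorenstein hypersurface, canonical — singularities; in particular $X$ is normal, so the minimal log discrepancies are defined. By precise inversion of adjunction for hypersurfaces (Ein--Musta\c{t}\u{a}--Yasuda), one has
\[
\mathrm{mld}(W;X)=\mathrm{mld}(W;Y,X)=\min_E\bigl(A_Y(E)-\mathrm{ord}_E(f)\bigr),
\]
the minimum running over all divisorial valuations $E$ over $Y$ with $\mathrm{center}_Y(E)=W$, where $A_Y(E)$ is the log discrepancy of $E$ over the smooth $Y$. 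Thus it suffices to bound $A_Y(E)-\mathrm{ord}_E(f)$ from below for every such $E$.

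Fix a divisorial valuation $E$ with $\mathrm{center}_Y(E)=W\subset\Sing(X)$. The estimate combines two inputs. First, a multiplicity bound: since $X$ is singular along $W$, the equation $f$ lies in the square of the maximal ideal of the regular local ring $\O_{Y,\eta_W}$ at the generic point, and $E$ dominates $\O_{Y,\eta_W}$, so $\mathrm{ord}_E(f)\ge 2$. Second, a weighted bound for the minimal exponent in the spirit of Proposition \ref{prop:weight-minimal exponent inequality}, now for an \emph{arbitrary} divisorial valuation centered in the singular locus:
\[
\wt\alpha(X)\ \le\ \frac{A_Y(E)}{\mathrm{ord}_E(f)}.
\]
Granting these, and using $\wt\alpha(X)\ge 1$, I compute
\[
A_Y(E)-\mathrm{ord}_E(f)\ \ge\ \bigl(\wt\alpha(X)-1\bigr)\,\mathrm{ord}_E(f)\ \ge\ 2\bigl(\wt\alpha(X)-1\bigr)\ >\ 2\cdot\tfrac k2=k.
\]
As $A_Y(E)$ and $\mathrm{ord}_E(f)$ are integers, the left-hand side is an integer strictly greater than $k$, hence at least $k+1$. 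Taking the minimum over $E$ gives $\mathrm{mld}(W;X)\ge k+1$.

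The terminal case is the specialization $k=1$: then $\mathrm{mld}(W;X)\ge 2$ for every $W\subset\Sing(X)$, so every exceptional divisor $E$ with $\mathrm{center}_X(E)\subset\Sing(X)$ satisfies $1+a(E;X)\ge\mathrm{mld}(\mathrm{center}_X(E);X)\ge 2$, i.e. $a(E;X)\ge 1>0$; since $X$ is smooth away from $\Sing(X)$, all other exceptional divisors also have positive discrepancy, and $X$ is terminal. The main obstacle is the second input: Proposition \ref{prop:weight-minimal exponent inequality} supplies $\wt\alpha\le A_Y(E)/\mathrm{ord}_E(f)$ only for monomial (weight) valuations, whereas computing $\mathrm{mld}$ genuinely requires \emph{all} divisorial valuations centered on $W$. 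I would establish the general case by realizing $E$ as a monomial valuation on a suitable smooth toroidal model $\mu:Y'\to Y$ and transporting the weighted estimate back, the delicate point being that $\wt\alpha$ is a microlocal rather than purely valuative invariant, so its behavior under $\mu$ — unlike that of the log canonical threshold — must be tracked carefully. Note that the hypothesis $W\subset\Sing(X)$, which forces $\mathrm{ord}_E(f)\ge 2$, is precisely what upgrades the coarse lct bound to the refined inequality needed here.
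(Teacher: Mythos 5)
There is a genuine gap, located exactly at the step you flagged as deferred: the valuative inequality
$$
\wt\alpha(X)\ \le\ \frac{A_Y(E)}{\mathrm{ord}_E(f)}
$$
for an \emph{arbitrary} divisorial valuation $E$ centered in $\Sing(X)$ is false, not merely unproven, so no toroidal-model transport can establish it. A concrete counterexample: take $f=x^2+y^2+z^2$ on $Y=\C^3$, so $X$ is the quadric cone with $\wt\alpha_0(f)=\tfrac{3}{2}$ by Theorem \ref{thm:Thom-Sebastiani for minimal exponents}. Blow up the origin to get $E_1$ (with $A_Y(E_1)=3$, $\mathrm{ord}_{E_1}(f)=2$), then blow up the smooth conic $C=E_1\cap \wt X$ to get $E_2$, whose center on $Y$ is the origin, i.e.\ lies in $\Sing(X)$. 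Since $K_{Y_2/Y}=3E_2+2\wt E_1$ and $\mathrm{div}(\mu^*f)=\wt X_2+3E_2+2\wt E_1$, one gets $A_Y(E_2)=4$ and $\mathrm{ord}_{E_2}(f)=3$, so $A_Y(E_2)/\mathrm{ord}_{E_2}(f)=\tfrac{4}{3}<\tfrac{3}{2}=\wt\alpha(X)$; your intermediate step $A_Y(E)-\mathrm{ord}_E(f)\ge(\wt\alpha(X)-1)\,\mathrm{ord}_E(f)$ would assert $1\ge\tfrac{3}{2}$ here. Worse, iterating this construction (repeatedly blowing up the intersection of the strict transform with the newest exceptional divisor) produces divisors centered at the origin with ratios $(j+2)/(j+1)\to 1=\lct$, so the infimum of $A_Y(E)/\mathrm{ord}_E(f)$ over valuations centered in the singular locus is governed by the log canonical threshold and can never see $\wt\alpha$ once $\wt\alpha>1$. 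This is the precise sense in which the minimal exponent is microlocal rather than valuative: Proposition \ref{prop:weight-minimal exponent inequality} is special to weight (monomial) valuations at a singular point and genuinely does not extend; corrected bounds of this shape carry an extra term involving $\mathrm{ord}_E$ of the Jacobian ideal, and with that term your clean arithmetic no longer closes.

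Your other ingredients are sound — $\mathrm{ord}_E(f)\ge 2$ for $E$ centered along $W\subset\Sing(X)$, the Ein--Musta\c{t}\u{a}--Yasuda formula $\mathrm{mld}(W;X)=\min_E\bigl(A_Y(E)-\mathrm{ord}_E(f)\bigr)$, and the integrality endgame — but the false middle input is the crux. For contrast, the paper avoids valuations on the ambient space entirely and inducts on $k$ via general hyperplane cuts: Bertini reduces positive-dimensional $W$ to a closed point $x$; a general hyperplane section $L$ through $x$ satisfies $\wt\alpha(X\cap L)\ge\wt\alpha(X)-\tfrac{1}{2}$ (Dirks--Musta\c{t}\u{a}) and $\mathrm{mld}(x;X)\ge\mathrm{mld}(x;X\cap L)+1$ (Ein--Musta\c{t}\u{a} inversion of adjunction), with base case $k=0$ given by Saito's criterion that $\wt\alpha>1$ is equivalent to canonical singularities; the hypothesis $1+\tfrac{k}{2}$ is calibrated exactly to the $\tfrac{1}{2}$ drop per cut, and the bound $\wt\alpha(X)>\tfrac{\codim_XW+1}{2}\Rightarrow$ generic smoothness along $W$ guarantees $k\le\codim_XW-2$, so the $k$ successive cuts remain meaningful. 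Any valuative salvage of your route would require the Jacobian-corrected inequality together with a lower bound on $\mathrm{ord}_E(\mathrm{Jac}(f))$ for valuations centered in $\Sing(X)$, which is substantially harder than the hyperplane-cut argument.
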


\begin{proof}
We may assume $X$ is quasi-projective. When the dimension of $W$ is positive, for a general hyperplane section $H\subset X$, we have
$$
\mathrm{mld}(W;X)=\mathrm{mld}(W\cap H;X\cap H).
$$
This follows from the Bertini theorem. Additionally, by \cite[Lemma 7.5]{MP20a}, we have
$$
\wt\alpha(X\cap H)\ge\wt\alpha(X)>1+\frac{k}{2}.
$$
Therefore, it suffices to prove when $W$ is a closed point $x\in X$. Let $L$ be a general hyperplane section through $x\in X$. Then by \cite[Theorem 1.5]{DM23}, we have
$$
\wt\alpha(X\cap L)\ge\wt\alpha(X)-\frac{1}{2}>1+\frac{k-1}{2},
$$
and by the inversion of adjunction \cite[Theorem 1.1]{EM04}, we have
$$
\mathrm{mld}(x;X)\ge \mathrm{mld}(x;X\cap L)+1.
$$
Recall that $X$ has canonical singularities when $\wt\alpha(X)>1$. Additionally, $X$ is smooth when $\wt\alpha(X)>\frac{\dim X+1}{2}$ (see e.g. \cite[Theorem E(3)]{MP20}); this implies that $X$ is smooth at the generic point of $W$ if
$$
\wt\alpha(X)>\frac{\codim_X W+1}{2},
$$
and thus, $k\le\codim_XW-2$ from the assumption $W\subset \Sing(X)$. Hence, applying the hyperplane section argument $k$-times completes the proof.
\end{proof}

We finally prove Theorem \ref{thm:cubic converse via minimal exponent}, using the materials developed in this section.

\begin{proof}[Proof of Theorem \ref{thm:cubic converse via minimal exponent}]
By Theorem \ref{thm:minimal exponent of complement} and Lemma \ref{lem:dim sing bound for cubic}, we have
$$
\wt\alpha(X)\ge\frac{n+1}{9}.
$$
From the explicit GIT analysis in \cite{Allcock03, Yokoyama02,Yokoyama08, Laza09, Laza10}, we have
$$
\wt\alpha(X)\ge\frac{4}{3} \quad \mathrm{when} \quad 3\le n\le 5;
$$
see Remark \ref{rmk:boundary cubics} below. Therefore, it remains to prove
$$
\wt\alpha(X)\ge\frac{5}{3} \quad \mathrm{when} \quad n\ge 6.
$$
In this case, Proposition \ref{prop:minimal exponent vs mld} implies that $X$ has terminal singularities.

Denote by
$$
S:=\left\{x\in X:\wt\alpha_x(X)<\frac{7}{4}\right\}.
$$
This is a closed subset of $X$ by the discreteness and upper semicontinuity of the minimal exponent. If $\dim S\le n-5$, then Theorem \ref{thm:minimal exponent of complement} implies $\wt\alpha(X)\ge \frac{5}{3}$. Hence, the following claim completes the proof.

\smallskip
\noindent
\textit{Claim.} $\dim S\le n-5$.

We argue by contradiction: suppose $\dim S\ge n-4$. Pick an irreducible component $Z$ of $S$ with $\dim Z\ge n-4$. Let $\P^e\subset \P^n$ be the minimal projective subspace containing $Z$:
$$
Z\subset \P^e\subset\P^n.
$$
In other words, $Z$ is a nondegenerate subvariety of $\P^e$. By the assumption, $e\ge n-4$.

If $e\le n-3$, then this contradicts Lemma \ref{lem:geometric obstruction to cubic GIT} (2).

If $e=n-2$, then the secant variety of $Z$ is $\P^e$. Indeed, the secant variety of a nondegenerate curve in $\P^3$ is $\P^3$. Since nondegeneracy is preserved by taking general hyperplane sections, this implies that the secant variety of the intersection $Z\cap \P^3$ is $\P^3$ for a general $3$-plane $\P^3\subset \P^e$. Therefore, $\P^e\subset X$, which contradicts Lemma \ref{lem:geometric obstruction to cubic GIT} (1).

Assume $e\ge n-1$. By Lemma \ref{lem:cubic Hessian rank}, the rank of the Hessian matrix at $x\in Z$ is at most $2$. We choose $n$-general points of $Z$ that spans $\P^{n-1}$. After a linear change of coordinates, we may assume these points are the coordinate points
$$
p_1:=[0:1:0:\dots:0],\dots,p_n:=[0:0:\dots:0:1]
$$
excluding $[1:0:\dots:0]$. The local equation of $X$ at each $p_i$ is
$$
f(x_0,\dots,x_{i-1},1,x_{i+1},\dots,x_n)=0.
$$
Denote by $q_i$, the quadratic term of this local equation at $p_i$. The vanishing locus $\{q_i=0\}\subset \P^n$ contains the singular locus of $X$, and thus, contains $Z$. Since $q_i$ has rank $\le 2$, the vanishing locus is the product of two hyperplanes, and one of the two contains $Z$. This implies that $e=n-1$ and $Z\subset\{x_0=0\}=\P^{n-1}$. Hence, every $q_i$ is divisible by $x_0$.

Express $f=x_0q+h$ where $h\in \C[x_1,\dots,x_n]$. The discussion above implies that the quadratic term of $h|_{x_i=1}$ should be zero for all $1\le i\le n$. This means $h=0$ and $f=x_0q$, which contradicts the GIT semistability of $X$.
\end{proof}

\begin{rmk}[Sharp bound for $n\le 5$]
\label{rmk:boundary cubics}
For a cubic hypersurface $X\subset\P^n$, we summarize below results from the literature on explicit GIT analyses, together with computations of minimal exponents (see Lemma \ref{lem:ADE and minimal expoenent}):

\noindent
(1) $n=3$: cubic surfaces \cite{Hilbert}.
\begin{center}
stable $\Longleftrightarrow$ at worst $A_1$-singularities $\Longleftrightarrow$ $\wt\alpha(X)>\frac{4}{3}$\\
semistable $\Longleftrightarrow$ at worst $A_1, A_2$-singularities $\Longleftrightarrow$ $\wt\alpha(X)\ge\frac{4}{3}$
\end{center}

\noindent
(2) $n=4$: cubic threefolds \cite{Allcock03, Yokoyama02}. Denote by $T$ the chordal cubic.
\begin{center}
stable $\Longleftrightarrow$ at worst $A_1, A_2, A_3, A_4$-singularities $\Longleftrightarrow$ $\wt\alpha(X)>\frac{5}{3}$\\
semistable $\nsim_{GIT} T$ $\Longleftrightarrow$ at worst $A_1, A_2, A_3, A_4, A_5, D_4$-singularities $\Longleftrightarrow$ $\wt\alpha(X)\ge\frac{5}{3}$
\end{center}
Here, ``semistable $\nsim_{GIT} T$" means ``semistable but not GIT equivalent to $T$." The chordal cubic $T$ is GIT polystable, and the minimal exponent is $\wt\alpha(T)=\frac{3}{2}$. Additionally, $T$ is not terminal.

\noindent
(3) $n=5$: cubic fourfolds \cite{Yokoyama08, Laza09, Laza10}. Denote by $\chi$, the one-parameter family of GIT polystable cubic fourfolds defined in \cite[Theorem 2.6]{Laza10}.
\begin{center}
stable, isolated singularities $\Longleftrightarrow$ simple $ADE$-singularities $\Longleftrightarrow$ $\wt\alpha(X)>2$\\
semistable $\nsim_{GIT} \chi$ $\Longrightarrow$ $\wt\alpha(X)\ge2$ $\Longrightarrow$ semistable
\end{center}
Here, ``semistable $\nsim_{GIT} \chi$" means ``semistable but not GIT equivalent to a cubic in $\chi$." The question of whether $\wt\alpha(X)\ge2$ implies GIT semistability was raised by Laza, and is answered here as a special case of Theorem \ref{thm:GIT stability via minimal exponent}. For the secant to the Veronese surface in $\P^5$, denoted $\omega\in \chi$, the minimal exponent is $\wt\alpha(\omega)=\frac{3}{2}$. For all other $X\in \chi\sm\{\omega\}$, the minimal exponent is $\wt\alpha(X)=\frac{11}{6}$. Additionally, $w$ is not terminal.

Determining the precise sharp bound for the minimal exponent in dimension five and higher remains an interesting open question.
\end{rmk}

\subsection{Extendability of period map to Baily-Borel compactification}

From the parameter space of degree $d$ hypersurfaces in $\P^n$, there exists a period map $\Phi_0$ to the period domain $\Gamma\backslash D$ for the primitive $\Z$-Hodge structure of middle cohomology, defined over the smooth locus:
$$
\Phi_0:\P^{\binom{n+d}{d}-1} \dashrightarrow \Gamma\backslash D.
$$
This descends to the map from the GIT moduli space
$$
\mathcal P_0:\overline\M^{\rm GIT} \dashrightarrow \Gamma\backslash D,
$$
and this provides a natural source to study the birational geometry of the moduli space when the generic Torelli theorem holds.

By Theorem \ref{thm:GIT stability via minimal exponent}, nodal hypersurfaces are GIT stable for $n\ge 3$, $d\ge 3$; the nodal singularity of dimension $n-1$ has the minimal exponent equal to $\frac{n}{2}$. Whether the period map $\mathcal P_0$ extends regularly over the locus of nodal hypersurfaces depends on the parity of $n$.

\begin{cor}
\label{cor:period map extension middle cohomology}
Let $(n,d)\neq(3,3)$, where $n\ge 3$ and $d\ge 3$. 

\noindent
(1) When $n$ is even, the GIT moduli space of smooth hypersurfaces is the domain of definition of the period map $\mathcal P_0$.

\noindent
(2) When $n$ is odd, the period map $\mathcal P_0$ extends regularly to the GIT moduli space of hypersurfaces with simple $ADE$-singularities
\end{cor}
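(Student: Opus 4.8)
The plan is to reduce both statements to a single local criterion: the period map $\mathcal P_0$ extends to a regular morphism into $\Gamma\backslash D$ across a point $[X_0]$ precisely when the local monodromy $T$ of the primitive middle cohomology around $[X_0]$ has finite order, equivalently when the nilpotent operator $N$ of the limit mixed Hodge structure of a one-parameter smoothing vanishes and the limit is pure of weight $n-1$. It is essential here that the target is the period domain quotient itself, not a Baily--Borel or other compactification: when $N=0$, Schmid's nilpotent orbit theorem produces a limiting Hodge filtration lying in $D$, so after a ramified base change trivializing the finite monodromy the map extends to $D$ and descends to $\Gamma\backslash D$; when $N\neq 0$ the period point diverges and no regular extension into $\Gamma\backslash D$ exists. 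I would first isolate this criterion, invoking Schmid together with the Borel (Griffiths--Schmid) extension theorem.

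The second ingredient is the computation of $T$ via the Picard--Lefschetz formula, where the parity of $\dim X_0 = n-1$ is decisive. A general point of the discriminant parametrizes a one-nodal hypersurface with a single vanishing cycle $\delta\in H^{n-1}(X_t,\Z)$; when $n-1$ is even (i.e. $n$ odd) one has $\langle\delta,\delta\rangle=\pm 2$ and $T$ is the corresponding reflection of order two, while when $n-1$ is odd (i.e. $n$ even) one has $\langle\delta,\delta\rangle=0$ and $T$ is a nontrivial unipotent transvection with $N\neq 0$. For an arbitrary simple $ADE$ singularity the vanishing cohomology carries the associated $ADE$ intersection form and $T$ is generated by the Picard--Lefschetz operators of the vanishing cycles: for $n$ odd these are reflections generating the finite $ADE$ Weyl group (Brieskorn's description of the monodromy on the versal deformation), whereas for $n$ even they are transvections generating an infinite unipotent group. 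I would record these two mutually exclusive cases as the key lemma.

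For part (1), with $n$ even, every nodal hypersurface of degree $d\ge 3$ is GIT stable by \theoremref{thm:GIT stability via minimal exponent}, since the node has $\wt\alpha=\tfrac n2>\tfrac{n+1}{d}$ for $n\ge 3$; hence the one-nodal locus, dense in the discriminant, maps into $\overline\M^{\rm GIT}$, and at each such point the transvection monodromy forces non-extension by the criterion. Let $U\subseteq\overline\M^{\rm GIT}$ be the (open) domain of definition of $\mathcal P_0$; it contains the open image of the smooth locus. If a singular semistable point $[X_0]$ lay in $U$, then since $[X_0]$ lies in the closure of the one-nodal locus, the open set $U$ would contain a one-nodal point, contradicting the non-extension just established. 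Therefore $U$ is exactly the image of the smooth locus.

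For part (2), with $n$ odd and $(n,d)\neq(3,3)$ --- the excluded pair being the one for which the middle cohomology of a smooth cubic surface is of Hodge--Tate type, so that $D$ is a point --- the locus of hypersurfaces with at worst simple $ADE$ singularities is open and $SL(n+1)$-invariant, and such hypersurfaces are GIT stable by \theoremref{thm:GIT stability via minimal exponent}, since a Thom--Sebastiani computation gives $\wt\alpha>\tfrac{n-1}{2}\ge\tfrac{n+1}{d}$ throughout the allowed range. At each such point the local monodromy group on $H^{n-1}$ is the finite $ADE$ Weyl group, acting on the span of the vanishing cycles and trivially on its orthogonal complement; by the criterion $\Phi_0$ extends to a regular $SL(n+1)$-invariant morphism on this open locus, which descends through the GIT quotient to the desired extension of $\mathcal P_0$. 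The main obstacle I anticipate is in this part: upgrading the nodal Picard--Lefschetz computation to arbitrary simple $ADE$ singularities requires identifying the full local monodromy with the finite Weyl group via the versal deformation and careful parity bookkeeping ($\langle\delta,\delta\rangle=\pm 2$ versus $0$), and one must confirm that the relevant $ADE$ hypersurfaces genuinely lie in the semistable locus so that the descent through $\pi$ is legitimate; a secondary point is to make the extension criterion into $\Gamma\backslash D$ rigorous in this generally non-Hermitian setting, where one relies on Schmid's $SL_2$- and nilpotent-orbit package rather than classical automorphic arguments.
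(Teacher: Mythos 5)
Your extension criterion and your proof of part (1) coincide with the paper's: the paper invokes Griffiths' Removable Singularity Theorem (regularity over an open set is equivalent to finiteness of local monodromy, equivalently purity of the limit mixed Hodge structure), which is exactly your Schmid-based criterion and is valid for non-Hermitian period domains, so the worry you flag at the end is unfounded; it then rules out extension at one-nodal points (non-pure limit, i.e.\ your transvection with $N\neq 0$) and uses the same density of one-nodal degenerations together with GIT stability of nodal hypersurfaces from Theorem \ref{thm:GIT stability via minimal exponent}. For part (2), however, you take a genuinely different route. The paper stays inside its Hodge-theoretic framework: Lemma \ref{lem:ADE and minimal expoenent} gives $\wt\alpha(X)>\frac{n-1}{2}$ for a hypersurface with simple $ADE$-singularities, hence $\frac{n-3}{2}$-rational singularities ($n$ odd makes this an integer), so $X$ is a rational homology manifold with pure $H^{n-1}(X,\Q)$, and Theorem \ref{thm:cohomologically insignificant} then shows that for any one-parameter smoothing the cokernel of $\sp^{n-1}$ is a sum of Tate twists $\Q^H(-\frac{n-1}{2})$; thus the limit is pure and the monodromy finite. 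You instead argue via classical singularity theory: Picard--Lefschetz in even fiber dimension, the $ADE$ Milnor lattice, and Brieskorn's identification of the versal monodromy with the finite Weyl group. This is workable, and the main care is exactly where you anticipate it: the deformation induced by the linear system of degree-$d$ hypersurfaces need not obviously be versal at each singular point, so you should either verify versality or, more cleanly, observe that the local monodromy acts trivially on the orthogonal complement of the vanishing cohomology (local invariant cycles) and preserves the Milnor lattice at each $ADE$ point, which in even fiber dimension carries a \emph{definite} intersection form -- so any subgroup of its isometry group is automatically finite, bypassing both versality and Brieskorn. Two small additional points: for part (1) you should record that the vanishing cycle is nonzero in $H^{n-1}(X_t,\Q)$ for $d\ge 3$ (so the transvection is genuinely nontrivial; the paper phrases this as the well-known non-purity of the limit), and your GIT stability checks ($\wt\alpha=\frac{n}{2}>\frac{n+1}{d}$ for a node, and $\wt\alpha>\frac{n-1}{2}\ge\frac{n+1}{d}$ in the $ADE$ case, with equality allowed at $(n,d)=(5,3)$ since the bound on $\wt\alpha$ is strict) are correct and match the paper's. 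In terms of trade-offs: your route yields integral, lattice-level information (the Weyl group action) by classical means, while the paper's is uniform with its machinery, requires no versal deformation theory, and exhibits the statement as a special case of the general higher cohomological insignificance theorem, which applies in situations where the lattice analysis would be harder.
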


We say that a germ of a hypersurface has \emph{simple $ADE$-singularities} if it is locally analytically isomorphic to the hypersurface
$$
\left\{f(x_1,x_2,x_3)+x_3^2+\cdots+x_{n}^2=0\right\}\subset \C^n
$$
near the origin, where $\{f(x_1,x_2,x_3)=0\}$ is a surface $ADE$-singularity. By Theorem \ref{thm:GIT stability via minimal exponent} and Lemma \ref{lem:ADE and minimal expoenent}, every hypersurface with simple $ADE$-singularities is GIT stable when $n\ge 5, d\ge 3$ or $n\ge 3, d\ge 4$.

\begin{lem}
\label{lem:ADE and minimal expoenent}
Let $x\in X$ be the germ of a hypersurface singularity of dimension $e\ge 2$. Then
$$
\wt\alpha_x(X)>\frac{e}{2}\Longleftrightarrow \text{simple } ADE\text{-singularity}.
$$
\end{lem}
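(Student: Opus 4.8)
The plan is to reduce the statement to the classical characterization of Du Val (surface $ADE$) singularities and to transport it across dimensions using Thom--Sebastiani. The basic tool is the generalized Morse (splitting) lemma: any hypersurface germ of dimension $e$, viewed as a function germ $f$ on $(\C^{e+1},0)$ with a critical point at $0$, is analytically equivalent to $x_1^2+\dots+x_r^2+h(u_1,\dots,u_c)$, where $r$ is the rank of the Hessian of $f$ at $0$, the corank is $c=e+1-r$, and either $h=0$ or $\mathrm{mult}_0(h)\ge 3$. Since the minimal exponent is an analytic-local invariant, Theorem~\ref{thm:Thom-Sebastiani for minimal exponents} gives
\[
\wt\alpha_x(X)=\tfrac{r}{2}+\wt\alpha_0(h).
\]
I will also use the base case $e=2$: for a surface hypersurface germ, Saito's criterion \cite{Saito93} says that $\wt\alpha_0>1$ is equivalent to rationality, and a rational Gorenstein surface singularity is exactly a Du Val ($ADE$) singularity; thus $\wt\alpha_0>1\iff\text{surface }ADE$.

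For the implication $(\Leftarrow)$, I would argue as follows. Suppose $X$ has a simple $ADE$ singularity, so $f\sim g(x_1,x_2,x_3)+x_4^2+\dots+x_{e+1}^2$ with $g$ a surface $ADE$ germ. By the base case $\wt\alpha_0(g)>1$, and $(e-2)$ applications of Thom--Sebastiani yield $\wt\alpha_x(X)=\wt\alpha_0(g)+\tfrac{e-2}{2}>1+\tfrac{e-2}{2}=\tfrac{e}{2}$.

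For $(\Rightarrow)$, assume $\wt\alpha_x(X)>\tfrac{e}{2}\ge 1$. If $c=0$ then $f$ is a nondegenerate quadric cone, i.e.\ an $A_1$ (node), which is simple $ADE$. If $c\ge 1$, then $h\neq 0$, since otherwise $\wt\alpha_x(X)=r/2=(e+1-c)/2\le e/2$; hence $\mathrm{mult}_0(h)\ge 3$ and the displayed formula rewrites the hypothesis as $\wt\alpha_0(h)>\tfrac{c-1}{2}$. Combining this with the multiplicity bound $\wt\alpha_0(h)\le \tfrac{c}{\mathrm{mult}_0(h)}\le \tfrac{c}{3}$ (compare \eqref{eqn:minimal exponent multiplicity bound}; in general this follows from Proposition~\ref{prop:weight-minimal exponent inequality} with $w=(1,\dots,1)$ applied to a sufficiently high jet of $h$) forces $\tfrac{c-1}{2}<\tfrac{c}{3}$, i.e.\ $c\le 2$. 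Therefore $r=e+1-c\ge e-2$, so I can split off exactly $e-2$ squares and write $f\sim g(x_1,x_2,x_3)+x_4^2+\dots+x_{e+1}^2$ for a $3$-variable germ $g$ with $\wt\alpha_0(g)=\wt\alpha_x(X)-\tfrac{e-2}{2}>1$. By the base case $g$ is a surface $ADE$ germ, whence $X$ has a simple $ADE$ singularity.

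The main obstacle is the corank bound $c\le 2$: once the problem is in the suspension normal form, everything else is bookkeeping around Thom--Sebastiani. The delicate point is the multiplicity estimate $\wt\alpha_0(h)\le c/\mathrm{mult}_0(h)$ for a possibly non-isolated analytic germ $h$; I would either invoke the germ form of \cite[Theorem E]{MP20} directly, or reduce to the polynomial statement of Proposition~\ref{prop:weight-minimal exponent inequality} by replacing $h$ with a high-order jet having the same multiplicity and minimal exponent (finite determinacy). A secondary point to verify is that the splitting lemma is available for non-isolated critical points, so that $\wt\alpha_x(X)=r/2+\wt\alpha_0(h)$ holds unconditionally with $h$ allowed to be degenerate or zero; this is standard.
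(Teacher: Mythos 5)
Your proof is correct and in substance identical to the paper's: both rest on the $e=2$ base case via Saito's rationality criterion, the Thom--Sebastiani additivity of Theorem \ref{thm:Thom-Sebastiani for minimal exponents}, and the multiplicity bound $\wt\alpha_x(X)\le (\dim X+1)/\mathrm{mult}_x(X)$ of \cite[Theorem E(3)]{MP20} --- the paper simply runs this as an induction on $e$, forcing $\mathrm{mult}_x(X)=2$ and splitting off one square at a time, which your single splitting-lemma normal form with the corank bound $c\le 2$ collapses into one pass. The germ-level multiplicity estimate for $h$ that you flagged as delicate is exactly \cite[Theorem E(3)]{MP20}, which the paper itself applies to analytic germs (and which needs no isolatedness), so no finite-determinacy detour is required.
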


\begin{proof}
We proceed by induction on $e$. When $e=2$, we have $\wt\alpha_x(X)>1$ if and only if $x\in X$ is a rational singularity by \cite{Saito93}, which is equivalent to an $ADE$-singularity.

Suppose the claim is true for $e\ge 2$. We prove when the dimension of $X$ is $e+1$. Since
$$
\wt\alpha_x(X)\le\frac{e+2}{\mathrm{mult}_x(X)}
$$
by \cite[Theorem E(3)]{MP20}, we have $\mathrm{mult}_x(X)=2$. In particular, this implies that the rank of the Hessian matrix is at least $1$, and thus, $x\in X$ is locally analytically isomorphic to the hypersurface
$$
\{F(x_1,\dots,x_{e+1})+x_{e+2}^2=0\}\subset\C^{e+2}
$$
near the origin. By Thom-Sebastiani Theorem \ref{thm:Thom-Sebastiani for minimal exponents}, this reduces to the case when the dimension of $X$ is $e$.
\end{proof}

\begin{proof}[Proof of Corollary \ref{cor:period map extension middle cohomology}]
Recall Griffiths' Removable Singularity Theorem (see \cite[Theorem 9.5]{Griffiths70} or \cite[Application 16]{GT84}): the period map $\mathcal P_0$ is regular over an open set $U\subset \overline\M^{\rm GIT}$ if and only if the local monodromy for $U_{\rm sm}$ around each point $[X]\in U\sm U_{\rm sm}$ is finite. Here, $U_{\rm sm}$ denotes the locus of smooth hypersurfaces. Additionally, for an arbitrary one-parameter smoothing $\X\to \Delta$ of a hypersurface $X\subset\P^n$, the local monodromy is finite if and only if the limit mixed Hodge structure $H^{n-1}(\X_\infty,\Q)$ is pure of weight $n-1$.

Suppose $n$ is even. Let $X\subset \P^n$ be a nodal hypersurface with exactly one node. It is well known that the limit mixed Hodge structure for any one-parameter smoothing of $X$ is never pure. Therefore, the period map $\mathcal P_0$ does not extend to the neighborhood of $[X]\in \overline\M^{\rm GIT}$. Furthermore, any singular hypersurface is a limit of nodal hypersurfaces with exactly one node. Indeed, for a hypersurface $\{f=0\}$ singular at $[1:0\dots:0]$, one may consider a degeneration $\{f+tg=0\}$ such that $\{g=0\}$ is a nodal hypersurface with exactly one node at $[1:0\dots:0]$. This proves (1).

Suppose $n$ is odd. By Lemma \ref{lem:ADE and minimal expoenent}, any hypersurface $X$ with simple $ADE$-singularities satisfies $\wt\alpha(X)>\frac{n-1}{2}$, and thus has $\frac{n-3}{2}$-rational singularities. In particular, $X$ is a rational homology manifold by \cite[Theorem A]{PP25a}, and $H^{n-1}(X,\Q)$ is pure of weight $n-1$. Consider any one-parameter smoothing $\X\to \Delta$ of $X$. By Theorem \ref{thm:cohomologically insignificant}, the cokernel of the specialization map
$$
\mathrm{sp}^{n-1}:H^{n-1}(X,\Q)\to H^{n-1}(\X_\infty,\Q)
$$
is a direct sum of the trivial Hodge structure $\Q^H(-\frac{n-1}{2})$ with a Tate twist. In particular, $H^{n-1}(\X_\infty,\Q)$ is pure, and thus, the local monodromy is finite. This proves (2).
\end{proof}

Next, we consider the period map for classical pairs $(n,d)\in\left\{(2,4),(2,6),(3,3),(4,3),(5,3)\right\}$. In proving the corollary, we disregard the lattice structure of the limit mixed Hodge structure, as this suffices for establishing the extension results in parts (1) and (2) (see Remark \ref{rmk:extension using Q-LMHS}).

\noindent
\textbf{Proof of Corollary \ref{cor:classical extension results}.}
In the classical cases, the literature works with suitable period domains so that the period map satisfies the global Torelli theorem over the smooth locus. For each case, we briefly describe the period map and give a streamlined proof of parts (1) and (2) of Corollary \ref{cor:classical extension results}. Except in the case of cubic fourfolds, this is a reinterpretation of known results from the literature using the minimal exponent.

For sextic plane curves $(n,d)=(2,6)$, Shah \cite{Shah80} considers the double cover of $\P^2$ branched along the sextic. This is a degree $2$ K3 surface, and thus one obtains a period map from the GIT moduli space to the Baily-Borel compactification of the period domain for degree $2$ K3 surfaces. For a plane sextic $C\subset \P^2$, denote by $S$ the double cover. Then $S$ has hypersurface singularities and
$$
\wt\alpha(C)>\frac{1}{2} \;(\mathrm{resp.}\ge) \Longleftrightarrow \wt\alpha(S)>1 \;(\mathrm{resp.}\ge).
$$
Indeed, if the local defining equation of $C$ is $f=0$, then the local defining equation of $S$ is $z^2-f=0$ for an independent variable $z$, and we apply Thom-Sebastiani Theorem \ref{thm:Thom-Sebastiani for minimal exponents}. If $\wt\alpha(C)>\frac{1}{2}$, then $S$ has $ADE$-singularities. Hence, for any one-parameter smoothing of $C$, the resulting one-parameter smoothing of $S$ induced by the double cover has locally finite monodromy. In contrast, if $\wt\alpha(C)=\frac{1}{2}$, then $S$ has Du Bois (but not rational) singularities. Hence, the local monodromy is not finite, but the direct sum of graded weight pieces of the limit mixed Hodge structure
$$
\bigoplus_w\gr^W_wH^2(\mathcal S_\infty,\Q)
$$
is independent of the one-parameter smoothing $\mathcal S\to \Delta$ of $S$. In terms of Theorem \ref{thm:core of liminal sources}, the core is invariant. Hence, parts (1) and (2) follow. Part (3) follows from \textit{loc. cit.}, which proves that the only GIT polystable sextic $C$ with $\wt\alpha(C)<\frac{1}{2}$ is the triple conic, corresponding precisely to the indeterminacy locus (see also \cite{Laza16}).

For quartic plane curves $(n,d)=(2,4)$, Kond\={o} \cite{Kondo00} and Artebani \cite{Artebani09} consider the quartic cover of $\P^2$ branched along the quartic. This is a quartic K3 surface, and thus one obtains a period map from the GIT moduli space to the Baily-Borel compactification of quartic K3 surfaces with a non-symplectic automorphism of order $4$. For a plane quartic $C\subset \P^2$, denote by $S$ the quartic cover. Then $S$ has hypersurface singularities and
$$
\wt\alpha(C)>\frac{3}{4} \;(\mathrm{resp.}\ge) \Longleftrightarrow \wt\alpha(S)>1 \;(\mathrm{resp.}\ge).
$$
As in the sextic case, this yields parts (1) and (2). Part (3) follows from \cite{Artebani09}, which proves that the indeterminacy locus is the double conic -- the only GIT polystable quartic $C$ with $\wt\alpha(C)<\frac{3}{4}$.

For cubic surfaces $(n,d)=(3,3)$, Allcock-Carlson-Toledo \cite{ACT02} considers the triple cover of $\P^3$ branched along the cubic. This is a cubic threefold, and one obtains a period map from the GIT moduli space to the Satake compactification of a quotient of the complex hyperbolic 4-space. Recall from Remark \ref{rmk:boundary cubics}(1) that the semistable locus in the parameter space is $U$ and the stable locus is $V$. Therefore, \cite[Theorem 3.17]{ACT02} yields all parts (1), (2), and (3); the indeterminacy locus is empty and the period map $\mathcal P$ is an isomorphism.

For cubic threefolds $(n,d)=(4,3)$, Allcock-Carlson-Toledo \cite{ACT11} and Looijenga-Swierstra \cite{LS07} consider the triple cover of $\P^4$ branched along the cubic, which is a cubic fourfold. This yields a period map from the GIT moduli space to the Baily-Borel compactification of a quotient of the $10$-dimensional subspace inside the $20$-dimensional Type IV bounded symmetric domain associated to cubic fourfolds. For a cubic threefold $X\subset \P^4$, denote by $Y$ the triple cover. Then
$$
\wt\alpha(X)>\frac{5}{3} \;(\mathrm{resp.}\ge) \Longleftrightarrow \wt\alpha(Y)>2 \;(\mathrm{resp.}\ge).
$$
By Theorem \ref{thm:core of liminal sources}, if $\wt\alpha(X)\ge\frac{5}{3}$, then the core of $H^4(Y,\Q)$ determines the core of the limit mixed Hodge structure $H^4(\mathcal Y_\infty,\Q)$ of any one-parameter smoothing $\mathcal Y\to \Delta$. Since the core for a K3-type limit mixed Hodge structure determines the direct sum of graded weight pieces, the direct sum
$$
\bigoplus_w\gr^W_wH^4(\mathcal Y_\infty,\Q)
$$
is independent of the smoothing. Theorem \ref{thm:core of liminal sources} additionally implies that $H^4(\mathcal Y_\infty,\Q)$ is pure of weight $4$ if and only if $Y$ has $1$-rational singularities (equivalently, $\wt\alpha(Y)>2$). Hence, parts (1) and (2) follow. Part (3) follows from \cite[Theorem 3.1]{LS07}, which proves $\mathcal P|_{\pi(V)}$ is an open embedding, and from \cite{ACT11}, which identifies the chordal cubic $T$ as the indeterminacy locus (see Remark \ref{rmk:boundary cubics}(2)).

For cubic fourfolds $(n,d)=(5,3)$, Looijenga \cite{Looijenga09} and Laza \cite{Laza10} studied the period map to the Baily-Borel compactification of a quotient of the $20$-dimensional Type IV bounded symmetric domain. Let $X\subset \P^5$ be a cubic fourfold. If $\wt\alpha(X)\ge 2$, then by Theorem \ref{thm:core of liminal sources}, the direct sum of weight graded pieces
$$
\bigoplus_w\gr^W_wH^4(\mathcal X_\infty,\Q)
$$
is independent of the one-parameter smoothing $\X\to \Delta$ of $X$. As in the case of cubic threefolds, $H^4(\mathcal X_\infty,\Q)$ is pure of weight $4$ if and only if $X$ has $1$-rational singularities. Hence, parts (1) and (2) follow. Part (3) follows from \cite[Theorem 4.1]{Looijenga09}, which proves $\mathcal P|_{\pi(V)}$ is an open embedding, and from \cite{Laza10}, which identifies the one-parameter family $\chi$ as the indeterminacy locus (see Remark \ref{rmk:boundary cubics}(3)).

\begin{rmk}
\label{rmk:extension using Q-LMHS}
In the proof above, we used the fact that the invariance of the graded weight pieces of the limit $\Q$-mixed Hodge structure at $X\subset \P^n$ is sufficient to extend the period map across $X$. Indeed, resolve the indeterminacy of
$$
\Phi:\P^{\binom{n+d}{d}-1} \dashrightarrow (\Gamma\backslash D)^*
$$
by a blow up $\mu:\wt\P\to \P^{\binom{n+d}{d}-1}$. If, for every one-parameter smoothing of $X$, the weight graded pieces of the limit $\Q$-mixed Hodge structure are invariant, then the image $\Phi(\mu^{-1}([X]))$ is supported on a countable subset of $(\Gamma\backslash D)^*$. It follows that $\Phi(\mu^{-1}([X]))$ must be a closed point, and hence $\Phi$ extends in a neighborhood of $[X]$.
\end{rmk}

Motivated by Theorem \ref{thm:GIT stability via minimal exponent}, Corollary \ref{cor:classical extension results}, and the recent construction of Bakker-Filipazzi-Mauri-Tsimerman on the Baily-Borel compactification of Calabi-Yau varieties \cite{BFMT25}, we propose the following question, as a generalized version of Conjecture \ref{conj:BB compactification for Calabi-Yau type}.

\begin{question}
\label{question:BB compactification and period map}
For which pairs $(n,d)$ with $n\ge 3$ and $d\ge 3$ does there exist a Hodge-theoretic compactification $M^{\rm BBH}$ of the GIT moduli space $M$ of degree $d$ hypersurfaces $X\subset\P^n$ with $\wt\alpha(X)>\frac{n+1}{d}$? Moreover, is the indeterminacy locus of the rational map
$$
\mathcal P:\overline\M^{\rm GIT} \dashrightarrow M^{\rm BBH}
$$
equal to the locus of GIT polystable hypersurfaces $X\subset \P^n$ with $\wt\alpha(X)<\frac{n+1}{d}$?
\end{question}

Corollary \ref{cor:classical extension results} answers this question for the classical pairs $\{(2,4),(2,6),(3,3),(4,3),(5,3)\}$. For quartic K3 surfaces (i.e. $n=3, d=4$), the Baily-Borel compactification exists, and the indeterminacy locus of the period map is predicted by Laza-O'Grady \cite{LOG18}. The K-moduli theoretic resolution of the period map in Ascher-DeVleming-Liu \cite{ADL23} should determine -- and appears to determine -- this indeterminacy locus, although we do not verify this here.

\begin{rmk}
Unlike Conjecture \ref{conj:BB compactification for Calabi-Yau type}, we do not specify the precise meaning of a ``Hodge-theoretic compactification," partly because there is no canonical choice of Hodge line bundle to begin with. For Calabi-Yau type hypersurfaces, however, one has a natural Hodge line bundle associated to the first nonzero graded piece of the Hodge filtration. Following \cite{BFMT25}, the existence of a compactification $M^{\rm BBH}$ amounts to proving the integrability and the torsion combinatorial monodromy conditions for this Hodge bundle. As seen from Corollary \ref{cor:classical extension results}, we expect Conjecture \ref{conj:BB compactification for Calabi-Yau type} to provide an answer to Question \ref{question:BB compactification and period map} by considering appropriate cycle covers, which reduces to the Calabi-Yau type case.  
\end{rmk}

\section{Hodge theory of Calabi-Yau type hypersurfaces}
\label{sec:Hodge theory of Calabi-Yau type hypersurfaces}

\subsection{Liminal centers and liminal sources}
\label{sec:liminal centers and liminal sources}

Liminal centers of hypersurface singularities satisfy analogous properties of log canonical centers as in Theorem \ref{thm:liminal centers}. In fact, they are obtained from general properties of mixed Hodge modules satisfying a certain condition: the first nonzero Hodge filtration is a line bundle. We start by defining generalized notions of liminal sources and liminal centers of a mixed Hodge module.

\begin{defn}
\label{defn:liminal sources and centers of MHM}
Let $X$ be a variety and $\K\in {\rm MHM}(X)$ be a mixed Hodge module. Let $m$ be the index of the first nonzero Hodge filtration of $\K$. A pure Hodge module $\M$ is a \emph{liminal source} of $\K$ if $\M$ is a simple subquotient of $\K$ such that $F_m\M\neq 0$. A \emph{liminal center} of $\K$ is the strict support $\Supp(\M)\subset X$ of a liminal source $\M$ of $\K$.
\end{defn}

Recall that the index of the first nonzero Hodge filtration of $\K$ is independent of the choice of local embedding of $X\hookrightarrow Y$ with $Y$ smooth and of the filtered (right) $\D_Y$-module presentation $(K,F_\bullet)$. Concretely, let
$$
m:=\min\left\{p:\gr^F_p\DR(\K)\neq0\right\}.
$$
Then the first nonzero Hodge filtration
$$
F_m\K:=\gr^F_m\DR(\K)
$$
is independent of the embedding (see \cite[Proposition 2.33]{Saito90}). When $F_m\K$ is a line bundle, we prove an analogous statement of Theorem \ref{thm:liminal centers} for liminal centers of $\K$.

\begin{prop}
\label{prop:liminal sources and centers of MHM}
In the setting of Definition \ref{defn:liminal sources and centers of MHM}, suppose $F_m\K$ is a line bundle on a closed subscheme $S\subset X$.
Then:

\noindent
(1) An intersection of two liminal centers of $\K$ is a union of liminal centers of $\K$.

\noindent
(2) There is a unique liminal source of $\K$ for each liminal center of $\K$.

\noindent 
(3) Any union of liminal centers of $\K$ has Du Bois singularities, and every minimal (with respect to inclusion) liminal center of $\K$ has rational singularities.

In particular, $S$ has Du Bois singularities.
\end{prop}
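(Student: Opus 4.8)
The plan is to reduce every assertion to the behavior of the single functor $F_m(-)=\gr^F_m\DR(-)$, which detects liminal sources. The starting point is that $\gr^F_m\DR$ carries short exact sequences of mixed Hodge modules to distinguished triangles in $D^b_{\mathrm{coh}}(X)$. Choosing a composition series of $\K$, the object $F_m\K$ is built by iterated triangles from the graded de Rham pieces $\gr^F_m\DR(\M_i)$ of the simple composition factors; by the very definition of Definition \ref{defn:liminal sources and centers of MHM}, a simple factor contributes a nonzero object exactly when it is a liminal source (a non-source $\M$ satisfies $F_m\M=0$ and drops out of every triangle). Consequently $F_m\K$ depends only on the liminal sources $\M_i=\IC^H_{Z_i}(\mathbb{V}_i)$, and $\Supp(F_m\K)=\bigcup_i Z_i$ is the union of the liminal centers. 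This identifies the reduced scheme underlying $S$ with the union of all liminal centers, so that the final ``in particular'' will follow from part (3) applied to $S$ itself.

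For part (2) I would first work over the smooth locus $U$ of a liminal center $Z$, with all other centers removed, where each source of strict support $Z$ restricts to a polarizable variation of Hodge structure and $\gr^F_m\DR(\IC^H_Z(\mathbb{V}))|_U \isom \gr^m_F\mathcal{V}[\dim Z]$ is a single locally free sheaf. Localizing $F_m\K$ at the generic point $\eta_Z$, only sources whose support contains $Z$ survive; when $Z$ is maximal (an irreducible component of $S$) these are precisely the sources of strict support $Z$, so the hypothesis that $F_m\K$ is a line bundle forces exactly one such source, with $\gr^m_F\mathbb{V}$ of rank one. For a general center I would run a descending induction on dimension, intertwined with part (1), in order to strip off the contributions of the strictly larger centers passing through $\eta_Z$ and reduce to the maximal case on a suitable localization. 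This simultaneously yields the uniqueness of (2) and the refinement that the first Hodge piece of each source is generically a line bundle on its center.

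Part (1) is where I expect the genuine difficulty, since it is the exact Hodge-theoretic analogue of Ambro's theorem that an intersection of log canonical centers is a union of log canonical centers (compare Theorem \ref{thm:liminal centers}(1)). After passing to $\gr^W\K$, which is semisimple and hence a direct sum of intersection complexes with their strict-support decomposition, and after cutting down by general hyperplane sections to a general point of a component $W$ of $Z_1\cap Z_2$, the task is to show that the line-bundle property of $F_m\K$ along $W$ cannot be sustained by the sources on $Z_1$ and $Z_2$ alone and must produce a source of strict support $W$. The technical heart is controlling, along $W$, the interaction of the graded de Rham complexes of $\IC^H_{Z_1}(\mathbb{V}_1)$ and $\IC^H_{Z_2}(\mathbb{V}_2)$ -- in effect a local torsion-and-cohomology computation for these complexes near $W$ -- and this is the step I expect to be the main obstacle. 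I would seek a formulation purely in terms of the invertibility of $F_m\K$ as an $\O_S$-module together with the compatibility of strict supports under restriction.

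For part (3) I would follow the log rational pair strategy of \cite{Park23}. Given any union $T$ of liminal centers, the sub-mixed Hodge module generated by the sources supported on $T$ carries the invertible sheaf $F_m\K|_T$, and Saito's Decomposition Theorem supplies a canonical direct summand after a log resolution $\mu:(X',T')\to(X,T)$; this summand is exactly the data needed to produce a left quasi-inverse of the natural map $\I_{X,T}\to R\mu_*\I_{X',T'}$, so $(X,T)$ is a log rational pair and $T$ has Du Bois singularities by Proposition \ref{prop:criterion for log rational pair}. Taking $T=S$ gives the final assertion. For a minimal center $Z$, minimality removes every smaller center, so $\gr^F_m\DR(\IC^H_Z(\mathbb{V}))$ is a genuine line bundle on $Z$; I would then invoke the criterion that invertibility of the first Hodge piece of $\IC^H_Z$ characterizes rational singularities of $Z$, which in the hypersurface setting is precisely the center-of-minimal-exponent statement of Schnell--Yang \cite{SY23}.
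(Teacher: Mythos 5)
Your proposal gets the soft parts of the argument right (exactness of the minimal Hodge piece $F_m$ through a composition series, so that $\mathrm{Supp}(F_m\K)$ is the union of the liminal centers; the log rational pair strategy for (3); Schnell--Yang for rational singularities of minimal centers), but it leaves the actual crux unproven: you explicitly defer part (1) as ``the main obstacle'' and sketch an approach (hyperplane cuts plus a local torsion-and-cohomology analysis of the interaction of $\IC^H_{Z_1}(\mathbb V_1)$ and $\IC^H_{Z_2}(\mathbb V_2)$ near $W$) for which no argument is supplied and which is not how the statement is proved. The idea you are missing is that (1) need not be attacked as a geometric statement about two arbitrary centers at all: the paper inducts on the number of simple factors of $\K$, takes a simple submodule $\K_0$ of the lowest nonzero weight piece, and uses strictness of the Hodge filtration to get the exact sequence $0\to F_m\K_0\to\O_S\to\O_Y\to 0$ (with $Y$ reduced by Lemma~\ref{lem:reducedness of lowest Hodge filtration}), so $F_m\K_0=\I_{Y\subset X}$ with $Z=\overline{X\sm Y}$. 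Then, letting $\K_1$ be the image of the adjunction map $\K\to\mathcal H^0(j_*j^*\K)$ for $j$ the inclusion of the locus where $\K_0$ is a variation of Hodge structure, one computes $F_m\K_1\isom\O_Z$ and hence $F_m(\K_1/\K_0)\isom\O_{Z\cap Y}$; thus $Z\cap Y$ is \emph{automatically} a union of liminal centers of $\K/\K_0$, and all intersections follow from the induction hypothesis. Part (1) is an output of this induction, not a separate Ambro-type theorem requiring new local Hodge theory; your descending-induction scheme for (2) is likewise subsumed by the same length induction (note also that at the generic point of a non-maximal center the stalk of $\O_S$ can have finite-length subquotients, so your ``strip off the larger centers'' step is genuinely delicate as stated).

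There is a second, smaller gap in your part (3): ``the sub-mixed Hodge module generated by the sources supported on $T$'' is not well defined, since liminal sources are subquotients rather than submodules (this is exactly why the paper uses the lowest weight piece to produce an honest simple submodule $\K_0$), and the decomposition theorem alone does not produce the left quasi-inverse of $\I_{X,T}\to R\mu_*\I_{X',T'}$ needed for Proposition~\ref{prop:criterion for log rational pair}. What is actually needed is Saito's strictness result $R\mu_*F_m\wt\K_0\isom F_m\K_0$ from \cite{Saito91}, combined with the Deligne canonical extension sandwich
$$
F_m\tilde j_*\mathcal V(-D)\subset F_m\wt\K_0\subset F_m\tilde j_*\mathcal V=:\L,
$$
the section $\O_{\wt Z}\to\L$ with divisor $D_0$, and the divisor inequality $-E\le D_0-B$ (itself proved by testing against $R\mu_*$ and the identification $F_m\K_0=\I_{Y\subset X}$); only then does one obtain $\I_{Y\subset X}\to R\mu_*\O_{\wt Z}(-E)\to\I_{Y\subset X}$ composing to the identity. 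Your appeal to the decomposition theorem is appropriate only in the base case (a simple $\K$, yielding rational singularities via Kov\'acs' criterion), where the paper indeed uses it.
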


We begin with a lemma, necessary for the proof of this proposition.

\begin{lem}
\label{lem:reducedness of lowest Hodge filtration}
Let $\M\in {\rm MHM}(X)$ be a mixed Hodge module, and let $m$ be the index of the first nonzero Hodge filtration of $\M$. If $F_m\M\isom \O_S$ for some closed subscheme $S\subset X$, then $S$ is reduced.
\end{lem}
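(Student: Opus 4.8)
The plan is to reduce the reducedness of $S$ to the single algebraic assertion that the ideal sheaf $I_S\subset\O_X$ equals its radical. Set $Z:=S_{\mathrm{red}}=\Supp(F_m\M)$ and recall $F_m\M=\gr^F_m\DR(\M)$ is, by hypothesis, the sheaf $\O_S$. Since $I_S\subseteq I_Z$ always holds (as $Z$ is the reduction of $S$), it suffices to prove the reverse inclusion, and because $\mathrm{Ann}_{\O_X}(\O_S)=I_S$ this is equivalent to showing that $I_Z$ annihilates $F_m\M$, i.e. that $F_m\M$ is a module over $\O_Z$. I would establish this by combining the generic structure of the lowest Hodge piece with a torsion–freeness statement.

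\emph{Step 1 (generic structure).} Let $U\subseteq X$ be the open subset over which $Z$ is smooth, so that $U$ contains the generic point of every irreducible component of $Z$ and $Z\smallsetminus U=\Sing(Z)$ has codimension $\ge 1$ in $Z$. Over each connected component of $Z\cap U$, Saito's structure theory \cite{Saito88,Saito90} presents $\M|_U$ as the direct image of an admissible variation of (mixed) Hodge structure on the \emph{reduced} subvariety $Z\cap U$. By Kashiwara's equivalence together with the filtered direct-image formula for a closed embedding, the lowest graded de Rham piece is concentrated in a single degree and corresponds to the summand carrying no normal derivatives: in local coordinates $(t_1,\dots,t_c)$ cutting out $Z\cap U$ in the ambient smooth variety, this is the ``$\partial_t^0$'' part, on which each $t_j$ acts by $0$. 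Hence $I_Z$ annihilates $F_m\M|_U$, so $F_m\M|_U$ is an $\O_Z$–module, locally free on the reduced $Z\cap U$.

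\emph{Step 2 (torsion-freeness and conclusion).} The essential input is that the lowest nonzero term $F_m\M=\gr^F_m\DR(\M)$ of the Hodge filtration is torsion-free on its support: it admits no nonzero subsheaf supported on a proper closed subset of $Z$. For a pure Hodge module this is Saito's torsion-freeness of the lowest Hodge piece, and the mixed case follows by the strictness of $F$ with respect to the weight filtration $W$, writing $F_m\M$ as an iterated extension of the torsion-free sheaves $F_m\gr^W_k\M$. (Alternatively, one may try to derive this purely internally from the duality of Proposition~\ref{prop:duality} applied with $p=m$, which expresses $R\Hom_{\O_X}(F_m\M,\omega_X^\bullet)$ as the top graded de Rham piece of $\dual\M$ and yields the needed $S_1$ property.) Granting torsion-freeness, the injection $F_m\M\hookrightarrow j_*\bigl(F_m\M|_U\bigr)$ holds for $j:U\hookrightarrow X$ because $Z\smallsetminus U$ has codimension $\ge1$ in $Z$; by Step~1 the target is an $\O_Z$–module, hence so is its subsheaf $F_m\M$. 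Therefore $I_Z\cdot\O_S=I_Z\cdot F_m\M=0$, giving $I_Z\subseteq I_S$ and thus $I_S=I_Z$, so $S=Z$ is reduced. Equivalently, Step~1 supplies generic reducedness ($R_0$) and torsion-freeness supplies the absence of embedded points ($S_1$), so Serre's criterion gives reducedness.

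The main obstacle is Step~2, namely pinning down and citing the torsion-freeness of the lowest Hodge filtration piece in the generality of a mixed Hodge module; everything else is the local computation of Step~1 plus the formal annihilator manipulation. I would take care to state the torsion-freeness precisely for the \emph{lowest} piece and to justify its passage from the pure to the mixed setting via strictness, since that is exactly what rules out a thickening such as $\O_X/I_Z^2$, which is itself torsion-free over $\O_Z$ in the naive sense but fails to be an $\O_Z$–module and is excluded precisely by the generic vector-bundle structure of Step~1.
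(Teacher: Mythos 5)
There is a genuine gap --- in fact one in each step --- and both trace to the same missing idea: you work only with the weight-graded pieces $F_m\gr^W_k\M$, but the lemma is really about the filtered \emph{extensions}, and those are not controlled by strictness of $W$. In Step 1, the claim that $\M|_U$ is the direct image of a variation from $Z\cap U$ is false in general: $\M$ need not be supported on $Z$ at all (it can have constituents of strictly larger support whose lowest Hodge index is $>m$; these contribute nothing to $F_m\M$ but destroy the pushforward description), and even the $Z$-supported constituents are variations only on a possibly smaller open set than the smooth locus of $Z$. One can shrink $U$ to fix that, but the conclusion you need --- $I_Z\cdot F_m\M|_U=0$ --- still does not follow from knowing it on each $\gr^W_k\M$: an iterated extension, inside the category of $\O_X$-modules, of sheaves killed by $I_Z$ need not be killed by $I_Z$ (your own $\O_X/I_Z^2$ example), and the ``generic vector-bundle structure'' of the graded pieces does not exclude this for the extension. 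What excludes it is Saito's Lemme 3.2.6 (the reference the paper's proof leans on): the lowest Hodge piece of a mixed Hodge module \emph{supported on a reduced subvariety} $Z$ is an $\O_Z$-module --- a statement about the full filtered module, proved via the specializability axioms, not deducible from $W$-strictness.

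In Step 2 the asserted torsion-freeness is simply false for mixed Hodge modules: if $\M=\M_1\oplus\M_2$ with strict supports $Z_1\neq Z_2$ both contributing at level $m$, then $F_m\M$ contains the subsheaf $F_m\M_1$ supported on the proper closed subset $Z_1\subsetneq Z_1\cup Z_2$; and more to the point, ``iterated extension of torsion-free sheaves'' does not rule out embedded torsion, because a $\gr^W$-constituent can itself be a skyscraper (its lowest piece is torsion-free on \emph{its} support, a point), and strictness alone would permit the total $F_m\M$ to be exactly that skyscraper even though $\M$ has no submodule supported there. Ruling this scenario out is the actual content of the lemma, so your Step 2 is close to circular. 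The paper's proof handles both issues with one adjunction trick: for $j:X\sm S\hookrightarrow X$, the composite $F_m\M\isom\O_S\to F_m\mathcal H^0(j_*j^*\M)$ is zero, since any map $\O_S\to j_*j^*\M$ vanishes by adjunction; hence $F_m\M=F_m\N$ where $\N=\ker\bigl(\M\to\mathcal H^0(j_*j^*\M)\bigr)$ is the maximal submodule supported on $S_{\rm red}$, and Lemme 3.2.6 applied to $\N$ gives $I_{S_{\rm red}}\cdot\O_S=0$ everywhere at once --- generically and at would-be embedded points --- whence $S=S_{\rm red}$. Your Serre-criterion architecture ($R_0$ plus $S_1$) could in principle be completed, but each half would end up invoking exactly this reduction to the maximal $S_{\rm red}$-supported submodule, so it is not a route around it.
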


\begin{proof}
Since the assertions are Zariski local on $X$, we may assume $X$ is smooth by Kashiwara’s equivalence for mixed Hodge modules \cite[4.2.10]{Saito90}. Denote by $j:X\sm S\hookrightarrow X$ the open embedding. Consider the adjunction morphism of mixed Hodge modules \cite[4.4.1]{Saito90},
$$
\M\to \mathcal H^0(j_*j^*\M),
$$
and denote by $\N$ the kernel of this morphism. Passing to the Hodge filtration at level $m$, the map of $\O_X$-modules
$$
F_m\M\isom\O_S\to F_m\mathcal H^0(j_*j^*\M)
$$
is the zero map. Indeed, this map lifts to a morphism $\O_S\to j_*j^*\M$, which, by adjunction, is the zero map. As a consequence, we have
$$
F_m\M\isom \O_S\isom F_m\N.
$$
It is clear from the construction that $\Supp(\N)\subset S_{\rm red}$, where $S_{\rm red}$ is a reduced scheme of $S$. Therefore, $F_m\N$ is a $\O_{S_{\rm red}}$-module by \cite[Lemme 3.2.6]{Saito88}, and $S=S_{\rm red}$.
\end{proof}

\begin{proof}[Proof of Proposition \ref{prop:liminal sources and centers of MHM}]
After replacing $\K$ by $\Gamma_S(\K)$, we may assume $S=X$ (see Definition \ref{defn:two pullbacks of MHM}). Indeed, we have
$$
\Gamma_S(\K)\subset \K\quad\mathrm{and}\quad F_m\Gamma_S(\K)=F_m\K,
$$
and thus the set of liminal sources of $\Gamma_S(\K)$ is exactly the set of liminal sources of $\K$. Since the assertions are Zariski local on $X$, after shrinking we may further assume $F_m\K=\O_X$.

We argue by induction on $s$, the number of simple factors of $\K$. For the base case $s=1$, $\K$ is simple, so statements (1) and (2) are vacuous. In particular, $\K$ is the only liminal source with the only liminal center $X$. By \cite[Proposition 7.36]{SY23}, $X$ has rational singularities and statement (3) follows. For completeness: take a resolution $\mu:\wt X\to X$ and let $\wt\K$ be the simple Hodge module on $\wt X$ that agrees with $\K$ over the isomorphic locus. Then, Saito's Decomposition Theorem \cite[Théorème 5.3.1]{Saito88} yields a splitting
$$
\K\to \mu_*\wt\K\to \K,
$$
in $D^b{\rm MHM}(X)$ and passing to the Hodge filtration at level $m$ gives a splitting
$$
F_m\K\to R\mu_*F_m\wt\K\to F_m\K.
$$
This induces a left inverse of the natural morphism $\O_X\to R\mu_*\O_{\wt X}$. By Kovács' criterion \cite[Theorem 1]{Kovacs00}, $X$ has rational singularities.

Let $\K_0\in {\rm MHM}(X)$ be a simple factor of the first nonzero weight filtration of $\K$. Consider the following short exact sequence
$$
0\to \K_0\to \K\to \K/\K_0\to 0.
$$
Passing to the Hodge filtration at level $m$, we obtain
$$
0\to F_m\K_0\to F_m\K\to F_m(\K/\K_0)\to 0.
$$
Since $F_m\K=\O_X$, we have $F_m(\K/\K_0)=\O_Y$ for a closed subvariety $Y\subset X$ by Lemma \ref{lem:reducedness of lowest Hodge filtration}.

If $X=Y$, then the induction hypothesis applies to $\K/\K_0$, which completes the proof.

If $X\neq Y$, then $F_m\K_0=\I_{Y\subset X}$ is the ideal sheaf of $Y\subsetneq X$, and $\K_0$ is a simple Hodge module strict supported on the closure $Z:=\overline{X\sm Y}$. Every other liminal source of $\K$ is a liminal source of $\K/\K_0$ and is supported inside $Y$. Hence, the induction hypothesis on $\K/\K_0$ implies statement (2).

Next, we prove statements (1) and (3). Consider the resolution square
\begin{equation*}
\xymatrix{
{E}\ar[d]_{}\ar[r]^-{}& {\wt Z}\ar[d]^{\mu}&{}\\
{Z\cap Y}\ar[r]^-{}&{Z}\ar[r]&{X}
}
\end{equation*}
where $\mu$ is a resolution of singularities and $E:=\mu^{-1}(Z\cap Y)$. Let $W\subset Z$ be a closed subset containing $Z\cap Y$ such that $\K_0$ is a polarizable variation of Hodge structure on $Z\sm W$. The existence of $W$ follows from the structure theorem \cite[Theorem 3.21]{Saito90} of pure Hodge modules. Taking a further resolution, we may assume that $D:=\mathrm{Exc}(\mu)\cup\mu^{-1}(W)$, the union of the exceptional locus $\mathrm{Exc}(\mu)$ and $\mu^{-1}(W)$, is a simple normal crossing divisor.

Denote by $\tilde j:\wt Z\sm D\to \wt Z$ and $j: \wt Z\sm D\to Z$ open embeddings. Note that $j=\mu\circ\tilde j$.

Let $\mathcal V:=j^*\K_0$ be the polarizable variation of Hodge structure (as a filtered right D-module), and let $\widetilde \K_0:=\tilde j_{!*}\mathcal V$ be the pure Hodge module on $\wt Z$ associated to the minimal extension of $\mathcal V$. By \cite[Proposition 2.6]{Saito91}, we have
\begin{equation}
\label{eqn:pre-log rational}
R\mu_*F_m\wt\K_0\isom F_m\K_0=\I_{Y\subset X}.
\end{equation}
Recall that $D\subset \wt Z$ is a simple normal crossing divisor, so we have an inclusion of mixed Hodge modules and the inclusion of the first nonzero Hodge filtrations:
$$
\wt\K_0\subset \tilde j_*\mathcal V,\quad F_m\wt\K_0\subset F_m\tilde j_*\mathcal V.
$$
The latter is an inclusion of line bundles associated to Deligne's canonical extension with eigenvalues of residues in $(-1,0]$ and $[-1,0)$, respectively. Thus, we have
\begin{equation}
\label{eqn:comparison of lowest Hodge filtration}
F_m\tilde j_*\mathcal V(-D)\subset F_m\wt\K_0\subset F_m\tilde j_*\mathcal V.    
\end{equation}

Denote by $\L:=F_m\tilde j_*\mathcal V$ the line bundle. By the commutativity of the graded de Rham functor with proper pushforward, we have
$$
\mu_*\L=F_m\mathcal H^0(j_*\mathcal V).
$$
From the adjunction map $\K\to \mathcal H^0(j_*j^*\K)$, we have the map of the first nonzero Hodge filtrations
\begin{equation}
\label{eqn:O_X to mu_*L}
F_m\K=\O_X\to F_m\mathcal H^0(j_*j^*\K)=F_m\mathcal H^0(j_*\mathcal V)=\mu_*\L.    
\end{equation}
Here, we treat $j$ as the open embedding $\wt Z\sm D\subset X$. The equality $F_m\mathcal H^0(j_*j^*\K)=F_m\mathcal H^0(j_*\mathcal V)$ holds from the following distinguished triangle
$$
\gr^F_{m}\DR(j_*j^*\K_0)\to \gr^F_{m}\DR(j_*j^*\K) \to \gr^F_{m}\DR(j_*j^*(\K/\K_0))\xrightarrow{+1}
$$
and the vanishing $\gr^F_{m}\DR(j_*j^*(\K/\K_0))=0$, which follows from the vanishing 
$$
\gr^F_{\le m}\DR(j^*(\K/\K_0))=0
$$
and \cite[Lemma 3.4]{Park23}.

Note that the map \eqref{eqn:O_X to mu_*L} is an isomorphism on $\wt Z\sm D$, and $\mu_*\L$ is a torsion-free sheaf supported on $Z$. Denote by $\K_1$, the image of the adjunction map $\K\to \mathcal H^0(j_*j^*\K)$. Then the composition map
$$
\K_0\to\K\to \K_1
$$
has the associated map of the first nonzero Hodge filtrations
$$
\I_{Y\subset X}\to \O_X\to \O_Z,
$$
where the identification $F_m\K_1=\O_Z$ follows from Lemma \ref{lem:reducedness of lowest Hodge filtration}. Since $\K_0$ is simple, the map $\K_0\to \K_1$ is injective, and its quotient $\K_1/\K_0$ has the first nonzero Hodge filtration
$$
F_m(\K_1/\K_0)=\O_{Z\cap Y}.
$$
Therefore, $Z\cap Y$ is a union of liminal centers of $\K/\K_0$. Applying the induction hypothesis to $\K/\K_0$, we obtain statement (1).

Continuing with the map \eqref{eqn:O_X to mu_*L}, we have a section
$$
\O_{\wt Z}\to \L
$$
induced by adjunction. Denote by $D_0$ the associated effective Cartier divisor, so that $\L\isom\O_{\wt Z}(D_0)$. Recall that \eqref{eqn:O_X to mu_*L} is an isomorphism on $\wt Z\sm D$. This implies $\Supp(D_0)\subset D$. By \eqref{eqn:comparison of lowest Hodge filtration}, we obtain
$$
F_m\wt\K_0=\O_{\wt Z}(D_0-B)
$$
for some reduced normal crossing divisor $B\le D$.

We first prove that $-E\le D_0-B$. It is clear that $-D\le D_0-B$. Suppose a divisor $F$ with $\mu(F)\nsubseteq Z\cap Y$ had a negative coefficient in $D_0-B$. Then
$$
\left(\mu_*\O_{\wt Z}(D_0-B)\right)|_{Z\sm Y}\neq \O_{Z\sm Y},
$$
whereas $(\I_{Y\subset X})|_{Z\sm Y}=\O_{Z\sm Y}$. This contradicts \eqref{eqn:pre-log rational}. Hence, we have $-E\le D_0-B$.

As a consequence, we obtain a sequence of morphisms
$$
\I_{Y\subset X}\to R\mu_*\O_{\wt Z}(-E)\to R\mu_*\O_{\wt Z}(D_0-B)=R\mu_*F_m\wt\K_0=\I_{Y\subset X}
$$
This composition is an isomorphism of subsheaves in $\mu_*\L$. Note that $\I_{Y\subset X}=\I_{Z\cap Y\subset Z}$, induced by the exact sequence
$$
0\to F_m\K_0\to F_m\K_1\to F_m(\K_1/\K_0)\to 0 \;\;\Longleftrightarrow\;\; 0\to \I_{Y\subset X}\to \O_Z\to \O_{Z\cap Y}\to 0.
$$
Hence, we deduce from Proposition \ref{prop:criterion for log rational pair} that $(Z,Z\cap Y)$ is a log rational pair, that is, $Z\sm Y$ has rational singularities and $(Z,Z\cap Y)$ is a Du Bois pair. 

Applying the induction hypothesis to $\K/\K_0$, any union of liminal centers of $\K/\K_0$ has Du Bois singularities. Additionally, any union of $Z$ and liminal centers of $\K/\K_0$ has Du Bois singularities. Indeed, denote by $T$ a union of liminal centers of $\K/\K_0$. Note that
$$
Z\cup T=Z\cup ((Z\cap Y)\cup T),
$$
and $(Z\cap Y)\cup T$ is a union of liminal centers of $\K/\K_0$. From the basic property of the Du Bois pair (see, for example, \cite[Proposition 5.1]{Kovacs11}), $Z\cup T$ is Du Bois since $(Z,Z\cap Y)$ is a Du Bois pair and $(Z\cap Y)\cup T$ is Du Bois. Minimal liminal centers of $\K$ should also have rational singularities by the induction hypothesis. This completes the proof of statement (3).
\end{proof}

Applying Proposition \ref{prop:liminal sources and centers of MHM} to the dual of the RHM-defect object $\K_X^\bullet$, we deduce Theorem \ref{thm:liminal centers} as shown below.

\begin{proof}[Proof of Theorem \ref{thm:liminal centers}]
Dualizing \eqref{eqn:nonvanishing grDR-m} in Definition \ref{defn:liminal sources and centers}, we have
$$
\gr^F_m\DR(\dual \M)=F_m\dual \M\neq 0,
$$
and $\dual \M$ is a simple subquotient of $\dual \K_X^\bullet$. Note that $m$ is the index of the first nonzero Hodge filtration of $\dual \K_X^\bullet$, which implies the equality $\gr^F_m\DR(\dual \M)=F_m\dual \M$. It suffices to prove that $F_m\dual \K_X^\bullet$ is a line bundle supported precisely on the $m$-liminal locus of $X$ with reduced scheme structure. Proposition \ref{prop:liminal sources and centers of MHM} then applies, yielding Theorem \ref{thm:liminal centers}.

Since the assertions are local on $X$, we assume that $X$ is a Cartier divisor in a smooth variety $Y$ of dimension $n$. From the assumption that $X$ has $m$-Du Bois singularities, we have $\Omega_X^m=\DB_X^m$. Applying the graded de Rham functor $\gr^F_{-m}\DR(\,\cdot\,)$ to \eqref{eqn:Q to IC triangle}, we have the distinguished triangle
$$
\gr^F_{-m}\DR(\K_X^\bullet)\to \Omega_X^m[n-1-m]\to I\DB_X^m[n-1-m]\xrightarrow{+1}.
$$
Applying the Grothendieck duality $R\Hom_{\O_X}(\,\cdot\,,\w_X[n-1])$ with the duality formula (see Proposition \ref{prop:duality}), we have
$$
I\DB_X^{n-1-m}[m]\to R\Hom_{\O_X}(\Omega_X^m,\w_X)[m]\to F_m\dual\K_X^\bullet\xrightarrow{+1}.
$$
and the associated long exact sequence of $\O_X$-modules:
$$
\cdots\to \mathcal Ext^m_{\O_X}(\Omega_X^m,\w_X)\to F_m\dual\K_X^\bullet\to \mathcal H^{m+1}(I\DB_X^{n-1-m})\to\cdots.
$$
Since a graded de Rham complex of a Hodge module lives in nonpositive degrees, we have the vanishing $\mathcal H^{m+1}(I\DB_X^{n-1-m})=0$. This implies the surjection
$$
\mathcal Ext^m_{\O_X}(\Omega_X^m,\w_X)\to F_m\dual\K_X^\bullet
$$
of $\O_X$-modules.

On the other hand, by Lemma \ref{lem:Koszul resolution of differentials}, the Koszul complex
$$
K_m^\bullet(\phi):\O_Y(-mX)|_X\to\Omega_Y(-(m-1)X)|_X \to\cdots\to\Omega_Y^m|_X,
$$
is a locally free resolution of $\Omega_X^m$. This implies that
$$
\mathcal Ext^m_{\O_X}(\Omega_X^m,\w_X)\isom\mathrm{coker}\left(\Hom_{\O_X}(\Omega_Y(-(m-1)X)|_X,\w_X)\to \w_X(mX)\right).
$$
In particular, we have a surjection
$$
\w_X(mX)\to F_m\dual\K_X^\bullet.
$$
Note that $F_m\dual\K_X^\bullet$ is an $\O_X$-module precisely supported on the $m$-liminal locus of $X$ (see the discussion following Definition \ref{defn:RHM defect object}). Therefore, by Lemma \ref{lem:reducedness of lowest Hodge filtration}, $F_m\dual\K_X^\bullet$ is a line bundle $\omega_X(mX)|_S$ where $S$ is the $m$-liminal locus of $X$ with reduced scheme structure.
\end{proof}

Note that the $m$-liminal locus, phrased in terms of minimal exponents, is the set
$$
\left\{x\in X:\wt\alpha_x(X)=m+1\right\}.
$$
The last statement of Theorem \ref{thm:liminal centers} says that if $\wt\alpha(X)=m+1\in \Z$, then the above set has Du Bois singularities. Using the Thom-Sebastiani theorem, we prove an analogous statement even when the global minimal exponent is not an integer.

\begin{cor}
Let $X$ be a variety with hypersurface singularities. Then the (reduced) locus where the local and global minimal exponents agree,
$$
\left\{x\in X:\wt\alpha_x(X)=\wt\alpha(X)\right\},
$$
has Du Bois singularities.
\end{cor}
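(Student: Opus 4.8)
The plan is to reduce the non-integer case to the integer case already settled by Theorem \ref{thm:liminal centers}, using the Thom-Sebastiani theorem to boost the global minimal exponent to an integer. Set $\alpha := \wt\alpha(X)$ and $S := \{x \in X : \wt\alpha_x(X) = \alpha\}$. Since the Du Bois property is local on $S$ and the minimal exponent is a local analytic invariant, it suffices to prove Du Bois-ness of the germ of $S$ at each $x_0 \in S$. Fix such an $x_0$ and a local embedding $X = \{f = 0\} \subset Y$ with $Y$ smooth; because $x_0 \in S$, the local minimal exponent $\wt\alpha_{x_0}(f) = \alpha$ is the minimum of $y \mapsto \wt\alpha_y(f)$ over a neighborhood of $x_0$, so after shrinking we may assume $\wt\alpha_y(f) \ge \alpha$ for all $y$. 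If $\alpha \in \Z$, then $S$ is exactly the $(\alpha-1)$-liminal locus $\{y : \wt\alpha_y(f) = \alpha\}$ of this local representative, and we conclude directly by the last statement of Theorem \ref{thm:liminal centers}.

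Assume now $\alpha \notin \Z$, and let $m+1 := \lceil \alpha \rceil$, so that $(m+1) - \alpha \in (0,1)$; write this difference as $p/q$ in lowest terms with $0 < p < q$. I would introduce auxiliary variables $t_1, \dots, t_p$ and set $g := t_1^q + \cdots + t_p^q$, so that $\wt\alpha_0(g) = p/q$ by Thom-Sebastiani (Theorem \ref{thm:Thom-Sebastiani for minimal exponents}) together with $\wt\alpha_0(t^q) = 1/q$. Then I would form the hypersurface $X' := \{f \oplus g = 0\} \subset Y \times \A^p$. The heart of the argument is to identify the $m$-liminal locus of $X'$ with $S \times \{0\}$: for a point $(y,t) \in X'$ with $t \ne 0$, the differential $dg(t)$ is nonzero (the only singular point of $g$ is the origin), so $X'$ is smooth at $(y,t)$ and $\wt\alpha_{(y,t)}(X') = +\infty$; for a point $(y,0) \in X'$ one has $f(y) = 0$, and Thom-Sebastiani gives $\wt\alpha_{(y,0)}(X') = \wt\alpha_y(f) + (m+1-\alpha)$, which is $\ge m+1$ and equals $m+1$ precisely when $y \in S$. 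Consequently $\wt\alpha(X') = m+1 \in \Z$, the germ of $X'$ at $(x_0,0)$ is $m$-Du Bois by Theorem \ref{thm:higher singularities vs minimal exponent}, and its $m$-liminal locus $\{z : \wt\alpha_z(X') = m+1\}$ equals the reduced subscheme $S \times \{0\}$.

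To finish, I would apply the last statement of Theorem \ref{thm:liminal centers} to a suitable quasi-projective representative of $X'$: its $m$-liminal locus has Du Bois singularities. Since $S \times \{0\} \isom S$ as reduced schemes and Du Bois-ness is preserved under this isomorphism, the germ of $S$ at $x_0$ is Du Bois, completing the proof.

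The main obstacle I anticipate is the clean identification of the $m$-liminal locus of $X'$ with $S \times \{0\}$ on the nose, with the correct reduced structure, rather than with some larger locus. This rests on two points that must be checked carefully: that every point of $X'$ with $t \ne 0$ is a smooth point, so that the ``extra'' level-set directions of $g$ contribute nothing, and that Thom-Sebastiani additivity applies verbatim at the points $(y,0)$, where both $f$ and $g$ vanish. A secondary technical point is ensuring that Theorem \ref{thm:liminal centers}, stated for a variety, may be applied to the local representative $X'$; this is harmless, since one may shrink $Y$ so that $\wt\alpha(X') = m+1$ holds on all of $X'$, given that $\wt\alpha_y(f) \ge \alpha$ was arranged near $x_0$.
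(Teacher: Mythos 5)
Your proof is correct and takes essentially the same route as the paper: both localize to a defining equation $f$, use Thom--Sebastiani to add a padding summand (your $t_1^q+\cdots+t_p^q$ with $p/q=\lceil\alpha\rceil-\alpha$, versus the paper's $y_1^N+\cdots+y_e^N$) that raises the global minimal exponent to an integer $m+1$, identify the $m$-liminal locus of the enlarged hypersurface with $S\times\{0\}$ via the Jacobian criterion and Thom--Sebastiani additivity, and conclude from the last statement of Theorem \ref{thm:liminal centers}. Your separate handling of the integer case and the explicit semicontinuity-plus-discreteness shrinking argument are only presentational refinements of the same proof.
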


\begin{proof}
Since the assertion is local on $X$, we assume that $X$ is a hypersurface defined by a local equation $f=0$ on a smooth variety $Y$. Let $N$ be a sufficiently divisible positive integer such that $N\wt\alpha(X)\in \Z$. Then there exists a positive integer $e$ such that
$$
\wt\alpha(X)+\frac{e}{N}=m+1\in \Z.
$$
Consider the global function
$$
F:=f\oplus (y_1^N+\dots+y_e^N)\in \O_{Y\times\C^e}(Y\times \C^e)
$$
and $W\subset Y\times \C^e$ be the hypersurface $\{F=0\}$. Note that by the Jacobian criterion, we have
$$
\Sing(W)=\Sing(X)\times\{0\}.
$$
By Thom-Sebastiani Theorem \ref{thm:Thom-Sebastiani for minimal exponents}, the global minimal exponent $\wt\alpha(W)=m+1$, and
$$
\left\{w\in W:\wt\alpha_w(W)=m+1\right\}=\left\{x\in X:\wt\alpha_x(X)=\wt\alpha(X)\right\}\times \{0\}.
$$
The formal is Du Bois by Theorem \ref{thm:liminal centers}, so the latter is also Du Bois.
\end{proof}

\subsection{Cores of Calabi-Yau type Hodge structures}

From now on, we focus on understanding the core of the middle cohomology and Hodge-Du Bois numbers of a Calabi-Yau type hypersurface. Recall from Introduction that a mixed Hodge structure $H=(V_\Q,F^\bullet,W_\bullet)$ is of Calabi-Yau type if $F^m V_\C=V_\C$ and $\dim \gr_F^mV_\C=1$ for some $m$. For instance, the middle cohomology of a smooth Calabi-Yau type hypersurface is a pure Hodge structure of Calabi-Yau type. Another instance occurs when the first nonzero Hodge filtration of a mixed Hodge module is a structure sheaf of a reduced variety:

\begin{lem}
\label{lem:MHS of Calabi-Yau type from MHM}
Let $X$ be projective and $\M\in {\rm MHM}(X)$. Let $m$ be the index of the first nonzero Hodge filtration of $\dual\M$. If
$$
F_m\dual\M=\gr^F_m\DR(\dual\M)\isom \O_S
$$
for a reduced connected closed subscheme $S\subset X$, then the hypercohomology $\HH^0(\M)$ is a mixed Hodge structure of Calabi-Yau type.
\end{lem}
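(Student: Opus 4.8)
The plan is to pass to the Verdier--Saito dual and read off the \emph{top} of the Hodge filtration of $\HH^0(\dual\M)$, where the hypothesis provides direct information, and then dualize. Since $X$ is projective, the structure map $a\colon X\to \mathrm{pt}$ is proper, so $a_*\isom a_!$ commutes with the dualizing functor, giving an identification $a_*\dual\M\isom \dual(a_*\M)$ in $D^b{\rm MHM}(\mathrm{pt})$. Applying $H^0$, using that $\dual$ is exact on mixed Hodge structures and sends $H^0$ of a complex to $\dual$ of its $H^0$, I obtain an isomorphism of mixed Hodge structures
$$
\HH^0(\dual\M)\isom \dual\,\HH^0(\M).
$$
Because being of Calabi--Yau type is preserved by $\dual$ (dualizing reflects the Hodge grading, $h^{p,q}\mapsto h^{-p,-q}$, so the lowest nonzero graded piece of one side matches the highest of the other), it suffices to show that $\HH^0(\dual\M)$ has a one-dimensional top Hodge graded piece with nothing strictly above it.

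For this I would invoke the standard compatibility from Saito's theory: for proper $X$ the Hodge filtration on the mixed Hodge structure $\HH^k(X,\N)$ is computed by
$$
\gr^F_{\ell}\,\HH^k(X,\N)\isom \HH^k\!\left(X,\ \gr^F_{\ell}\DR(\N)\right),
$$
which follows from the commutation of $\gr^F\DR$ with $a_*$ together with the strictness of the Hodge filtration under proper pushforward. Applying this with $\N=\dual\M$ and $k=0$, and recalling that by hypothesis $m=\min\{\ell:\gr^F_{\ell}\DR(\dual\M)\neq 0\}$ with $\gr^F_m\DR(\dual\M)\isom \O_S$ a coherent sheaf in degree $0$, I get $\gr^F_{\ell}\HH^0(\dual\M)=0$ for every $\ell<m$, while
$$
\gr^F_m\,\HH^0(\dual\M)\isom \HH^0(X,\O_S)=H^0(S,\O_S).
$$
Since $S$ is a reduced, connected, closed subscheme of the projective variety $X$, it is a reduced connected projective scheme, so $H^0(S,\O_S)$ is a reduced finite-dimensional $\C$-algebra with no nontrivial idempotents, i.e. $H^0(S,\O_S)=\C$. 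In the decreasing convention $F^{\bullet}=F_{-\bullet}$ this says $\HH^0(\dual\M)$ satisfies $\gr_F^{p}=0$ for $p>-m$ and $\dim_\C\gr_F^{-m}=1$; equivalently, its top Hodge piece is one-dimensional.

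Dualizing, $\HH^0(\M)\isom \dual\,\HH^0(\dual\M)$ then satisfies $\gr_F^{p}\HH^0(\M)=0$ for $p<m$ and $\dim_\C\gr_F^{m}\HH^0(\M)=1$; in particular $F^m\HH^0(\M)_\C=\HH^0(\M)_\C$, which is exactly Laza's Calabi--Yau type condition with parameter $m$ (and shows in passing that $\HH^0(\M)\neq 0$). The only genuinely delicate points are bookkeeping: matching the increasing $\D$-module index $\ell$ with the decreasing Hodge index $p=-\ell$ throughout, confirming that $\gr^F_m\DR(\dual\M)$ is the sheaf $\O_S$ placed in cohomological degree $0$ so that $\HH^0(X,-)$ returns $H^0(S,\O_S)$ with no shift or higher-cohomology contribution, and invoking the strictness in Saito's theory that legitimizes the hypercohomology formula at the single degree $k=0$. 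I expect the hardest part to be phrasing this strictness/hypercohomology input cleanly in the paper's conventions, rather than any conceptual obstacle.
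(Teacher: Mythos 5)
Your proof is correct and follows essentially the same route as the paper: dualize via $\dual\circ {a_X}_*\isom {a_X}_*\circ\dual$ for the proper map to a point, use the degeneration of Saito's Hodge-to-de Rham spectral sequence to identify $\gr^F_m\HH^0(\dual\M)\isom H^0(S,\O_S)=\C$ (with vanishing below $m$), and dualize back. The bookkeeping points you flag (lowest graded piece sitting in degree $0$, increasing/decreasing index conversion) are handled exactly as in the paper, which builds the degree-$0$ identification into the hypothesis $F_m\dual\M=\gr^F_m\DR(\dual\M)\isom\O_S$.
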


\begin{proof}
Denote by $a_X:X\to \mathrm{pt}$ the constant map to a point. By definition, 
$$
\HH^0(\M)=H^0({a_X}_*\M).
$$
Since $X$ is projective, we have the duality
$$
\HH^0(\M)\isom {\rm Hom}(\HH^0(\dual \M),\Q^H)
$$
by $\dual\circ{a_X}_*={a_X}_*\circ\dual$ \cite[4.3.5]{Saito90}. Additionally, we have 
$$
\gr^F_{<m}\HH^0(\dual \M)=0\quad \mathrm{and}\quad \gr^F_m\HH^0(\dual\M)=H^0(X,\O_S)=\C
$$
from Saito's Hodge-to-de Rham spectral sequence. Consequently, $\HH^0(\M)$ is of Calabi-Yau type.
\end{proof}

We will see later in Proposition \ref{prop:lowest Hodge filtration of dual K} that $F_m\dual\K_X^\bullet\isom\O_S$ for a $m$-liminal locus $S$ of a Calabi-Yau type hypersurface $X\subset \P^n$ of degree $d$. Although $S$ is not necessarily connected, we may restrict $\K_X^\bullet$ to each irreducible component of $S$ and apply Lemma \ref{lem:MHS of Calabi-Yau type from MHM}. To begin with, we denote two inverse image functors by $\Gamma_S(\,\cdot\,)$ and $\,\cdot\,|_S$ for a reduced closed subscheme $S\subset X$.

\begin{defn}
\label{defn:two pullbacks of MHM}
Let $\M\in {\rm MHM}(X)$ be a mixed Hodge module on a variety $X$ and $\iota:S\hookrightarrow X$ a closed embedding. We define
$$
\Gamma_S(\M):=\mathcal H^0(\iota^!\M)\quad\mathrm{and}\quad \M|_S:=\mathcal H^0(\iota^*\M),
$$
the mixed Hodge modules supported on $S$.
\end{defn}

As a D-module on a smooth variety, $\Gamma_S(\M)$ is a submodule of $\M$ consisting of sections with support in $S$. The adjunction morphism $\Gamma_S(\M)\to \M$ is injective, and by duality, $\M\to \M|_S$ is surjective. Note that we have an isomorphism
\begin{equation}
\label{eqn:dual restriction=restriction dual}
\dual(\M|_S)\isom \Gamma_S(\dual \M).    
\end{equation}
Next, we study liminal sources and the associated cores in the situation where $F_m\dual\K$ is a structure sheaf of a reduced variety.

\begin{prop}
\label{prop:core of liminal source of MHM}
Let $X$ be a projective variety and $\K\in {\rm MHM}(X)$ a mixed Hodge module. Let $m$ be the index of the first nonzero Hodge filtration of $\dual \K$. Assume $F_m\dual\K\isom \O_S$ for a reduced closed subscheme $S\subset X$. For each connected component $S_0\subset S$, the mixed Hodge modules $\Gamma_{S_0}(\dual\K)$ and $(\dual\K)|_{S_0}$ satisfies
$$
F_m(\Gamma_{S_0}(\dual\K))\isom F_m((\dual\K)|_{S_0})\isom\O_{S_0}.
$$
Moreover, for any liminal source $\dual \M$ of $\dual \K$ supported on a minimal liminal center $Z\subset S_0$, we have isomorphisms
$$
F_m\dual\M\isom \O_Z\quad\mathrm{and}\quad \Core(\HH^0(\K|_{S_0}))\isom\Core(\HH^0(\M)).
$$
\end{prop}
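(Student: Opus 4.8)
Since $S$ is reduced, I decompose it into its connected components $S=\bigsqcup S_0$, so that $\O_S=\bigoplus_{S_0}\O_{S_0}$. Exactly as in the proof of Proposition~\ref{prop:liminal sources and centers of MHM}, the lowest Hodge piece of $\dual\K$ is already supported on $S$, so $F_m\Gamma_S(\dual\K)=F_m\dual\K=\O_S$; as the components are pairwise disjoint, $\Gamma_S(\dual\K)=\bigoplus_{S_0}\Gamma_{S_0}(\dual\K)$, whence $F_m\Gamma_{S_0}(\dual\K)\isom\O_{S_0}$. Because Saito's morphisms are strictly filtered, the lowest graded functor $\gr^F_m\DR(\,\cdot\,)=F_m(\,\cdot\,)$ is exact; applying it to the surjection $\dual\K\twoheadrightarrow(\dual\K)|_{S_0}$, using that a map from $\O_S$ into a sheaf supported on $S_0$ kills the summands $\O_{S_0'}$ with $S_0'\neq S_0$, and invoking Lemma~\ref{lem:reducedness of lowest Hodge filtration} for the reduced structure, I obtain $F_m((\dual\K)|_{S_0})\isom\O_{S_0}$ as well. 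For a minimal source $\dual\M=\IC^H_Z(\mathbb V)$, exactness of $F_m$ realizes $F_m\dual\M$ as a nonzero subquotient of $F_m\dual\K=\O_S$ along $Z$, hence a torsion-free rank-one sheaf on $Z$ (the lowest Hodge filtration of a simple Hodge module is torsion-free); since $Z$ is a minimal liminal center it has rational singularities by Proposition~\ref{prop:liminal sources and centers of MHM}(3), and the resolution computation of \eqref{eqn:pre-log rational}--\eqref{eqn:comparison of lowest Hodge filtration} then upgrades this to $F_m\dual\M\isom\O_Z$.

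\textbf{Reduction of the core statement.} By the duality \eqref{eqn:dual restriction=restriction dual}, $\dual(\K|_{S_0})=\Gamma_{S_0}(\dual\K)$, and since $X$ is projective $\dual$ commutes with $a_{X*}$ \cite[4.3.5]{Saito90}; thus $\HH^0(\K|_{S_0})\isom\dual\,\HH^0(\Gamma_{S_0}(\dual\K))$ and $\HH^0(\M)\isom\dual\,\HH^0(\dual\M)$. Duality sends a Calabi-Yau type mixed Hodge structure to one of Calabi-Yau type and carries the core to the dual of the core, so it suffices to prove $\Core(\HH^0(\Gamma_{S_0}(\dual\K)))\isom\Core(\HH^0(\dual\M))$. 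By the computation in the proof of Lemma~\ref{lem:MHS of Calabi-Yau type from MHM}, both objects are Calabi-Yau type with one-dimensional extreme (lowest) Hodge piece, $\gr^F_m\HH^0=H^0(\O_{S_0})=\C$ and $\gr^F_m\HH^0=H^0(\O_Z)=\C$ respectively, the core being the simple subquotient realizing this line.

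\textbf{Inductive peeling.} I induct on the number of simple factors of $\Gamma_{S_0}(\dual\K)$, following the scheme of Proposition~\ref{prop:liminal sources and centers of MHM} with $S_0$ in the role of $X$. If there is one factor it equals $\dual\M$ and there is nothing to prove. Otherwise choose, as in that proof, a lowest-weight simple submodule $\G\subset\Gamma_{S_0}(\dual\K)$, strictly supported on $Z'=\overline{S_0\smallsetminus Y}$, with $F_m\G=\I_{Y\subset S_0}$ and $F_m(\Gamma_{S_0}(\dual\K)/\G)=\O_Y$ for a closed $Y\subsetneq S_0$. If $Y=\emptyset$ then $S_0$ is the unique minimal center, $\G=\dual\M$, and we are done. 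If $Y\neq\emptyset$, connectedness of $S_0$ yields the key vanishing
\[
\gr^F_m\HH^0(\G)=H^0(S_0,\I_{Y\subset S_0})=0,
\]
since the restriction $\C=H^0(\O_{S_0})\to H^0(\O_Y)$ sends $1\mapsto 1$ and is injective. Feeding the short exact sequence $0\to\G\to\Gamma_{S_0}(\dual\K)\to\Gamma_{S_0}(\dual\K)/\G\to0$ into the long exact sequence for $\HH^\bullet$, to which the exact functor $\gr^F_m$ applies, the one-dimensional extreme line of $\gr^F_m\HH^0(\Gamma_{S_0}(\dual\K))$ injects into that of the quotient: the peeled maximal-center factor $\G$ contributes nothing to the outermost $1$, and the distinguished extreme line, together with the pure sub-Hodge-structure it spans (the core), descends to $\Gamma_{S_0}(\dual\K)/\G$. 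As $\dual\M$, being supported on $Z\neq Z'$, persists as a simple factor of the quotient, the induction hypothesis applies and gives the claim.

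\textbf{Main obstacle.} The delicate point is to make ``the core descends'' precise at the level of Hodge structures rather than merely of the lowest graded piece: I must control the connecting maps into $\gr^F_m\HH^1(\G)$ and the possible disconnectedness of the intermediate supports $Y$, showing that the distinguished line always spans the \emph{same} pure Hodge structure. Here purity of the $\HH^0$ of each simple source---pure by the decomposition theorem, $Z$ being projective---together with the log rational pair structure produced in Proposition~\ref{prop:liminal sources and centers of MHM} forces the remaining contributions into strictly higher Hodge filtration and pins down the weight of the surviving core. The same input underlies the uniqueness implicit in the statement: although $S_0$ may contain several (necessarily disjoint) minimal centers, connectedness of $S_0$ propagates the underlying variation of Hodge structure between adjacent centers through their common higher centers, so that every minimal liminal source of $\K|_{S_0}$ carries the isomorphic core, equal to $\Core(\HH^0(\K|_{S_0}))$.
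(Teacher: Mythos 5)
Your skeleton does match the paper's proof: splitting off connected components via $\Gamma_{S_0}$, inducting on the number of simple factors by peeling a lowest-weight simple subobject $\G$, the key vanishing $H^0(S_0,\I_{Y\subset S_0})=0$ from connectedness of $S_0$, and transferring the core through the one-dimensional extreme Hodge piece. But two steps have genuine gaps. First, your derivation of $F_m\dual\M\isom\O_Z$ fails as written. Being a subquotient of $\O_S$ does not make $F_m\dual\M$ generically rank one along $Z$: if $Z$ is a proper subvariety of a component of $S$, subquotients of $\O_S$ supported on $Z$ can have arbitrary generic length. And even granting torsion-free rank one, rational singularities of $Z$ cannot "upgrade" it to $\O_Z$ (ideal sheaves and nontrivial line bundles are torsion-free of rank one); moreover \eqref{eqn:pre-log rational}--\eqref{eqn:comparison of lowest Hodge filtration} concern the top-dimensional factor peeled in the proof of Proposition \ref{prop:liminal sources and centers of MHM}, not an arbitrary minimal source. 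In the paper, $F_m\dual\M\isom\O_Z$ is an \emph{output of the induction}: the minimal source $\dual\M$ can only appear as the peeled lowest-weight factor at a stage where the residual support $Y$ is empty -- because if $Y\neq\emptyset$, connectedness of the current (connected) support together with Proposition \ref{prop:liminal sources and centers of MHM}(1) shows $\overline{S_0\sm Y}\cap Y$ is a nonempty union of liminal centers strictly contained in the peeled support, contradicting minimality -- and then the exact sequence on $F_m$ gives $F_m\dual\M\isom\O_{S_0}=\O_Z$ directly. Note that this is exactly the argument you need, and merely assert, for your claim that $Z\neq Z'$.

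Second, the "main obstacle" you describe is in fact already resolved by Lemma \ref{lem:isomorphism of cores of MHS}: once the map of one-dimensional extreme graded pieces $\gr^F_m\HH^0(\Gamma_{S_0}(\dual\K))\to\gr^F_m\HH^0(\Gamma_{S_0}(\dual\K)/\G)$ is an isomorphism -- which your long exact sequence argument does produce -- strictness of morphisms of mixed Hodge structures identifies the cores immediately. No control of connecting maps into $\gr^F_m\HH^1(\G)$, no purity of $\HH^0$ of simple sources, and no log rational pair input is needed; the mechanism you gesture at is off-target and left unexecuted. Two smaller omissions in the same part: you silently assume the peeled factor satisfies $F_m\G\neq 0$ (i.e.\ $Y\subsetneq S_0$), whereas the case $F_m\G=0$ (the paper's $S=T$ case) occurs and must be handled -- easily, since then the quotient carries all of $\O_{S_0}$ and the same lemma transfers the core; and when $Y$ is disconnected the paper passes to the further quotient by $\Gamma_{Y\sm Y_0}$ before invoking the induction hypothesis, a step you flag but do not carry out.
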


\begin{proof}
Denote by $j_0:X\sm S_0\to X$ the open embedding. Then the cokernel of the inclusion
$$
\Gamma_{S_0}(\dual\K)\to \dual\K
$$
is a submodule of $\mathcal H^0({j_0}_*j_0^*\dual\K)$ (see e.g. \cite[4.4.1]{Saito90}). Since any section of $\mathcal H^0({j_0}_*j_0^*\dual\K)$ considered as a D-module on the smooth ambient projective space is not supported in $S_0$, the induced map of the first nonzero Hodge filtration
$$
F_m\Gamma_{S_0}(\dual\K)\to F_m\dual\K\isom\O_S
$$
is an isomorphism near $S_0$. Hence, $F_m\Gamma_{S_0}(\dual\K)\isom \O_{S_0}$. The isomorphism $F_m((\dual\K)|_{S_0})\isom\O_{S_0}$ also follows from the dual argument for the surjection
$$
\O_S\isom F_m\dual\K\to F_m((\dual\K)|_{S_0}).
$$

Consider the inclusion
$$
\Gamma_{S}(\dual\K)\to \dual\K.
$$
As above, this induces an isomorphism of the first nonzero Hodge filtrations. The set of liminal sources of $\Gamma_{S}(\dual\K)$ is exactly the set of liminal sources of $\dual \K$. Hence, a liminal source $\dual\M$ of $\dual\K$, as in the statement of the proposition, is a liminal source of $\Gamma_{S_0}(\dual\K)$. By Lemma \ref{lem:MHS of Calabi-Yau type from MHM} and \eqref{eqn:dual restriction=restriction dual}, the mixed Hodge structure $\HH^0(\K|_{S_0})$ is of Calabi-Yau type.

Without loss of generality, we may now assume $S=S_0$ is connected and $\dual\K=\Gamma_{S}(\dual\K)$, that is, $\K$ is supported on $S$.
To prove the last two isomorphisms in the statement, we proceed by induction on $s$, the number of simple factors of $\K$. When $s=1$, the statements are vacuous. As in the proof of Proposition \ref{prop:liminal sources and centers of MHM}, consider $\N\in {\rm MHM}(X)$ a simple factor of $\K$ that fits into a short exact sequence
$$
0\to \dual\N\to \dual\K\to \dual\K'\to 0.
$$
Passing to the Hodge filtration at level $m$, we have
$$
0\to F_m\dual\N\to \O_S\to\O_T\to 0,
$$
for a closed subvariety $T\subset S$. If $S=T$, then the induction hypothesis applies to $\K'$, and
$$
\Core(\HH^0(\K))\isom\Core(\HH^0(\K'))
$$
by Lemma \ref{lem:isomorphism of cores of MHS} below; note that the morphism $\K'\to\K$ induces $\HH^0(\K')\to \HH^0(\K)$, and this induces an isomorphism of top Hodge pieces. This completes the proof.

If $S\neq T$, then the strict support of $\dual\N$ is an irreducible component of $S$ that intersects with $T$. By Proposition \ref{prop:liminal sources and centers of MHM}, this implies that the strict support of $\dual\N$ is not a minimal liminal center, and thus, $\M\neq \N$. Hence, $\dual\M$ is a liminal source of $\dual \K'$. 

When $T$ is connected, we apply the induction hypothesis to $\K'$. The map
$$
\gr_F^m\HH^0(\K')\to\gr_F^m\HH^0(\K)
$$
is dual to the map $H^0(\O_S)\to H^0(\O_T)$, and thus an isomorphism. In particular, 
$$
\Core(\HH^0(\K))\isom\Core(\HH^0(\K'))
$$
by Lemma \ref{lem:isomorphism of cores of MHS} below, and this completes the proof.

When $T$ is not connected, suppose $Z\subset T_0$ for a connected component $T_0\subset T$. Then $\dual\M$ is a liminal source of the further quotient $\dual \K'/\Gamma_{T-T_0}(\dual \K')$. Applying the induction hypothesis to the dual of this further quotient, we likewise complete the proof.
\end{proof}

In the course of the proof, we used the following general result about morphisms of mixed Hodge structures of Calabi-Yau type and their cores:

\begin{lem}
\label{lem:isomorphism of cores of MHS}
Let $H\to H'$ be a morphism of mixed Hodge structures $H=(V_\Q,F^\bullet,W_\bullet)$ and $H=(V'_\Q,F^\bullet,W_\bullet)$ of Calabi-Yau type, such that the induced map
$$
\gr^m_FV_\C\to\gr^m_FV_\C' 
$$
is an isomorphism of one-dimensional top Hodge pieces. Then $\Core(H)\isom\Core(H')$.
\end{lem}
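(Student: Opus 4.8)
The plan is to reduce everything to the single weight in which the ``outermost $1$'' lives, and then to invoke semisimplicity of polarizable pure Hodge structures together with Schur's lemma. Write $\phi\colon H\to H'$ for the given morphism and $m$ for the common Calabi-Yau index. I will use that a morphism of mixed Hodge structures is strictly compatible with $F^\bullet$ and $W_\bullet$ and, more usefully here, respects Deligne's canonical bigrading $V_\C=\bigoplus_{p,q}I^{p,q}$ by functoriality of the splitting (see \cite{PS08}). Since $H$ is of Calabi-Yau type, $\gr_F^m V_\C$ is one-dimensional, and by the bigrading it is concentrated in a single weight, i.e. $\gr_F^m V_\C\isom I^{m,w-m}$ for a unique integer $w$, which I will identify as the weight of $\Core(H)$; the same applies to $H'$ with some weight $w'$.

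First I would prove $w=w'$. Because $\phi$ preserves the bigrading, $\phi(I^{m,w-m})\subset I^{m,w-m}_{V'}$, and the induced map on $\gr_F^m$ is by hypothesis an isomorphism of one-dimensional spaces; hence $I^{m,w-m}_{V'}\neq 0$. As $\gr_F^m V'_\C=\bigoplus_q I^{m,q}_{V'}$ is also one-dimensional, its unique nonzero summand must be $I^{m,w-m}_{V'}$, forcing $w'=w$.

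Next I would pass to the weight-$w$ graded piece. Strictness of $\phi$ with respect to $W_\bullet$ makes $\gr^W_w$ exact, so $\gr^W_w\phi\colon \gr^W_w H\to \gr^W_w H'$ is a morphism of polarizable pure Hodge structures of weight $w$; moreover $F^m\gr^W_w V_\C=\gr^W_w V_\C$ and $\dim_\C \gr_F^m \gr^W_w V_\C=1$, so each of $\gr^W_w H$, $\gr^W_w H'$ is of Calabi-Yau type and contains a unique simple summand, $C$ and $C'$, with nonzero $\gr_F^m$; these represent $\Core(H)$ and $\Core(H')$. By semisimplicity of the category of polarizable pure Hodge structures I can split $\gr^W_w H=C\oplus R$ and $\gr^W_w H'=C'\oplus R'$ with $\gr_F^m R=\gr_F^m R'=0$.

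Finally I would extract the core isomorphism. On $\gr_F^m$ the complementary summands contribute nothing, so the component $C\to C'$ of $\gr^W_w\phi$ induces precisely the given isomorphism $\gr_F^m C\isom \gr_F^m C'$; in particular this component is nonzero. Since $C$ and $C'$ are simple objects of a semisimple abelian category, Schur's lemma upgrades a nonzero morphism to an isomorphism, giving $\Core(H)\isom C\isom C'\isom\Core(H')$. The only genuinely delicate point I anticipate is the identification $w=w'$: one must know that the one-dimensional top Hodge piece remembers its weight and that $\phi$ cannot shift it, which is exactly what functoriality of the canonical bigrading (equivalently, strictness together with compatibility with complex conjugation) supplies; everything after that is formal.
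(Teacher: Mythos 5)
Your proof is correct and is essentially the detailed unpacking of the paper's own one-line argument, which simply declares the lemma ``immediate from the strictness of morphisms of mixed Hodge structures'' (citing \cite[Corollary 3.6]{PS08}): your use of the functorial Deligne bigrading to pin down the weight $w$ of the core, followed by exactness of $\gr^W_w$, semisimplicity of polarizable pure Hodge structures, and Schur's lemma, is precisely the standard implementation of that strictness argument. Nothing is missing, and the step you flag as delicate (that the top Hodge piece cannot shift weight under $\phi$) is handled correctly.
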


\begin{proof}
This is immediate from the strictness of morphisms of mixed Hodge structures (see e.g. \cite[Corollary 3.6]{PS08}).
\end{proof}

For the RHM-defect object of a Calabi-Yau type $m$-Du Bois hypersurface, we verify that the first nonzero Hodge filtration of its dual is the structure sheaf of the $m$-liminal locus.

\begin{prop}
\label{prop:lowest Hodge filtration of dual K}
Let $X\subset \P^n$ be an $m$-Du Bois hypersurface of degree $d$, with $\frac{n+1}{d}=m+1$. Then there exists a surjective composition map
$$
\O_{X}\to R\Hom_{\O_X}(\Omega_X^m,\w_X)[m]\to F_m\dual\K_X^\bullet,
$$
and $F_m\dual\K_X^\bullet\isom \O_S$ where $S$ is the $m$-liminal locus of $X$.
\end{prop}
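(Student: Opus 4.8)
The plan is to reuse the duality computation already carried out in the proof of Theorem \ref{thm:liminal centers}, and then to feed in the Calabi--Yau numerical relation $\frac{n+1}{d}=m+1$ to trivialize the twisting line bundle that appeared there. First I would reduce to a local situation, writing $X$ as a reduced Cartier divisor of dimension $n-1$ in a smooth variety $Y$; the hypothesis that $X$ is $m$-Du Bois gives $\Omega_X^m=\DB_X^m$. Applying $\gr^F_{-m}\DR(\,\cdot\,)$ to the triangle \eqref{eqn:Q to IC triangle} and dualizing via Proposition \ref{prop:duality} produces, exactly as in the proof of Theorem \ref{thm:liminal centers}, the distinguished triangle
$$
I\DB_X^{n-1-m}[m]\to R\Hom_{\O_X}(\Omega_X^m,\w_X)[m]\to F_m\dual\K_X^\bullet\xrightarrow{+1},
$$
together with the surjection $\mathcal Ext^m_{\O_X}(\Omega_X^m,\w_X)\twoheadrightarrow F_m\dual\K_X^\bullet$ coming from the associated long exact sequence and the vanishing $\mathcal H^{m+1}(I\DB_X^{n-1-m})=0$.

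Next I would make the Koszul resolution of Lemma \ref{lem:Koszul resolution of differentials} explicit. Dualizing the Koszul complex $K_m^\bullet(\phi)$ presents $R\Hom_{\O_X}(\Omega_X^m,\w_X)[m]$ as a complex concentrated in cohomological degrees $[-m,0]$ whose top (degree $0$) term is $\w_X(mX)$; its zeroth cohomology is $\mathcal Ext^m_{\O_X}(\Omega_X^m,\w_X)$, and there is a canonical surjection of sheaves $\w_X(mX)\twoheadrightarrow \mathcal Ext^m_{\O_X}(\Omega_X^m,\w_X)$. Because $\w_X(mX)$ occupies the top cohomological degree of this complex, its degree-$0$ component defines a genuine morphism of complexes $\w_X(mX)\to R\Hom_{\O_X}(\Omega_X^m,\w_X)[m]$ inducing the above surjection on $\mathcal H^0$.

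The key new input is the trivialization of $\w_X(mX)$. By adjunction $\w_X=\w_Y(X)|_X=\O_X(d-n-1)$ and $\O_X(mX)=\O_X(md)$, so $\w_X(mX)=\O_X\big((m+1)d-(n+1)\big)$; the Calabi--Yau relation $(m+1)d=n+1$ gives the global isomorphism $\w_X(mX)\isom\O_X$. Composing
$$
\O_X\isom\w_X(mX)\to R\Hom_{\O_X}(\Omega_X^m,\w_X)[m]\to F_m\dual\K_X^\bullet
$$
therefore yields the asserted map, and since each arrow is surjective on $\mathcal H^0$ while the source and target are sheaves concentrated in degree $0$, the composite is a surjection $\O_X\twoheadrightarrow F_m\dual\K_X^\bullet$. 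Finally, the proof of Theorem \ref{thm:liminal centers} identifies $F_m\dual\K_X^\bullet$ as the line bundle $\w_X(mX)|_S$ supported on the reduced $m$-liminal locus $S$ (reducedness via Lemma \ref{lem:reducedness of lowest Hodge filtration}); applying $\w_X(mX)\isom\O_X$ gives $F_m\dual\K_X^\bullet\isom\O_S$.

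The hard part will be essentially bookkeeping rather than a new idea: one must check that the surjection $\w_X(mX)\twoheadrightarrow \mathcal Ext^m_{\O_X}(\Omega_X^m,\w_X)$ genuinely promotes to a derived morphism $\O_X\to R\Hom_{\O_X}(\Omega_X^m,\w_X)[m]$ (this is exactly where the top-degree placement of $\w_X(mX)$ in the Koszul-dual complex is used, so that no compatibility with an outgoing differential is imposed), and that the second arrow, whose target is a sheaf, factors through $\mathcal H^0$ so that surjectivity of the composite reduces to surjectivity at the level of zeroth cohomology. Everything else is the adjunction computation and a direct appeal to the construction already established in the proof of Theorem \ref{thm:liminal centers}.
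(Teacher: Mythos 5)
Your proposal is correct and takes essentially the same route as the paper's proof: the paper likewise obtains the second map by applying $\gr^F_m\DR(\,\cdot\,)$ to $\dual(\Q_X^H[n-1])\to\dual\K_X^\bullet$, represents $R\Hom_{\O_X}(\Omega_X^m,\w_X)[m]$ by the Koszul-dual complex whose degree-zero term is $\w_{\P^n}((m+1)X)|_X\isom\O_X(-n-1+(m+1)d)\isom\O_X$ by the Calabi--Yau relation, composes with the surjection $\mathcal Ext^m_{\O_X}(\Omega_X^m,\w_X)\twoheadrightarrow F_m\dual\K_X^\bullet$ recalled from the proof of Theorem~\ref{thm:liminal centers}, and concludes $F_m\dual\K_X^\bullet\isom\O_S$ from the resulting surjection $\O_X\twoheadrightarrow F_m\dual\K_X^\bullet$, the support statement, and Lemma~\ref{lem:reducedness of lowest Hodge filtration}. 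The only cosmetic slip is your opening ``reduce to a local situation'': the statement is global, and your argument is in fact already global with ambient $Y=\P^n$ (which is exactly what the Calabi--Yau trivialization requires), so no reduction is needed or used.
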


\begin{proof}
We first construct the composition map. Applying the functor $\gr^F_m\DR(\,\cdot\,)$ to the morphism $\dual(\Q_X^H[n-1])\to \dual\K_X^\bullet$, we obtain
$$
R\Hom_{\O_X}(\Omega_X^m,\w_X)[m]\to F_m\dual\K_X^\bullet.
$$
By Lemma \ref{lem:Koszul resolution of differentials}, $R\Hom_{\O_X}(\Omega_X^m,\w_X)[m]$ is represented by the complex of vector bundles
$$
\Hom_{\O_X}(\Omega_{\P^n}^m|_X,\w_X)\to \dots\to\Hom_{\O_X}(\O_{\P^n}(-mX)|_X,\w_X).
$$
with the right-most term supported on degree $0$. By adjunction, we have
$$
\Hom_{\O_X}(\O_{\P^n}(-mX)|_X,\w_X)\isom\omega_{\P^n}((m+1)X)|_X\isom\O_X(-n-1+(m+1)d),
$$
which is isomorphic to $\O_X$. Hence, we have the composition map.

Recall from the proof of Theorem \ref{thm:liminal centers} that we have a surjection
$$
\mathcal Ext^m_{\O_X}(\Omega_X^m,\w_X)\twoheadrightarrow F_m\dual\K_X^\bullet.
$$
Therefore, the composition $\O_X\to F_m\dual\K_X^\bullet$ is a surjection. The set-theoretic support of $F_m\dual\K_X^\bullet$ is the $m$-liminal locus $S$ of $X$. Using Lemma \ref{lem:reducedness of lowest Hodge filtration}, we obtain $F_m\dual\K_X^\bullet\isom \O_S$.
\end{proof}

Combining Propositions \ref{prop:core of liminal source of MHM} and \ref{prop:lowest Hodge filtration of dual K}, we deduce the following

\begin{cor}
\label{cor:core of liminal center}
In the setting of Proposition \ref{prop:lowest Hodge filtration of dual K}, the natural map
$$
\gr_F^m\HH^0(\K_X^\bullet)\to \gr_F^mH^{n-1}(X,\C)
$$
of Hodge pieces induced by $\K_X^\bullet\to \Q_X^H[n-1]$ is surjective, and for any $m$-liminal source $\M$ supported on a minimal $m$-liminal center of $X$, we have
$$
\Core(H^{n-1}(X,\Q))\isom\Core(\HH^0(\M)).
$$
\end{cor}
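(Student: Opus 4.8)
The plan is to feed $\K=\K_X^\bullet$ into Proposition \ref{prop:core of liminal source of MHM}, using Proposition \ref{prop:lowest Hodge filtration of dual K} to verify its hypothesis: $m$ is the index of the first nonzero Hodge filtration of $\dual\K_X^\bullet$ and $F_m\dual\K_X^\bullet\isom\O_S$. That proposition then supplies, for every connected component $S_0\subset S$ and every $m$-liminal source $\M$ on a minimal center $Z\subset S_0$, the isomorphism $\Core(\HH^0(\K_X^\bullet|_{S_0}))\isom\Core(\HH^0(\M))$, so the whole content is to bridge these component-wise cores to $H^{n-1}(X,\Q)$. For this I would work with the short exact sequence $0\to\K_X^\bullet\to\Q^H_X[n-1]\xrightarrow{\gamma_X}\IC^H_X\to 0$ from \eqref{eqn:Q to IC triangle} (recall that $\K_X^\bullet$ is a single mixed Hodge module in the hypersurface case), and with two maps out of $Q:=\HH^0(\K_X^\bullet)$: the map $b\colon Q\to H^{n-1}(X,\Q)$ induced by $\gamma_X$, and the restriction $a\colon Q\to\HH^0(\K_X^\bullet|_{S_0})$ induced by the surjection $\K_X^\bullet\twoheadrightarrow\K_X^\bullet|_{S_0}$ dual to $\Gamma_{S_0}(\dual\K_X^\bullet)\hookrightarrow\dual\K_X^\bullet$ via \eqref{eqn:dual restriction=restriction dual}.

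The surjectivity in part (1) and the key input for part (2) both come from a single duality computation of $b$ on the bottom Hodge piece. Using Saito's Hodge--de Rham degeneration in the form $\gr_F^p\HH^j(\M)=\HH^j(X,\gr^F_{-p}\DR(\M))$ together with Proposition \ref{prop:duality} and $F_m\dual\K_X^\bullet\isom\O_S$, I would identify $\gr^F_{-m}\DR(\K_X^\bullet)\isom\omega_S^\bullet$, the dualizing complex of $S$, so that $\gr_F^m Q=\HH^0(X,\omega_S^\bullet)=H^0(S,\C)^\vee=\C^{\,c}$ where $c$ is the number of connected components of $S$. Condition $D_{m-1}$ (which holds since $m$-Du Bois implies $(m-1)$-rational) gives $\gr^F_{-p}\DR(\K_X^\bullet)=0$ for $p<m$, hence $F^mQ=Q$, while $H^{n-1}(X,\Q)$ is of Calabi--Yau type with $\gr_F^mH^{n-1}(X,\C)=\C$ by Friedman--Laza \cite{FL24}. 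Grothendieck--Serre duality then identifies $\gr_F^m b$ with the dual of $\HH^0(X,\dual\phi)$, where $\dual\phi\colon R\Hom_{\O_X}(\Omega^m_X,\w_X)[m]\to\O_S$ is the surjection of Proposition \ref{prop:lowest Hodge filtration of dual K}; since $\dual\phi$ factors the natural restriction $\O_X\to\O_S$ and the intermediate $\HH^0$ is one-dimensional, $\HH^0(X,\dual\phi)$ is the diagonal $\C\to\C^{\,c}$. Dualizing, $\gr_F^m b\colon\C^{\,c}\to\C$ is the summation map: in particular surjective (this is part (1)) and nonzero on the line contributed by each single component.

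For part (2) I would track simple subquotients in bottom Hodge degree. The image $\mathrm{im}(b)\subset H^{n-1}(X,\Q)$ has $\gr_F^{<m}\mathrm{im}(b)=0$ and $\gr_F^m\mathrm{im}(b)=\gr_F^mH^{n-1}(X,\C)=\C$ by part (1), so it is of Calabi--Yau type and its inclusion into $H^{n-1}(X,\Q)$ is an isomorphism on $\gr_F^m$; Lemma \ref{lem:isomorphism of cores of MHS} then gives $\Core(\mathrm{im}(b))\isom\Core(H^{n-1}(X,\Q))=:C$. The component decomposition $\omega_S^\bullet=\bigoplus_{S_0}\omega_{S_0}^\bullet$ shows $\gr_F^mQ=\bigoplus_{S_0}\C$ splits one line per component, and the simple subquotients of $Q$ with nonzero $\gr_F^m$ are precisely the cores $C_{S_0}:=\Core(\HH^0(\K_X^\bullet|_{S_0}))$, each carrying exactly the $S_0$-line (their strict supports being irreducible, hence contained in a single component). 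Because $\gr_F^m b$ is nonzero on every such line, each $C_{S_0}$ maps injectively into $\mathrm{im}(b)$, and both being simple with one-dimensional $\gr_F^m$ forces $C_{S_0}\isom C$. Combining with Proposition \ref{prop:core of liminal source of MHM} yields $\Core(\HH^0(\M))\isom C_{S_0}\isom\Core(H^{n-1}(X,\Q))$, as claimed. When $X$ is $m$-rational, $S=\emptyset$ and the only source is $\IC^H_X$, so the statement is immediate.

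The hard part is exactly the multi-component bookkeeping in the last step: $S$ may be disconnected, and a single component may even carry several minimal $m$-liminal centers, yet the bottom Hodge piece of $H^{n-1}(X,\Q)$ is one-dimensional, so it is not a priori clear that the possibly distinct cores $C_{S_0}$ all coincide with $\Core(H^{n-1}(X,\Q))$. What resolves this is precisely the computation that $\gr_F^m b$ is the summation over components (nonzero on each), which forces every $C_{S_0}$ to be isomorphic to the single simple factor $C$ surviving in $H^{n-1}(X,\Q)$; the within-component coincidence of sources is already provided by Proposition \ref{prop:core of liminal source of MHM}. A secondary technical point I would treat carefully is the compatibility of Grothendieck--Serre duality with the graded de Rham functor (Proposition \ref{prop:duality}), which is what lets me identify $\gr_F^m b$ with the dual of the structure-sheaf surjection of Proposition \ref{prop:lowest Hodge filtration of dual K}.
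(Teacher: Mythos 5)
Your first half is essentially the paper's own argument, and it is correct: both you and the paper identify $\gr_F^m\HH^0(\K_X^\bullet)$ with $H^0(\O_S)^*$ via Proposition \ref{prop:duality}, and deduce surjectivity from the nonvanishing on $\HH^0$ of the composition $\O_X\to R\Hom_{\O_X}(\Omega_X^m,\w_X)[m]\to\O_S$ of Proposition \ref{prop:lowest Hodge filtration of dual K} (the image of $1$ is a unit of $\O_S$, so the induced map $H^0(\O_X)\to H^0(\O_S)$ is nonzero on every component). Your sharper observation that, in component coordinates, $\gr_F^m b\colon \C^c\to\C$ is nonzero on \emph{each} coordinate line is also correct, and it is precisely the per-component input that the paper exploits in part (2), where its dual computation runs through $H^0(\O_{S_0})=\C$.

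The gap is in the splice for part (2), exactly at the multi-component step you flag as the crux. The sentence ``because $\gr_F^m b$ is nonzero on every such line, each $C_{S_0}$ maps injectively into $\mathrm{im}(b)$'' does not follow: $C_{S_0}$ was realized as a subquotient of $Q$ through the restriction $a$, not through $b$, and a subquotient of the source of $b$ carries no induced morphism to $\mathrm{im}(b)$. Moreover, since $Q$ need not be semisimple, the assignment of coordinate lines of $\gr_F^m Q\isom\C^c$ to Jordan--H\"older factors depends on the chosen filtration, so ``carrying exactly the $S_0$-line'' is not well defined for abstract subquotients (your parenthetical about ``strict supports'' also conflates Hodge modules with Hodge structures, which have no supports). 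What is needed is an honest object whose one-dimensional top Hodge piece is the $S_0$-line, equipped with morphisms to both $\HH^0(\K_X^\bullet|_{S_0})$ and $H^{n-1}(X,\Q)$. The paper manufactures this one level up, at the Hodge-module level: it composes $\Gamma_{S_0}(\K_X^\bullet)\to\K_X^\bullet\to\Q_X^H[n-1]$, shows via Proposition \ref{prop:core of liminal source of MHM} that the dual of the induced map on $\gr_F^m\HH^0$ is the restriction $H^0(\O_X)\to H^0(\O_{S_0})=\C$, hence an isomorphism of lines, and then applies Lemma \ref{lem:isomorphism of cores of MHS} together with $\Core(\HH^0(\Gamma_{S_0}(\K_X^\bullet)))\isom\Core(\HH^0(\K_X^\bullet|_{S_0}))\isom\Core(\HH^0(\M))$. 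Your route can in fact be patched without leaving the MHS level: set
$$
Q_{S_0}:=\ker\Bigl(Q\to\bigoplus_{S_1\neq S_0}\HH^0(\K_X^\bullet|_{S_1})\Bigr);
$$
since $\gr_F^m$ is exact on mixed Hodge structures (strictness), $\gr_F^m Q_{S_0}$ is exactly the $S_0$-line, so $Q_{S_0}$ is of Calabi-Yau type, and applying Lemma \ref{lem:isomorphism of cores of MHS} once to $Q_{S_0}\to\HH^0(\K_X^\bullet|_{S_0})$ and once to $Q_{S_0}\to H^{n-1}(X,\Q)$ (an isomorphism on top lines by your summation computation) yields $C_{S_0}\isom\Core(Q_{S_0})\isom\Core(H^{n-1}(X,\Q))$. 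As written, however, the decisive step when $S$ is disconnected is asserted rather than proved; note that for connected $S$ your argument closes without this step, which confirms the gap sits exactly where you located the difficulty.
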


\begin{proof}
Recall that $\dim \gr_F^mH^{n-1}(X,\C)=1$. This follows from either the constancy of Hodge-Du Bois numbers in families with $m$-Du Bois singularities \cite{FL24} or a direct spectral sequence computation.

Taking (hyper)cohomologies of the composition in Proposition \ref{prop:lowest Hodge filtration of dual K}, we have
$$
H^0(\O_X)\to \mathrm{Ext}^m(\Omega_X^m,\w_X)\to H^0(\O_S).
$$
The second map is the dual of the map $\gr_F^m\HH^0(\K_X^\bullet)\to \gr_F^mH^{n-1}(X,\C)$. Since the above composition is a nonzero map, the natural map of our interest is nonzero, hence surjective.

For an $m$-liminal source $\M$ of $X$ supported on a minimal $m$-liminal center $Z\subset X$, assume that $Z\subset S_0$ where $S_0$ is a connected component of the $m$-liminal locus $S$. Consider a composition
$$
\Gamma_{S_0}(\K_X^\bullet)\to \K_X^\bullet\to \Q_X^H[n-1].
$$
The natural map
$$
\gr_F^m\HH^0(\Gamma_{S_0}(\K_X^\bullet))\to \gr_F^mH^{n-1}(X,\C)
$$
is an isomorphism, since Proposition \ref{prop:core of liminal source of MHM} implies that the dual of this map is the composition
$$
H^0(\O_X)\isom\mathrm{Ext}^m(\Omega_X^m,\w_X)\to H^0(\O_S)\to H^0(\O_{S_0})=\C
$$
via Saito's Hodge-to-de Rham spectral sequence. Therefore, by Lemma \ref{lem:isomorphism of cores of MHS}, we have
$$
\Core(H^{n-1}(X,\Q))\isom \Core(\HH^0(\Gamma_{S_0}(\K_X^\bullet))),
$$
and the latter is isomorphic to $\Core(\HH^0(\K_X^\bullet|_{S_0}))\isom\Core(\HH^0(\M))$ by Proposition \ref{prop:core of liminal source of MHM}.
\end{proof}

This corollary establishes the first equality of the cores in Theorem \ref{thm:core of liminal sources}; the rest follows from Theorem \ref{thm:cohomologically insignificant} in the subsequent section on limit mixed Hodge structures.

\subsection{Limit mixed Hodge structures of degenerations}

A flat projective morphism $f:\X\to \Delta$ over the unit disk $\Delta$ is called a \emph{one-parameter degeneration}. If the generic fiber of $f$ is smooth, then we call $f$ a \emph{one-parameter smoothing} of the special fiber $X:=f^{-1}(0)$. Every one-parameter degeneration has the associated limit mixed Hodge structure $H^\bullet(\X_\infty,\Q)$ (see \cite{Schmid73, Steenbrink75} for one-parameter smoothing and \cite{SZ85, Saito90} in general) with the specialization map
$$
\sp^\bullet: H^\bullet(X,\Q)\to H^\bullet(\X_\infty,\Q)
$$
of mixed Hodge structures. Additionally, the limit mixed Hodge structure $H^\bullet(\X_\infty,\Q)$ has a quasi-unipotent monodromy operator $T$, and the associated nilpotent operator $N$ (of some sufficiently divisible power of $T$).

From now on, we assume that $f$ extends to a projective morphism between quasi-projective varieties, which allows flexible use of Saito’s theory of mixed Hodge modules \cite{Saito90}. This reduction is sufficient for the proofs of Theorem \ref{thm:core of liminal sources} and Corollary \ref{cor:maximal degeneration}; see Remark \ref{rmk:one parameter degeneration}.

By Shah \cite{Shah79} for surfaces, and later by Dolgachev \cite{Dolgachev81} and Steenbrink \cite{Steenbrink81} for higher-dimensional varieties, it was shown that any one-parameter smoothing $f:\X\to \Delta$, whose special fiber $X$ has Du Bois singularities, is \emph{cohomologically insignificant}, in the sense that
$$
\gr_F^0(\sp^\bullet): \gr_F^0H^\bullet(X,\C)\to \gr_F^0H^\bullet(\X_\infty,\C)
$$
is an isomorphism. For one-parameter degenerations whose generic fiber has higher rational singularities and special fiber has higher Du Bois singularities, we establish a form of \emph{higher cohomological insignificance}, with particular focus on degenerations of Calabi-Yau type hypersurfaces.

\begin{thm}
\label{thm:cohomologically insignificant}
Let $f:\X\to \Delta$ be a projective one-parameter degeneration with special fiber $X$. If $X$ has $m$-Du Bois lci singularities and the generic fiber has $m$-rational lci singularities, then
$$
\gr_F^p(\sp^k): \gr_F^pH^k(X,\C)\to \gr_F^pH^k(\X_\infty,\C)
$$
is an isomorphism for all $p\le m$ and $k\in \Z$. In particular, if $X$ is an $m$-Du Bois degree $d$ hypersurface in $\P^n$ with $\frac{n+1}{d}=m+1$, then
$$
\Core\left(H^{n-1}(X,\Q)\right)\isom\Core\left(H^{n-1}(\X_\infty,\Q)\right).
$$
\end{thm}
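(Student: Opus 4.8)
The plan is to compute the specialization map through Saito's nearby and vanishing cycle functors, and to show that the obstruction to its being a graded isomorphism -- the vanishing cycle contribution -- carries no Hodge pieces in degrees $\le m$. Let $i\colon X\hookrightarrow \X$ be the inclusion of the special fiber. Since $f$ extends to a projective morphism of quasi-projective varieties, the full formalism of \cite{Saito90} is available: the nearby cycles $\psi_f\Q^H_\X$ compute the limit mixed Hodge structure $H^\bullet(\X_\infty,\Q)$, the specialization $\sp^\bullet$ is induced by the canonical map $i^*\Q^H_\X\to \psi_f\Q^H_\X$, and (with $\phi_f$ normalized as the cone of this map, decomposed into unipotent and non-unipotent parts so that the argument applies eigenvalue by eigenvalue) there is a distinguished triangle $i^*\Q^H_\X\to \psi_f\Q^H_\X\to \phi_f\Q^H_\X\xrightarrow{+1}$. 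Taking hypercohomology gives a long exact sequence
$$
\cdots\to H^k(X,\Q)\xrightarrow{\sp^k} H^k(\X_\infty,\Q)\to \HH^k(X,\phi_f\Q^H_\X)\to H^{k+1}(X,\Q)\to\cdots.
$$
Because Saito's Hodge-to-de Rham spectral sequence degenerates, applying $\gr^F_{-p}\DR(\,\cdot\,)$ computes the pieces $\gr^p_F$ of each term, so it suffices to prove the vanishing
$$
\gr^F_{-p}\DR(\phi_f\Q^H_\X)=0\qquad\text{for all }p\le m.
$$

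This vanishing is the main obstacle, and it is here that both hypotheses enter. The object $\phi_f\Q^H_\X$ is supported on the locus where the fibers of $f$ are singular, so the assertion is local on $X$ and reduces to a computation in a local lci model. I would fix $x\in X$, choose a local embedding $\X\hookrightarrow Y$ into a smooth variety with $\X$ cut out by a regular sequence, and extend $f$; via \theoremref{thm:Thom-Sebastiani for minimal exponents} and the lci characterization of higher singularities (\theoremref{thm:higher singularities vs minimal exponent} with its lci refinement \cite{CDM24, CDMO24}) one models the graded de Rham complex of $\phi_f$ by a Koszul-type complex built from the conormal data, in the spirit of \lemmaref{lem:Koszul resolution of differentials}. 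The point is that $\gr^F_{-p}\DR(\phi_f\Q^H_\X)$ measures the failure of the comparison between the Du Bois complex $\DB^p_X$ of the special fiber -- recovered from $\gr^F_{-p}\DR(i^*\Q^H_\X)$ up to shift -- and the corresponding graded piece of $\psi_f\Q^H_\X$ assembled from the nearby fibers. The $m$-Du Bois hypothesis on $X$ gives $\DB^p_X\isom\Omega^p_X$ for $p\le m$, controlling the special-fiber side, while the $m$-rational hypothesis on the generic fiber gives $\DB^p=I\DB^p=\Omega^p$ for $p\le m$ on the nearby fibers, controlling the limit side; the codimension bounds built into the two notions guarantee that the competing Koszul models agree in degrees $\le m$. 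Reconciling these special-fiber and nearby-fiber resolutions in the lci setting, while tracking the Hodge filtration precisely, is the technical heart of the argument, and forces the displayed vanishing.

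Granting the vanishing, $\gr^p_F(\sp^k)$ is an isomorphism for all $p\le m$ and all $k$, which is the main assertion. For the final statement, specialize to $k=n-1$ and $p=m$. By the constancy of Hodge--Du Bois numbers in $m$-Du Bois families \cite{FL24} (or by the spectral-sequence computation underlying \corollaryref{cor:core of liminal center}), $H^{n-1}(X,\Q)$ is of Calabi--Yau type with $\dim_\C\gr^m_FH^{n-1}(X,\C)=1$ and $\gr^p_F=0$ for $p<m$. Since the graded dimensions of the Hodge filtration are preserved in passing to the limit of a smoothing, $H^{n-1}(\X_\infty,\Q)$ is likewise of Calabi--Yau type with its one-dimensional top piece in $\gr^m_F$. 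Then $\gr^m_F(\sp^{n-1})$ is an isomorphism of the one-dimensional top Hodge pieces of two Calabi--Yau type structures, so \lemmaref{lem:isomorphism of cores of MHS} yields
$$
\Core\left(H^{n-1}(X,\Q)\right)\isom\Core\left(H^{n-1}(\X_\infty,\Q)\right),
$$
which, combined with \corollaryref{cor:core of liminal center}, completes the full chain of isomorphisms in \theoremref{thm:core of liminal sources}.
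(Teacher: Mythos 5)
Your framework coincides with the paper's at the level of the reduction: take hypercohomology of the triangle relating $i^*\Q^H_\X$, $\psi_{f,1}$ and $\phi_{f,1}$, and reduce everything to the vanishing $\gr^F_{-p}\DR(\phi_{f,1}(\Q^H_\X[\dim\X]))=0$ for $p\le m$ (the paper handles non-unipotent monodromy by a cyclic base change rather than ``eigenvalue by eigenvalue,'' but that is cosmetic), and your final step deducing the core isomorphism from \lemmaref{lem:isomorphism of cores of MHS} is exactly what the paper does. The genuine gap is the central vanishing itself, which you defer to ``reconciling special-fiber and nearby-fiber Koszul resolutions'' and declare to be the technical heart; the mechanism you sketch would not deliver it. The Hodge filtration on $\phi_{f,1}$ is defined via the $V$-filtration on the total space $\X$, not via the Du Bois complexes of the (possibly singular) nearby fibers, so the $m$-rationality of the generic fiber---an open condition on $\X\sm X$---says nothing directly at the points of $X$ where $\phi_{f,1}$ is supported. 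The paper converts this hypothesis into usable data at $X$ by Chen's inversion of adjunction \cite[Theorem 1.2]{Chen25}: $X$ $m$-Du Bois together with $\X\sm X$ $m$-rational forces the total space $\X$ to be $m$-rational, hence to satisfy condition $D_m$. Your local lci Koszul model has no substitute for this step, and without it there is no control on $\gr^F\DR$ of any vanishing-cycle object along $X$.

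Even granting $D_m$ for $\X$, the vanishing is not a Koszul computation but a weight-theoretic one (Proposition \ref{prop:limit MHS via Dm}): the paper splits $\Q^H_\X[\dim\X]$ by the RHM-defect triangle \eqref{eqn:Q to IC triangle}; the defect part $\K^\bullet_\X$ is killed by \lemmaref{lem:grDR vanishing for nearby and vanishing}, while for the pure part one uses that $\IC^H_\X$ admits no subquotient supported in $f^{-1}(0)$, so $\phi_{f,1}$ is the image of $N$ on $\psi_{f,1}$, and hard Lefschetz on the weight-graded pieces of $\psi_{f,1}(\dual\IC^H_\X)$ reduces the filtration bound to the kernel of the comparison $\alpha:\iota^*\IC^H_\X[-1]\to\IC^H_X$, where condition $D_{m-1}$ for $X$ (automatic, since $m$-Du Bois lci implies $(m-1)$-rational) enters. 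None of this purity/monodromy input appears in your outline, so the displayed vanishing---precisely the content of the paper's key proposition---is asserted rather than proved.
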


When $f$ is a one-parameter smoothing, Acu\~ na-Kerr \cite[Theorem 3]{AK25} proved the statement by applying results from \cite{FL24} after passing to a compactification of $\X$ with $m$-rational singularities. When $f$ is not a one-parameter smoothing, such a compactification need not exist. We prove a generalized version under condition $D_m$ that applies to families with non-lci singularities.

\begin{prop}
\label{prop:limit MHS via Dm}
Let $f:\X\to \Delta$ be a projective one-parameter degeneration with a reduced special fiber $X$ of pure dimension $r$. If $X$ satisfies $D_{m-1}$ and $\X$ satisfies $D_m$, then the natural morphism
$$
\Q_X^H[r]\to \psi_{f,1}(\Q_\X^H[r+1])
$$
induces isomorphisms
$$
\gr^F_p\DR(\Q_X^H[r])\to \gr^F_p\DR(\psi_{f,1}(\Q_\X^H[r+1])) \quad \text{for all}\quad p\ge -m.
$$
\end{prop}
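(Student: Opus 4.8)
The plan is to recognize the displayed map as the specialization morphism and then to trade the statement for a single vanishing of the graded de Rham complex of a vanishing cycle. Writing $i\colon X\hookrightarrow\X$ for the inclusion of the special fibre and using the perverse normalization of the unipotent nearby and vanishing cycle functors, the identity $i^*\Q_\X^H[r+1]=\Q_X^H[r+1]$ and the canonical specialization triangle for $\M=\Q_\X^H[r+1]$ give
$$\Q_X^H[r]\xrightarrow{\ \sp\ }\psi_{f,1}(\Q_\X^H[r+1])\xrightarrow{\ \mathrm{can}\ }\phi_{f,1}(\Q_\X^H[r+1])\xrightarrow{+1},$$
in which $\sp=i^*\M[-1]\to\psi_{f,1}\M$ is exactly the morphism of the statement -- over the locus where $f$ is smooth $\phi_{f,1}$ vanishes and $\sp$ is an isomorphism. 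Since $\gr^F_p\DR(\,\cdot\,)$ is exact on distinguished triangles, the proposition becomes equivalent to the vanishing $\gr^F_{-q}\DR\big(\phi_{f,1}(\Q_\X^H[r+1])\big)=0$ for all $0\le q\le m$. So the whole problem is to bound from below the Hodge degrees in which the unipotent vanishing cycle of the constant Hodge module can contribute to its de Rham complex.

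The engine I would use is Saito's description of the Hodge filtration on $\psi_{f,1}$ and $\phi_{f,1}$ through the Kashiwara--Malgrange $V$-filtration \cite{Saito88, Saito90}: after the graph embedding $x\mapsto(x,f(x))$ both functors are read off from $\gr_V$ of the underlying filtered $\D$-module, the canonical map is strictly compatible with $F$, and the variation map is $F$-compatible only after a single Tate twist. This lets me express $\gr^F_{-q}\DR(\phi_{f,1}(\Q_\X^H[r+1]))$ in terms of the filtered de Rham complexes of $\Q_\X^H[r+1]$ upstairs and of $\Q_X^H[r]$ on the fibre, and it is here that both hypotheses must enter. Condition $D_m$ on $\X$ gives $\gr^F_{-q}\DR(\K_\X^\bullet)=0$ for $q\le m$, i.e. $\Q_\X^H[r+1]$ and $\IC_\X^H$ agree after $\gr^F_{-q}\DR$ for $q\le m$; as $\IC_\X^H$ is pure, $\X$ is a rational homology manifold up to Hodge degree $m$ in the sense of \cite{DOR25}, so that its unipotent nearby and vanishing cycles are, in this range, as clean as in the smooth case. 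Condition $D_{m-1}$ on $X$ gives $\DB_X^q\isom I\DB_X^q$ for $q\le m-1$, which is what identifies the surviving fibre complexes with those coming from $\Q_X^H[r]$ and forces the cone to vanish. The one-degree gap between the two hypotheses is meant to be absorbed by the Tate twist in the variation map, equivalently by the Gysin shift $i^!\isom i^*(-1)[-2]$ on the relevant graded pieces attached to the Cartier divisor $X\subset\X$.

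The hard part will be this $V$-filtration bookkeeping carried out with no loss of Hodge degree. Concretely, I would isolate a propagation lemma: if $\gr^F_{-q}\DR_\X(\M)=0$ for all $q\le m$, then $\gr^F_{-q}\DR_X(\psi_{f,1}\M)=\gr^F_{-q}\DR_X(\phi_{f,1}\M)=0$ for all $q\le m$, to be proved by a local computation with $\gr_V$ and the induced $F$ after the graph embedding. The delicate point is that neither $\psi_{f,1}$ nor $\phi_{f,1}$ may create a contribution in the top Hodge degrees beyond those already present on $\X$, and that the single degree separating the two hypotheses is accounted for exactly and not overspent. Because $\X$ and $X$ are only assumed to satisfy the $D$-conditions and need not be lci, I cannot fall back on the Koszul resolution of $\Omega^p_X$ that drives the lci arguments elsewhere in the paper, and must argue intrinsically with mixed Hodge modules and their $V$-filtrations; this is precisely why the $I\DB$/$D_m$ formulation, rather than $\Omega^p=\DB^p$, is the right language. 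Once the lemma and the $D_{m-1}$ identification are in place the required vanishing $\gr^F_{-q}\DR(\phi_{f,1}(\Q_\X^H[r+1]))=0$ for $q\le m$ follows, and specializing to a one-parameter smoothing with lci fibres recovers the argument of Acu\~na--Kerr \cite{AK25}.
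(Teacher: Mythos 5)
Your opening reduction is exactly the paper's: identify the map with the specialization triangle, use exactness of $\gr^F_p\DR(\,\cdot\,)$ to trade the statement for the vanishing $\gr^F_p\DR\bigl(\phi_{f,1}(\Q_\X^H[r+1])\bigr)=0$ for $p\ge -m$, split off $\K_\X^\bullet$ using condition $D_m$ on $\X$, and prove your ``propagation lemma'' (this is literally the paper's Lemma on $\gr^F\DR$-vanishing for nearby and vanishing cycles, proved by duality and the $V$-filtration definition of $F$ on $\psi_{f,1}$ and $\phi_{f,1}$). After this step the problem is to show $\gr^F_p\DR\bigl(\phi_{f,1}(\IC_\X^H)\bigr)=0$ for $p\ge -m$, and here your proposal has a genuine gap: you assert that since $\IC_\X^H$ is pure its unipotent vanishing cycles are ``as clean as in the smooth case,'' but purity gives no such vanishing -- the vanishing cycles of $\IC_\X^H$ along the special fiber are nontrivial in general, and bounding their Hodge degrees is the actual content of the proposition. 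Your propagation lemma cannot apply to $\IC_\X^H$ (its hypothesis fails: $\gr^F_p\DR(\IC_\X^H)\neq 0$ already at $p=0$), and nothing in your sketch says concretely how $D_{m-1}$ on $X$ enters.

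The paper's mechanism, which is absent from your proposal, runs as follows. Since $\dual\IC_\X^H$ has no subobject or quotient supported in $f^{-1}(0)$, $\mathrm{can}$ is surjective and $\mathrm{Var}$ is injective, so $\phi_{f,1}(\dual\IC_\X^H)=\mathrm{image}(N)$ inside $\psi_{f,1}(\dual\IC_\X^H)(-1)$; the weight-Lefschetz structure of $\psi_{f,1}$ of a pure module then forces every simple subquotient $\M$ of $\phi_{f,1}(\dual\IC_\X^H)$ to occur, with a strictly positive Tate twist $\M(-k)$, $k>0$, as a subquotient of $\G:=\bigl(\iota^!(\dual\IC_\X^H)[1]\bigr)/W_{-r}$. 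Dualizing, $\dual\G\subset\ker\bigl(\alpha:\iota^*\IC_\X^H[-1]\to\IC_X^H\bigr)$, and the factorization $\gamma_X=\alpha\circ(\iota^*\gamma_\X[-1])$ combines $D_m$ on $\X$ (making $\gr^F_p\DR(\iota^*\gamma_\X[-1])$ an isomorphism for $p\ge -m$, via the pullback of the $\K_\X^\bullet$-triangle) with $D_{m-1}$ on $X$ (making $\gr^F_p\DR(\gamma_X)$ an isomorphism for $p\ge -m+1$) to yield $\gr^F_p\DR(\ker\alpha)=0$ for $p\ge -m+1$; the positive twist $k>0$ then upgrades this by exactly one degree, giving the needed bound $p(\dual\phi_{f,1}(\IC_\X^H))\ge m+1$. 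This is how the one-degree gap between the hypotheses is ``accounted for exactly,'' and it is not the Gysin shift you invoke: $\iota^!\isom\iota^*(-1)[-2]$ fails for the divisor $X$ in a singular $\X$, whereas the twist in the paper comes from primitivity with respect to $N$ in the Lefschetz decomposition of $\gr^W\psi_{f,1}(\dual\IC_\X^H)(-1)$. Without this (or an equivalent) argument your sketch establishes only the $\K_\X^\bullet$-half of the required vanishing.
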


Following \cite[Section 2.2]{Saito90}, the nearby and vanishing cycle functors
$$
\psi_f: {\rm MHM}(\X)\to {\rm MHM}(X),\quad \phi_{f,1}:{\rm MHM}(\X)\to {\rm MHM}(X)
$$
are defined for mixed Hodge modules, which extend to the functors on $D^b{\rm MHM}(\X)$ and underlie the usual nearby and vanishing functors for regular holonomic D-modules and perverse sheaves \cite[Theorem 0.1]{Saito90}. The limit mixed Hodge structure is isomorphic to the hypercohomology of $\psi_{f}(\Q_\X^H[r+1])$:
$$
\HH^{k-r}(\psi_{f}(\Q_\X^H[r+1]))\isom H^{k}(\X_{\infty},\Q).
$$
The functor $\psi_{f,1}$ is the unipotent summand of $\psi_f$ with respect to the monodromy operator.

For convenience, for a mixed Hodge module $\M$, we denote by
$$
p(\M):=\min\{p: F_p\M\neq 0\},
$$
the index of the first nonzero Hodge filtration of $\M$.

\begin{lem}
\label{lem:grDR vanishing for nearby and vanishing}
Let $f:\X\to \Delta$ be a projective one-parameter degeneration and $\M\in D^b{\rm MHM}(\X)$ satisfying
$$
\gr^F_p\DR(\M)=0\quad\text{for all}\quad p\ge k
$$
for some integer $k$. Then,
$$
\gr^F_p\DR(\psi_{f}(\M))=\gr^F_p\DR(\phi_{f,1}(\M))=0\quad\text{for all}\quad p\ge k.
$$
\end{lem}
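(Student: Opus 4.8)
The assertion is local on $\X$ and the formation of $\psi_f,\phi_{f,1}$ is compatible with the six-functor formalism, so I would first reduce to a normal form: after a local closed embedding $\X\hookrightarrow Y$ into a smooth variety and the graph embedding of a local extension of $f$, one may assume that $f=t$ is a coordinate on the smooth $Y$, that $X_0:=\{t=0\}$ is a smooth divisor with inclusion $\iota\colon X_0\hookrightarrow Y$, and that $\M$ is represented by a filtered right $\mathcal D_Y$-module $(M,F_\bullet)$ underlying a mixed Hodge module. In this form Saito's definitions read the nearby and vanishing cycles off the Kashiwara--Malgrange $V$-filtration $V^\bullet M$ along $t=0$: the module underlying $\psi_{f,e^{-2\pi i\alpha}}\M$ is $\gr_V^\alpha M$ for $\alpha\in(0,1]$, the module underlying $\phi_{f,1}\M$ is $\gr_V^0 M$, and in each case the Hodge filtration is induced by $F_\bullet M$ (with the standard shift producing the Tate twist in $\phi_{f,1}$). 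Since each $\gr_V^\alpha M$ is a $\mathcal D_{X_0}$-module, its de Rham complex is taken only in the $X_0$-directions.

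The heart of the argument is then to compare $\gr^F_p\DR_{X_0}(\gr_V^\alpha M)$ with $\gr^F_p\DR_Y(M)$. For a right $\mathcal D_Y$-module the de Rham complex factors as the $X_0$-direction de Rham complex applied to the two-term $t$-direction complex $[M\xrightarrow{\,\partial_t\,}M]$, and Saito's axioms assert the \emph{bistrictness} of $F_\bullet$ with respect to $V^\bullet$ together with the isomorphisms
$$
t\colon \gr^F_p\gr_V^\alpha M\xrightarrow{\ \isom\ }\gr^F_p\gr_V^{\alpha+1}M\ \ (\alpha>0),\qquad \partial_t\colon \gr^F_p\gr_V^\alpha M\xrightarrow{\ \isom\ }\gr^F_{p+1}\gr_V^{\alpha-1}M\ \ (\alpha<1).
$$
Passing to $V$-graded pieces of the filtered complex $\gr^F_p\DR_Y(M)$, these isomorphisms annihilate every summand except those that assemble into $\gr^F_p\DR_{X_0}(\gr_V^\alpha M)$ for $\alpha\in(0,1]$ (the nearby part, giving $\psi_f\M$) and for $\alpha=0$ (the vanishing part, giving $\phi_{f,1}\M$). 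Bistrictness guarantees that the $V$-filtration on $\gr^F_p\DR_Y(M)$ is strict, so the hypothesis that the total complex $\gr^F_p\DR_Y(M)$ vanishes for all $p\ge k$ forces the vanishing of each $V$-graded constituent $\gr^F_p\DR_{X_0}(\gr_V^\alpha M)$ for $p\ge k$; these are exactly $\gr^F_p\DR(\psi_f\M)$ and $\gr^F_p\DR(\phi_{f,1}\M)$, which yields the claim.

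I expect the main obstacle to be precisely the index bookkeeping in the previous paragraph: the operator $\partial_t$ raises the Hodge index by one and the normalization of $\phi_{f,1}$ introduces a Tate twist, so one must verify that as $\alpha$ decreases these shifts never push a nonzero graded piece past the threshold $k$ -- that is, that the single bound $p\ge k$ is uniform across all $V$-graded pieces -- and that vanishing of the \emph{total} filtered complex really descends to its $V$-graded constituents. The cleanest way to organize this is to record the compatibility of $\DR$ with the $(F,V)$-bifiltration once and then invoke strictness, rather than tracking each shift by hand. As a sanity check and an alternative route for the unipotent and vanishing parts, one may instead use Saito's triangles $\psi_{f,1}\M\to\phi_{f,1}\M\to\iota^*\M\xrightarrow{+1}$ and $\iota^!\M\to\phi_{f,1}\M\to\psi_{f,1}\M(-1)\xrightarrow{+1}$, bounding $\gr^F_p\DR(\iota^*\M)$ and $\gr^F_p\DR(\iota^!\M)$ and, if convenient, dualizing via Proposition \ref{prop:duality} together with $\dual\psi_f\isom\psi_f\dual$ and $\dual\phi_{f,1}\isom\phi_{f,1}\dual$ to trade the top-degree vanishing for a bottom-degree one. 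The non-unipotent summands $\gr_V^\alpha M$ with $\alpha\in(0,1)$, however, are invisible to $\iota^*$ and $\iota^!$, so the direct $V$-filtration estimate above seems unavoidable for the full statement about $\psi_f$.
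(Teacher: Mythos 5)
Your central step has a genuine gap. The complex $\gr^F_p\DR_Y(M)$ carries no natural $V$-filtration by subcomplexes: the $t$-direction component of the de Rham differential is $\partial_t$, which shifts $V^\alpha\to V^{\alpha-1}$, so ``passing to $V$-graded pieces of the filtered complex $\gr^F_p\DR_Y(M)$'' is not an operation you actually have. Even after repairing this by a diagonal re-indexing (e.g.\ filtering the two-term complex $[M\xrightarrow{\partial_t}M]$ by the subcomplexes $[V^{\alpha+1}M\to V^{\alpha}M]$), the inference you need --- acyclicity of the total complex for all $p\ge k$ forces acyclicity of every $V$-graded constituent --- holds only if the differentials are \emph{strictly} bifiltered in a precise form, and moreover requires a convergence argument since infinitely many $\alpha$ contribute (already for $\M=\Q^H_{\{0\}}$ on $\A^1$ the module $M=\C[\partial_t]\delta$ has infinitely many nonzero $V$-graded pieces). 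You correctly identify this descent as the main obstacle, but you assert it rather than prove it, and the danger is real: vanishing of $\gr^F_p\DR$ is a cancellation statement, not a termwise one. In the same toy example, each graded de Rham complex is of the form $[\gr^F_{p'}M\xrightarrow{\ \partial_t\ }\gr^F_{p'+1}M]$ with both terms nonzero and $\partial_t$ an isomorphism, hence acyclic for all large $p$; propagating a top-range vanishing hypothesis through a $V$-decomposition is therefore exactly the delicate point, not index bookkeeping. (A smaller omission: your graph-embedding normal form treats $\M$ as a single module, whereas the statement concerns $\M\in D^b{\rm MHM}(\X)$; some reduction to cohomology modules is needed first.)

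The paper's proof is, in effect, the route you relegate to a ``sanity check,'' and your reason for discarding it is mistaken. It first reduces to a single mixed Hodge module: by Proposition \ref{prop:duality} the hypothesis is equivalent to $\gr^F_p\DR(\dual\M)=0$ for $p\le -k$, i.e.\ to the first nonzero Hodge index of each $\mathcal H^i(\dual\M)$ being $\ge -k+1$ --- a \emph{bottom-range}, hence termwise, condition. It then uses $\dual\psi_f\isom\psi_f\dual$ and $\dual\phi_{f,1}\isom\phi_{f,1}\dual$ (up to Tate twist, \cite[Proposition 2.6]{Saito90}); crucially this commutation holds for the full functor $\psi_f$, all eigenvalues, so the non-unipotent summands $\gr_V^\alpha M$, $\alpha\in(0,1)$, are not ``invisible'' to the duality route --- only $\iota^*$ and $\iota^!$ miss them, and the paper never uses those triangles in this lemma. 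After dualizing, the required bounds $p(\psi_f(\dual\M))\ge -k+2$ and $p(\phi_{f,1}(\dual\M))\ge -k+1$ follow from the observation already in your first paragraph: by Saito's definition \cite[(2.2.6)]{Saito90} the Hodge filtration on every eigenspace of $\psi_f$ and on $\phi_{f,1}$ is \emph{induced} from $F_\bullet M$ on $V$-graded subquotients, so $F_cM=0$ forces $F_c\gr_V^\alpha M=0$. In short, dualizing first converts the top-range hypothesis into a bottom-range one that passes through the $V$-graded pieces trivially, replacing your bistrictness descent by a one-line subobject observation; you assembled all the right ingredients but chose the one assembly that would require redoing a substantial piece of Saito's strictness theory.
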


\begin{proof}
We may assume $\M\in {\rm MHM}(\X)$. Indeed, the assumption is equivalent to
$$
\gr^F_p\DR(\dual\M)=0\quad\text{for all}\quad p\le -k
$$
by Proposition \ref{prop:duality}, which is equivalent to $p(\mathcal H^i(\dual \M))\ge -k+1$ for all $i$. Since $\dual\mathcal H^{-i}(\M)\isom \mathcal H^i(\dual\M)$, this reduces to proving the assertion for each cohomology module of $\M$.

It suffices to prove $p(\dual \psi_{f}(\M))\ge -k+1$ and $p(\dual\phi_{f,1}(\M))\ge -k+1$. By \cite[Proposition 2.6]{Saito90}, they are equivalent to
$$
p(\psi_{f}(\dual\M))\ge -k+2\quad\text{and}\quad p(\phi_{f,1}(\dual\M))\ge -k+1.
$$
They are immediate from $p(\dual\M)\ge -k+1$, via the definition \cite[(2.2.6)]{Saito90} of the Hodge filtration on the underling D-module of $\psi_{f}(\dual\M)$ and $\phi_{f,1}(\dual\M)$.
\end{proof}

Denote by $\iota:X\hookrightarrow \X$ the closed embedding. By \cite[Corollary 2.24]{Saito90}, we have the distinguished triangle
$$
\iota^*\M[-1]\to \psi_{f,1}(\M)\xrightarrow{\rm can} \phi_{f,1}(\M)\xrightarrow{+1}.
$$
As a consequence, under the same assumption of Lemma \ref{lem:grDR vanishing for nearby and vanishing}, we have the vanishing
\begin{equation}
\label{eqn:pullback grDR vanishing}
\gr^F_p\DR(\iota^*\M)=0\quad\text{for all}\quad p\ge k .
\end{equation}

\begin{proof}[Proof of Proposition \ref{prop:limit MHS via Dm}]
Consider the distinguished triangle \eqref{eqn:Q to IC triangle}:
$$
\K_\X^\bullet\to \Q_\X^H[r+1]\xrightarrow{\gamma_\X}\IC_\X^H\xrightarrow{+1}.
$$
Applying the vanishing cycle functor, we have
$$
\phi_{f,1}(\K_\X^\bullet)\to \phi_{f,1}(\Q_\X^H[r+1])\to\phi_{f,1}(\IC_\X^H)\xrightarrow{+1}.
$$
Condition $D_m$ on $\X$ implies the vanishing
$$
\gr^F_p\DR(\K_\X^\bullet)=0\quad\text{for all}\quad p\ge -m,
$$
which implies
$$
\gr^F_p\DR(\phi_{f,1}(\K_\X^\bullet))=0\quad\text{for all}\quad p\ge -m
$$
by Lemma \ref{lem:grDR vanishing for nearby and vanishing}. Note that the assertion of the proposition is equivalent to the vanishing
$$
\gr^F_p\DR(\phi_{f,1}(\Q_\X^H[r+1]))=0\quad\text{for all}\quad p\ge -m.
$$
Hence, it suffices to prove
$$
\gr^F_p\DR(\phi_{f,1}(\IC_\X^H))=0\quad\text{for all}\quad p\ge -m,
$$
or equivalently, $p(\dual \phi_{f,1}(\IC_\X^H))\ge m+1$ by Proposition \ref{prop:duality}.

By \cite[Corollary 2.24]{Saito90}, we have the distinguished triangles
\begin{gather*}
\phi_{f,1}(\dual\IC_\X^H)\xrightarrow{\rm Var} \psi_{f,1}(\dual\IC_\X^H)(-1)\to \iota^!(\dual\IC_\X^H)[1]\xrightarrow{+1},\\
\iota^*(\dual\IC_\X^H)[-1]\to\psi_{f,1}(\dual\IC_\X^H)\xrightarrow{\rm can} \phi_{f,1}(\dual\IC_\X^H)\xrightarrow{+1}.
\end{gather*}
Since $\dual\IC_\X^H$ does not admit a nontrivial subobject or quotient object supported inside $f^{-1}(0)$, $\rm can$ is surjective and $\rm Var$ is injective (see \cite[Lemme 5.1.4]{Saito88}). In particular,
$$
\phi_{f,1}(\dual\IC_\X^H)={\rm image}(N:\psi_{f,1}(\dual\IC_\X^H)\to \psi_{f,1}(\dual\IC_\X^H)(-1))
$$
where the nilpotent operator $N:=\rm Var\circ can$. Note that $\dual\IC_\X^H$ is a pure Hodge module of weight $-r-1$, so we have the isomorphisms
$$
N^j:\gr^W_{-r-2+j}\psi_{f,1}(\dual\IC_\X^H)\xrightarrow{\sim}\left(\gr^W_{-r-2-j}\psi_{f,1}(\dual\IC_\X^H)\right)(-j) \quad\text{for all}\quad j>0.
$$
(See \cite[Section 5.1.6]{Saito88}.)

Let $\M$ be any simple subquotient of $\dual \phi_{f,1}(\IC_\X^H)\isom\phi_{f,1}(\dual\IC_\X^H)$. This can be considered as a subquotient of $\psi_{f,1}(\dual\IC_\X^H)(-1)$. Then the above isomorphism of $N^j$ implies that $\M(-k)$ is a simple subquotient of
$$
\left(\iota^!(\dual\IC_\X^H)[1]\right)/W_{-r}\left(\iota^!(\dual\IC_\X^H)[1]\right).
$$
for some $k>0$. Indeed, we may consider the Lefschetz decomposition of
$$
\bigoplus_{j\in \Z}\gr^W_{-r+j}\left(\psi_{f,1}(\dual\IC_\X^H)(-1)\right)
$$
with respect to the nilpotent operator $N$. Note that by the Tate twist $(-1)$, the decomposition is centered at $-r$. Consequently, there should exist a primitive element $\M(-k)$ of weight $\ge -r+1$, not in the image of $N$. Therefore, this is a subquotient of
$$
\G:=\left(\iota^!(\dual\IC_\X^H)[1]\right)/W_{-r}\left(\iota^!(\dual\IC_\X^H)[1]\right).
$$
Since we need to prove that $p(\M)\ge m+1$, the above argument reduces to proving that $p(\G)\ge m$.

By $\iota^!\circ\dual\isom\dual\circ\iota^*$ in \cite[Section 4.4]{Saito90}, we have
$$
\dual \G\isom W_{r-1}\left(\iota^*\IC_\X^H[-1]\right).
$$
Moreover, $\iota^*\IC_\X^H[-1]$ is a mixed Hodge module of weight $\le r$ (\cite[(4.5.2)]{Saito90}). Hence, there is a natural morphism
$$
\alpha:\iota^*\IC_\X^H[-1]\to \IC_X^H
$$
which restricts to the identity over the smooth locus of $X$. Indeed, $\IC_X^H$ appears as a direct summand of $\gr^W_r\iota^*\IC_\X^H[-1]$, and the complementary summand is supported on a lower-dimensional subvariety. In particular, we obtain an inclusion
$$
\dual\G \subset \ker (\alpha).
$$
Again by duality, it suffices to show that $p(\dual(\ker(\alpha)))\ge m$, or equivalently,
\begin{equation}
\label{eqn:grDR vanishing for alpha}
\gr^F_p\DR(\ker(\alpha))=0\quad\text{for all}\quad p\ge -m+1.    
\end{equation}

Consider the commuting diagram
\begin{equation*}
\xymatrix{
{\Q_X^H[r]}\ar[drr]_{\gamma_X}\ar[rr]^-{\iota^*\gamma_{\X}[-1]}&{}& {\iota^*\IC_\X^H[-1]}\ar[d]^{\alpha}\\
{}&{}&{\IC_X^H}
}
\end{equation*}
where $\gamma_X=\alpha\circ(\iota^*\gamma_{\X}[-1])$, since $\alpha\circ(\iota^*\gamma_{\X}[-1])$ restricts to the identity over the smooth locus of $X$ (see also \cite[Section 4.5]{Saito90}). From the distinguished triangle
$$
\iota^*\K_\X^\bullet[-1]\to \Q_X^H[r]\xrightarrow{\iota^*\gamma_{\X}[-1]}\iota^*\IC_\X^H[-1]\xrightarrow{+1},
$$
we have the isomorphism
$$
\gr^F_p\DR(\iota^*\gamma_{\X}[-1]):\gr^F_p\DR(\Q_X^H[r])\xrightarrow{\sim} \gr^F_p\DR(\iota^*\IC_\X^H[-1])\quad\text{for all}\quad p\ge -m,
$$
by \eqref{eqn:pullback grDR vanishing} and condition $D_m$ for $\X$. Additionally, condition $D_{m-1}$ for $X$ implies the isomorphism
$$
\gr^F_p\DR(\gamma_X):\gr^F_p\DR(\Q_X^H[r])\xrightarrow{\sim} \gr^F_p\DR(\IC_X^H)\quad\text{for all}\quad p\ge -m+1.
$$
Therefore, the above commuting diagram yields the isomorphism
$$
\gr^F_p\DR(\alpha):\gr^F_p\DR(\iota^*\IC_\X^H[-1])\xrightarrow{\sim} \gr^F_p\DR(\IC_X^H)\quad\text{for all}\quad p\ge -m+1,
$$
which implies \eqref{eqn:grDR vanishing for alpha}. This completes the proof.
\end{proof}

Together with Chen's inversion of adjunction for higher singularities \cite[Theorem 1.2]{Chen25}, Proposition \ref{prop:limit MHS via Dm} implies Theorem \ref{thm:cohomologically insignificant}.

\begin{proof}[Proof of Theorem \ref{thm:cohomologically insignificant}]
Let $r=\dim X$. From the commutativity of the projective pushforward and the nearby cycle functor \cite[Theorem 2.14]{Saito90}, we have
$$
H^\bullet(\X_\infty,\Q)_1\isom\HH^{\bullet-r}(\psi_{f,1}(\Q_\X^H[r+1]))
$$
where $H^\bullet(\X_\infty,\Q)_1\subset H^\bullet(\X_\infty,\Q)$ is the unipotent eigenspace for the monodromy operator $T$.

Via cyclic base change, we may assume that $T$ is unipotent:
$$
H^\bullet(\X_\infty,\Q)\isom\HH^{\bullet-r}(\psi_{f,1}(\Q_\X^H[r+1])).
$$
From the assumption, $X$ has $m$-Du Bois singularities and $\X\sm X$ has $m$-rational singularities. Therefore, by Chen's inversion of adjunction \cite[Theorem 1.2]{Chen25}, $\X$ has $m$-rational singularities. In particular, $X$ satisfies $D_{m-1}$ and $\X$ satisfies $D_m$. Applying Proposition \ref{prop:limit MHS via Dm} and Saito's Hodge-to-de Rham spectral sequence, we obtain that
$$
\gr_F^p(\sp^k): \gr_F^pH^k(X,\C)\to \gr_F^pH^k(\X_\infty,\C)
$$
is an isomorphism for all $p\le m$.

In the Calabi-Yau type hypersurface case, we conclude from Lemma \ref{lem:isomorphism of cores of MHS} that
$$
\Core\left(H^{n-1}(X,\Q)\right)\isom\Core\left(H^{n-1}(\X_\infty,\Q)\right).
$$
\end{proof}

\begin{rmk}[$m=0$]
When $f:\X\to \Delta$ is projective and $\X$ has rational singularities, Proposition \ref{prop:limit MHS via Dm} implies that
$$
\gr_F^0(\sp^k): \gr_F^0H^k(X,\C)\to \gr_F^0H^k(\X_\infty,\C)_1
$$
is an isomorphism for all $k$. Here, $H^k(\X_\infty,\Q)_1$ is the unipotent monodromy eigenspace. Notably, this avoids any assumption on the singularities of the special fiber.
\end{rmk}

\begin{proof}[Proof of Corollary \ref{cor:maximal degeneration}]
Let $f:\X\to \Delta$ be a one-parameter smoothing of $X$. By definition, $f$ is a maximal degeneration if $N^{n-2m-1}\neq0$ where the nilpotent operator
$$
N:H^{n-1}(\X_\infty,\Q)\to H^{n-1}(\X_\infty,\Q)(-1).
$$
Since we have the isomorphisms
$$
N^j:\gr^W_{n-1+j}H^{n-1}(\X_\infty,\Q)\xrightarrow{\sim}\left(\gr^W_{n-1-j}H^{n-1}(\X_\infty,\Q)\right)(-j),
$$
the family $f$ is maximally degenerate if and only if we have the nonvanishing
$$
W_{2m}H^{n-1}(\X_\infty,\Q)\neq 0.
$$
However, by the invariance of Hodge numbers
$$
\dim\gr_F^pH^{n-1}(\X_\infty,\C)=\dim\gr_F^pH^{n-1}(X_t,\C)
$$
for the general fiber $X_t:=f^{-1}(t)$ and all $p$, we have
$$
\dim\gr_F^mH^{n-1}(\X_\infty,\C)=1\quad\text{and}\quad\dim\gr_F^{<m}H^{n-1}(\X_\infty,\C)=0.
$$
Therefore, $W_{2m}H^{n-1}(\X_\infty,\Q)\neq 0$ if and only if
$$
\dim\gr_F^mW_{2m}H^{n-1}(\X_\infty,\C)=1.
$$
In other words, $W_{2m}H^{n-1}(\X_\infty,\Q)\isom\Q^H(-m)$, where $\Q^H$ is the trivial Hodge structure of weight $0$. Moreover, $\Q^H(-m)$ should be the core of $H^{n-1}(\X_\infty,\Q)$. Hence, statement (3) is equivalent to
$$
\Core(H^{n-1}(\X_\infty,\Q))=\Q^H(-m).
$$

Applying Theorem \ref{thm:core of liminal sources}, we have $(2)\Rightarrow(1)\Leftrightarrow(3)$. It remains to prove that for a $m$-liminal source $\IC^H_Z(\mathbb V)$ supported on a minimal $m$-liminal center $Z\subset X$, an isomorphism
$$
\Core(\HH^0(Z,\IC^H_Z(\mathbb V)))\isom\Q^H(-m)
$$
implies that $\IC^H_Z(\mathbb V)\isom\Q_{\{x\}}^H(-m)$ for some $x\in X$. The above isomorphism says that $\IC^H_Z(\mathbb V)$ is of weight $2m$.

Recall that $\mathbb V$ is a polarizable variation of Hodge structure, with $\gr_F^m\mathbb V_\C\neq 0$ and $\gr_F^{< m}\mathbb V_\C=0$; see Definition \ref{defn:liminal sources and centers} and the following discussions. In particular, the weight of $\mathbb V$ is at least $2m$. Since the weight of $\IC^H_Z(\mathbb V)$ is $2m$, which is equal to the sum of the weight of $\mathbb V$ and the dimension of $Z$, the subvariety $Z$ is a closed point $x\in X$ and $\mathbb V=\Q^H(-m)$ supported at $x$. This completes the proof.
\end{proof}

\begin{rmk}[Reduction to a family over an algebraic curve]
\label{rmk:one parameter degeneration}
A one-parameter degeneration $f:\X\to \Delta$ is induced by a holomorphic map $\Delta\to \rm{Hilb}^{\rm an}$ to the analytification of the Hilbert scheme $\rm{Hilb}$ of $X:=f^{-1}(0)$. Let $u: \mathcal U\to\rm{Hilb}$ be the universal family. The sheaves $R^ku_*\Q_{\mathcal U}$ are constructible and, on a suitable algebraic stratification of $\rm{Hilb}$, underlie (admissible) variations of mixed Hodge structure; on each stratum, they are induced by the $k$-th cohomology of the fibers. Assume $\Delta\sm\{0\}$ maps into a stratum $S\subset {\rm Hilb}$, and write $h:\Delta\to \bar S$ for the induced map to its closure. After finitely many blow-ups centered in $\bar S\sm S$, $h$ lifts across the origin. Thus we obtain a resolution $\nu: \wt S\to \bar S$ whose boundary $E:=\nu^{-1}(\bar S\sm S)$ is a simple normal crossing divisor, and the map $\Delta\sm\{0\}\to \wt S$ extends to $\tilde h:\Delta\to \wt S$.

By the theory of admissible variations of mixed Hodge structure (see \cite{CKS86,Kashiwara86}), the graded pieces of the limit mixed Hodge structure
$$
\bigoplus_w\gr^W_w H^k(\X_\infty,\Q)
$$
depend only on the base point $\tilde h(0)\in \wt S$ and the local monodromy
$$
\pi_1(\Delta\sm\{0\})\to \pi_1^{\rm loc}(\wt S\sm E, \tilde h(0)).
$$
In a sufficiently small neighborhood of $\tilde h(0)$, the complement $\wt S\sm E$ is a product of punctured disks. Choosing an algebraic curve $C\to \wt S$ with the same local monodromy at $c\in C$ mapping to $\tilde h(0)$, we may replace $\Delta$ by the pointed algebraic curve $(C,c)$ for the proofs of Theorem \ref{thm:core of liminal sources} and Corollary \ref{cor:maximal degeneration}.
\end{rmk}

\subsection{Hodge Du-Bois numbers and liminal loci}

A projective variety $X$ with $m$-rational singularities exhibits fundamental symmetries of Hodge-Du Bois numbers:
$$
\underline h^{p,q} (X) =\underline h^{q,p} (X) =\underline h^{\dim X-p,\dim X-q} (X) =\underline h^{\dim X-q,\dim X-p} (X)
$$
for all $0\le p\le m$ and $0\le q\le \dim X$. This was established for the first two equalities in \cite{FL24, SVV23} and the third in \cite{PP25a}. For Calabi-Yau type hypersurfaces $X\subset \P^n$ of degree $d$ with with $m$-Du Bois singularities (and $m+1=\frac{n+1}{d}$), the Hodge numbers $\h^{p,q}(X)$ for $0\le p\le m$ and $0\le q\le n-1$ are equal to those of smooth hypersurfaces by \cite{FL24}. In particular, when $p=m$, we have
\begin{equation}
\label{eqn:p=m Hodge numbers}
\h^{m,m}(X)=\h^{m,n-1-m}(X)=1,\quad \h^{m,q}(X)=0 \;\;\;\mathrm{for~ all}\;\; q\neq m \;\;\mathrm{or}\;\; n-1-m.
\end{equation}
When $X$ has $m$-rational singularities and $p=n-1-m$, the symmetries above give
$$
\h^{n-1-m,m}(X)=\h^{n-1-m,n-1-m}(X)=1,\quad \h^{n-1-m,q}(X)=0 \;\;\;\mathrm{for~ all}\;\; q\neq m \;\;\mathrm{or}\;\; n-1-m.
$$
However, when $X$ is $m$-liminal, that is $m$-Du Bois but not $m$-rational, the symmetries always break. Notably, $\h^{n-1-m,m}(X)=1$ when $X$ is $m$-rational and $\h^{n-1-m,m}(X)=0$ when $X$ is $m$-liminal. Theorem \ref{thm:Hodge numbers} says more: the Hodge-Du Bois numbers of $X$ for $p=n-1-m$ and the Hodge-Du Bois numbers of the liminal locus for $p=0$ are related by an explicit formula.

\begin{proof}[Proof of Theorem \ref{thm:Hodge numbers}]
Recall that $m$-Du Bois hypersurface singularities are $(m-1)$-rational singularities. Hence, $X$ satisfies condition $D_{m-1}$. By \cite[Proposition 7.4]{PP25a}, we have
$$
\DB_X^{n-1-m}=I\DB_X^{n-1-m}
$$
which implies the equalities
$$
\h^{n-1-m,i}(X)=I\h^{n-1-m,i}(X)=I\h^{m,n-1-i}(X).
$$
The second equality is the Poincaré duality for intersection cohomology. From \eqref{eqn:Q to IC triangle}, we have the long exact sequence
$$
\dots\to\gr_F^m\HH^{j}(\K_X^\bullet)\to \gr_F^m H^{n-1+j}(X,\C)\to \gr_F^m \mathit{IH}^{n-1+j}(X,\C)\to \gr_F^m\HH^{j+1}(\K_X^\bullet)\to\cdots.
$$
By duality, we have
$$
\gr_F^m\HH^{j}(\K_X^\bullet)\isom\left(\gr_F^{-m}\HH^{-j}(\dual \K_X^\bullet)\right)^*\isom H^{-j}(\O_S)^*
$$
where the second isomorphism is a consequence of Proposition \ref{prop:lowest Hodge filtration of dual K}.

By \cite[Lemma 2.2]{MOPW}, we have $\dim S\le n-2-2m$ and
$$
H^{-j}(\O_S)\neq 0 \quad\text{only if}\quad -n+2+2m\le j\le 0.
$$
Additionally, Corollary \ref{cor:core of liminal center} says
\begin{equation}
\label{eqn:surjection of m-Hodge piece}
\gr_F^m\HH^{0}(\K_X^\bullet)\to \gr_F^m H^{n-1}(X,\C)    
\end{equation}
is surjective. Making the change of variables $i=m-j$, the above long exact sequence, together with \eqref{eqn:p=m Hodge numbers} yields the claimed identities for the Hodge-Du Bois numbers in Theorem \ref{thm:Hodge numbers}.

When the core of $H^{n-1}(X,\Q)$ has weight $w\le n-3$, we prove that the surjection \eqref{eqn:surjection of m-Hodge piece} is an isomorphism. Observe that the $m$-th Hodge piece of $\HH^{0}(\K_X^\bullet)$ is pure of weight $w$:
$$
\gr_F^m\gr^W_w\HH^{0}(\K_X^\bullet)=\gr_F^m\HH^{0}(\K_X^\bullet).
$$
This follows from the identity $\gr_F^m\HH^{0}(\K_X^\bullet)=\gr_F^m\HH^{0}(\K_X^\bullet|_S)$, together with Theorem \ref{thm:core of liminal sources} and Proposition \ref{prop:core of liminal source of MHM} applied to each irreducible component of $S$. Hence, from the above long exact sequence, we have
$$
\dots\to\gr_F^m\gr^W_w\mathit{IH}^{n-2}(X,\C)\to\gr_F^m\gr^W_w\HH^{j}(\K_X^\bullet)\to \gr_F^m\gr^W_w H^{n-1+j}(X,\C)\to \cdots
$$
and the vanishing $\gr^W_w\mathit{IH}^{n-2}(X,\C)=0$ implies that \eqref{eqn:surjection of m-Hodge piece} is injective (see e.g. \cite[Corollary 3.8]{PS08} for the strictness of Hodge and weight filtrations). In conclusion, $\dim \gr_F^m\HH^{0}(\K_X^\bullet)=h^{0,0}(S)=1$, and thus, $S$ is connected.
\end{proof}

For a quartic K3 surface $X\subset \P^3$ with Du Bois (non-rational) singularities, Theorem \ref{thm:Hodge numbers} determines the Hodge-Du Bois diamond completely, but two entries $\h^{1,1}(X)$ and $\h^{1,2}(X)$. The same proof works for Gorenstein semi-log canonical K3 surfaces, and the Hodge-Du Bois diamond is illustrated in the following example:

\begin{ex}[{\bf Gorenstein semi-log canonical K3 surfaces}]
\label{ex:K3 surface}
When $X$ is a K3 surface with Gorenstein Du Bois (non-rational) singularities, we have the following Hodge-Du Bois diamond (see \cite[Section 4]{PP25a}) of $X$ by Theorem \ref{thm:Hodge numbers}:
$$
\renewcommand{\arraystretch}{1.15}
\newcommand{\hd}[1]{\makebox[3.2em][c]{$#1$}}
\begin{array}{ccccc}
 & & \hd{h^1(\mathcal O_S)+1} & & \\
 & \hd{h^0(\mathcal O_S)-1} & & \hd{\h^{1,2}} & \\
 \hd{0} & & \hd{\h^{1,1}} & & \hd{1} \\
 & \hd{0} & & \hd{0} & \\
 & & \hd{1} & &
\end{array}
$$
Here, $S=\mathrm{nklt}(X)$ is the non-klt locus of $X$. Note that the sum along each horizontal row equals the corresponding Betti number. For any one-parameter smoothing of $X$, the degeneration is of Type II or Type III: it is Type II precisely when the core of $H^2(X,\Q)$ has weight 1, and Type III precisely when it has weight 0. This recovers the fact that $S$ is connected if Type III.

Moreover, if $X$ is log canonical, then $H^3(X,\Q)$ is pure (see \cite[Theorem 6.33]{PS08}). In particular, the Betti number $h^3(X)=2|S|-2$, where $|S|$ is the number of non-klt points on $X$. This only leaves $\h^{1,1}(X)$ undetermined, which can be recovered from the topological Euler characteristic of $X$.

Theorem \ref{thm:core of liminal sources} implies that all elliptic curves appearing in the resolution of $X$ are isogenous. In fact, they should be isomorphic by classical theory, but our method -- formulated for $\Q$-Hodge structures, without integer lattices -- proves only isogeny at the moment.
\end{ex}

This paper is framed for hypersurfaces, but the same methods yield the corresponding statement on Hodge-Du Bois numbers for Gorenstein semi-log canonical (strict) Calabi-Yau varieties, up to small modifications; we do not record the details.

\subsection{Thom-Sebastiani for liminal sources and examples}
\label{sec:Thom-Sebastiani for liminal sources}

We state a Thom-Sebastiani type theorem that enables explicit computation of $m$-liminal sources in many cases, including the example from the introduction. The result is extracted from Maxim-Saito-Schürmann’s Thom-Sebastiani theorem for filtered D-modules \cite{MSS20}; for completeness, we present a proof adapted to our setting. We then work out a range of examples.

\begin{thm}
\label{thm:Thom-Sebastiani for liminal sources}
Let $Y_1,Y_2$ be smooth varieties, and let $f_i\in \O_{Y_i}(Y_i)$ be nonzero regular functions with hypersurfaces
$$
X_i:=\{f_i=0\}\subset Y_i\quad (i=1,2).
$$
Assume $X_i$ is $m_i$-liminal. Let $\IC^H_{Z_i}(\mathbb V_i)$ be an $m_i$-liminal source of $X_i$ supported on a proper $m_i$-liminal center $Z_i\subsetneq X_i$. Then $X:=\{f_1\oplus f_2=0\}\subset Y_1\times Y_2$ is $(m_1+m_2+1)$-liminal, and
$$
\IC^H_{Z_1\times Z_2}(\mathbb V)(-1)
$$
is an $(m_1+m_2+1)$-liminal source of $X$, where $\mathbb V$ is the core of the external product $\mathbb V_1\boxtimes \mathbb V_2$, defined generically on $Z_1\times Z_2$.

Conversely, any nontrivial $(m_1+m_2+1)$-liminal source of $X$ (i.e. one with support $\neq X$) is obtained this way.
\end{thm}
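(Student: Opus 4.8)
The plan is to reverse the construction of the direct statement, extracting every nontrivial liminal source of $X$ from the Thom--Sebastiani decomposition of vanishing cycles of Maxim--Saito--Schürmann \cite{MSS20}. Write $f=f_1\oplus f_2$ on $Y=Y_1\times Y_2$, put $n=n_1+n_2$ with $n_i=\dim Y_i$, and let $\M=\IC^H_Z(\mathbb W)$ be a nontrivial $(m_1+m_2+1)$-liminal source, so that $Z\subsetneq X$ and $\mathbb W$ is an irreducible variation with $\gr_F^{m_1+m_2+1}\mathbb W_\C\neq0$ and $\gr_F^{<m_1+m_2+1}\mathbb W_\C=0$. The goal is to produce proper $m_i$-liminal centers $Z_i\subsetneq X_i$ with liminal sources $\IC^H_{Z_i}(\mathbb V_i)$ such that $Z=Z_1\times Z_2$ and $\mathbb W\isom\Core(\mathbb V_1\boxtimes\mathbb V_2)(-1)$.

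First I would locate the support. By Theorem~\ref{thm:higher singularities vs minimal exponent} the $m_i$-liminal locus of $X_i$ is $S_i=\{x_i\in X_i:\wt\alpha_{x_i}(f_i)=m_i+1\}$, and every $m_i$-liminal center lies in $S_i$. For a singular point $(x_1,x_2)$ of $X$ lying on $X_1\times X_2$, the Thom--Sebastiani Theorem~\ref{thm:Thom-Sebastiani for minimal exponents} gives $\wt\alpha_{(x_1,x_2)}(f)=\wt\alpha_{x_1}(f_1)+\wt\alpha_{x_2}(f_2)\ge m_1+m_2+2$, with equality exactly when $x_i\in S_i$ for both $i$. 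Thus the deepest (i.e. $(m_1+m_2+1)$-)liminal locus of $X$ meeting $X_1\times X_2$ is precisely $S_1\times S_2$; when the $f_i$ are homogeneous (the case relevant to hypersurfaces in $\P^n$) the Euler relation forces $\Sing(X)\subset X_1\times X_2$, while in general one localizes analytically near $X_1\times X_2$. Hence $Z\subseteq S_1\times S_2$.

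The heart of the argument is to promote this to the level of Hodge modules. The liminal sources of $X$ are the simple constituents of $\K_X^\bullet$ carrying the lowest Hodge filtration, which -- through the canonical and variation morphisms recalled in the proof of Proposition~\ref{prop:limit MHS via Dm}, applied to the map $f:Y\to\A^1$ -- are governed by the lowest Hodge level of the unipotent vanishing cycle module $\phi_{f,1}(\Q_Y^H[n])$. The Maxim--Saito--Schürmann theorem expresses $\phi_f(\Q_Y^H[n])$ as the Thom--Sebastiani convolution of the $\phi_{f_i}(\Q_{Y_i}^H[n_i])$; the integrality $\wt\alpha(X_i)=m_i+1\in\Z$ ensures that the relevant lowest pieces are unipotent, so only the summand $\phi_{f_1,1}\boxtimes\phi_{f_2,1}$ contributes at the lowest Hodge level, and there the convolution restricts to the external tensor product $\boxtimes$ together with a single Tate twist $(-1)$: the top Hodge degree $m_1+m_2$ of $\mathbb V_1\boxtimes\mathbb V_2$ is raised to $m_1+m_2+1$, accounting for the $(-1)$ and matching $\wt\alpha(f)=\wt\alpha(f_1)+\wt\alpha(f_2)$. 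I would conclude that every simple constituent of $\phi_{f,1}$ carrying the lowest Hodge piece has the form $\bigl(\IC^H_{Z_1}(\mathbb V_1)\boxtimes\IC^H_{Z_2}(\mathbb V_2)\bigr)(-1)=\IC^H_{Z_1\times Z_2}(\mathbb V_1\boxtimes\mathbb V_2)(-1)$, where each $\IC^H_{Z_i}(\mathbb V_i)$ is a constituent of $\phi_{f_i,1}$ again carrying its lowest Hodge level, i.e. an $m_i$-liminal source of $X_i$. Since $\gr_F^{m_1+m_2}(\mathbb V_1\boxtimes\mathbb V_2)_\C\isom\gr_F^{m_1}\mathbb V_{1,\C}\otimes\gr_F^{m_2}\mathbb V_{2,\C}$ is the top Hodge piece, it is concentrated in the core of $\mathbb V_1\boxtimes\mathbb V_2$; Lemma~\ref{lem:isomorphism of cores of MHS} then identifies $\M$ with $\IC^H_{Z_1\times Z_2}(\Core(\mathbb V_1\boxtimes\mathbb V_2))(-1)$, the asserted form. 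Comparison with the forward construction yields a bijection between nontrivial $(m_1+m_2+1)$-liminal sources of $X$ and pairs of $m_i$-liminal sources of the $X_i$.

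The hard part will be the Hodge-filtration bookkeeping in the third paragraph: one must verify that the lowest Hodge level of the convolution computing $\phi_{f,1}$ is exactly the external tensor product of the lowest levels of the two factors -- so that only liminal sources of $X_1$ and $X_2$, and only their cores, can contribute -- and that the induced shift is precisely a single Tate twist rather than a more intricate change of filtration. A secondary point is the compatibility of the core operation with $\boxtimes$: one needs $\mathbb V_1\boxtimes\mathbb V_2$ to be of Calabi--Yau type (forcing each $\gr_F^{m_i}\mathbb V_{i,\C}$ to be one-dimensional) and its top piece to lie inside a single simple subquotient, which is exactly where Lemma~\ref{lem:isomorphism of cores of MHS} and the one-dimensionality of the top Hodge pieces enter. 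Once these are in place, completeness of the Maxim--Saito--Schürmann decomposition guarantees that no nontrivial liminal source is missed.
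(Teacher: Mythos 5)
You take the same route as the paper: decompose the unipotent vanishing cycles of $f=f_1\oplus f_2$ via Maxim--Saito--Sch\"urmann \cite{MSS20}, argue that only the summand $\phi_{f_1,1}\boxtimes\phi_{f_2,1}$ can carry the lowest Hodge level, and match lowest-level constituents with external products of liminal sources up to a Tate twist. The difficulty is that the two steps you yourself label ``the hard part'' are where all the content sits, and your proposal does not carry them out. Your reason for discarding the remainder $R=\bigoplus_{\alpha_1+\alpha_2=1,\,\alpha_i>0}\phi_{f_1,e(\alpha_1)}\boxtimes\phi_{f_2,e(\alpha_2)}$ --- that integrality of $\wt\alpha(X_i)$ ``ensures the relevant lowest pieces are unipotent'' --- is not an argument: the non-unipotent summands $\phi_{f_i,e(\alpha_i)}$ are nonzero in general, and what actually excludes $R$ is a quantitative bound on first nonzero Hodge filtrations. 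The paper gets this from \cite[(6)]{Saito17} (see also \cite[Theorem A]{MY23}): the first nonzero Hodge index of $T=\phi_{f_1,1}\boxtimes\phi_{f_2,1}$ is $m_1+m_2+2$, while that of $R$ is $\ge m_1+m_2+3$ (the shift $F_{\bullet+1}$ is built into $R$), and the comparison is made after dualizing (Proposition~\ref{prop:duality}), since what one controls for a liminal source $\M$ is the index $m_1+m_2+1$ of the first nonzero filtration of $\dual\M$.

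Equally essential, and only gestured at in your proposal, is the bridge between liminal sources of $X$ --- simple subquotients of $\K_X^\bullet$ --- and simple subquotients of $\phi_{f,1}(\Q_Y^H[n])$, with the exact Tate twists. The paper uses the exact sequence $0\to\Q_X^H[n-1]\to\psi_{f,1}(\Q_Y^H[n])\to\phi_{f,1}(\Q_Y^H[n])\to0$, the identification $\phi_{f,1}(\Q_Y^H[n])\isom\mathrm{image}(N)$, and the Lefschetz isomorphisms $N^j:\gr^W_{n-1+j}\psi_{f,1}\isom\left(\gr^W_{n-1-j}\psi_{f,1}\right)(-j)$ to show that a nontrivial liminal source $\M$ yields $\M(-1)$ inside $\phi_{f,1}$, and that every simple subquotient of $\phi_{f_i,1}(\Q^H_{Y_i}[n_i])$ has the form $\G_i(-k_i)$ with $\G_i$ a simple subquotient of $\K_{X_i}^\bullet$ and $k_i>0$. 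Your ``single Tate twist'' is thus an output of Hodge-level bookkeeping, not an input from the convolution: the constraints $\gr_F^{<m_i}\mathbb V_i=0$ together with nonvanishing at level $m_1+m_2+1$ force $k_1=k_2=1$ and simultaneously force each $\G_i$ to be a genuine $m_i$-liminal source. Note also that your converse-by-``comparison with the forward construction'' is circular as phrased, since the forward direction is itself part of the statement; the paper instead reruns the same machinery (any candidate $\M(-1)$ occurs in $\phi_{f,1}$, hence $\M(k-1)$ occurs in $\K_X^\bullet$ for some $k>0$, and the Hodge level pins $k=1$). Two smaller points: your support-location paragraph via minimal exponents is dispensable, since $Z=Z_1\times Z_2$ falls out of Lemma~\ref{lem:box product of IC extension}; and identifying $\Core(\mathbb V_1\boxtimes\mathbb V_2)$ needs only semisimplicity of polarizable variations of Hodge structure plus one-dimensionality of the generic top Hodge piece (which follows from the line-bundle statement in the proof of Theorem~\ref{thm:liminal centers}), rather than Lemma~\ref{lem:isomorphism of cores of MHS}.
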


Here, $\mathbb V_1\boxtimes\mathbb V_2$ is a Calabi-Yau type variation of pure Hodge structure, and its core $\mathbb V$ is the simple summand containing the top Hodge piece. Recall that a variation of (polarizable) pure Hodge structure is semisimple.

\begin{proof}
Denote by $Y:=Y_1\times Y_2$, $\dim Y=n$, and $\dim Y_i=n_i$ $(i=1,2)$. By \cite[Theorem 2]{MSS20}, we have
$$
\phi_{f_1\oplus f_2,1}(\O_Y,F_\bullet)=\left(\phi_{f_1,1}(\O_{Y_1},F_\bullet)\boxtimes \phi_{f_2,1}(\O_{Y_2},F_\bullet)\right)\oplus R
$$
where
$$
R:=\bigoplus_{\alpha_1+\alpha_2=1, \alpha_1,\alpha_2>0}\phi_{f_1,e(\alpha_1)}(\O_{Y_1},F_\bullet)\boxtimes \phi_{f_2,e(\alpha_2)}(\O_{Y_2},F_{\bullet+1})
$$
Following the notations in \emph{loc. cit.}, $(\O_{Y_i},F_\bullet)$ is the underlying filtered left $D_{Y_i}$-module of $\Q_{Y_i}^H[n_i]$, $e(\alpha):=e^{2\pi\sqrt{-1}\alpha}$, and $\phi_{f_i,e(\alpha_i)}$ is the $e(\alpha_i)$-eigenspace of the vanishing cycle functor $\phi_{f_i}$.
Note that the term
$$
T:=\phi_{f_1,1}(\O_{Y_1},F_\bullet)\boxtimes \phi_{f_2,1}(\O_{Y_2},F_\bullet)
$$
is the underlying filtered left $\D_Y$-module of $\phi_{f_1,1}\Q_{Y_1}^H[n_1]\boxtimes \phi_{f_2,1}\Q_{Y_2}^H[n_2]$, and thus, is naturally equipped with the weight filtration and the $\Q$-perverse sheaf. In particular, \emph{loc. cit.} implies that
$$
\phi_{f_1\oplus f_2,1}\Q^H_{Y}[n]=\left(\phi_{f_1,1}\Q_{Y_1}^H[n_1]\boxtimes \phi_{f_2,1}\Q_{Y_2}^H[n_2]\right)\oplus \mathcal R,
$$
where $\mathcal R\in {\rm MHM}(Y)$ with the underlying filtered left $\D_Y$-module $R$.

Thom-Sebastiani Theorem \ref{thm:Thom-Sebastiani for minimal exponents} for minimal exponents implies that $X$ is $(m_1+m_2+1)$-liminal. By \cite[Corollary 2.24]{Saito90}, we have the short exact sequence
$$
0\to\Q_X^H[n-1]\to \psi_{f,1}(\Q_Y^H[n])\xrightarrow{\rm can} \phi_{f,1}(\Q_Y^H[n])\to 0.
$$
where $f:=f_1\oplus f_2$. As in the proof of Proposition \ref{prop:limit MHS via Dm},
$$
\phi_{f,1}(\Q_Y^H[n])\isom{\rm image}(N:\psi_{f,1}(\Q_Y^H[n])\to \psi_{f,1}(\Q_Y^H[n])(-1))
$$
where the nilpotent operator $N:=\rm Var\circ can$. Recall that we have the isomorphism
$$
N^j:\gr^W_{n-1+j}\psi_{f,1}(\Q_Y^H[n])\xrightarrow{\sim}\left(\gr^W_{n-1-j}\psi_{f,1}(\Q_Y^H[n])\right)(-j) \quad\text{for all}\quad j>0.
$$
This implies that if $\M$ is a nontrivial $(m_1+m_2+1)$-liminal source of $X$, then $\M(-1)$ is a simple subquotient of $\phi_{f,1}(\Q_Y^H[n])$. By duality, $(\dual\M)(1)$ is a simple subquotient of
$$
\phi_{f,1}(\dual(\Q_Y^H[n]))\isom\phi_{f,1}(\Q_Y^H[n])(n).
$$
Recall the polarization $\dual(\Q_Y^H[n])\isom \Q_Y^H[n](n)$, and $\dual\phi_{f,1}=\phi_{f,1}\dual$ \cite[Proposition 2.6]{Saito90}. In other words, $(\dual\M)(1-n)$ is a simple subquotient of $\phi_{f,1}(\Q_Y^H[n])$.

By Definition \ref{defn:liminal sources and centers}, the index of the first nonzero filtration of $\dual\M$ is $m_1+m_2+1$. As a filtered left $\D_Y$-module, the index of $(\dual\M)(1-n)$ is $m_1+m_2+2$. On the other hand, \cite[(6)]{Saito17} (see also \cite[Theorem A]{MY23}) implies that the index of the first nonzero Hodge filtration of $T$ (resp. $R$) as a left $\D_Y$-module is $m_1+m_2+2$ (resp. $\ge m_1+m_2+3$). Therefore, $(\dual\M)(1-n)$ must be a subquotient of
$$
\phi_{f_1,1}\Q_{Y_1}^H[n_1]\boxtimes \phi_{f_2,1}\Q_{Y_2}^H[n_2].
$$
Again, by duality, $\M(-1)$ is a subquotient of $\phi_{f_1,1}\Q_{Y_1}^H[n_1]\boxtimes \phi_{f_2,1}\Q_{Y_2}^H[n_2]$.

From the property of the nilpotent operator, every simple subquotient of $\phi_{f_i,1}\Q_{Y_i}^H[n_i]$ is isomorphic to $\mathcal G_i(-k_i)$ for a simple subquotient $\G_i$ of $\K_{X_i}^\bullet$ and a positive integer $k_i$ ($i=1,2$). Therefore, by Lemma \ref{lem:box product of IC extension} below, $\M(-1)$ is a simple summand of the (semisimple) Hodge module
$$
\G_1(-k_1)\boxtimes \G_2(-k_2).
$$
Let $\G_i:=\IC_{Z_i}^H(\mathbb V_i)$ for $i=1,2$, then
$$
\left(\G_1(-k_1)\boxtimes \G_2(-k_2)\right)(1)=\IC_{Z_1\times Z_2}^H(\mathbb V_1\boxtimes \mathbb V_2)(1-k_1-k_2).
$$
Denote by $\mathbb V$, the summand of $\mathbb V_1\boxtimes \mathbb V_2(2-k_1-k_2)$ such that $\M=\IC_{Z_1\times Z_2}^H(\mathbb V)(-1)$. For $\M$ to satisfy the definition of $(m_1+m_2+1)$-liminal source, we need
$$
\gr_F^{m_1+m_2}\mathbb V\neq 0\quad\mathrm{and}\quad\gr_F^{<m_1+m_2}\mathbb V= 0.
$$
See \eqref{eqn:Hodge piece nonvanishing for VHS}. Recall that $\gr_F^{<m_i}\mathbb V_i=0$. Therefore, we must have $k_1=k_2=1$ and $\gr_F^{m_i}\mathbb V_i\neq 0$ for $i=1,2$. Therefore, $\IC_{Z_i}^H(\mathbb V_i)$ is an $m_i$-liminal source of $X_i$ for $i=1,2$, and
$$
\M=\IC_{Z_1\times Z_2}^H(\mathbb V)(-1)
$$
where $\mathbb V$ is the core of $\mathbb V_1\boxtimes \mathbb V_2$.

Conversely, any such $\M$ is an $(m_1+m_2+1)$-liminal source of $X$. Indeed, the above argument implies that $\M(-1)$ appears as a simple subquotient of $\phi_{f,1}(\Q_Y^H[n])$. Hence, $\M(k-1)$ is a simple subquotient of $\K^\bullet_X$ for some $k>0$. By \eqref{eqn:Hodge piece nonvanishing for VHS}, $k=1$ and $\M$ should be an $(m_1+m_2+1)$-liminal source.
\end{proof}

In the above proof, we used the following lemma on the external product of minimal extensions.

\begin{lem}
\label{lem:box product of IC extension}
Let $Z_1$ and $Z_2$ be irreducible varieties. Let $\IC_{Z_i}^H(\mathbb V_i)$ be the pure Hodge module associated to a polarizable variation $(\mathbb V_i, F^\bullet)$ of $\Q$-Hodge structure defined on a smooth Zariski dense open subset of $Z_i$ for $i=1,2$. Then we have an isomorphism of pure Hodge modules
$$
\IC_{Z_1\times Z_2}^H(\mathbb V_1\boxtimes \mathbb V_2)\isom\IC_{Z_1}^H(\mathbb V_1)\boxtimes \IC_{Z_2}^H(\mathbb V_2)
$$
where $\mathbb V_1\boxtimes \mathbb V_2$ is the polarizable variation of $\Q$-Hodge structure defined by the external product on a smooth Zariski dense open subset of $Z_1\times Z_2$.
\end{lem}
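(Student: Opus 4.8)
The plan is to reduce the statement to two ingredients: the classical topological fact that the intermediate (minimal) extension commutes with external product, and Saito's structure theorem, which says that a polarizable pure Hodge module with a given strict support is determined, up to isomorphism, by the polarizable variation of Hodge structure it restricts to on a dense open subset. Throughout, let $U_i \subset Z_i$ be a smooth Zariski-dense open subset on which $\mathbb V_i$ is defined, so that $U_1 \times U_2$ is smooth and Zariski-dense in $Z_1 \times Z_2$ and $\mathbb V_1 \boxtimes \mathbb V_2$ is a polarizable variation of $\Q$-Hodge structure of weight $\mathrm{wt}(\mathbb V_1) + \mathrm{wt}(\mathbb V_2)$ there. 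I will use that Saito's external product functor $\boxtimes$ on mixed Hodge modules lifts the topological external product on the underlying $\Q$-perverse sheaves, is additive in weights, and sends polarizable pure Hodge modules to polarizable pure Hodge modules; see \cite{Saito90}.

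First I would treat the underlying perverse sheaves. Writing $j_i : U_i \hookrightarrow Z_i$ and $j : U_1 \times U_2 \hookrightarrow Z_1 \times Z_2$ for the open immersions, the underlying perverse sheaf of $\IC_{Z_i}^H(\mathbb V_i)$ is the intermediate extension $j_{i!*}(\mathbb V_i[\dim Z_i])$. The external product $j_{1!*}(\mathbb V_1[\dim Z_1]) \boxtimes j_{2!*}(\mathbb V_2[\dim Z_2])$ is again perverse, since external product is $t$-exact for the middle perversity on a product, and it restricts over $U_1 \times U_2$ to $(\mathbb V_1 \boxtimes \mathbb V_2)[\dim(Z_1 \times Z_2)]$. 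I would then check that it satisfies the support and cosupport conditions characterizing $j_{!*}$, namely the strict inequalities $\dim \Supp \mathcal H^{-k}(\,\cdot\,) < k$ (and their duals) below the top degree. Because dimensions of supports add under external product and stalk cohomology obeys the Künneth formula, the strict inequalities for each factor force the corresponding strict inequalities for the product. Hence
$$
j_{1!*}(\mathbb V_1[\dim Z_1]) \boxtimes j_{2!*}(\mathbb V_2[\dim Z_2]) \isom j_{!*}\bigl((\mathbb V_1 \boxtimes \mathbb V_2)[\dim(Z_1 \times Z_2)]\bigr)
$$
as $\Q$-perverse sheaves; in particular this is the intersection complex with strict support $Z_1 \times Z_2$.

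Finally I would upgrade this to Hodge modules. By the compatibility of Saito's $\boxtimes$ with the underlying perverse sheaves, $\IC_{Z_1}^H(\mathbb V_1) \boxtimes \IC_{Z_2}^H(\mathbb V_2)$ is a polarizable pure Hodge module whose underlying perverse sheaf is $\IC_{Z_1 \times Z_2}(\mathbb V_1 \boxtimes \mathbb V_2)$; consequently it has strict support $Z_1 \times Z_2$, and its restriction to $U_1 \times U_2$ is the polarizable variation $\mathbb V_1 \boxtimes \mathbb V_2$. By Saito's structure theorem \cite[Theorem 3.21]{Saito90}, a polarizable pure Hodge module with strict support $Z_1 \times Z_2$ is uniquely determined, up to isomorphism, by the polarizable variation of Hodge structure it restricts to on a dense open subset. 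Since both $\IC_{Z_1 \times Z_2}^H(\mathbb V_1 \boxtimes \mathbb V_2)$ and $\IC_{Z_1}^H(\mathbb V_1) \boxtimes \IC_{Z_2}^H(\mathbb V_2)$ have strict support $Z_1 \times Z_2$ and restrict to $\mathbb V_1 \boxtimes \mathbb V_2$ over $U_1 \times U_2$, the desired isomorphism follows. The main obstacle I anticipate is the topological lemma --- verifying $t$-exactness of external product and, more substantively, that the external product of two intersection complexes satisfies the (co)support conditions of the intermediate extension. This is classical and can be extracted from the formalism of perverse sheaves, but it is the only step requiring genuine checking; the passage to Hodge modules is then formal, resting on the weight-additivity and perverse-compatibility of Saito's external product together with the uniqueness in the structure theorem.
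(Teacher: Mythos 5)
Your proof is correct, but it takes a genuinely different route from the paper's. The paper argues entirely inside Saito's formalism: after shrinking $U_i$ so that $Z_i\sm U_i$ is a locally principal divisor, it invokes the canonical isomorphisms $(j_!j^{-1}\M)\boxtimes\N\isom j_!j^{-1}(\M\boxtimes\N)$ and $(j_*j^{-1}\M)\boxtimes\N\isom j_*j^{-1}(\M\boxtimes\N)$ from \cite[(3.8.5)]{Saito90}, so that extending minimally first along the $Z_1$ factor and then along the $Z_2$ factor (minimal extension being the image of $j_!j^{-1}\to j_*j^{-1}$, and $\boxtimes$ being exact) identifies $\IC^H_{Z_1}(\mathbb V_1)\boxtimes\IC^H_{Z_2}(\mathbb V_2)$ directly and naturally with the minimal extension of $\mathbb V_1\boxtimes\mathbb V_2$. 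You instead establish the K\"unneth identity for the underlying intersection complexes via the support/cosupport characterization of the intermediate extension, and then conclude by the uniqueness in the structure theorem \cite[Theorem 3.21]{Saito90}; this is sound, including the point that strict support of the Hodge module is detected on the underlying perverse sheaf. Two remarks on the comparison. First, your argument takes as input that Saito's $\boxtimes$ carries polarizable pure Hodge modules to polarizable pure Hodge modules of the additive weight; this is true and citable (it is used, for instance, in \cite{MSS20}), but it is not part of the definition \cite[(2.17.4)]{Saito90}, and the standard proofs of it run close to the paper's own argument, so you should cite it explicitly rather than treat it as formal, to avoid any appearance of circularity. Second, the paper's route needs the small geometric reduction you avoid --- shrinking $U_i$ so that its complement is a locally principal divisor, the setting in which Saito's open pushforward compatibilities are stated --- whereas your route imposes no such condition. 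In sum: the paper's proof is shorter and yields the natural isomorphism with minimal external input; yours is more topological and transparent, with the only genuine verification being the classical perverse-sheaf K\"unneth for intermediate extensions, at the cost of importing purity and polarizability of external products as a black box.
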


\begin{proof}
Recall from \cite[(2.17.4)]{Saito90}, the definition of the functor
$$
\cdot\;\boxtimes\;\cdot:{\rm MHM}(Z_1)\times{\rm MHM}(Z_2)\to {\rm MHM}(Z_1\times Z_2).
$$
By \cite[(3.8.5)]{Saito90}, we have canonical isomorphisms
$$
(j_!j^{-1}\M)\boxtimes \N\isom j_!j^{-1}(\M\boxtimes\N), \quad (j_*j^{-1}\M)\boxtimes \N\isom j_*j^{-1}(\M\boxtimes\N)
$$
for an open embedding $j:U_1\hookrightarrow Z_1$ such that $Z_1\sm U_1$ is a locally principal divisor. Since there exists a smooth open subvariety $U_i\subset Z_i$ such that $Z_i\sm U_i$ is a locally principal divisor and $\mathbb V_i$ is a variation of Hodge structure on $U_i$ ($i=1,2$), the minimal extension of $\mathbb V_1\boxtimes\mathbb V_2$ from $U_1\times U_2$ to $Z_1\times Z_2$ is naturally isomorphic to
$$
\IC_{Z_1}^H(\mathbb V_1)\boxtimes \IC_{Z_2}^H(\mathbb V_2)
$$
as desired.
\end{proof}

We list $m$-minimal sources for normal crossing singularities and for affine cones over smooth Calabi-Yau type hypersurfaces.

\begin{ex}[{\bf Normal crossing singularities}]
A normal crossing singularity
$$
X:=\{x_1\dots x_n=0\}\subset \C^n
$$
is $0$-liminal, and admits a stratification consisting of coordinate planes. Note that
$$
\gr^W_w\Q_X^H[n-1]=\bigoplus_{L\subset \C^n} \Q^H_L[w] \quad \text{for all} \quad 0\le w\le n-1,
$$
where the direct sum runs over every $w$-dimensional coordinate planes $L$ in $\C^n$. This can be easily obtained by induction on the dimension of $X$. In particular, every $\Q^H_L[w]$ is a $0$-liminal source of $X$, and $\Q^H_{\{0\}}$ is the $0$-liminal source with the minimal $0$-liminal center.
\end{ex}

\begin{ex}[{\bf Cones over smooth Calabi-Yau type hypersurfaces}]
An affine cone over a smooth hypersurface $X\subset \P^n$ of degree $d$ with $\frac{n+1}{d}=m+1\in\Z$,
$$
{\rm Cone}(X)\subset \C^{n+1},
$$
is $m$-liminal (see Theorem \ref{thm:minimal exponent of cone}). The cone point $\{0\}$ is the only singular point, and thus the only nontrivial $m$-liminal center ($\neq {\rm Cone}(X)$). As in the proof of Theorem \ref{thm:higher singularities of cone}(2), denote by $\mu:\wt C\to{\rm Cone}(X)$ the blow-up along the cone point. Then, we have Saito's Decomposition Theorem
$$
\mu_*\Q_{\wt C}[n]\isom \IC_{{\rm Cone}(X)}^H\oplus\M^\bullet
$$
where $\M^\bullet$ is supported on $\{0\}$. Applying the pullback $\iota^*$ where $\iota:\{0\}\to {\rm Cone}(X)$, we obtain
$$
\mu_*\Q_X^H[n]\isom \iota^*\IC_{{\rm Cone}(X)}^H\oplus \M^\bullet
$$
by the proper base change theorem \cite[(4.4.3)]{Saito90}. Since $\K_{{\rm Cone}(X)}^\bullet$ is also supported on $\{0\}$, it is obvious from the pullback $\iota^*$ of \eqref{eqn:Q to IC triangle} for $\rm{Cone}(X)$ that
$$
\mathcal H^{-1}(\iota^*\IC_{{\rm Cone}(X)}^H)\isom \K_{{\rm Cone}(X)}^\bullet.
$$
Hence, the nontrivial $m$-liminal source, which is a subquotient of $\K_{{\rm Cone}(X)}^\bullet$, is the Hodge module
$$
\Core\left(H^{n-1}(X,\Q)\right)_{\{0\}}
$$
associated to the pure Hodge structure $\Core(H^{n-1}(X,\Q))$ supported at the cone point.
\end{ex}

Combined with Theorem \ref{thm:Thom-Sebastiani for liminal sources} and results obtained throughout this paper, we determine GIT (semi)stability, compute the core of the middle cohomology, and identify the nilpotency index of the limit mixed Hodge structure for various Calabi-Yau type hypersurfaces.

\begin{ex}[{\bf Sum of independent normal crossing monomials}]
\label{ex:sum of normal crossing}
Fix $m\ge 0$ and $d\ge 3$. Let $\{x_{i,j}\}_{0\le i\le m, 1\le j\le d}$ denote homogeneous coordinates on $\P^{(m+1)d-1}$. Define
$$
X=\left\{\sum_{i=0}^m\left(\prod_{j=1}^d x_{i,j}\right)=0\right\}\subset \P^{(m+1)d-1}.
$$
On an affine chart $\{x_{0,1}\neq 0\}$, the local equation of $X$ is
$$
x_{0,2}\dots x_{0,d}+\sum_{i=1}^m\left(\prod_{j=1}^d x_{i,j}\right)=0.
$$
First of all, $X$ is $m$-liminal by Theorem \ref{thm:Thom-Sebastiani for minimal exponents}. Note that every monomial is a normal crossing singularity, which does not share a variable with any other monomial. Additionally, for each normal crossing singularity, $\Q^H_{\{0\}}$ is a $0$-liminal source. Therefore, applying Theorem \ref{thm:Thom-Sebastiani for liminal sources},
$$
\Q^H_{\{0\}}\boxtimes\dots\boxtimes\Q^H_{\{0\}}(-m)=\Q^H_{\{0\}}(-m)
$$
is a $m$-liminal source of $X$. Hence, Corollary \ref{cor:maximal degeneration} applies.

In conclusion, $X$ is GIT semistable, every one-parameter smoothing is a maximal degeneration, and
$$
\Core\left(H^{(m+1)d-2}(X,\Q)\right)=\Q^H(-m).
$$
\end{ex}

\begin{ex}[{\bf Cubic sevenfolds}]
A cubic sevenfold $X\subset \P^8$ is a Calabi-Yau type hypersurface. When $X$ is smooth, the Hodge numbers of $H^7(X,\Q)$ are:
$$
0\quad0\quad1\quad84\quad84\quad1\quad0\quad0,
$$
with $h^{7,0}(X)$ on the left and $h^{0,7}(X)$ on the right.

\noindent
(1) Let $X=\left\{f(x_0,\dots,x_5)+x_6x_7x_8=0\right\}\subset \P^8$, such that $Y:=\{f=0\}\subset \P^5$ is a smooth cubic fourfold. By Theorem \ref{thm:Thom-Sebastiani for minimal exponents}, $X$ is $2$-liminal, and $X$ is GIT semistable. On an affine chart $\{x_8\neq 0\}$, the local equation of $X$ is
$$
f(x_0,\dots,x_5)+x_6x_7=0.
$$
Applying Theorem \ref{thm:Thom-Sebastiani for liminal sources}, we obtain a $2$-liminal source
$$
\Core\left(H^4(Y,\Q)\right)(-1)_{\{[0:\dots:0:1]\}}
$$
associated to $\Core\left(H^4(Y,\Q)\right)(-1)$ supported on $[0:\dots:0:1]$. Applying Theorem \ref{thm:core of liminal sources}, we conclude that
$$
\Core\left(H^7(X,\Q)\right)=\Core\left(H^4(Y,\Q)\right)(-1),
$$
and the nilpotent operator $N$ of the limit mixed Hodge structure of any one-parameter smoothing satisfies $N\neq 0$, $N^2=0$.

\noindent
(2) Let $X=\left\{g(x_0,x_1,x_2)+x_3x_4x_5+x_6x_7x_8=0\right\}\subset \P^8$, such that $C:=\{g=0\}\subset \P^2$ is a smooth cubic plane curve. As above, $X$ is $2$-liminal, and $X$ is GIT semistable. We obtain a $2$-liminal source
$$
\Core\left(H^1(C,\Q)\right)(-2)_{\{[0:\dots:0:1]\}}.
$$
In conclusion, we have
$$
\Core\left(H^7(X,\Q)\right)=\Core\left(H^1(C,\Q)\right)(-2),
$$
and the nilpotent operator $N$ of any limit mixed Hodge structure satisfies $N^2\neq 0$, $N^3=0$.

\noindent
(3) Let $X=\left\{x_0x_1x_2+x_3x_4x_5+x_6x_7x_8=0\right\}\subset \P^8$. This is subsumed in Example \ref{ex:sum of normal crossing}: $X$ is $2$-liminal, GIT semistable, every one-parameter smoothing is a maximal degeneration, and
$$
\Core\left(H^7(X,\Q)\right)=\Q^H(-2).
$$
\end{ex}

\bibliographystyle{alpha}
\bibliography{refs}

\end{document}